\newcommand{\foot}[1]{\mbox{}\marginpar{\raggedleft\hspace{0pt}\tiny #1}}
\renewcommand{\foot}[1]{}
\newcommand{\todelete}[1]{{\color{red} \sout{#1}}}
\renewcommand{\todelete}[1]{}
\newcommand{\eps}{\epsilon}
\newcommand{\diam}{\operatorname{diam}}
\newcommand{\supp}{\operatorname{supp}}
\newcommand{\wQ}{\widehat{Q}}
\newcommand{\Gpq}{\Gamma_{pq}}
\newcommand{\barx}{{\bar{x}}}
\newcommand{\barl}{{\bar{\ell}}}
\newcommand{\quand}{\quad\text{ and } \quad }
\newcommand{\qand}{\quad\text{and}\quad}
\def \RR {{\mathbb R}}
\def \ZZ {{\mathbb Z}}
\def \NN {{\mathbb N}}
\DeclareMathOperator{\Id}{Id}
\DeclareMathOperator{\Int}{Int}
\DeclareMathOperator{\inj}{inj}
\DeclareMathOperator{\spn}{span}
 \def\part{\@startsection{part}{0}%
  \z@{2\linespacing\@plus\linespacing}{.5\linespacing}%
  {\large\normalfont\bfseries\raggedright}}
\newcommand{\dem}{\begin{proof}}
\newcommand{\cqd}{\end{proof}}
\newtheorem*{Theorem*}{Theorem}
\newtheorem*{Definition*}{Definition}
\newtheorem{maintheorem}{Theorem}
\newtheorem{maincorollary}{Corollary}[maintheorem]
\newtheorem{Proposition}{Proposition}[section]
\newtheorem{Lemma}[Proposition]{Lemma}
\newtheorem{theorem}[Proposition]{Theorem}
\newtheorem{corollary}[Proposition]{Corollary}
\newtheorem{Sublemma}[Proposition]{Sublemma}
\newtheorem*{Conjecture}{Conjecture}
\theoremstyle{definition}
\newtheorem{Remark}[Proposition]{Remark}
\newtheorem{Definition}[Proposition]{Definition}
\newtheorem{Example}[Proposition]{Example}
\numberwithin{equation}{section}
\numberwithin{figure}{section}
\begin{document}

\title
[SRB measures and Young towers on surfaces]
{SRB measures and Young Towers \\ for surface diffeomorphisms}

\author{Vaughn Climenhaga}
\address{Department of Mathematics, University of Houston, Houston, Texas 77204, USA}
\email{climenha@math.uh.edu}

\author{Stefano Luzzatto}
\address{Abdus Salam International Centre for Theoretical Physics (ICTP), Strada Costiera 11, Trieste, Italy}
\email{luzzatto@ictp.it}

\author{Yakov Pesin}
\address{Department of Mathematics, McAllister Building, Pennsylvania State University, University Park, PA 16802, USA}
\email{pesin@math.psu.edu}

\date{\today}

\keywords{Nonuniform hyperbolicity, Young towers, SRB measures, Pesin theory, decay of correlations}

\subjclass[2010]{37D25 (primary); 37C40, 37E30 (secondary)}

\begin{abstract}
We give geometric conditions that are necessary and sufficient  for the existence of Sinai--Ruelle--Bowen (SRB) measures for  $C^{1+\alpha}$ surface diffeomorphisms, thus proving a version of the Viana conjecture.   As part of our argument we give an original method for constructing first return Young towers, proving that every hyperbolic measure, and in particular every SRB measure,  can be lifted to such a tower. This method relies on a new general result on hyperbolic branches and shadowing for pseudo-orbits in nonuniformly hyperbolic sets which is of independent interest. 
\end{abstract}

\maketitle

\setcounter{tocdepth}{1}
\tableofcontents

\part{Statements of Results}
\label{part:results}

The purpose of this paper is to study the connection between analytic, geometric, dynamical, and statistical properties of surface diffeomorphisms.
In particular, we are interested in the way that certain analytic properties,  such as hyperbolicity, imply non-trivial geometric structures which in turn produce non-trivial dynamics and statistical behavior. 
Although we consider only the two-dimensional case, our results on pseudo-orbits and shadowing (Theorems \ref{thm:pseudo-orbit} and \ref{thm:inf-po}) should extend to higher dimensions as well.

In \S\ref{sec:intro} we discuss the general philosophy and theoretical framework of our study,  define   \emph{Sinai-Ruelle-Bowen (SRB) measure} and  recall the \emph{Viana conjecture} on the existence of SRB measures. In 
\S\ref{sec:PgivesSRB} we state Theorem~\ref{thm:existsSRB} which, roughly speaking, says that under some mild recurrence condition,
\begin{center}
\textbf{\emph{a fat (nonuniformly) hyperbolic set supports an SRB measure,}}
\end{center}
thus proving a version of the Viana conjecture in the two-dimensional setting. We note that, unlike most previous results in this direction, our assumptions are also \emph{necessary}, thus giving an interesting  geometric characterization of SRB measures. We give a more detailed review of existing results in \S\ref{sec:histsrb}. 

Our construction of the  SRB measure uses the technique of \emph{Young towers}, which gives additional  information about the geometry and structure of the measure. 
In \S\ref{sec:P'givestower}  we state Theorem~\ref{thm:existsY} which, roughly speaking, says that under some mild recurrence condition, a (nonuniformly) hyperbolic set supports a ``topological'' Young tower and, more specifically,
\begin{center}
\textbf{\emph{a fat (nonuniformly) hyperbolic set supports a Young tower}}.
\end{center}
This result implies Theorem~\ref{thm:existsSRB} but is of independent interest. In \S\ref{sec:hypmeas} we state Theorem \ref{thm:existsnice1}, which says that the assumptions of Theorems \ref{thm:existsSRB} and \ref{thm:existsY} are necessary for the existence of an SRB measure. 
The following consequences are worth highlighting here:
\begin{itemize}
\item every SRB measure, and more generally every \emph{hyperbolic} measure, is \emph{liftable} to a topological Young tower (Corollary \ref{thm:lifts});
\item the towers we produce have the first return property (Theorem \ref{thm:existsY});
\item we formulate explicit conditions under which the decay rate of the tail of the tower can be controlled (Corollary \ref{thm:edc}).
\end{itemize}

Our construction of a Young tower works in a general setting and differs from other constructions  in the literature.\footnote{These typically use
specific geometric characteristics of the system under consideration.} 
The starting point is a measurable subset $A$ of a (non-invariant) ``uniformly'' hyperbolic set bounded by a \emph{nice domain}.  Using an abstract argument we extend $A$ to a \emph{rectangle} $\Gamma$ -- a subset  with product structure of local stable and unstable curves -- which is maximal in a sense, allowing us to build a tower. The key step in producing $\Gamma$ is Theorem \ref{thm:genkatok2} in  \S\ref{sec:hyplyap}, which states that to every \emph{almost return} to $A$ one can associate a \emph{hyperbolic branch}; the total collection of such branches ``saturates'' $A$ to the desired rectangle $\Gamma$. 
The proof of Theorem~\ref{thm:genkatok2} is based on two general results, which we state as Theorems \ref{thm:pseudo-orbit} and \ref{thm:inf-po} in 
\S\ref{sec:reg-branches} and \S\ref{sec:pseudocurves} respectively. Theorems \ref{thm:genkatok2}, \ref{thm:pseudo-orbit}, and \ref{thm:inf-po} are new results in non-uniform hyperbolicity theory of independent interest, with Theorem \ref{thm:pseudo-orbit} providing a new version of Katok's closing lemma and Theorem \ref{thm:inf-po} giving a new version of the shadowing property.

In Part \ref{part:results} of the paper we state all our results.  In Part~\ref{part:hyp} we state and prove  Theorem \ref{thm:pseudo-orbit} and \ref{thm:inf-po} which, as mentioned above, are general results in the theory of nonuniform hyperbolicity. Part~\ref{part:Y} is devoted to the proofs of the remaining results in our more specific setting. These results have a  clear logical interdependence as follows: 
\[
\text{\ref{thm:pseudo-orbit} }  \Longrightarrow 
\text{ \ref{thm:genkatok2} } \Longrightarrow 
\text{ \ref{thm:existsY} } \Longrightarrow 
\text{ \ref{thm:existsSRB}\ref{A1} }
\quad\text{ and } \quad 
\text{ \ref{thm:inf-po} } \Longrightarrow 
\text{ \ref{thm:existsnice1} } \Longrightarrow 
\text{ \ref{thm:existsSRB}\ref{A2}.}
\]
The letters above refer to the corresponding Theorems. Theorem \ref{thm:existsSRB} has two parts: \ref{thm:existsSRB}\ref{A1} states our sufficient conditions for the existence of an SRB measure, and \ref{thm:existsSRB}\ref{A2} states that these conditions are necessary. More details on organization and the relations between the various results are given at the beginning of Parts \ref{part:hyp} and \ref{part:Y}.
In an appendix we provide a list of terminology together with references to the relevant definitions.

See \S\ref{sec:histsrb} and \S\ref{sec:tower-history} for a discussion of related prior work, especially that of Young \cite{lY98,lY99} and Sarig \cite{oS13}.

\subsection*{Acknowledgments}
The authors wish to thank Stefano Bianchini who helped keep this project alive with many useful comments and suggestions when we had almost given up, Dima Dolgopyat for pointing out a missing argument in a previous version, and Vilton Pinheiro for helping us fill in the missing argument. 
We are also grateful to the anonymous referee for a very careful reading and many useful suggestions, which led to corrections and clarifications that have substantially improved the paper.
V.~C.\ is partially supported by NSF grants DMS-1362838 and DMS-1554794. Ya.~P.\ was partially supported by NSF grant DMS-1400027.

\section{SRB measures and the Viana conjecture: Theorem \ref{thm:existsSRB}}
\label{sec:intro}

Throughout this paper,  let  $M$ be a surface -- by which we mean a compact boundaryless smooth 
$2$-dimensional Riemannian manifold -- and let $f\colon M\to M$ be a $C^{1+\alpha}$ diffeomorphism, where $\alpha\in (0,1]$. Let $d(\cdot,\cdot)$ denote the distance function on $M$, and let $m$ denote Lebesgue measure on $M$; that is, the area form induced by the Riemannian metric.  Given a curve $W\subset M$, we write $m_W$ for the one-dimensional Lebesgue measure on $W$ defined by the induced Riemannian metric.  By ``measurable'' we always mean ``Borel measurable''.

\subsection{Physical measures} 
The first step in the statistical description of the diffeomorphism \( f \) is the notion of the ``statistical basin of attraction'' of a probability measure \( \mu \):
\[
\mathcal{B}_\mu := \bigg\{x\in M : \lim_{n\to\infty} \frac 1n \sum_{k=0}^{n-1} \phi(f^k x) = \int \phi\,d\mu \text{ for all continuous } \phi\colon M\to \RR\bigg\}.
\]
Equivalently, $\mathcal{B}_\mu$ consists of all points for which $\frac 1n \sum_{k=0}^{n-1} \delta_{f^k(x)}$ converges to $\mu$ in the weak* topology, where $\delta_y$ is the Dirac delta measure on $y$.
If $\mu$ is $f$-invariant and ergodic, then $\mu(\mathcal{B}_\mu)=1$ by the Birkhoff ergodic theorem, but since Lebesgue measure is the most natural reference measure, we are most interested in finding $\mu$ for which $\mathcal{B}_\mu$ is large in the following sense.

\begin{Definition}\label{def:physical}
 \( \mu \) is a \emph{physical measure} for \( f \) if \( m(\mathcal B_{\mu})>0 \). 
\end{Definition}

Thus a physical measure is a probability measure which describes the asymptotic statistical behavior of a significant (positive Lebesgue measure) subset of the phase space. Not all dynamical systems admit physical measures,\footnote{Consider the identity map, or see  \cite[p.\ 140]{aK80} for a more interesting example, sometimes referred to as ``Bowen's eye''.}
so it is a basic problem to establish the class of dynamical systems which have physical measures before going on to investigate further questions related to the possible number of such measures and their structure and properties. 

The simplest example of a physical measure is given by the Dirac-delta measure $\delta_p$ at an attracting fixed point $p$. This easily generalizes to the case when $p$ is an attracting periodic point.  At the other extreme, if $\mu\ll m$ is ergodic, then $\mu(\mathcal{B}_\mu)=1$ gives $m(\mathcal{B}_\mu)>0$, hence $\mu$ is physical.  Unfortunately it is relatively rare for such absolutely continuous measures to exist, and thus the problem of  the existence of a physical measure is quite non-trivial.

\subsection{Hyperbolic measures}

In the 1970's, Sinai, Ruelle, and Bowen established existence, as well as geometric and statistical properties, of physical measures for \emph{uniformly hyperbolic} systems. The theory of
\emph{Sinai--Ruelle--Bowen measures}, or \emph{SRB measures},
has since been extended to non-uniform hyperbolicity, in which setting we need the following definition.

\begin{Definition}[Hyperbolic measures and non-zero Lyapunov exponents]
\label{def:nonzero}
An invariant probability measure \( \mu \) is \emph{hyperbolic} if there exists a set \( \Lambda\subseteq M \) with \( f(\Lambda)=\Lambda \) and \( \mu(\Lambda)=1 \) which has \emph{non-zero Lyapunov exponents}, i.e. there exists a measurable \( Df \)-invariant  decomposition \( T_{x}M=E^{s}_{x}\oplus E^{u}_{x} \) such that for every \( x\in \Lambda \) and unit vectors \( e^{s}\in E^{s}_{x}, e^{u}\in E^{u}_{x} \) we have:
\begin{enumerate}
\item \( \displaystyle{\lim_{n\to\pm\infty}\frac1n\log \|Df_{x}^{n}(e^{s})\| =: \lambda^{s}_{x} < 0 < \lambda^{u}_{x} :=  \lim_{n\to\pm\infty}\frac1n\log \|Df_{x}^{n}(e^{u})\|}
\); 
\item \( \displaystyle{\lim_{n\to\pm\infty} \frac1n\log\measuredangle (E^{s}_{f^{n}(x)}, E^{u}_{f^{n}(x)})=0}
\).
\end{enumerate}
\end{Definition} 
The heart of this definition is that the \emph{Lyapunov exponents} $\lambda_x^s$ and $\lambda_x^u$ are \emph{non-zero} and have \emph{opposite signs};\footnote{If both Lyapunov exponents are negative or both are positive, then it can be shown that the corresponding ergodic component of the measure \( \mu \) is supported on an attracting or repelling periodic orbit respectively; we exclude this trivial situation.} the fact that the limits exist is guaranteed by Oseledets' Multiplicative Ergodic Theorem, which also guarantees that
although the angle between the two subspaces is not in general  bounded away from zero, it cannot degenerate at an exponential rate along any given orbit, as stated in  condition (2).    
We point out that in the $2$-dimensional case the Margulis--Ruelle inequality implies that every measure with positive entropy is hyperbolic.

\subsection{Sinai-Ruelle-Bowen (SRB) measures}\label{sec:sinai-ruelle-bowen-measures}
A fundamental and crucial property of sets with non-zero Lyapunov exponents is that every point \( x\in \Lambda \) has a \emph{local stable curve} \( V^{s}_{x} \) and a \emph{local unstable curve} \( V^{u}_{x} \) satisfying certain properties which we describe  in Definition \ref{def:localman} below.\footnote{In fact, the existence of local stable and unstable curves can be proved under  weaker conditions than those of non-zero Lyapunov exponents, see Definition \ref{def:hypset} and Theorem~\ref{thm:HP}.} 
For the moment we use these curves to give the formal definition of SRB measure. 

\begin{Definition}[Fat sets]\label{def:fat}
A set \( A \subseteq \Lambda\) is \emph{fat} if 
\begin{equation}\label{eq:fat}
m\bigg(\bigcup_{x\in A}V^{s}_{x}\bigg)>0.
\end{equation}
\end{Definition}

We can now give the definition of SRB measure that we use in this paper.

\begin{Definition}[SRB measures]\label{srb-mes}
An invariant probability  measure \( \mu \) is an  \emph{SRB measure}
if it is hyperbolic and every set $X\subset \Lambda$ with $\mu(X)=1$ is fat.
\end{Definition}

One of the key properties of  \( V^{s}_{x} \) is that \( d(f^{n}(y), f^{n}(x)) \to 0 \) as \( n\to \infty \) for every \( y\in V^{s}_{x} \). This implies that if \( x\in \mathcal B_{\mu} \) for some measure \( \mu \) then also \( y\in \mathcal B_{\mu} \) for every \( y\in V^{s}_{x} \).  Therefore the fatness condition \eqref{eq:fat} together with Birkhoff's Ergodic Theorem implies that any ergodic SRB measure is  a physical measure.\footnote{
 The converse is not true: for example, if $p$ is a hyperbolic fixed point whose stable and unstable curves form a figure-eight, then $\delta_p$ is a hyperbolic physical measure which is not  SRB \cite[p.\ 140]{aK80}.}
In his plenary lecture at the ICM in Berlin in 1998, Viana formulated the following natural conjecture. 

\begin{Conjecture}[Viana \cite{Via98}] 
If a smooth map has only non-zero Lyapunov exponents at Lebesgue almost every point, then it admits some SRB measure.
\end{Conjecture}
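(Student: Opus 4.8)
The plan is to attack the conjecture through the machinery of this paper, which means working in the category where that machinery lives: $C^{1+\alpha}$ diffeomorphisms of compact surfaces. (In full generality — arbitrary dimension, merely $C^1$ — the statement is beyond reach, since even the Pesin-theoretic tools used below become problematic.) In this setting, Viana's hypothesis says that, away from the trivial case of an attracting periodic orbit, Lebesgue-almost every point is (nonuniformly) hyperbolic. Theorem~\ref{thm:existsSRB} then reduces the problem to a single task: produce a \emph{fat} nonuniformly hyperbolic set $\Lambda$ together with a subset $A\subseteq\Lambda$ bounded by a nice domain satisfying the recurrence condition of that theorem; Theorem~\ref{thm:existsY} will then build a Young tower over $A$ and extract an SRB measure. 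So everything comes down to showing that the exponent hypothesis \emph{forces} such an $A$ to exist.

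The production of the fat hyperbolic set is the soft part. Fixing uniform hyperbolicity constants and invoking Oseledets and Pesin theory, the hypothesis puts Lebesgue-almost every point in some regular (Pesin) block; taking the blocks $\Lambda_\ell$ compact and increasing, we get $m(\Lambda_\ell)>0$ for all large $\ell$. Since $x\in V^s_x$ for $x\in\Lambda_\ell$, the union $\bigcup_{x\in\Lambda_\ell}V^s_x$ contains $\Lambda_\ell$ and therefore has positive Lebesgue measure, so $\Lambda_\ell$ is automatically fat. Intersecting $\Lambda_\ell$ with a small product box around a Lebesgue density point of $\Lambda_\ell$ and perturbing the box's sides to a nice domain (a generic choice suffices) yields the candidate set $A$.

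The recurrence condition is the crux. The natural route is to transport Lebesgue measure forward: let $W=\bigcup_{x\in A}V^s_x$, form the Cesàro averages $\nu_n=\tfrac1n\sum_{k=0}^{n-1}f^k_*(m|_W)$, and pass to a weak$^*$ subsequential limit $\nu$, which is $f$-invariant. The set $\mathcal P$ of points with non-zero exponents is $f$-invariant of full Lebesgue measure, so each $\nu_n$ is carried by $\mathcal P=\bigcup_\ell\Lambda_\ell$; \emph{if} the limit $\nu$ still gives positive mass to the Pesin blocks, then Poincar\'e recurrence for $\nu$, combined with a Fubini argument along the stable curves that foliate $W$ (to pass from $\nu$-typical to $m|_W$-typical points), delivers exactly the returns to a controlled neighborhood of $A$ demanded by Theorem~\ref{thm:existsSRB}. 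Feeding each such \emph{almost return} into Theorem~\ref{thm:genkatok2} promotes it to a genuine \emph{hyperbolic branch}, and the union of these branches saturates $A$ into a rectangle $\Gamma$ on which the tower — and hence the SRB measure — is built.

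The main obstacle is the conditional clause ``\emph{if} $\nu$ gives positive mass to the Pesin blocks.'' Nothing in the pointwise exponent hypothesis prevents \emph{escape of mass}: the $\nu_n$ could concentrate on blocks $\Lambda_{\ell(n)}$ with $\ell(n)\to\infty$, so that along the orbits carrying the bulk of the mass the hyperbolicity degenerates, $\nu$ fails to be hyperbolic, and the whole argument collapses. Excluding this — showing that a definite, geometrically controlled amount of recurrent unstable mass survives near $A$ — is the real substance of the conjecture. In the surface $C^{1+\alpha}$ case one would attempt it using Theorems~\ref{thm:genkatok2} and~\ref{thm:inf-po}: since the hyperbolic branches and the shadowing they provide carry hyperbolicity and distortion bounds depending only on the fixed block $\Lambda_\ell$, the mass near $A$ ought not be able to leak to worse blocks. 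Pushing this through unconditionally is the hard step; short of it the construction still runs under a mild recurrence assumption, which is why one obtains only ``a version'' of the conjecture — though, paired with Theorem~\ref{thm:existsnice1}, a version that is sharp, characterizing SRB measures by exactly these geometric data.
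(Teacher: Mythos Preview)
This statement is Viana's \emph{conjecture}: the paper states it but does not prove it, and explicitly presents Theorem~\ref{thm:existsSRB} as only ``a version'' of it, with the recurrence hypothesis added. So there is no proof in the paper to compare your proposal against.

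Your proposal is not a proof either, and you are candid about this: you correctly isolate the escape-of-mass problem as the genuine obstruction, and you acknowledge that without controlling it the argument only yields the conditional Theorem~\ref{thm:existsSRB}. That diagnosis matches the paper's own position.

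One further gap worth flagging: your line ``perturbing the box's sides to a nice domain (a generic choice suffices)'' skips a real step. A nice domain $\Gamma_{pq}$ is bounded by pieces of stable and unstable curves of \emph{periodic} points $p,q$ lying in a regular set, and producing such periodic points (Proposition~\ref{prop:get-pq}, Lemma~\ref{lem:periodic-bracket}) already uses recurrence to $\Lambda_\ell$ --- in the paper, via Poincar\'e recurrence for an invariant measure. Under the bare Lebesgue-a.e.\ exponent hypothesis you have no invariant measure yet, so even the construction of the nice domain is entangled with the recurrence problem you identify later. This is not a separate obstacle so much as another face of the same one, but it shows that the difficulty enters earlier in the argument than your outline suggests.
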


\begin{Remark}\label{rem:srb}
In the non-uniform hyperbolicity theory a common definition of SRB measure $\mu$ requires only that ``the conditional measures generated by $\mu$ on unstable manifolds are absolutely continuous with respect to the leaf volume on these manifolds'' \cite{LedYou,lY02}. 
Such measures may have some zero Lyapunov exponents in directions transversal to unstable manifolds and hence need not be hyperbolic. The advantage of this more general definition is that SRB measures are characterized as the only measures that satisfy the entropy formula. On the other hand, these ``non-hyperbolic'' SRB measures may not be physical, and some authors adopt a different convention beyond uniform hyperbolicity, in which ``SRB measure'' simply means ``physical measure'' \cite[Chapter 11]{BDV05}; this appears to be the intent of the Viana conjecture.

We stress that our definition is in fact equivalent to the requirement of being ``hyperbolic with absolutely continuous conditionals along unstable manifolds'' \cite[Theorem C]{Tsu91} although we emphasize the fatness condition which is easier to state and is ultimately the crucial property for proving physicality of the measure.
\end{Remark}

In this paper we use the fatness condition to prove a version of the Viana conjecture for surface diffeomorphisms under the hyperbolicity conditions \eqref{eq:slowvar}, \eqref{eq:angle}, and \eqref{eq:hypest} (see Definition \ref{def:hypset} below; these conditions are weaker than the non-zero Lyapunov exponents condition in Definition \ref{def:nonzero}) but with the addition of a mild \emph{recurrence} condition.
In \S\S\ref{sec:hypsets}--\ref{sec:definition-of-constants} we give the exact definitions we need for the formal statement of our result in \S\ref{sec:PgivesSRB}; then in \S\ref{sec:histsrb} we discuss previous literature on the topic. 
\subsection{Hyperbolic sets}\label{sec:hypsets}
The requirement that the limits 
in Definition \ref{def:nonzero}  exist is somewhat unnatural in a setting where we do not \emph{a priori} have an invariant probability measure,
and also obscures
a crucial feature of sets with non-zero Lyapunov exponents, which is the fact that the convergence to the limit can be  very \emph{non-uniform}. We give here an alternative  formulation of hyperbolicity, which is slightly more technical, but is more general and explicit about the intrinsic non-uniformity.

\begin{Definition}[Hyperbolic set]\label{def:hypset}
Given $\chi,\eps>0$,  we say that an \( f \)-invariant measurable set \( \Lambda \) is   \emph{$(\chi,\eps)$-hyperbolic} if there exists  a measurable \( Df \)-invariant  splitting
\(
T_{x}M=E^{s}_{x}\oplus E^{u}_{x} 
\)
for all $x\in \Lambda$, and  measurable positive functions  $C,K\colon \Lambda \to (0,\infty)$ satisfying  
\begin{equation}\label{eq:slowvar}\tag{\protect\(\mathbf{H1}\protect\)}
 e^{-\epsilon}  \leq K(f(x)) /K (x)\leq e^{\epsilon}
 \quad \text{ and } \quad 
 e^{-\epsilon}  \leq C (f(x)) /C (x)\leq e^{\epsilon} 
\end{equation}
such that for every \( x \in \Lambda \)
\begin{equation}\label{eq:angle}
\tag{\protect\(\mathbf{H2}\protect\)}
 \measuredangle (E^s_x, E^u_x) \geq  K(x)
\end{equation}
and  for all  unit vectors \( e_x^{s}\in E^{s}_{x}, e_x^{u}\in E^{u}_{x} \) and  for all \( n \geq 1 \),
 \begin{equation}\label{eq:hypest}\tag{\protect\(\mathbf{H3}\protect\)}
 \begin{aligned}
 \|Df^{n}_{x}(e_x^{s})\| &\leq C(x) e^{-\chi n}, 
 \qquad &\|Df^n_x (e_x^u)\| &\geq C(x)^{-1} e^{\chi n}, \\
 \|Df^{-n}_x (e_x^s)\| &\geq C(x)^{-1} e^{\chi n},
&
 \|Df^{-n}_{x}(e_x^{u})\| &\leq C(x) e^{-\chi n}.
 \end{aligned}
\end{equation}
A set $\Lambda$ is \emph{$\chi$-hyperbolic} if it is $(\chi,\eps)$-hyperbolic for all $\eps>0$, and \emph{hyperbolic} if it is a union of \( \chi \)-hyperbolic sets over all \( \chi>0 \).
\end{Definition}

We will always assume that both \( E^{s}_{x} \) and \(  E^{u}_{x} \) are non-trivial (hence one-dimensional) and we stress that our definition of hyperbolicity is inherently \emph{non-uniform} and the set \( \Lambda \) is not in general closed. Moreover, observe that if $\Lambda$ is $(\chi,\eps')$-hyperbolic for some $0<\eps'<\eps$, then it is $(\chi,\eps)$-hyperbolic.

\begin{Remark}
It can be shown that a set \( \Lambda \) with \emph{non-zero Lyapunov exponents} (as in Definition \ref{def:nonzero}) is \emph{hyperbolic} (as in  Definition~\ref{def:hypset}), Indeed, if \( \Lambda \) has non-zero Lyapunov exponents then it is a union of \( f \)-invariant sets on  which  the Lyapunov exponents \( \lambda^{s}, \lambda^{u} \) are uniformly bounded away from~0. Each such set is then \( (\chi,\epsilon) \)-hyperbolic for some \( \chi>0 \) and  for \emph{every} \( \epsilon>0 \), where the functions \( K=K_\epsilon\) and \(C=C_\epsilon \) clearly depend on \( \epsilon \),  see \cite[\S3.3]{BarPes07}.\footnote{The converse is not true; the limits in the definition of non-zero Lyapunov exponents need not exist at every point (only almost every), even in uniform hyperbolicity.  Although existence of these limits is not necessary for our results, the slow variation condition \eqref{eq:slowvar} still plays a crucial role in Theorem \ref{thm:HP}, and it seems unlikely that it can be removed.}
\end{Remark}

A first advantage of formulating hyperbolicity as above is that we can write~\( \Lambda \) as a union of nested sets on which we have uniform estimates. 

\begin{Definition}[Regular sets]\label{def:reg}
Given a $(\chi,\eps)$-hyperbolic set $\Lambda$, for each \( \ell \geq 1 \) we define the \emph{regular level set} \begin{equation}\label{lambdal}
\Lambda_{\chi,\eps,\ell}:=  \{x \in \Lambda: C(x) \leq e^{\epsilon \ell}  \text{ and } K(x) \geq e^{-\epsilon \ell}\}. 
\end{equation}
\end{Definition}

One can assume without loss of generality that the regular level sets $\Lambda_{\chi,\eps,\ell}$ are closed by replacing them with their closures, which still carry the hyperbolic structure in \eqref{eq:angle} and \eqref{eq:hypest} with $C(x) = e^{\eps\ell}$ and $K(x) = e^{-\eps\ell}$, and then taking the union over all $\ell$ to get a $(\chi,\eps)$-hyperbolic set containing $\Lambda$. When the values of $\chi,\eps$ are clear from context, we will often suppress them in the notation and simply write $\Lambda_\ell = \Lambda_{\chi,\eps,\ell}$. Clearly
\( 
\Lambda_\ell \subseteq \Lambda_{\ell+1}  \subseteq \cdots 
\) and \( 
\Lambda = \bigcup_{\ell \geq 1 } \Lambda_\ell
\) 
and, by \eqref{eq:slowvar},  
\begin{equation}\label{eq:images} 
f^{\pm k}(\Lambda_\ell) \subseteq \Lambda_{\ell+k}
\quad\text{for all } \ell,k\in\NN.
\end{equation}

Note that for a $\chi$-hyperbolic set $\Lambda$ the regular sets 
$\Lambda_\ell$ are not defined until we fix a  value of $\eps>0$; changing the value of $\eps$ changes the functions $C,K$ and hence changes the sets $\Lambda_\ell$. On the other hand, we can introduce a useful notation for regular sets independent of an a-priori choice of~\( \Lambda \).

\begin{Definition}[$(\chi,\eps,\ell)$-regular sets]\label{def:chi-reg}
A set $\Gamma\subset M$ is \emph{$(\chi,\eps,\ell)$-regular} if there exists a $(\chi,\eps)$-hyperbolic set $\Lambda$ such that $\Gamma \subset \Lambda_\ell$.  
\end{Definition}

\subsection{Local stable and unstable curves}\label{sec:stablecurves}

One of the fundamental  consequences of hyperbolicity is the existence of stable and unstable curves.

\begin{Definition}[Local stable and unstable  curves]\label{def:localman} 
Given $C,\lambda>0$, we say that a $C^1$ curve $V^s$ in $M$ is a \emph{local $(C,\lambda)$-stable curve} if for every $y,z \in V^s$ and $n\geq 0$, we have 
\begin{equation}\label{eqn:Vs-contract}
d(f^n(y), f^n(z)) \leq C e^{-\lambda n} d(y,z)
\end{equation}
and if in addition the curve can be written as $V^s = \exp_x(\gamma)$ for some $x\in M$ and $\gamma \subset T_x M$ satisfying the following conditions:
\begin{enumerate}
\item there is a splitting $T_x M = E\oplus F$, an interval $B \subset E$, and a $C^1$ function $\psi \colon B\to F$ such that $\gamma := \{v + \psi(v) : v\in B\}$;
\item $\gamma$ lies in the ball around the origin of radius $\inj(M)$, the injectivity radius of the manifold.
\end{enumerate}
Replacing $f^n$ with $f^{-n}$ in \eqref{eqn:Vs-contract} gives the definition of a \emph{local $(C,\lambda)$-unstable curve} $V^u$. We will sometimes omit $C,\lambda$ from the notation when their precise values are unimportant, and refer simply to \emph{local stable and unstable curves}.
\end{Definition}

\begin{Definition}[Brackets]\label{def:brackets}
A point $z\in M$ is said to be the \emph{bracket} of two points $x,y\in M$ if there is a local stable curve $V_x^s$ containing $x$ and a local unstable curve $V_y^u$ containing $y$  and such that $z \in V_x^s \cap V_y^u$, and if moreover $z$ is the \emph{only} point with this property and the intersection is transversal. In this case we write $[x,y] = z$.
\end{Definition}

The above definitions are slightly more general than those commonly used in the literature because we make no a priori assumption that the points involved lie in a $(\chi,\eps)$-hyperbolic set; this makes certain bookkeeping tasks more convenient (see Remark \ref{rem:Gamma-reg}).
With that said, we will still restrict our attention to points satisfying the usual picture provided by 
the following classical result; see \cite[\S7.1]{BarPes07} or \cite{yP76} for a more  precise and technical statement and the proof.

\begin{theorem}[Local Stable and Unstable Manifold Theorem]\label{thm:HP}
For all \(  \chi > \lambda > 0 \)   there exists \(  \epsilon_{0}>0  \) 
such that,  for any \( \epsilon\in (0, \epsilon_{0}) \) and any \( \ell\in \mathbb N \) there exist constants  $C_\ell,\delta_\ell>0$ such that
if $\Lambda$ is a $(\chi,\epsilon)$-hyperbolic set, then
there are families $\{V_x^s \ni x\}_{x\in \Lambda_\ell}$ and $\{V_x^u\ni x\}_{x\in \Lambda_\ell}$ of $C^{1+\text{H\"older}}$ local $(C_\ell,\lambda)$-stable and $(C_\ell,\lambda)$-unstable curves, respectively, such that 
\begin{enumerate}
    \item $V_x^s$ and $V_x^u$ depend continuously on $x\in \Lambda_\ell$ in the $C^1$ topology;
    \item if $x,y \in \Lambda_\ell$ satisfy $d(x,y) < \delta_\ell$, then the bracket $[x,y]$ exists and is the unique point in $V_x^s \cap V_y^u$.
\end{enumerate}
\end{theorem}

\begin{Remark}\label{rmk:HP}
In fact Theorem \ref{thm:HP} follows from Theorem \ref{thm:inf-po} and Corollary \ref{cor:bracket-reg} below, with the small caveat that there we only guarantee that $V_x^{s/u}$ are local $(C_\ell, \lambda/3)$-stable and unstable curves. The proof of Theorem \ref{thm:HP} could be modified to improve $\lambda/3$ to $\lambda$ by being more careful with the bounds in \eqref{eqn:greek0}, \eqref{eqn:eps}, and \eqref{eqn:omega} below, but this is not necessary for our purposes.
\end{Remark}

\subsection{Definition of constants}\label{sec:definition-of-constants}
Before proceeding further, we give an explicit bound on how small $\eps$ must be in the $(\chi,\eps)$-hyperbolic sets we consider.  The precise form of this bound is technical and can be omitted at a first reading; the important thing is that as with $\eps_0$ in Theorem \ref{thm:HP}, the quantity $\eps_1$ depends only on $f$, $\chi$, and $\lambda$. Fix constants $c_1<0 < c_2,c_3$ such that
\begin{equation}\label{eqn:bc}
c_1  = - c_2
\leq \min_{x\in M}\{\log\|D_{x}f^{-1}\|^{-1}\} \leq \max_{x\in M}\{\log \|D_{x}f\|\} \leq c_2 < \frac{c_{3}}{1+\alpha}.
\end{equation}
Given $\chi>\lambda>0$, let $\eps_0$ be as in Theorem \ref{thm:HP}. Define the following auxiliary constants: 
\begin{equation}\label{eqn:greek0}
\begin{aligned}
\gamma := \frac{\chi - c_1}{2\chi}, 
\quad 
\beta &:= \frac{2\chi}{c_3 + \chi} \alpha,
\quad& 
\iota &:= \frac{2(\chi-\lambda)}{6\eps_0\gamma\alpha + (2+\alpha\beta)c_2 + 2\chi},
\\
\eta &:= 6\gamma\alpha\iota + 2,
\quad&
\zeta &:= \alpha\beta\iota.
\end{aligned}
\end{equation}
Notice that $\gamma,\eta>1$ and $\beta,\iota,\zeta\in (0,1)$. Let
$\eps_1 = \eps_1(f,\chi,\lambda)$ be given by
\begin{equation}\label{eqn:eps}
\eps_1 := \min\left\{\frac{\lambda\alpha}{18}, \frac{\lambda\beta}{7\gamma},   \frac{\lambda\zeta}{\eta-1},
\frac{\lambda}{2(1+1/\alpha)}, 
\frac{\lambda^3 \alpha}{9(\lambda - 4c_1)(\lambda+4c_3)},
\epsilon_{0} \right\}.
\end{equation}

\begin{center}
\smallskip
\fbox{\parbox{.9\textwidth}
{\textbf{Throughout the paper, we will consider $(\chi,\eps)$-hyperbolic sets where $\eps \in (0,\eps_1)$ and $\eps_1$ is given by \eqref{eqn:eps}.}}} 
\smallskip
\end{center}

\begin{Remark}
The first bound in \eqref{eqn:eps} is used in \eqref{eqn:omega}, while the next three are  used in the proof of Theorem~\ref{thm:overlapping} in \S\ref{sec:overlapping}, when we produce a constant $\delta>0$ such that for every $\ell\in \NN$ and every $x,y\in \Lambda_\ell$ with $d(x,y) \leq \delta e^{-\lambda\ell}$, the corresponding \emph{Lyapunov charts} (\S\ref{sec:lyapcharts}) are \emph{overlapping} (Definition \ref{def:overlapping}).  More precisely, the second and third  bound in \eqref{eqn:eps} are used in Lemmas \ref{lem:4terms1} and \ref{lem:4terms2}, and the fourth is used in \eqref{eqn:small-in-bell}.
The fifth bound is used in \S\ref{sec:saturated} when we prove Theorem \ref{thm:existsY}.
The last bound, \( \epsilon_{1}\leq \epsilon_{0} \) is used to guarantee that we can apply the Local Stable and Unstable Manifold Theorem \ref{thm:HP}. 
The bounds in \eqref{eqn:eps} also imply that $\eps_1 < \frac \lambda{12}$, which is used in \eqref{eqn:ellj-0}.
\end{Remark}

\begin{Remark}
The precise value of $\lambda \in (0,\chi)$ is not important and the reader wishing to reduce the number of constants may as well consider $\lambda = \chi/2$, although choosing a different value of $\lambda$ might yield a larger value of $\eps_1$.
\end{Remark}

\subsection{Nice domains, recurrent sets, and SRB measures}\label{sec:PgivesSRB}
We are now ready to introduce the key definitions we need to state our main result. To simplify the notation, for a positive integer $T$ we let $T\mathbb N$ denote the set of positive integer multiples of $T$. We also use the notation $V_{p/q}^{s/u}$ to refer simultaneously to  $V_p^s, V_q^s, V_p^u, V_q^u$.

\begin{Definition}[Nice Domain]\label{def:nice}
Given $\chi, \eps>0$, $\ell \in \NN$, and \( r\in (0, \delta_{\ell}) \), a \emph{\((\chi, \eps, \ell, r)\)-nice domain}  is  a  topological disk  \( \Gpq \) satisfying \( \diam (\Gpq)<r \) whose boundary  is formed by (pieces of) the local stable and unstable curves   of \( (\chi, \epsilon, \ell) \)-regular  periodic points $p,q$; see Figure~\ref{fig:nice}. We let \( T=T(\Gpq) \) denote the smallest positive integer such that:
\begin{enumerate}
\item $T$ is a common multiple of the periods of \( p, q \), so that  \( f^{T}(p)=p \) and  \( f^{T}(q)=q \);
\item $T$ is even, so that all eigenvalues of $Df^T_p$ and $Df^t_q$ are positive; and
\item $T$ is large enough that $8\sqrt{2} (1+e^{2(\lambda-\chi)})^{-1/2}e^{2\eps\ell}e^{-\lambda T/3} < 1$.
\end{enumerate}
\end{Definition}

\begin{figure}[tbp]
\includegraphics[width=.4\textwidth]{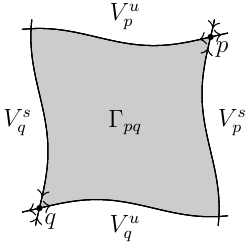}
\caption{A nice domain.}
\label{fig:nice}
\end{figure}

\begin{Remark}
The ``niceness'' condition implies  that  for all $n\in T\NN$, 
\begin{equation}\label{eq:nicerect}
f^{n}(V^{s}_{p/q}\cap \Gpq )\cap \Int \Gpq = \emptyset  \ \text{ and } \ 
f^{-n}(V^{u}_{p/q}\cap \Gpq )\cap \Int \Gpq = \emptyset;
\end{equation}
that is, the stable (respectively, unstable) boundary of \( \Gpq \) never intersects the interior of \( \Gpq \) at iterates which are multiples of \( T \) in forward (respectively, backward) time.
This can be thought of as a two-dimensional version of the notion of ``nice interval'' in one-dimensional dynamics, which refers to an interval whose boundary points never enter  the interval in forward time.\footnote{After this paper was completed we learned of recent work by Chen, Wang, and Zhang that uses a similar notion for systems with singularities; see Definition 9 in \cite[\S5.3]{CWZ}.}
Here \eqref{eq:nicerect} will play a crucial role by ensuring that certain regions are necessarily nested or disjoint, see e.g. Remark \ref{rem:nice} and Lemma~\ref{lem:nestdisj}. There is no obvious generalization of this condition to higher dimensions and this is essentially the main reason for which our main results are restricted to surface diffeomorphisms. 
\end{Remark}

\begin{Remark}
The condition that $T$ be even is used in the proof of Lemma \ref{lem:nestdisj} to guarantee that the `branches' forming our eventual Markov structure are disjoint. The condition that $T$ be sufficiently large is used in \S\ref{sec:fatyoung} to guarantee that the tower we build satisfies the contraction property required in \cite{lY98}.
\end{Remark}

\begin{Definition}[Nice regular set]\label{def:nice-set}
Given $\chi,\eps>0$,  $\ell\in\NN$, and \( r\in (0, \delta_\ell] \), a \emph{\((\chi, \eps, \ell, r)\)-nice regular set} is a set $A$ that is \( (\chi, \epsilon, \ell) \)-regular and is contained in some \((\chi, \eps, \ell, r)\)-nice domain \( \Gpq   \). 
\end{Definition}

The simplest way to produce a $(\chi,\eps,\ell,r)$-nice regular set is to fix a $(\chi,\eps,\ell,r)$-nice domain $\Gpq$ and then put $A = \Lambda_{\chi,\eps,\ell} \cap \Gpq$, but we do not require that the set $A$ has this exact form.

\begin{Remark}\label{rem:implicit}
The definitions above depend on constants \( \chi, \epsilon, \ell, r \) which, for simplicity,  are not always reflected in the notation; we will sometimes refer to a `nice domain' or a `nice regular set' without naming the constants. We stress however that any reference to nice domains or nice regular sets which does not explicitly refer to a choice of constants means that such a choice is implicitly clear from the context. The same applies to the constant \( T \) which is always associated to a nice domain and therefore to a nice regular set (as for example in the following definition in which \( T \) is implicitly given by the choice of a nice regular set~\( A \)). 
\end{Remark}

\begin{Definition}[Recurrence]\label{def:Treturn}
A nice regular set   \(  A\subseteq \Lambda  \) is
\begin{enumerate}
\item  \emph{recurrent} if   for all \(   x\in A \) there exist  \( i, j \in T\mathbb N  \) such that   
\[
  f^{i}(x), f^{-j}(x)\in A;
    \]
\item \emph{Lebesgue-strongly recurrent}
if it is recurrent and there is a local unstable curve $V^u$ and a set $E \subset V^u \cap A$ of positive 1-dimensional Lebesgue measure\footnote{One could imagine studying other equilibrium measures besides SRB by replacing Lebesgue measure here with a reference measure such as those studied in \cite{CPZ19,CPZ20}, but we do not pursue this here.} such that for every $x\in E$ we have
\begin{equation}\label{eqn:pos-freq}
\limsup_{n\to \infty} \frac 1{n} {\#\{i\in T\NN, 1\leq i \leq n: f^{i}(x) \in A\}}>0.
\end{equation}
\end{enumerate}
\end{Definition}

Observe that a Lebesgue-strongly recurrent set is fat in the sense of \eqref{eq:fat}.
We are now ready to state our main result. 

\begin{maintheorem}\label{thm:existsSRB}
Let \(  f\) be a \(  C^{1+\alpha}  \) surface diffeomorphism. 
\begin{enumerate}[label=\upshape{(\arabic{*})}]
\item\label{A1} 
For every $\chi>\lambda>0$,  
$0<\eps<\eps_1(f,\chi,\lambda)$, and  $\ell\in\NN$, there exists $r>0$ 
such that if there exists a \((\chi, \eps, \ell, r)\)-nice 
regular set $A$ that is Lebesgue-strongly recurrent,
then \( f \) admits an SRB measure. 
\item\label{A2} Conversely, if \(  f  \) admits an SRB measure,
then for every sufficiently small $\chi>0$ and every $\eps>0$, there exists $ \ell\in \NN$ such that for every $r>0$, there exists an \((\chi, \eps, \ell, r)\)-nice  regular set $A$ that is Lebesgue-strongly recurrent.
\end{enumerate}
\end{maintheorem}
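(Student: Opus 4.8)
\textbf{Plan for part \ref{A1}.} The plan is to reduce the existence of an SRB measure to the construction of a first return Young tower and then invoke Theorem~\ref{thm:existsY}. Starting from a nice, strongly recurrent, fat set $A$ contained in a nice domain $\Gamma_{pq}$, the first step is to enlarge $A$ to a rectangle. For this I would apply Theorem~\ref{thm:genkatok2}: to each almost return of a point of $A$ to $A$ it associates a hyperbolic branch, and the union of all these branches saturates $A$ to a set carrying the product structure of local stable and unstable curves; closing this set under the bracket operation and using Corollary~\ref{cor:bracket-reg} to retain regularity (at a worse level $(\chi',\eps',\ell')$ controlled only by $\chi,\lambda,\eps$) yields a rectangle $\Gamma$. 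The niceness condition \eqref{eq:nicerect} then forces images of sub-rectangles of $\Gamma$ to be nested in or disjoint from $\Gamma$ at multiples of $T$, which is exactly the combinatorial input needed to define a first return map to $\Gamma$ with a countable Markov partition into sub-rectangles, on each of which an iterate of $f$ is uniformly hyperbolic with bounded distortion (the distortion estimates coming from the $C^{1+\alpha}$ regularity together with the uniform bounds on $\Lambda_\ell$). Strong recurrence of $A$ guarantees that the return time is finite and, moreover, integrable against Lebesgue measure along unstable curves, while fatness of $A$ guarantees that the base of the tower has positive Lebesgue ``width'' in the stable direction. This is precisely the data of a topological Young tower, so Theorem~\ref{thm:existsY} applies; running the standard tower machinery --- pushing forward $m_W$ on a local unstable curve of $\Gamma$, averaging over time, and extracting a weak* limit --- produces an $f$-invariant probability measure which, by hyperbolicity and bounded distortion, is hyperbolic, and which, by the fatness of the base, is an SRB measure.

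\textbf{Plan for part \ref{A2}.} Conversely, suppose $f$ admits an SRB measure $\mu$; after passing to an ergodic component, which is again an SRB measure, I may assume $\mu$ ergodic. By Definition~\ref{srb-mes}, $\mu$ is hyperbolic with nonzero Lyapunov exponents; I would fix any $\chi>0$ smaller than the absolute values of these exponents and any $\eps>0$, so that there is a $(\chi,\eps)$-hyperbolic set $\Lambda$ with $\mu(\Lambda)=1$, and, since $\Lambda=\bigcup_\ell \Lambda_\ell$ with $\Lambda_\ell$ increasing, choose $\ell$ with $\mu(\Lambda_\ell)>\tfrac12$ and with the local-manifold constants $\delta_\ell,r_\ell$ of Theorem~\ref{thm:HP} available. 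Given any $r>0$, I would then use the closing and shadowing results --- Theorem~\ref{thm:pseudo-orbit} (a nonuniform version of Katok's closing lemma) together with Theorem~\ref{thm:inf-po} --- to produce $(\chi,\eps,\ell)$-regular periodic points $p,q$ with $d(p,q)<r$ whose local stable and unstable curves bound a topological disk $\Gamma_{pq}$ meeting $\Lambda_\ell$ in positive $\mu$-measure; this is the content of Theorem~\ref{thm:existsnice1}. I would take $A$ to be the subset of $\Lambda_\ell\cap\Gamma_{pq}$ consisting of points at which the local stable curve exists and strong recurrence holds; then $A$ is $(\chi,\eps,\ell)$-regular and lies in $\Gamma_{pq}$, hence nice. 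Strong recurrence holds by construction, and it holds for $\mu$-a.e.\ point of $\Lambda_\ell\cap\Gamma_{pq}$ by Birkhoff's ergodic theorem applied to $f^{T}$ and the indicator of this positive-$\mu$-measure set, both forward and backward in time (the conditional frequency of returns being positive $\mu$-a.e.\ on a set of positive measure). Finally $A$ is fat: removing the exceptional $\mu$-null set does not change $\mu$-measure, and since $\mu$ is SRB its conditional measures on unstable curves are absolutely continuous, so the union of the local stable curves through a positive-$\mu$-measure subset of $A$ has positive $m$-measure, which is \eqref{eq:fat}.

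\textbf{Main obstacle.} In part \ref{A1} the soft step is passing from a tower to a measure; the real work is building the tower, i.e.\ proving that the hyperbolic branches delivered by Theorem~\ref{thm:genkatok2} genuinely assemble into a rectangle carrying a first return Markov structure with integrable return time, and that the resulting measure is hyperbolic and SRB rather than degenerate --- this is where \eqref{eq:nicerect}, the distortion bounds, and strong recurrence must all cooperate, and it is the technical heart of the paper. In part \ref{A2} the delicate point is the closing lemma: one needs periodic points that are not merely close together but carry uniform regular-level-$\ell$ membership, with invariant curves long enough to enclose a disk, that is, an effective version of Katok's closing lemma valid in the inherently nonuniform setting of Definition~\ref{def:hypset} --- precisely Theorem~\ref{thm:pseudo-orbit}.
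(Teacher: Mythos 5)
Your plan follows essentially the same route as the paper: part \ref{A1} is reduced to Theorem~\ref{thm:existsY} (whose proof you sketch correctly — hyperbolic branches from Theorem~\ref{thm:genkatok2}, saturation to a nice recurrent rectangle, nesting/disjointness from niceness, integrability from strong recurrence) combined with Young's results from \cite{lY98} for the soft step from tower to SRB measure; part \ref{A2} is precisely the content of Theorem~\ref{thm:existsnice1}, proved via the closing/shadowing machinery of Theorems~\ref{thm:pseudo-orbit} and~\ref{thm:inf-po} to manufacture the periodic points $p,q$ bounding a small nice domain of positive $\mu$-measure, and then using Birkhoff plus absolute continuity of the unstable conditionals to verify strong recurrence and fatness of $A = \Gamma_{pq}\cap\Lambda_\ell$ (after discarding a null set). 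The one place where your description is slightly loose is in the construction of the rectangle $\Gamma$ in part \ref{A1}: the paper does not simply bracket-close $A$, but defines $\Gamma$ as the ``maximal invariant set'' of points admitting bi-infinite hyperbolic sequences through the branches (Proposition~\ref{thm:nicehyp}), which is what makes $\Gamma$ \emph{saturated} and hence gives the first-return Markov property; but since you explicitly invoke the union of all branches before bracket-closing, this is a matter of phrasing rather than a gap.
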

In addition to the explicit bounds in \eqref{eqn:eps} on $\eps$, we refer to
equation \eqref{def:r} for relatively explicit conditions on~\(  r  \). 

\begin{Remark}
Theorem \ref{thm:existsSRB} provides (in the two-dimensional case) natural geometric conditions which are necessary and sufficient for the existence of an SRB measure, which is a highly non-trivial feature of a dynamical system. Part \ref{A1} of the theorem can be interpreted as a version of Viana's conjecture.
The condition of Lebesgue-strongly recurrent includes a `fatness' requirement, which is definitely necessary as it is built into the definition of SRB measures (see also Remark \ref{rmk:fat-subtlety}). Beyond this, the theorem requires `niceness' and `(strong) recurrence'.
While our proof uses the niceness property in an essential way, we believe that 
with some new ideas it should be possible to remove this technical requirement.
In addition, part \ref{A2} of the theorem indicates that this requirement is not too strong. 
 This leaves us with the requirement of strong recurrence which is not explicit in the statement of Viana's conjecture but we believe is needed to establish existence of an SRB measure. Here again part \ref{A2} of the theorem indicates that strong recurrence is not too much to ask for.

We stress that the proof of Theorem \ref{thm:existsSRB} includes a number of new results in non-uniform hyperbolicity theory (such as results about almost returns and  shadowing) which are of independent interest. 
We will discuss these in more details in the next sections. 
\end{Remark}

\subsection{Historical background}
\label{sec:histsrb}

We review here some of the main results on the existence of SRB measures for diffeomorphisms. To avoid getting too technical we will not be overly specific about the precise technical assumptions, emphasizing instead the general ideas. We refer the reader to \cite{CliLuzPes16} for more details and a discussion of the various techniques which have been used in different settings. Most  results mentioned below hold  in arbitrary dimension. 

\subsubsection{Uniformly hyperbolic sets}

In the 1970's, Sinai, Ruelle, and Bowen constructed (in fact \emph{invented!}) SRB measures for \emph{fat, uniformly hyperbolic} sets \( \Lambda \) (attractors) under the additional assumption that \( \Lambda \) has a dense set of periodic points  (\emph{Axiom A}) or, equivalently, that \( \Lambda \) has local product structure or that \( \Lambda \) is locally maximal (see \cite{Shu78} for a proof that these three properties of uniformly hyperbolic sets are equivalent).

\subsubsection{Partially hyperbolic attractors} 

In 1982, Pesin and Sinai \cite{PS} developed a new ``push-forward'' technique for constructing what they called ``u-Gibbs'' measures (also called simply $u$-measures), which share a lot of geometric characteristics of SRB measures but are not necessarily physical measures. They applied their construction to \emph{partially hyperbolic} attractors. In 2008, Burns, Dolgopyat, Pesin, and Pollicott \cite{BDPP} showed that under some ``transitivity'' assumptions, a $u$-measure that has negative Lyapunov exponents for vectors in the central direction on a set of positive measure is a unique SRB measure. In 2000, Bonatti and Viana \cite{BV} considered a variation of this setting with a continuous splitting \( E^{s}\oplus E^{uu} \) with uniform expansion estimates in \( E^{uu} \) and nonuniform contraction estimates in \( E^{s} \), and proved the existence of genuine SRB measures.

Around the same time, Alves, Bonatti, and Viana \cite{ABV} considered the more difficult setting of a continuous splitting \( E^{ss}\oplus E^{u} \) with uniform contraction and nonuniform expansion estimates. The construction of the SRB measure in this case required a significantly more sophisticated version of the push-forward technique. An alternative construction of the SRB measure using Young towers was carried out more recently in \cite{AlvDiaLuzPin17} under some slightly weaker expansivity assumptions. 

\subsubsection{Nonuniformly hyperbolic sets} 
Relaxing the continuity of the splitting \( E^{s}\oplus E^{u} \) and the uniform lower bound on the angle \(  \measuredangle (E^s_x, E^u_x) \) seems to bring the level of difficulty of the problem to another level. The first result for a system with this kind of hyperbolic set is due to Benedicks and Young  who constructed SRB measures for certain two-dimensional ``H\'enon'' maps, first using the push-forward technique \cite{BenYou93} and then using Young towers \cite{lY98,BenYou00}, in both cases taking significant advantage of the specific geometric and analytic properties of the maps. These results were extended to more general ``H\'enon-like'' maps in \cite{YouWan} but still only apply to some quite restrictive classes of systems. 

More recently,  \cite{CliDolPes16} significantly generalizes the techniques of \cite{ABV} to successfully construct SRB measures for systems in which the splitting \(E^{s}\oplus E^{u}\) is only measurable, with angle \(\measuredangle (E^s_x, E^u_x)\) not bounded away from zero, and with non-uniform contraction and expansion estimates which, however, need to satisfy a non-trivial ``synchronization'' assumption.

\subsubsection{Necessary and sufficient conditions}

Theorem \ref{thm:existsSRB} is in some sense an optimal result, because the conditions stated there are both necessary and sufficient for the existence of an SRB measure.  In contrast, the prior results mentioned above all established sufficient conditions for existence of SRB measures, without addressing the question of whether the conditions are also necessary.\footnote{As we will see in the next section, for surfaces Young's tower conditions from \cite{lY98} turn out to be necessary as well as sufficient, but this was not proved in that paper.}  

We mention three notable exceptions, where some set of conditions was demonstrated to be both necessary and sufficient for existence of an SRB measure. First is work of Tsujii \cite{Tsu91}, in which a point $x$ is called `regular' if its empirical measures $\mu_n = \frac 1n \sum_{k=0}^{n-1} \delta_{f^k x}$ weak*-converge to an ergodic measure $\mu$ whose Lyapunov exponents agree with the exponents of $x$; then existence of an SRB measure is shown to be equivalent to existence of a positive volume set of regular points with nonzero Lyapunov exponents. More recently, Burguet \cite{dB} has proved a similar result, using a different definition of `regular' that is related to the `tempered' property, and that does not require one to determine in advance whether empirical measures from a point converge to a given measure.

Finally, we mention a preprint of Snir Ben Ovadia \cite{BO20} that appeared after our results were completed. This work is most closely related to ours as it establishes existence of an SRB measure under conditions of hyperbolicity, fatness and strong recurrence which are very similar to ours. Moreover, Ben Ovadia's proof works in any dimension, not just for surfaces, and does not require the use of nice domains. However, while our method is pretty much geometrical, his approach is completely different and is based on Markov coding which he developed in his earlier work on countable Markov partitions \cite{BO19}, building on work of Sarig \cite{oS13}. This Markov coding is in general, non-uniformly bounded-to-one, and thus the tower which can be obtained by inducing on a single state need not be a first $T$-return Young tower for any $T$, in contrast to the one we build; see \S\ref{sec:sat-disc} for a discussion of how the first return property is related to the \emph{saturation} property of the rectangle that we build as the base of the Young tower.
Moreover,
as we will see in Corollary \ref{thm:edc}, our construction permits us to formulate a condition under which the tower has exponential tails, which appears to have no analogue in the abstract Markov partition approach.
We believe that this opens up a possibility to study various properties of SRB measures such as the decay of correlations and the Central Limit Theorem using recently available techniques involving Young towers.

\section{Young towers: Theorem \ref{thm:existsY}}\label{sec:FRTYT}

Our strategy for producing the SRB measure in Theorem \ref{thm:existsSRB} consists of a new technique for the  construction of a \emph{Young tower}.  
The latter has a non-trivial geometric and dynamical structure  and it is rather remarkable that its existence can be deduced using only 
a nice regular set that is Lebesgue-strongly recurrent. 

\subsection{Rectangles}

Recall that Theorem \ref{thm:HP} guarantees that given 
$\chi>\lambda>0$ and a $(\chi,\eps)$-hyperbolic set $\Lambda$, any two sufficiently close points $x,y\in \Lambda_\ell$ have a unique bracket $[x,y] = V_x^s \cap V_y^u$. This point need not be contained in $\Lambda_\ell$, or even in $\Lambda$; for example, this occurs when $\Lambda$ consists of two hyperbolic periodic orbits that pass sufficiently close to each other.

Nevertheless, we will see in Theorem \ref{thm:inf-po} and Corollary \ref{cor:bracket-reg} below that $z= [x,y] $ always belongs to \emph{some} hyperbolic set, and indeed more precisely to a $(\lambda/4,2\eps, \ell+\ell')$-regular set, where $\ell'$ depends only on $\chi,\lambda,\eps$. One could then produce local stable and unstable curves $V_z^s$ and $V_z^u$ by applying Theorem \ref{thm:HP}; however, \emph{a priori} these may be shorter than $V_x^s$ and $V_y^u$.
We will generally write $V_z^s = V_x^s$ and $V_z^u = V_y^u$ for the longer curves, even though these may not be ``centered'' at $z$.
Indeed, we can write $V_z^s = V_x^s$ for any $z\in V_x^s$, and similarly for unstables.

Brackets play a very important role in describing the geometry of hyperbolic sets. In particular we will be interested in sets which are \emph{closed} with respect to the bracket operation.

\begin{Definition}[Rectangles]\label{def:rectangle}
Given $C,\lambda>0$, a \emph{$(C,\lambda)$-rectangle} is a set $\Gamma$ such that for every $x,y\in \Gamma$ there is a $(C,\lambda)$-local stable curve $V_x^s$ containing $x$ and a $(C,\lambda)$-local unstable curve $V_y^u$ containing $y$ with the property that $[x,y] := V_x^s \cap V_y^u$ is a single point, and $[x,y] \in \Gamma$. When the precise values of $C,\lambda$ are unimportant, we will refer to $\Gamma$ simply as a \emph{rectangle.}
\end{Definition}

\begin{Remark}
Rectangles are also sometimes referred to in the literature as sets with \emph{local product structure} or \emph{hyperbolic product structure}, though this is usually in the  more restrictive uniformly hyperbolic setting. 
The discussion above shows that rectangles are very natural structures also in our more general (nonuniformly) hyperbolic setting. 
Indeed, if \( A \) is a \( (\chi, \epsilon, \ell) \)-regular set of sufficiently small diameter then the bracket \( [x,y] \) is well defined and consists of a single point for every pair of points \( x,y\in A \).  In this case then $[[x,y],[x',y']] = [x,y'] \in \Gamma$ for all $x,y,x',y'\in A$ and therefore the set 
\begin{equation}\label{eq:bracketclosure}
\Gamma := \{[x,y] : x,y\in A \}
\end{equation}
 is closed under the bracket operation, so $\Gamma$ is a $(C_\ell,\lambda)$-rectangle.
\end{Remark}

\subsection{Topological Young Towers}

To describe the notion of Young tower that we use,
we first recall some standard and some slightly non-standard definitions.

\begin{Definition}[Nice Rectangles]\label{def:nicerectangle}
A rectangle $\Gamma$ is a \emph{nice rectangle} if $\Gamma \subset \Gpq$ for some nice domain $\Gpq$ and if the local stable and unstable curves of every point $x\in \Gamma$ are ``full length'' in $\Gpq$ in the following sense: $V_x^u$ intersects both $V_p^s$ and $V_q^s$, and $V_x^s$ intersects both $V_p^u$ and $V_q^u$.
\end{Definition}

A nice rectangle is not assumed a priori to be a nice regular set (although it is assumed to have stable and unstable curves through every point); the word `nice' in both cases emphasizes the geometry of the situation.
We  want to restrict our attention to the pieces of local stable and unstable curves which are inside the nice domain and so, for any $x\in \Gamma \cup \{p,q\}$, let 
\[
W_x^{s/u} := V_x^{s/u} \cap \Gpq. 
\]
Then a nice  rectangle $\Gamma\subset \Gpq$ has the structure 
\begin{equation}\label{eq:rectanglestruct}
\Gamma = C^{s} \cap C^{u} 
\quad\text{ where } \quad 
C^{s}:= \bigcup_{x\in \Gamma} W^{s}_{x} \quad \text{ and } \quad 
C^{u}:= \bigcup_{x\in \Gamma} W^{u}_{x}. 
\end{equation}

\begin{Definition}[\( s \)-subsets and \( u \)-subsets]\label{def:su-sets}
If  \( \Gamma \) is a rectangle, we say that  
$\Gamma^s \subset\Gamma$ is an \(s\)\emph{-subset} of \( \Gamma \) if $x\in\Gamma^s$ implies $V^s_{x}\cap\Gamma\subset\Gamma^s$ and 
$\Gamma^u \subset\Gamma$ is a  \(u\)\emph{-subset} of \( \Gamma \) if $x\in\Gamma^u$ implies $V^u_{x}\cap\Gamma\subset\Gamma^u$. 
\end{Definition}

\begin{Definition}[T-return times]\label{def:firstTret}
For a  $T$-recurrent set \( A \subset \Gpq \) and  \( x \in A \), let
\[
\tau(x) := \min\{i\in T\NN: f^{i} (x) \in A\}
\]
be the  \emph{first T-return time} to \( A \).  
\end{Definition}

\begin{Definition}[Topological   Young Tower]\label{def:FRYT}
A   nice $T$-recurrent  rectangle \( \Gamma \) supports a \emph{First T-Return Topological  Young Tower} if
for each \( i\in T\NN \) 
we can subdivide 
\(
\Gamma^s_i :=\{x\in\Gamma: \tau(x)=i\} 
\)
into pairwise disjoint \(  s  \)-subsets
\begin{equation}\label{eq:youngtower}\tag*{\textbf{(Y0)}}
\Gamma^{s}_{i1},\dots,\Gamma^{s}_{im_i} 
\quad \text{ such that } \quad    \Gamma^{u}_{ij}:= f^{i}(\Gamma^{s}_{ij}) 
\end{equation}
 is a \(  u  \)-subset of \(  \Gamma  \)    for every \(  j=1,\dots,m_i  \).
 If $\Gamma_i^s$ is empty we put $m_i = 0$.
\end{Definition}

This definition in particular requires that $\Gamma_i^s$ is an $s$-subset, so that for each $x\in \Gamma$ with $\tau(x) = i$ we have $f^i(W_x^s \cap \Gamma) \subset W_{f^i x}^s \cap \Gamma$; this Markov condition, and the analogous one for $u$-subsets, is often formulated explicitly as part of the definition of a Young tower.

\begin{Remark}
We remark that the ``First T-Return'' part of the definition comes from the fact that \( \tau(x) \) is the first T-return time.  A similar definition can be used for 
a general return time function that is constant on local stable leaves to give a more general Topological Young Tower. 
\end{Remark}

 \begin{Remark}
As mentioned in Remark \ref{rem:implicit} above, several constants, including \( T \), are implicit in these definitions. 
We will sometimes include $T$ explicitly in the associated terminology, for example in the notions of ``T-return times'' and  of  ``First T-Return'' in Definitions \ref{def:firstTret} and \ref{def:FRYT} above, when it helps maintain clarity. 
 \end{Remark}
 
\subsection{Young Towers}
We call the above a \emph{Topological} Young Tower because \ref{eq:youngtower} captures only the topological structure of a Young tower which is often, including in the present setting,  the most difficult part of the construction. To state the other properties of a Young tower we need to introduce the induced map to~\(  \Gamma  \).

\begin{Definition}[Induced map]
For a rectangle \( \Gamma \) which supports a First T-Return Topological Young Tower, 
we define the \emph{induced map} 
 \[
 F\colon \Gamma \to \Gamma \quad \text{ by } \quad 
F|_{\Gamma^{s}_{i}}:= f^{i}
\]
and refer to \(  \Gamma  \) as the \emph{base} of the tower. 
We also let 
\(
\text{Jac}^uF(x):=|\det Df^{\tau(x)}|_{E^u_{x}}|
\) 
denote the  unstable Jacobian of this induced map,
where $E_x^u = T_x W_x^u$.
\end{Definition}

\begin{Definition}[Young Tower]\label{def:young-tower}
Let \( \Gamma \) be a rectangle which  supports a First T-Return Topological  Young Tower. We say that \(  \Gamma  \) supports a \emph{First T-Return   Young Tower} if there exist constants
\( \kappa_{1}, \kappa_{2}\in (0,1) \) and \( c>0 \) such that \begin{enumerate}[label=\upshape{\textbf{(Y\arabic{*})}}]\setcounter{enumi}{0}
\item\label{Y1}  for every \(  i \in T\NN \),  \( j\in \{1,...,m_i\} \),  \(  x\in \Gamma^{s}_{ij}  \): 
\begin{enumerate}[ref=\upshape{\textbf{(Y1)}(\alph{*})}]
\item\label{Y2a} 
$
d(F(x), F(y))\leq\kappa_{1}  d(x,y)
$  for  every \(  y\in V^{s}_{x}  \)
\item\label{Y2b} 
$
d(x,y)\leq\kappa_{1} d(F(x), F(y))
$
for every $y\in V^u_{x} \cap \Gamma^{s}_{ij}$.
\end{enumerate}

\item\label{Y2}
for every $x\in  \Gamma$ and $n\ge 0$: 
\begin{enumerate}
[ref=\upshape{\textbf{(Y2)}(\alph{*})}]
\item\label{Y3a} For all  $y\in V^s_{x}$, we have
\[
\left|\log\frac{\text{Jac}^uF(F^{n}(x))}{\text{Jac}^uF(F^{n}(y))}\right|\le c\kappa_{2}^n;
\]
\item\label{Y3b} 
If $y\in V_x^u$ has the property that for all $0\leq k\leq n$ there are $i_k \in T\NN$ and $j_k\in \{1,\dots, m_{i_k}\}$ such that $F^k(x), F^k(y) \in \Gamma^s_{i_k j_k}$, then we have
\[
\left|\log
\frac{\text{Jac}^u F(F^{n-k}(x))}{\text{Jac}^uF(F^{n-k}(y))}\right|\le c\kappa_{2}^k
\quad\text{for all }0\leq k\leq n.
\]
\end{enumerate}
\end{enumerate}
\end{Definition}

\begin{Remark}
The additional geometric estimates required for a Topological Young Tower to be a Young Tower are non-trivial and are not consequences of the topological structure. Indeed, it is easy to construct a topological Young Tower which has a  \emph{neutral fixed point}, i.e., a fixed point at which the derivative has both eigenvalues equal to 1. An easy and intuitive model of this, albeit in the one-dimensional non-invertible setting, is the map \( f(x) = x+ x^{2} \) mod 1 which is topologically a full branch map with two branches but does not satisfy uniform expansion or bounded distortion. Similarly, a Topological Young Tower with a neutral fixed point cannot satisfy conditions \ref{Y1} and \ref{Y2}. 
\end{Remark}

\begin{Definition}[Integrable return times]\label{def:integrability}
A \emph{fat} rectangle \( \Gamma \) which supports a First T-Return  Young Tower has \emph{integrable return times} if there is  \(  x\in \Gamma  \) such that
\[
\int_{V^{u}_{x}\cap \Gamma} \tau\, d m_{V_x^u}< \infty.
\] 
\end{Definition}

\subsection{Existence of Young Towers}\label{sec:P'givestower}

Young towers are non-trivial geometric structures and their  construction generally requires substantial work. A key part of our argument is to show that they are  part of the intrinsic structure of hyperbolic sets.

\begin{maintheorem}\label{thm:existsY}
Let \(  f\colon M \to M  \) be a \(  C^{1+\alpha}  \) surface diffeomorphism. 
For every $\chi>\lambda>0$, every $0<\eps<\eps_1(f,\chi,\lambda)$, and every
$\ell\in \NN$ there exists $r>0$ such that  if \( A \) is a \((\chi, \eps, \ell, r)\)-nice regular set that is recurrent, then
\begin{enumerate}[label=\upshape{(\arabic{*})}]
\item\label{B1}  \( A \) is contained in a nice rectangle $\Gamma\subset\Gpq$ that is recurrent and supports a First $T$-Return Topological Young Tower;
\item\label{B2}  if   the set $\Gamma$ of part \ref{B1} is  Lebesgue-strongly recurrent, then $\Gamma$ supports a First $T$-Return  Young Tower with integrable return times.
\end{enumerate}
\end{maintheorem}

\begin{Remark}\label{rmk:fat-subtlety}
Since the rectangle $\Gamma$ provided by Theorem \ref{thm:existsY}\ref{B1} is recurrent and contains $A$, it follows from Definition \ref{def:Treturn} that if $A$ is Lebesgue-strongly recurrent, then $\Gamma$ is Lebesgue-strongly recurrent as well.
However, it is possible that the set $A$ is not fat (and hence not Lebesgue-strongly recurrent) but still produces a Lebesgue-strongly recurrent (hence fat) rectangle $\Gamma$.
In particular the assumptions in part \ref{A1} of Theorem \ref{thm:existsSRB} can  be relaxed 
to require that the construction which we will use in the proof of  part \ref{B1} of Theorem \ref{thm:existsY} produces a Lebesgue-strongly recurrent rectangle. A simple setting in which this distinction may be seen to  be potentially relevant is that of a two-dimensional Anosov diffeomorphism. Then we can easily find a small nice domain \( \Gpq \) and may choose \( A  \) as a countable dense set of points in \( \Gpq \) which means in particular that \( A \) is not fat, but  we shall see from the proof of Theorem ~\ref{thm:existsY} that this gives rise to a  Lebesgue-strongly recurrent rectangle \( \Gamma \). 
\end{Remark}

\begin{Remark}\label{rmk:weaker}
We will see in the proof (see Proposition \ref{thm:nicehyp}) that the rectangle $\Gamma$ produced by Theorem \ref{thm:existsY}\ref{B1} is $(\lambda/4,2\eps,\ell+\ell')$-regular for a value of $\ell'$ that depends only on $\chi,\lambda,\eps$.
\end{Remark}

\subsection{Historical background}\label{sec:tower-history}

The Young tower approach for constructing invariant measures is a particular case of a general and classical method in ergodic theory, that of \emph{inducing}. This is  based on the construction of a return map to some subset of the phase space which is simpler to study and more amenable to the construction of an invariant measure;  this invariant measure can then be extended  to the whole phase space by an elementary argument.  

The specific inducing structure defined above,  which is pertinent to the study of systems with hyperbolic behavior, was introduced by Young \cite{lY98} as a framework for studying the existence and ergodic properties of SRB measures.\footnote{A more general inducing structure was introduced in \cite{PSZ16}; it can be used to study the existence and ergodic properties of equilibrium measures, which include SRB measures.}
Since then, it has been applied to a variety of cases, including billiards \cite{BalTot08, Che99, CheZha05, Mar04,  lY98}, certain H\'enon maps \cite{BenYou00}, and partially hyperbolic diffeomorphisms \cite{AlvDiaLuzPin17, AL15,  AlvPin08, AlvPin10}. A non-invertible version of a Young tower was also introduced by Young in \cite{lY99} and has proved extremely powerful in studying non-invertible maps satisfying  nonuniform expansivity conditions \cite{AlvFreLuzVai11, AlvLuzPin05, BruLuzStr03,  CP17, DiaHolLuz06,   Gou06, Hol05, Pin11}.

A quite  remarkable feature of the  method of Young towers is that it associates, by construction, a non-trivial geometric structure  to the measure; a structure which moreover  can be used  effectively to study several statistical and ergodic properties of the measure, see \cite{AlvCarFre10b, AlvCarFre10,  Chu11,  Dem10, GupHolNic11, HayPsi14,  HolNicTor12, vMD01, MN05, MN08, ReyYou08, lY98, lY99} as well as the other references mentioned above.  This leads naturally to the question of the domain of applicability of this method: if it implies so much structure then maybe it can only be applied to a limited number of special cases which have this structure? This   legitimate doubt  is partly supported by the observation that all the constructions of Young towers so far have relied heavily on specific, assumed a-priori,  geometric properties of the systems under consideration. This question has therefore led to the so-called \emph{liftability} problem, which is the question of which measures have, or ``lift to'', a Young tower structure, see Definition \ref{def:liftability} below,
and therefore can in principle be obtained by the method of Young towers.  
 
 For the non-invertible non-uniformly expanding case this has been addressed in several papers and in particular it is shown in \cite{AlvDiaLuz13} that essentially every invariant probability measure which is absolutely continuous  with respect to Lebesgue lifts to a Young tower. 
One consequence of our results, stated formally as Corollary \ref{thm:lifts} below, 
is that for surface diffeomorphisms, every hyperbolic measure (in particular, every SRB measure) lifts to a Young tower, which means that the geometric structure of Young towers is intrinsic to all hyperbolic (and in particular, all SRB) measures. Moreover, we show that every hyperbolic measure lifts to a \emph{first return} Young tower for some iterate of the map, which is perhaps surprising because in most other  applications of Young's work the towers do not have the first return property. Using this, we obtain a way  of controlling the tail of the tower, see Corollary \ref{thm:edc}.  

A related result in this direction is Sarig's result \cite{oS13} that every hyperbolic measure for a surface diffeomorphism can be lifted to a countable-state topological Markov shift, which is closely related to a Young tower.  
The results of \cite{oS13} have been extended to higher dimensions \cite{BO19}, to three-dimensional flows \cite{LS19} and to non-uniformly hyperbolic surface maps with discontinuities \cite{LM18}. It seems natural to expect that our results admit similar extensions, 
although for the extension to higher dimensions one must confront a major technical obstruction, which is that the nice domains we use are inherently two-dimensional.

\section{Lifting hyperbolic measures to towers: Theorem \ref{thm:existsnice1}}
\label{sec:hypmeas}

Statement \ref{A1} of Theorem \ref{thm:existsSRB} follows immediately from Statement \ref{B2} of Theorem \ref{thm:existsY} and the results in \cite{lY98} on the existence of SRB measures for Young towers. Statement \ref{A2} of Theorem \ref{thm:existsSRB} follows immediately from our next result, which states that 
nice Lebesgue-strongly recurrent  rectangles are \emph{necessary} for the existence of  hyperbolic (SRB) measures. 

\begin{maintheorem}\label{thm:existsnice1}
Let \(  f \) be a \(  C^{1+\alpha}  \) surface diffeomorphism. If $\mu$ is a non-atomic ergodic hyperbolic  measure for $f$,  then for every sufficiently small $\chi>0$ and every $\eps>0$, there is an integer $\ell\in \NN$ such that for every $r>0$, 
there exists a \((\chi, \eps, \ell, r)\)-nice  regular set \( A \) with $\mu(A)>0$ such that every $x\in A$ has positive frequencies of returns as in \eqref{eqn:pos-freq}, and continues to satisfy this property with $f^i$ replaced by $f^{-i}$. In particular, $A$ is recurrent, and if $\mu$ is an SRB measure, then $A$ is Lebesgue-strongly recurrent.
\end{maintheorem}

\begin{Remark}\label{rmk:large-ell}
As will be seen in Proposition \ref{prop:get-pq}, the condition on $\ell$ in Theorem \ref{thm:existsnice1} is merely that it be sufficiently large that $\mu(\Lambda_{\chi,\eps,\ell}) > 0$ for some $(\chi,\eps)$-hyperbolic set $\Lambda$.
\end{Remark}

The rest of the paper is therefore devoted to the proofs of Theorems \ref{thm:existsY} and \ref{thm:existsnice1}, which we will carry out through the development of some non-trivial technical results of independent interest. 
We first conclude this section with  two almost immediate corollaries of Theorems  \ref{thm:existsY} and  \ref{thm:existsnice1}, also  of independent interest. The first is  an observation concerning the  geometric structure of hyperbolic measures, formalized in the notion of \emph{liftability} which is relevant in many applications and studies of the ergodic properties of invariant measures, see e.g. \cite{PSZ08}. 

\begin{Definition}[Liftable measures]\label{def:liftability}
An invariant probability measure \( \mu \) is \emph{liftable}  (to a Topological Young Tower) if there exists a recurrent rectangle \( \Gamma \) which supports a Topological Young Tower and a
probability measure  $\hat\mu$ on \( \Gamma \) which is invariant for the corresponding induced 
map \( F: \Gamma\to \Gamma \), such that 
\begin{equation}\label{eq:lifts}
E_{\hat\mu}:= \int_{\Gamma} \tau\, d\hat\mu < \infty 
 \quad \text{ and }\quad 
 \mu=\frac{1}{E_{\hat \mu}} \sum_{i=0}^{\infty} f^{i}_{*}(\hat\mu | \{\tau>i\}).
\end{equation}
\end{Definition}

The following is an immediate consequence of Theorem \ref{thm:existsnice1} and \cite{Zweim}.

\begin{maincorollary}\label{thm:lifts}
Let $f $ be a $C^{1+\alpha}$ surface diffeomorphism. Then every ergodic non-atomic hyperbolic (invariant) probability measure  lifts to a  First T-Return Topological Young Tower for some \(  T>0  \). \end{maincorollary}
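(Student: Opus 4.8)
The plan is to combine Theorem \ref{thm:existsnice1} (which produces a nice strongly recurrent set from a hyperbolic measure) with Theorem \ref{thm:existsY} (which builds a Topological Young Tower from such a set) and then invoke the abstract ergodic-theory result of \cite{Zweim} to show that the measure actually lifts in the sense of Definition \ref{def:liftability}. First I would fix an ergodic non-atomic hyperbolic probability measure $\mu$. By the Margulis--Ruelle inequality and the hypothesis of hyperbolicity, $\mu$ has Lyapunov exponents of opposite signs, so for all sufficiently small $\chi>0$ the measure is carried by a $\chi$-hyperbolic set; fix such a $\chi$, choose $\lambda\in(0,\chi)$ and some $\eps\in(0,\eps_1(f,\chi,\lambda))$. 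Applying Theorem \ref{thm:existsnice1} gives an $\ell\in\NN$ such that for every $r>0$ there is a nice strongly recurrent set $A$. Taking $r$ small enough that the conclusion of Theorem \ref{thm:existsY} applies, part (1) of that theorem furnishes a nice recurrent rectangle $\Gamma\subset\Gpq$ supporting a First $T$-Return Topological Young Tower, with $T$ the common period associated to the nice domain; this handles the topological part of the statement.

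It remains to exhibit an $F$-invariant probability measure $\hat\mu$ on $\Gamma$ with integrable return time $\tau$ whose projection via \eqref{eq:lifts} recovers $\mu$. The natural candidate is the normalized restriction of $\mu$ to the union of the towers, pushed down to the base: more precisely, consider the saturation $\hat\Gamma := \bigcup_{x\in A}V^s_x$ (or the analogous set built from $\Gamma$), which is fat and on which $\mu$ assigns positive mass by the construction in Theorem \ref{thm:existsnice1}; one checks that the first-return map of $f^T$ to this set, projected along stable curves to $\Gamma$, is exactly the induced map $F$, and that the conditional measures of $\mu$ restricted to this set give rise to an $F$-invariant probability $\hat\mu$ on $\Gamma$. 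The abstract argument of \cite{Zweim} (Kac's formula together with the fact that the base has positive $\mu$-measure, hence finite expected return time) then yields both $E_{\hat\mu}=\int_\Gamma\tau\,d\hat\mu<\infty$ and the reconstruction formula $\mu=\frac{1}{E_{\hat\mu}}\sum_{i\geq 0}f^i_*(\hat\mu|\{\tau>i\})$.

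The main obstacle I expect is the bookkeeping needed to pass between the return map of $f^T$ to the fat saturation and the formally defined induced map $F\colon\Gamma\to\Gamma$ on the rectangle — in particular, verifying that the partition $\{\Gamma^s_{ij}\}$ into $s$-subsets is genuinely the partition into domains of constant return, that stable curves are contracted so the projection along them is well-defined $\mu$-a.e., and that no mass escapes to the boundary stable/unstable curves of $\Gpq$ (here the niceness condition \eqref{eq:nicerect} is what keeps the returns organized). Once one knows the base has positive $\mu$-measure, integrability of $\tau$ is automatic from Kac, so the genuinely delicate point is identifying $\hat\mu$ correctly and checking it is a probability measure, i.e.\ that $\mu$-almost every point in the saturated set does return to the base under some $f^{iT}$; this is precisely where strong recurrence of $A$ (and hence the positive-frequency-of-returns property it encodes) is used.
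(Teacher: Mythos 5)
Your overall route---Theorem \ref{thm:existsnice1} gives a nice strongly recurrent set $A$, Theorem \ref{thm:existsY}(1) gives the rectangle $\Gamma\supset A$ carrying a First $T$-Return Topological Young Tower, and \cite{Zweim} supplies the lift---is the same as the paper's one-line proof. The problem is in the details you supply for the lifting step.

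You assert that the stable saturation $\bigcup_{x\in A}V^s_x$ is fat. That claim is false in general: positive Lebesgue measure of the stable saturation of a full-measure set is precisely the defining property of an SRB measure (Definition \ref{srb-mes}), and this corollary concerns hyperbolic measures that need not be SRB. Theorem \ref{thm:existsnice1} only produces a fat $A$ under the parenthetical SRB hypothesis, which you do not have here. The good news is that you do not need fatness, and you also do not need the detour through the stable saturation and the projection along stable leaves, which creates exactly the bookkeeping issues you flag but do not resolve. The correct mechanism is direct: $\mu(A)>0$ by Proposition \ref{prop:get-pq} (with $A=\Gpq\cap\Lambda_\ell$) and $A\subset\Gamma$ by Theorem \ref{thm:existsY}(1), hence $\mu(\Gamma)>0$. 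Set $\hat\mu:=\mu|_\Gamma/\mu(\Gamma)$. Since $\tau$ is the first $T$-return time to $\Gamma$ (Proposition \ref{prop:sat-tower}), Poincar\'e recurrence for the $f^T$-invariant measure $\mu$ shows $\tau$ is finite $\hat\mu$-a.e., so $F=f^{\tau(\cdot)}$ is defined and $\hat\mu$ is $F$-invariant, and Kac's formula gives $E_{\hat\mu}=\int_\Gamma\tau\,d\hat\mu<\infty$. The reconstructed measure $\nu:=\frac{1}{E_{\hat\mu}}\sum_{i\geq 0}f^i_*(\hat\mu|\{\tau>i\})$ is an $f$-invariant probability with $\nu\ll\mu$, so $\nu=\mu$ by ergodicity of $\mu$. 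This is what the citation to \cite{Zweim} packages; fatness and stable-curve projections play no role.
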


Before stating our second corollary, we emphasize that we have a first T-return tower which is, to all intents and purposes, essentially as good as if it was a first return time tower. Existence of a first T-return tower in this generality is a rather surprising result, and has important consequences.  For example, because every return to 
$\Gpq \cap \Lambda_\ell$ is a return to the base of the tower, we can relate the size of the tail of the tower to the return rate to regular level sets.

\begin{maincorollary}\label{thm:edc}
Let $f$ be a $C^{1+\alpha}$ surface diffeomorphism, $\mu$ an ergodic non-atomic $\chi$-hyperbolic  measure, and $\Lambda$ a $\chi$-hyperbolic set with $\mu(\Lambda)>0$.  Suppose that for every $\eps>0$ there is $\ell\in \NN$ such that for every open set $U\subset M$ with $\mu(U\cap \Lambda_\ell)>0$ and every $\ell', T\in \NN$, we have
\begin{equation}\label{eqn:exp-return}
\varlimsup_{n\to\infty} \frac 1n \log \mu \big\{ x\in U\cap \Lambda_{\ell'} : f^{kT}(x) \notin U\cap \Lambda_\ell \text{ for every } 1\leq k \leq n \big\} < 0.
\end{equation}
Then $\mu$ lifts to a First $T$-Return Topological Young Tower for some $T>0$, whose tail decays exponentially in the sense that
\begin{equation}\label{eqn:exp-tails}
\varlimsup_{n\to\infty} \frac 1n \log \hat\mu\{ x\in \Gamma : \tau(x) > n \} < 0.
\end{equation}
\end{maincorollary}

\begin{proof}
Without loss of generality, assume that $\chi>0$ is sufficiently small that Theorem \ref{thm:existsnice1} applies. Fix $\lambda \in (0,\chi)$ and let $\eps_1$ be given by \eqref{eqn:eps}. Fix $\eps \in (0,\eps_1)$, and let $\ell\in \NN$ be as in the hypotheses of Corollary \ref{thm:edc}, so that \eqref{eqn:exp-return} holds for all open $U\subset M$ with $\mu(U\cap \Lambda_\ell)>0$ and every $\ell',T\in \NN$. Note that $\ell$ can also be chosen large enough to guarantee that $\mu(\Lambda_\ell)>0$.

With this value of $\ell$, let $r>0$ be given by Theorem \ref{thm:existsY}, and let $U'\subset M$ be an open set with diameter less than $r$ such that $\mu(U'\cap \Lambda_\ell)>0$. Note that by Remark \ref{rmk:large-ell} this value of $\ell$ works for the conclusion of Theorem \ref{thm:existsnice1} (and in particular Proposition \ref{prop:get-pq}), which guarantees the existence of a nice domain $\Gpq$ such that $\mu(\Int \Gpq \cap \Lambda_\ell)>0$. Let $U = \Int(\Gpq)$ and let 
\[
Z^+ = \{x\in U\cap\Lambda_\ell : \#\{k\in\NN : f^{kT}(x) \in U\cap \Lambda_\ell\} < \infty \}.
\]
Define $Z^-$ similarly with $f^{-kT}$ in place of $f^{kT}$, and let $Z = Z^- \cup Z^+$. Note that $\mu(Z)=0$ by the Poincar\'e recurrence theorem.
Let $A = (U\cap \Lambda_\ell) \setminus Z$, and observe that
$A$ is a $(\chi,\eps,\ell,r)$-nice regular set that is recurrent, so by Theorem \ref{thm:existsY} it is contained in a nice rectangle $\Gamma\subset \Gpq$ that is recurrent and supports a First $T$-Return Topological Young Tower.

Just as in Corollary \ref{thm:lifts}, the measure $\mu$ lifts to this tower by \cite{Zweim}. It remains to prove \eqref{eqn:exp-tails}. 
Observe that
\begin{equation}\label{eqn:inclusions}
\begin{aligned}
\{x\in \Gamma : \tau(x) > n\}
&\subset
\{x\in \Gamma : f^{kT}(x) \notin \Gamma \text{ for every } 1\leq k\leq n \} \\
&\subset \{x\in \Gamma : f^{kT}(x) \notin A \text{ for every } 1\leq k \leq n\}.
\end{aligned}
\end{equation}
where the first inclusion uses the first $T$-return property, and the second uses the fact that $A\subset \Gamma$. If $f^{kT}(x) \notin A$, then we either have $f^{kT}(x) \notin U\cap \Lambda_\ell$ or $f^{kT}(x) \in Z$, and thus the sets in \eqref{eqn:inclusions} are contained in
\[
\{x\in \Gamma : f^{kT}(x) \notin U\cap \Lambda_\ell\text{ for every } 1\leq k\leq n\} \cup \bigcup_{k=1}^n f^{-kT}(Z).
\]
Now we can use the fact that $\mu(Z)=0$ and that $\Gamma \subset U \cap \Lambda_{\ell'}$ for some $\ell'\in\NN$ (recall Remark \ref{rmk:weaker}) to deduce that
\[
\mu\{x\in \Gamma : \tau(x) > n \}
\leq \mu \{x\in U\cap \Lambda_{\ell'} : f^{kT}(x) \notin U\cap \Lambda_\ell \text{ for every } 1\leq k\leq n\}.
\]
From \eqref{eq:lifts} we see that $\mu(B) \geq \hat\mu(B)/E_{\hat\mu}$ for all Borel sets $B$. Combining this observation and the previous inequality with \eqref{eqn:exp-return} yields \eqref{eqn:exp-tails}.
\end{proof}

\begin{Remark}
A similar result can easily be formulated for rates of decay other than exponential. What the proof above actually shows is the following: let
\[
\mathcal{X} = \{ (a_n)_{n=1}^\infty : a_n > 0 \text{ for all } n  \text{ and } a_n\to 0 \text{ as } n\to\infty\}
\]
and suppose that $\mathcal{A} \subset \mathcal{X}$ has the property that for every open set $U\subset M$ with $\mu(U\cap \Lambda_\ell)>0$ and every $\ell',T\in \NN$, there is $(a_n)_n \in \mathcal{A}$ such that
\[
\mu \{x\in U\cap \Lambda_{\ell'} : f^{kT}(x) \notin U\cap \Lambda_\ell \text{ for every } 1\leq k\leq n\} \leq a_n \text{ for all } n.
\]
Then $\mu$ lifts to a First $T$-Return Topological Young Tower for some $T>0$ whose tail is governed by one of these sequences $(a_n)_n \in \mathcal{A}$ in the sense that
\[
\hat\mu\{x\in \Gamma : \tau(x) > n \} \leq E_{\hat\mu} a_n \text{ for all } n.
\]
When $\mathcal{A} = \{ (a_n)_{n=1}^\infty \in \mathcal{X} : \varlimsup_{n\to\infty} \frac 1n \log a_n < 0 \}$, this yields Corollary \ref{thm:edc}.
\end{Remark}

\begin{Remark}
If $\mu$ is a hyperbolic measure whose log Jacobian along unstable leaves is sufficiently regular, then control of the tail of the tower implies results on decay of correlations and other statistical properties; see \cite{lY98,lY99,vMD01,CS09,PSZ16,SZ} for upper bounds on correlations, \cite{oS02,sG04,SZ} for lower bounds, and \cite{MN05,MN08} for some other statistical properties (this list is far from comprehensive).

The condition on regularity of the log Jacobian is often satisfied when $\mu$ is an equilibrium measure for a sufficiently regular potential function.  We remark that the results in \cite{CFT,CFT2,BCFT} provide techniques for studying equilibrium states for some classes of non-uniformly hyperbolic systems, which suggest a method for establishing the hypothesis of Corollary \ref{thm:edc}.  (Note that those systems are not surface diffeomorphisms, so the results here do not apply directly.)
\end{Remark}

\section{Hyperbolic branches from almost returns: Theorem \ref{thm:genkatok2}}\label{sec:hyplyap}
 
A remarkable feature of Theorem \ref{thm:existsY} is that   its hypotheses  contain only a minimal amount of structure whereas the conclusions  produce a great deal of non-trivial structure. The fundamental ingredient which we will use to build this structure is that of a \emph{hyperbolic branch} which we proceed to define. The main result of this section, Theorem \ref{thm:genkatok2}, is then a statement on the existence of hyperbolic branches under very mild recurrence conditions.

We assume throughout this section that  \( \Gpq \) is a nice domain  as in Definition \ref{def:nice} and Figure \ref{fig:nice},
and that $A\subset \Gpq$ satisfies the following condition (which in particular is true if $A$ is a nice regular set or a nice rectangle).

\begin{Definition}
Say that $A\subset \Gpq$ \emph{has full length local stable and unstable curves}
if every $x\in A$ is contained in a local stable curve $W_x^s \subset \Gpq$ whose endpoints are on $W_p^u$ and $W_q^u$, and in a local unstable curve $W_x^u$ whose endpoints are on $W_p^s$ and $W_q^s$.
\end{Definition}

The following definitions will allow us to discuss analogues of stable and unstable curves through points in $\Gpq$ that do not necessarily lie in $A$.

\begin{Definition}[Cones]\label{def:cones1}
Given a normed vector space $V$, a \emph{cone} in $V$ is any subset $K \subset V$ for which there is a decomposition $V = E\oplus F$ and $\omega>0$ such that
\[
K = \{0\} \cup \{ v + w : v\in E, w\in F, \|w\| < \omega \|v\| \}.
\]
Say that two cones $K$ and $K'$ are \emph{transverse} if $K\cap K' = \{0\}$.  Note that we do not require $K$ and $K'$ to be defined in terms of the same decomposition $E\oplus F$.
\end{Definition}

\begin{Definition}[Conefields]\label{def:conefield}
A \emph{conefield} over $\Gpq$ is a family of cones $\mathcal{K} = \{K_x \subset T_x M\}_{x\in \Gpq}$; it is continuous if $K_x$ can be defined via decompositions $E_x \oplus F_x$ and widths $\omega_x$ such that $E_x,F_x,\omega_x$ vary continuously in $x$. 
\end{Definition}

\begin{Definition}[$\mathcal{K}$-admissible curves]\label{def:kadmissible}
Given a conefield $\mathcal{K} = \{K_x\}_{x\in \Gpq}$, we say that a $C^1$ curve $\gamma \subset \Gpq$ is \emph{$\mathcal{K}$-admissible}
if for every $x\in \gamma$ we have $T_x\gamma \subset K_x$.
\end{Definition}

\begin{Definition}[Adapted conefields]\label{def:conefields}
Let $A\subset \Gpq$ have full length stable and unstable curves. We say that two transverse continuous conefields
\( \mathcal K^s = \{K^s_x\}\) and \( \mathcal K^u=\{K^u_x\} \) over \(  \Gpq\) are \emph{adapted} for $A$ if $W_x^s$ is $\mathcal{K}^s$-admissible and $W_x^u$ is $\mathcal{K}^u$-admissible for every $x\in A \cup \{p,q\}$.
\end{Definition}

Now we can define the broader family of curves that generalizes the ``true'' stable and unstable curves through points in $A$.

\begin{Definition}[Admissible curves in a nice domain]\label{def:nice-adm}
Given adapted conefields $\mathcal{K}^s$ and $\mathcal{K}^u$ for a set $A \subset \Gpq$ as in the previous definition, we refer to a $\mathcal{K}^s$-admissible curve as a \emph{stable admissible curve}, and a $\mathcal{K}^u$-admissible curve as an \emph{unstable admissible curve}. We say that a stable admissible curve is \emph{full length} if its endpoints lie on $W_p^u$ and $W_q^u$, and an unstable admissible curve is full length if its endpoints lie on $W_p^s$ and $W_q^s$.
\end{Definition} 

\begin{Definition}[Strips in a nice domain]\label{def:nice-strips}
Given $A\subset \Gpq$ and $\mathcal{K}^{s/u}$ as above,
a closed region \( \widehat C^s \subset \Gpq\) is a \emph{stable strip} if it is bounded by two pieces of \( W^u_p, W^u_q\) and two full length stable admissible curves; see Figure \ref{fig:topbranch}. Similarly, a closed region \( \widehat C^u \subset \Gpq\) is an \emph{unstable strip} if it is bounded by two pieces of \( W^s_p, W^s_q\) and two full length unstable admissible curves.
\end{Definition}

\begin{figure}[tbp]
\includegraphics[width=150pt]{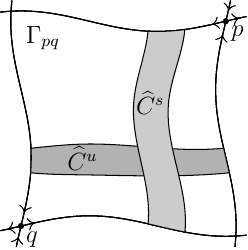}
\caption{Stable strips and unstable strips  in \( \Gpq \). }
\label{fig:topbranch}
\end{figure}

The stable and unstable strips we consider will typically be bounded by admissible curves of the form $W_x^{s/u}$ for some $x$, but we do not require this as part of the definition.

\begin{Definition}[Hyperbolic  Branches]\label{def:hypbranch}
Let $\mathcal{K}^s,\mathcal{K}^u$ be adapted conefields for $A\subset \Gpq$.
Let \(  \widehat C^s , \widehat C^u  \subseteq \Gpq  \) be a stable and an unstable strip respectively and suppose that there exists \(  i>0  \) such that \(  f^{i}(\widehat C^s ) = \widehat C^u  \).  The map 
 \begin{equation}\label{eqn:branch}
f^i\colon \widehat C^s \to \widehat C^u
\end{equation}
is a \( (C, \kappa)\)-\emph{hyperbolic branch} if 
for every \( x\in \widehat C^s\) and \(y=f^i(x)\in \widehat C^u\) we have 
\begin{equation}\label{eqn:inv-cones}
 Df^i_x(  K^u_x) \subset  K^u_{y}
\quad \text{ and } \quad 
 Df^{-i}_y( K^s_y) \subset  K^s_{x}
 \end{equation}
and if for every
$v^u\in K_x^u$ and $v^s \in K_y^s$, the vectors defined for $0\leq j\leq i$ by $v_j^u := Df_x^j(v^u)$ and $v_j^s := Df_y^{-(i-j)}(v^s)$
satisfy
\begin{equation}\label{eqn:branch-contract}
\|v^u_j\| \leq C e^{-\kappa(i-j)} \|v^u_i\| \quad \text{ and }   \quad
\|v^s_j\| \leq C e^{-\kappa j} \|v^s_0\|.
\end{equation}
We call \( i \) the \emph{order} of the hyperbolic branch.
\end{Definition}

\begin{Remark}\label{rem:boundaries}
Stable and unstable strips which form a hyperbolic branch have the special property that their boundaries are pieces of the global stable and unstable manifolds of the points \(p, q\). 
\end{Remark}

The following result plays no role in Theorem \ref{thm:genkatok2}, but is natural to state at this point and is important in \S\ref{sec:sat-rec}.
See \S\ref{sec:inf-po} and Remark \ref{rmk:hyp-branches-Vs} for its proof.

\begin{Proposition}\label{prop:intersect-branches}
Suppose that $I\subset \mathbb{N}$ is infinite and that $\{f^{i} \colon \widehat{C}_i^s \to \widehat{C}_i^u\}_{i\in I}$ is a family of $(C,\kappa)$-hyperbolic branches whose stable strips are nested in the sense that $\widehat{C}_i^s \supset \widehat{C}_j^s$ for all $i,j\in I$ with $j\geq i$. Then $\bigcap_{i\in I} \widehat{C}_i^s$ is a local $(C,\kappa)$-stable curve that has full length in $\Gpq$.

Similarly, if the unstable strips are nested in the sense that $\widehat{C}_i^u \supset \widehat{C}_j^u$ for all $i,j\in I$ with $j\geq i$, then $\bigcap_{i\in I} \widehat{C}_i^u$ is a local $(C,\kappa)$-unstable curve that has full length in $\Gpq$.
\end{Proposition}

\begin{Definition}\label{def:concat}
If $f^i \colon \widehat{C}_1^s \to \widehat{C}_1^u$ and $f^j \colon \widehat{C}_2^s \to \widehat{C}_2^u$ are hyperbolic branches, then $\widehat{C}^s := f^{-i} (\widehat{C}_1^u \cap \widehat{C}_2^s)$ is the stable strip for a hyperbolic branch $f^{i+j} \colon \widehat{C}^s \to \widehat{C}^u := f^j (\widehat{C}_1^u \cap \widehat{C}_2^s)$.  
We refer to this new branch as the \emph{concatenation} of the original two branches. Observe that
$\widehat{C}^s$ consists of the points in the stable strip of the first branch that are mapped by $f^i$ to the stable strip of the second branch, as shown in Figure \ref{fig:concat}.
\end{Definition}

\begin{figure}[tbp]
\includegraphics[width=.9\textwidth]{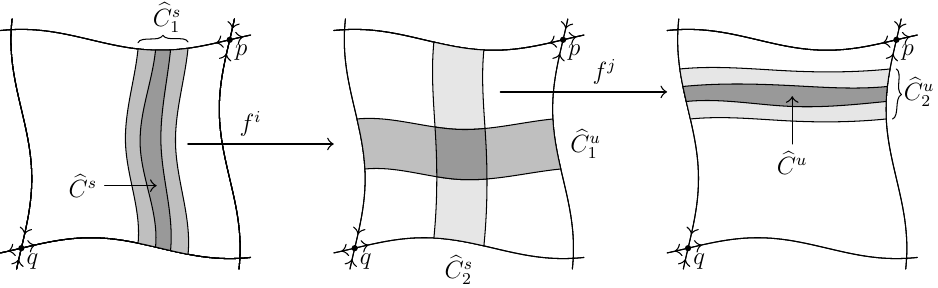}
\caption{Concatenating hyperbolic branches.}
\label{fig:concat}
\end{figure}

If we start with two \( (C, \kappa) \)-hyperbolic branches, then the concatenation of the two has the property that for every $x\in \widehat{C}^s$ and 
$v^{s} \in Df^{-(i+j)}_{f^{i+j}(x)} K_{f^{i+j}(x)}^{s}$, the vectors $v_k^{s} := Df_x^k(v^{s})$ for $0\leq k \leq i+j$ satisfy
$\|v_k^s\| \leq C e^{-\kappa k} \|v_0^s\|$ when $0\leq k \leq i$, and
\[
\|v_k^s\| \leq C e^{-\kappa(k-i)} \|v_i^s\| \leq C^2 e^{-\kappa k} \|v_0^s\|
\]
when $k>i$, with similar estimates on $v_k^u$; thus 
$f^{i+j} \colon \widehat{C}^s \to \widehat{C}^u$ is a $(C^2,\kappa)$-hyperbolic branch.
The fact that $C$ is replaced with $C^2$ can be a problem as the constant would continue to grow with each repeated concatenation. 

In the uniformly hyperbolic setting, one can deal with this problem by passing to an \emph{adapted metric}, or \emph{Lyapunov metric}, in which $C=1$. In non-uniform hyperbolicity this procedure does not give a continuous metric on $M$, and one must restrict to regular level sets. In the next section we use this idea to produce a family of branches that has the following property.

\begin{Definition}[Concatenation property of hyperbolic branches]\label{def:concatenation}
We say that a collection of  \( (C, \kappa) \)-hyperbolic branches has the \emph{concatenation property} if any finite concatenation of these branches is still a \( (C, \kappa) \)-hyperbolic branch. 
\end{Definition}

Given a nice regular set $A \subset \Gpq$, it would be a fairly routine exercise in non-uniform hyperbolicity theory (and the geometry of nice domains) to show that for every $x\in \Gamma$ and $i\in \NN$ such that $f^i(x) \in \Gamma$, there is a hyperbolic branch $f^i \colon \widehat{C}^s \to \widehat{C}^u$ such that $x\in \widehat{C}^s$, and that the collection of branches associated to all returns to $A$ has the hyperbolic branch property. However, when we prove Theorem \ref{thm:existsY} in \S\ref{sec:C->B}, it will become important to work with a larger collection of branches associated to a weaker kind of return.\footnote{We need the larger collection to guarantee that the rectangle we build is ``saturated'', see Remark \ref{rmk:why-almost} and \S\ref{sec:sat-disc}.}

\begin{Definition}[Almost Returns]\label{def:almostreturns}
Let $A\subset \Gpq$ have full length stable and unstable curves.
A point $x\in A$
has an \emph{almost return} to \(  A  \)   at  time \(  i \in T
\mathbb Z   \)  (see Figure \ref{fig:Gamma-p-q}) if
\( f^{i}(x)\in \Gpq \)  and there is  \( y\in A \) such that 
\[
[x,y,i]\neq \emptyset 
\quad \text{ where } \quad [x,y,i] :=  
\begin{cases} 
f^{i}(W^{s}_{x})\cap W^{u}_{y} \quad &\text{ if } i > 0
\\
 f^{i}(W^{u}_{x})\cap W^{s}_{y}\quad &\text{ if } i < 0.
\end{cases}
\]
\end{Definition}

Note that actual returns, where \(x, f^{i}(x) \in A  \), are special cases of almost returns and thus if  \( A \)  is recurrent, then 
every $x\in A$ has infinitely many almost returns in both forward and backward time. 
In \S\ref{sec:C->B} we will construct a rectangle using hyperbolic branches associated to almost returns, for which we need the following definition and result. (These become vacuous for a set without almost returns.)

\begin{figure}[tbp]
\includegraphics{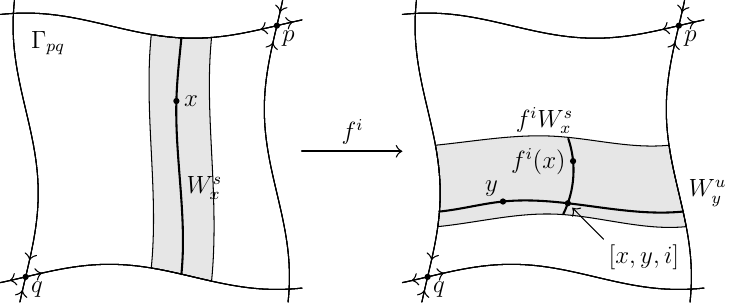}
\caption{An almost return  and the associated hyperbolic branch.}
\label{fig:Gamma-p-q}
\end{figure}

\begin{Definition}[Hyperbolic branch property]\label{def:hyp-branch-prop}
The set $A\subset \Gpq$ has the \emph{$(C,\kappa)$-hyperbolic branch property} if there exist adapted conefields $\mathcal{K}^{s/u}$ such that the following are true:
\begin{enumerate}
\item whenever $x\in A$ has an almost return to $A$ at a time $i\in T\mathbb N$,
there exists a $(C,\kappa)$-hyperbolic branch $f^i \colon \widehat{C}^s \to \widehat{C}^u$ such that $x\in \widehat{C}^s$; 
\item the collection of such branches has the concatenation property.
\end{enumerate}
\end{Definition}

\begin{maintheorem}\label{thm:genkatok2}
Let \(  f\) be a \(  C^{1+\alpha}  \) surface diffeomorphism.  
For every $\chi>\lambda>0$, every $0<\eps<\eps_1 {(f,\chi,\lambda)}$, and every $\ell\in\NN$,
there is $r>0$ such that every \((\chi, \eps, \ell, r)\)-nice regular set has the $(C, \lambda/3) $-hyperbolic branch property for $C = 8\sqrt{2} (1-e^{2(\lambda-\chi)})^{-1/2} e^{2\eps\ell}$.
\end{maintheorem}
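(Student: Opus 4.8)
The plan is to construct, for a given nice set $\Gamma$ and an almost return at time $i\in T\NN$, a hyperbolic branch by a ``closing'' argument: the almost return provides a point $z=[x,y,i]\in W^u_y$ whose backward orbit stays near the forward orbit of $x$, so applying Theorem~\ref{thm:pseudo-orbit} (the new Katok closing lemma) to the pseudo-orbit $x,f(x),\dots,f^i(x)$ glued to $y$ yields a genuine hyperbolic orbit segment with uniform estimates controlled by the regular level $\ell$. More precisely, I would first fix the adapted conefields $\cal K^{s/u}$ over $\Gamma_{pq}$: since $\Gamma$ is $(\chi,\eps,\ell)$-regular, the local stable/unstable curves of points of $\Gamma$ vary continuously in $C^1$ (Theorem~\ref{thm:HP}), so their tangent directions lie in small continuous cone fields $\cal K^s,\cal K^u$ which are transverse; the width $\omega$ of these cones is the place where the first bound $\eps_1\le \lambda\alpha/18$ in \eqref{eqn:eps} enters (cf.\ the reference to \eqref{eqn:omega}). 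Having fixed the conefields, the constant $\widehat Q$ should be read off as the Lipschitz/distortion constant coming from the Lyapunov charts and the comparison of the Riemannian metric with the adapted norm on the regular set $\Lambda_\ell$, so that the contraction/expansion constant in the definition of $(C,\lambda)$-hyperbolic branch comes out as $C=\widehat Q^{-1}e^{2\eps\ell}$ with exponent $\lambda/3$ (the loss from $\lambda$ to $\lambda/3$ absorbing the non-uniformity and the passage through charts).

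The second step is the construction of the strips. Given the almost return, I would take the full-length stable curve through $x$ inside $\Gamma_{pq}$ and a nearby full-length stable curve so that together with two pieces of $W^u_p,W^u_q$ they bound a thin stable strip $\widehat C^s\ni x$; the niceness condition \eqref{eq:nicerect} guarantees that $f^i$ maps the unstable boundary pieces to pieces of the unstable boundary of $\Gamma_{pq}$ again (this is exactly what ``nice'' buys us), and the hyperbolicity estimates of $\Lambda$ together with the closing lemma guarantee that $f^i$ of the stable curves are again full-length stable curves, so $f^i(\widehat C^s)=\widehat C^u$ is an unstable strip. The cone invariance $Df^i_x(\cal K^u_x)\subset \cal K^u_{f^i(x)}$ and $Df^{-i}(\cal K^s)\subset\cal K^s$ and the exponential estimates $\|v^u_j\|\le Ce^{-\lambda(i-j)/3}\|v^u_i\|$, $\|v^s_j\|\le Ce^{-\lambda j/3}\|v^s_0\|$ then follow from the hyperbolicity estimates \eqref{eq:hypest} along the shadowing orbit, combined with the cone-width bound to transfer estimates from the invariant splitting to arbitrary cone vectors. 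I would organize this so that the distortion/regularity constant appearing is uniformly $\widehat Q^{-1}e^{2\eps\ell}$ regardless of the length $i$ of the branch — this uniformity in $i$ is what later makes the concatenation property plausible.

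For part (2), the concatenation property, the point is that the constant $C=\widehat Q^{-1}e^{2\eps\ell}$ does not come from accumulating distortion along the orbit but is an ``$\ell$-level'' constant: a concatenation of branches passing through $\Gamma$ corresponds to a longer pseudo-orbit whose successive pieces all shadow orbit segments of points at regular level $\le\ell$ (because returns are to $\Gamma\subset\Lambda_\ell$), so re-applying the closing/shadowing machinery to the whole concatenated pseudo-orbit gives a single hyperbolic orbit with the \emph{same} level-$\ell$ constant rather than $C^2$. Concretely I would show that the shadowing orbit produced by Theorem~\ref{thm:pseudo-orbit} applied to the concatenated pseudo-orbit is $(\lambda/4,2\eps,\ell+\ell')$-regular (as in the Remark after Theorem~\ref{thm:HP}, with $\ell'$ depending only on $\chi,\lambda,\eps$), and that $\lambda/3 < \lambda/4$-type slack leaves room to absorb the extra $\ell'$ into the already-present factor $e^{2\eps\ell}$ after adjusting $r$; this is where the choice of $r$ in the statement gets pinned down, small enough that all almost-return points and their shadowing orbits stay in the domain of validity of Theorem~\ref{thm:pseudo-orbit}.

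The main obstacle I expect is precisely this uniformity in the concatenation: naively each hyperbolic branch contributes its own multiplicative distortion, and the whole content of the theorem is that by building branches out of regular level sets one can keep the constant fixed. Making this rigorous requires carefully tracking that every point involved in any (iterated) almost return lies in $\Lambda_\ell$ — not in some $\Lambda_{\ell+k}$ with $k$ growing along the concatenation — which in turn relies on the returns being returns to $\Gamma$ itself and on the closing lemma producing a shadowing orbit of controlled level. A secondary technical difficulty is arranging the conefields $\cal K^{s/u}$ to be simultaneously (i) adapted to $\Gamma$, (ii) narrow enough for the estimates with exponent $\lambda/3$, and (iii) globally continuous over all of $\Gamma_{pq}$ and not just over $\Gamma$, since hyperbolic branches and their strips live in $\Gamma_{pq}$; this forces a careful use of the continuity statement in Theorem~\ref{thm:HP}(1) together with the smallness of $\diam\Gamma_{pq}$ controlled by $r$.
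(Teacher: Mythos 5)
Your overall plan — produce a pseudo-orbit from the almost return, apply Theorem~\ref{thm:pseudo-orbit} to get an $\ell$-regular branch, read off the constant $\wQ^{-1}e^{2\eps\ell}$ from the Lyapunov chart estimates, and use niceness to restrict the strips to $\Gpq$ — matches the paper's structure. But your concatenation argument has a genuine gap, and it is the crux of the theorem. You propose to work with the pseudo-orbit built from $x$ and $y$ themselves, so that your $\ell$-regular branch lives in the Lyapunov charts based at $x$ and $y$. Two such branches — the first ending near some $y\in\Gamma\cap\Lambda_\ell$, the second beginning near some other $x'\in\Gamma\cap\Lambda_\ell$ — do \emph{not} concatenate as pseudo-orbits: the seam condition requires $d(f(y),x')\leq\delta e^{-\lambda(\ell+1)}$, and $f(y)$ has no reason to be near $x'$ or even near $\Gpq$. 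So there is no ``concatenated pseudo-orbit'' for you to re-apply the closing machinery to. Moreover, reaching instead for the shadowing Theorem~\ref{thm:inf-po} (which you seem to conflate with Theorem~\ref{thm:pseudo-orbit}) makes things worse: the shadowing point is only $(\lambda/4,2\eps,\ell+\ell')$-regular, i.e.\ worse rate, worse $\eps$, worse level — and $\lambda/3>\lambda/4$, not $<$ as you wrote, so there is no slack to absorb the degradation. The constant would blow up under repeated concatenation, which is exactly the failure mode the theorem must avoid.

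The missing idea is the paper's small but decisive modification of the pseudo-orbit: instead of using $x$ and $y$ at the ends, it replaces both with the periodic point $p$ defining $\Gpq$, yielding $\bar p=(p,x_1,\dots,x_{k-1},p)$. Since $\diam\Gpq<r$ with $r$ as in \eqref{def:r}, this is still an $(\barl,\delta,\lambda)$-pseudo-orbit, and now \emph{every} branch is an $\ell$-regular branch in the single chart $\mathcal B^{(\ell)}_p$ starting and ending at $p$. Two such pseudo-orbits glue trivially at $p$ (the seam is just the first step of the second pseudo-orbit), the result is again an $\barl$-regular branch based at $p$, and the manifold estimate \eqref{eq:hyp-branch} — whose multiplicative constant depends only on the level of the chart's base point — gives the same $\wQ^{-1}e^{2\eps\ell}$ with no loss. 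This, not any re-shadowing, is what keeps the constant fixed under concatenation. The same device also underlies the strip-restriction step: Lemma~\ref{lem:strict} rules out $\gamma^u_p=W^u_p$ by noting that a regular branch cannot contain two fixed points of $f^k$, which requires $p,q$ to be fixed by $f^k$ — another reason the construction must pivot around the nice domain's periodic points rather than around the returning points $x$ and $y$.
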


The constant $C$ in Theorem \ref{thm:genkatok2} is equal to $\wQ e^{2\eps\ell}$ where $\wQ$ is as in \eqref{def:qhat} below.
The constant \( r \)  determined by Theorem \ref{thm:genkatok2} is the same constant which appears in the statements of Theorem \ref{thm:existsY} and Theorem \ref{thm:existsSRB}, and is given relatively explicitly in \eqref{def:r}. 
In~\S \ref{sec:C->B} we 
use Theorem \ref{thm:genkatok2}, together with some regularity estimates from \S\ref{sec:pseudocurves} and \S\ref{sec:holder}, to prove
Theorem \ref{thm:existsY}, and thus part \ref{A1} of Theorem \ref{thm:existsSRB}. 
Most of that proof only relies on Theorem \ref{thm:genkatok2} and can be read independently of the other sections; the regularity results are only needed for the proof of part \ref{B2} of Theorem \ref{thm:existsY}.

Theorem \ref{thm:genkatok2} implies that every almost return corresponds to a hyperbolic branch. Such branches can be concatenated as in Definition \ref{def:concat}, and we point out that the concatenated branches themselves may or may not correspond to almost returns; see Remark \ref{hyp-br}.

\begin{Remark}
Our construction of a first $T$-return topological Young tower in \S\ref{sec:C->B} is solely based on the hyperbolic branch property. Theorem \ref{thm:genkatok2} ensures this property in the setting of a global invariant hyperbolic set $\Lambda$ although one could imagine verifying this property in other situations using alternate arguments. In this case one would still require some extra information on bounded distortion in order to get the analogue of Theorem \ref{thm:existsY}\ref{B2} about a genuine Young tower.
\end{Remark}

\begin{Remark}\label{rem:nice}
The proof of Theorem \ref{thm:genkatok2} in \S\ref{sec:almost-return} relies on the notion of ``regular branch'' that we introduce in the next section. The hyperbolic branches we seek are restrictions of these regular branches to a nice domain $\Gpq$ (see Figure \ref{fig:get-branch}), and Lemmas \ref{lem:niceint1} and \ref{lem:images} use the niceness property in a crucial way to show that the restricted branch has stable and unstable strips that are actually contained in $\Gpq$. One could attempt to mimic the restriction procedure for a domain $\Gpq$ without the niceness property, but in that case there would be no way to guarantee this containment.
\end{Remark}

\section{Pseudo-orbits: Theorems \ref{thm:pseudo-orbit} and \ref{thm:inf-po}}\label{sec:po}
 
The proofs of Theorems~\ref{thm:existsnice1} and \ref{thm:genkatok2} are  ultimately based on a  new and non-trivial result, Theorem \ref{thm:pseudo-orbit} below,  in the general theory of (nonuniformly) hyperbolic sets, 
which is a generalization of the Katok closing lemma. This result is of independent interest and we expect it to have further applications beyond those presented here. For simplicity, and in view of the setting of our main theorems, we state it for two-dimensional diffeomorphisms but it should generalize in a relatively straighforward way to arbitrary dimension.  

In \S \ref{sec:lyapcharts} we introduce the basic notion of a Lyapunov chart, following  the approach  of Sarig in \cite{oS13},  and use this to define the notion of stable and unstable strips in \S\ref{sec:strips} and of \emph{regular} branch for pseudo-orbits in \S \ref{sec:reg-branches}.  In \S\ref{sec:reg-branches} we state our main result about regular branches for pseudo-orbits, and in \S\ref{sec:pseudocurves} a useful consequence about shadowing orbits.

\subsection{Lyapunov charts}\label{sec:lyapcharts}
Let \(  \Lambda  \) be a \(  (\chi, \epsilon)  \)-hyperbolic set. We fix some    
\[
 b>0 
 \]
sufficiently small to be determined in the course of the proof. This is the only constant which we cannot give \emph{explicitly} in terms of properties of \( f \) and the constants associated to the hyperbolic set \( \Lambda \), however see equations \eqref{eqn:norm-Psix}, \eqref{eqn:Q5b},  \eqref{eq:smallb1},  and Lemma 
\ref{lem:overlapping} for the key places in which conditions on \( b \) appear. 
For   \( x\in \Lambda \),   let $e^s_{x}\in E^s_{x}$, $e^u_{x}\in E^u_{x}$ be unit vectors and define $s,u\colon \Lambda\to [1,\infty)$ by
\begin{equation}\label{def:sxux}
\begin{gathered}
s(x)^2 := {2\sum_{n=0}^\infty e^{2n\lambda} \|Df^n_x e_x^s\|^{2}}, \\
u(x)^2 := {2\sum_{n=0}^\infty e^{2n\lambda} \|Df^{-n}_x e_x^u\|^{2}}.
\end{gathered}
\end{equation}
By \eqref{eq:hypest}, for $\lambda<\chi$,  the sums above converge and therefore  \( s(x), u(x) \) are well defined, though note that they are not uniformly bounded in \( x \). 
 Letting   $e_1 = (1,0), e_2 =(0,1)$ denote  the standard basis vectors in \( \mathbb R^{2} \), define the  linear map \( L_{x}\colon \RR^2 \to T_x M \),   by letting 
\begin{equation}\label{def:L}
L_{x}(e_1) := u(x)^{-1} e^u_x \qand L_{x}(e_2) := s(x)^{-1} e^s_x
\end{equation}
and extending to  \( \mathbb R^{2} \) by linearity. We call \( L_{x} \)
the \emph{Lyapunov change of coordinates} at  \( x \). 
\begin{figure}[tbp]
 \includegraphics[width=\textwidth]{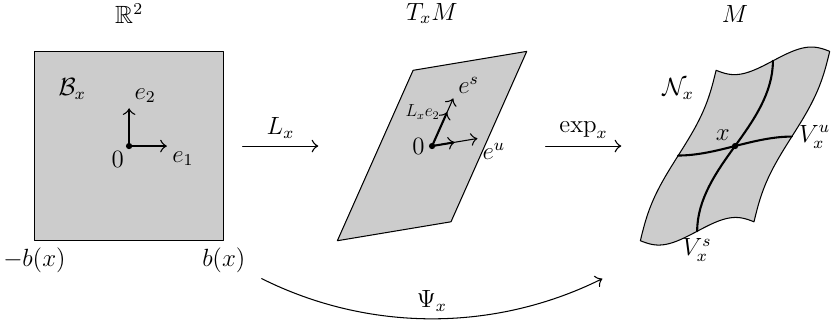}
 \caption{The Lyapunov chart at $x$.}
 \label{fig:lyap-chart}
 \end{figure}
In Lemma \ref{lem:Lbound} we prove the following standard relation between the Riemannian metric and the metric induced by the Lyapunov coordinates: 
for any \(  0< \lambda < \chi  \) let\footnote{We could of course replace \( Q_{0} \) in the expression for \(  \wQ  \) by its explicit value but  various calculations to be given below will be easier and clearer by keeping track of \( Q_{0} \) as an independent constant.}
\begin{equation}\label{def:qhat} 
Q_{0}:=1/8
    \quand  \wQ:=  Q_{0}^{-1}\left({2\sum_{i=0}^{\infty} e^{2(\lambda-\chi) i}} \ \right)^{1/2}
    = \frac{8\sqrt{2}}{\sqrt{1 - e^{2(\lambda-\chi)}}},
\end{equation}
then,
for every $\ell\in \NN$ and $x\in \Lambda_\ell$, we have 
\begin{equation}\label{eqn:norm-Lx}
1\leq \|L_x^{-1}\| \leq 3Q_0 \wQ e^{2\eps\ell}.
\end{equation}
For every    \( x\in \Lambda \)  let 
\begin{equation}\label{eq:bx}
 b(x) := b \left(\sum_{k=-\infty}^{\infty} e^{- 3 |k|\epsilon} \|L_{f^{k}(x)}^{-1}\|\right)^{-\frac 1\alpha}.
\end{equation}
Notice that  the sum converges by \eqref{eq:images} and \eqref{eqn:norm-Lx}. 
For  \( \ell \geq 1 \)  let 
\begin{equation}\label{eq:bell}
b_{\ell}:= b \left(3 Q_{0}\wQ e^{2 \epsilon \ell} \sum_{k=-\infty}^{\infty} e^{- |k|\epsilon}\right)^{- \frac{1}{\alpha}}
\end{equation}
It follows immediately from the definition that 
 \(
 b_{\ell+1}/b_\ell = e^{-2\eps/\alpha}
\) 
and 
\( 
b \geq b(x) \geq b_\ell > 0,
\) 
and it  follows from \eqref{eqn:norm-Lx} (we give a formal proof in Lemma \ref{lem:Pesin}) that 
\( 
e^{-3\epsilon/\alpha} <   b(x)/  b(f(x))< e^{3\epsilon/\alpha}
\).
Then, for every  \( x \in \Lambda_{\ell} \) we define 
\begin{equation}\label{eq:regB}
\mathcal B_{x}^{(\ell)}:= [-b_{\ell}, b_{\ell}]^{2} \subseteq  [-b(x), b(x)]^{2} =:\mathcal B_{x} \subset \RR^2.
\end{equation}
Letting $\exp_{x}\colon T_x M \to M$ be  the exponential map, define \( \Psi_x \colon \mathcal B_{x} \to M \) by
\[
\Psi_x := \exp_x \circ \ L_{x}.
\]
The map $\Psi_x$ is called a \emph{Lyapunov chart} at $x$; see \cite{BarPes07} for a more general notion.\footnote{In \cite{BarPes07} the Lyapunov change of coordinates $L_x$ is required to be \emph{tempered}, but we do not require this condition.}  We write
\begin{equation}\label{eq:regN}
 \mathcal N^{(\ell)}_{x} :=\Psi_{x}(\mathcal B^{(\ell)}_{x}) \subseteq 
\Psi_{x}(\mathcal B_{x}) =:\mathcal N_{x}.
\end{equation}
Notice that \( \Psi_{x}(0)=x \) and therefore \( \mathcal N_{x}^{(\ell)}, \mathcal N_{x} \) are neighbourhoods of  \( x \), which we call  respectively the \emph{regular neighbourhood of level} \(\ell \)  of  \( x \) and  the \emph{regular neighbourhood} of  \( x \). 

\subsection{Stable and unstable strips}\label{sec:strips}
We want to define stable and unstable cones and other objects related to these regular neighbourhoods and Lyapunov charts,
these will be analogous to the corresponding definitions for nice domains in \S\ref{sec:hyplyap}.
For this we  need to introduce  an additional small constant \(  \omega  \) which, for completeness, we define precisely.

Let \( \chi > \lambda > 0 \) and \( 0< \epsilon < \epsilon_{1}(f,\chi,\lambda) \), and \( \Lambda \) a \( (\chi, \epsilon) \)-hyperbolic set.  Notice  that  \eqref{eqn:eps} gives $\eps < \alpha\lambda/18$ and so $3\eps/\alpha < \lambda/6$, which gives $e^{-\lambda/2}e^{3\eps/\alpha} < e^{-\lambda/3}$.  Thus we can choose $\omega>0$ sufficiently small that the following inequalities all hold: 
\begin{equation}\label{eqn:omega}
\begin{aligned}
e^{-\lambda/2} e^{3\eps/\alpha} + e^{-\lambda}\omega &< e^{-\lambda/3},
\\
e^\lambda / \sqrt{1+\omega^2} &\geq e^{2\lambda/3},
\\
(e^{-\lambda/24} - \omega e^{\lambda/24})/\sqrt{1+\omega^2} &> e^{-\lambda/4},
\end{aligned}
\qquad 
\begin{aligned}
(1-\omega) &\geq \sqrt{2(1+\omega^2)} /2,
\\
2\omega &< 1- e^{\lambda/24}e^{-\lambda/3},
\\
e^{-\lambda/24} &< 1/\sqrt{1+e^{-2\lambda}\omega^2}.
\end{aligned}
\end{equation}
From now on we always assume that \( \omega \) satisfies \eqref{eqn:omega}.
The following three definitions are analogues of Definitions \ref{def:conefields}--\ref{def:nice-strips} for nice domains.

\begin{Definition}[Cones in regular neighbourhoods]\label{def:conesize}
For any \(  \lambda>0  \)  
we fix the following cones defined in terms of  standard Euclidean coordinates with  \( v=(v_1, v_2) \in \mathbb R^{2}\): 
\begin{equation}\label{def:cones}
\begin{aligned}
\tilde K^s:= \{v: |v_1| < e^{-\lambda}\omega |v_2|\} \ \subset \ 
&  \{v: |v_1| < \omega |v_2|\}:= K^s ,
\\
\tilde K^u:= \{v: |v_2| < e^{-\lambda}\omega |v_1|\} \ \subset \ 
&\{v: |v_2| <  \omega|v_1|\} := K^u. 
\end{aligned}
\end{equation}
Given $x\in \Lambda$ and $y\in \mathcal{N}_x$, we write
\begin{equation}\label{eqn:M-cones}
K_{x,y}^{s/u} := D_{\Psi_x^{-1}(y)} \Psi_x(K^{s/u})
\quand
\tilde K_{x,y}^{s/u} := D_{\Psi_x^{-1}(y)} \Psi_x(\tilde K^{s/u})
\end{equation}
for the cones in $T_yM$ that correspond to the cones over $\mathcal{B}_x$.
\end{Definition}

\begin{Definition}[Admissible curves in regular neighbourhoods]\label{def:localmanreg}
Let \( x \in \Lambda \).  A \( C^1 \) curve 
\(
\gamma^s\subset \mathcal B_x
\) 
is a \emph{stable admissible curve} (resp.\ \emph{strongly stable admissible curve}) if its tangent directions lie in $K^s$ (resp.\ $\tilde K^s$); similarly for unstable admissible curves \(\gamma^u\subset\mathcal B_x
\). 
In particular, the horizontal and vertical boundaries of \(  \mathcal B_{x}  \) or  \(  \mathcal B_{x}^{(\ell)}  \), are stable and unstable admissible curves respectively and so we   denote them by \(  \partial^{s}\mathcal B_{x},  \partial^{s}\mathcal B_{x}^{(\ell)}  \) and \(  \partial^{u}\mathcal B_{x},  \partial^{u}\mathcal B_{x}^{(\ell)} \) respectively. A stable admissible curve \( \gamma^s \) is a \emph{full length stable admissible curve}, with respect to \( \mathcal B_x \) or \( \mathcal B_x^{(\ell)} \)   if its endpoints lie on distinct components of \( \partial^{u}\mathcal B_x \) or \( \partial^{u}\mathcal B_x^{(\ell)} \) respectively. We define \emph{full length unstable admissible curves} similarly. For a \( C^{1} \) curve \( \gamma^{s/u}\subset \mathcal N_{x} \) we use the same terminology according to the geometry  of the corresponding curve \( \Psi_{x}^{-1}(\gamma^{s/u}) \subset \mathcal B_{x} \). 
\end{Definition}

\begin{Definition}[Strips in regular neighbourhoods] \label{def:strips}
Let \( x \in \Lambda \).
A region \(
 \mathcal B^s_x \subseteq \mathcal B_x^{(\ell)} 
 \)
is a \emph{(strongly) stable strip}   if its boundary is  formed  by two full length (strongly) stable admissible curves and two pieces of \( \partial^u \mathcal B_x^{(\ell)} \). Full length unstable admissible curves and \emph{(strongly) unstable strips} 
\(\mathcal B^u_x \subseteq \mathcal B_x^{(\ell)}\) are defined similarly. Moreover, for subsets \( \mathcal N^{s/u}_{x}\subseteq \mathcal N_{x}^{(\ell)} \), we use the same terminology  depending on the geometry of the corresponding sets \( \Psi_{x}^{-1}(\mathcal N^{s/u}_{x}) \subseteq \mathcal B_{x}^{(\ell)}  \). 
\end{Definition}

\subsection{Pseudo-orbits and regular branches}\label{sec:reg-branches}

\begin{Definition}[Pseudo-orbits]\label{def:po}
Given constants \(  \delta, \lambda>0  \) and a finite sequence \(  \barl = (\ell_0,\dots, \ell_k) \) of positive integers satisfying $|\ell_j - \ell_{j-1}| \leq 1$ for all $1\leq j\leq k$, we  say that a finite sequence of points $\barx = (x_0,\dots,x_k)$ is a \emph{finite ($\barl, \delta, \lambda$)-pseudo-orbit} if for every $0\leq j\leq k$ we have
\[
 x_j \in \Lambda_{\ell_j} \quand d(f(x_{j-1}),x_j) \leq  \delta e^{-\lambda\ell_j}.
 \]
\end{Definition}
Notice that the true orbit of the point \( x\in \Lambda_{\ell} \) is trivially an \( (\barl, \delta, \lambda) \)-pseudo-orbit for \( \delta=0 \) and for the sequence \( \ell_{j}=\ell+j \), recall \eqref{eq:images}. The results we present here are essentially already known for true orbits, but their generalizations to pseudo-orbits is non-trivial and constitute a crucial step in our arguments. 
Given a finite (\( \barl, \delta, \lambda \))-pseudo-orbit we write
\[
\mathcal{N}_\barx^0 := \bigcap_{i=0}^k f^{-i} \mathcal{N}_{x_i}^{(\ell_i)}
\quad\text{and}\quad
\mathcal{N}_\barx^j := f^j(\mathcal{N}_\barx^0)  \text{ for } 1\leq j\leq k
\]
for the sets of points 
corresponding to orbit segments that go through all the Lyapunov neighborhoods of the points $x_i$;   the size of these sets depends on the choice of $\barl$, but we suppress this in the notation.

\begin{Remark}\label{rmk:superscript-notation}
We point out
that one must carefully distinguish between $\mathcal{N}_{\barx}^j$, where the subscript denotes a finite pseudo-orbit and the superscript indicates which point in the pseudo-orbit to consider, and $\mathcal{N}_x^{(\ell)}$, where the subscript denotes a point and the superscript denotes the level of a regular set containing that point. A similar distinction should be made when $\mathcal{N}$ is replaced by $\mathcal{B}$ as in the following.
\end{Remark}

In Lyapunov coordinates, for every \( 0\leq j\leq k \), we write 
\begin{equation}\label{eqn:Bjx}
\mathcal{B}^j_{\barx} := 
\Psi_{x_j}^{-1} 
(\mathcal{N}_\barx^j)
\subseteq \mathcal B_{x_{j}}^{(\ell_{j})}
\end{equation}
and for every  $0\leq i, j \leq k$, 
\begin{equation}\label{eq:Bjxmap}
f_{\barx}^{i,j} := \Psi_{x_j}^{-1} \circ f^{j-i} \circ \Psi_{x_i}\colon \mathcal{B}^i_{\barx} \to \mathcal{B}^j_{\barx},
\end{equation}
which is a diffeomorphism between $\mathcal{B}^i_{\barx}$ and $\mathcal{B}^j_{\barx}$ by definition.
We will show that such maps are hyperbolic branches in a suitable sense. 

\begin{Definition}
[$\barl$-regular branch]\label{def:reg-branch}
A finite (\( \barl, \delta, \lambda \))-pseudo-orbit \emph{$\barx$ determines an $\barl$-regular branch for $f$} if the following are true:
\begin{enumerate}[label=(\roman{*})]
\item $\mathcal{B}_{\barx}^0$, $\mathcal{B}_{\barx}^k$ are stable and unstable strips in $\mathcal{B}_{x_0}^{(\ell_0)}$, $\mathcal{B}_{x_k}^{(\ell_k)}$, respectively;
\item given $0\leq i < j \leq k$, $\underline{y} \in \mathcal{B}_{\barx}^i$, $\underline{z} \in \mathcal{B}_{\barx}^j$, $\underline{v}^u \in K^u$, and $\underline{v}^s \in K^s$,
we have 
\begin{equation}\label{eq:regbranch}
\begin{gathered}
D_{\underline{y}} f_{\barx}^{i,j} \underline{v}^u \in K^u,
\qquad \|D_{\underline{y}} f_{\barx}^{i,j} \underline{v}^u\| \geq e^{\lambda(j-i)/3} \|\underline{v}^u\|, \\
D_{\underline{z}} f_{\barx}^{j,i} \underline{v}^s \in K^s,
\qquad \|D_{\underline{z}} f_{\barx}^{j,i} \underline{v}^s\| \geq e^{\lambda(j-i)/3} \|\underline{v}^s\|.
\end{gathered}
\end{equation}
\end{enumerate}
In the specific case where $\ell_0 = \ell_k = \ell$ and $\ell_j = \min(\ell + j, \ell + k-j)$ for $0<j<k$, we refer to this as an $\ell$-regular branch.
\end{Definition}

Here and in what follows, we use underlined variables such as $\underline{y},\underline{z}$ to represent coordinates in $\RR^2$, while undecorated variables such as $y,z$ will represent points in $M$.

We now state our main result in the general setting of hyperbolic sets. 

\begin{maintheorem}\label{thm:pseudo-orbit}
Let $f\colon M\to M$ be a $C^{1+\alpha}$ surface diffeomorphism.  
For every $\chi>\lambda>0$ and every $0<\eps<\eps_1(f,\chi,\lambda)$,
there are $b,\delta>0$ such that 
if \( \Lambda \) is a  $(\chi,\eps)$-hyperbolic set then every finite \( (\barl, \delta, \lambda \))-pseudo-orbit in $\Lambda$ determines an \( \barl \)-regular branch.
\end{maintheorem}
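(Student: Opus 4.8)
The plan is to reduce the statement to uniform \emph{one-step} estimates between consecutive Lyapunov charts and then concatenate them, using that in Lyapunov coordinates the one-step map is a small perturbation of a hyperbolic diagonal linear map. First I would record the algebraic heart of the construction: writing $A_x := L_{f(x)}^{-1}\circ D_xf\circ L_x$ for the true-orbit linearization, the definitions \eqref{def:sxux} and \eqref{def:L} of $s(x),u(x),L_x$, together with $Df$-invariance of the splitting, give by a direct computation that $A_x$ is \emph{diagonal} in the basis $(e_1,e_2)$, say $A_x=\operatorname{diag}(a_x,d_x)$, with $a_x\geq e^{\lambda}$ and $|d_x|\leq e^{-\lambda}$ (indeed $a_x^2=e^{2\lambda}+2\|D_xf\,e^u_x\|^2/u(x)^2$ and $d_x^2=e^{-2\lambda}(1-2/s(x)^2)$), and similarly for $A_{f(x)}^{-1}$. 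The actual one-step map of a pseudo-orbit, $f_{\barx}^{j-1,j}=\Psi_{x_j}^{-1}\circ f\circ\Psi_{x_{j-1}}$, differs from $A_{x_{j-1}}$ by three controlled errors: (i) the nonlinearity of $f$, which by the $C^{1+\alpha}$ hypothesis and the choice of $b(x)$ in \eqref{eq:bx} contributes at most $C\|\underline y\|^{\alpha}\leq Cb^{\alpha}$ to the derivative on the box $\mathcal B^{(\ell_{j-1})}_{x_{j-1}}$ --- the weight $\sum_k e^{-3|k|\eps}\|L_{f^k(x)}^{-1}\|$ in \eqref{eq:bx} is exactly what absorbs the chart norms \eqref{eqn:norm-Lx}, \eqref{eqn:norm-Psix}; (ii) the pseudo-orbit jump $d(f(x_{j-1}),x_j)\leq\delta e^{-\lambda\ell_j}$, which in the chart $\Psi_{x_j}$ is a translation and a derivative perturbation of size $\lesssim \|L_{x_j}^{-1}\|\,\delta e^{-\lambda\ell_j}\lesssim e^{2\eps\ell_j}\delta e^{-\lambda\ell_j}$; (iii) the mismatch between the chart at $f(x_{j-1})$ (which lies in $\Lambda_{\ell_{j-1}+1}$, hence at level within $2$ of $\ell_j$) and the chart at $x_j$, which overlap in the quantitative sense provided by Theorem~\ref{thm:overlapping}, making the transition map $\Psi_{x_j}^{-1}\circ\Psi_{f(x_{j-1})}$ close to the identity. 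Since $e^{2\eps\ell}e^{-\lambda\ell}\leq 1$ for $\eps<\lambda/2$, and $b_\ell$ decays at rate $e^{-2\eps/\alpha}$ while $\|L^{-1}\|$ grows only at rate $e^{2\eps}$ with $1/\alpha\geq 1$, all three errors are bounded by $Cb^{\alpha}+C\delta=:\rho$ \emph{uniformly in $x$ and $\ell$}, and $\rho\to 0$ as $b,\delta\to 0$. This is exactly where the constraints on $\eps$ in \eqref{eqn:eps} get consumed.

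\textbf{Cone invariance and expansion.} Fix $b,\delta$ so small that $\rho$ is below the threshold required by the $\omega$-inequalities \eqref{eqn:omega}. A routine cone computation for a perturbation of size $\rho$ of $\operatorname{diag}(a_x,d_x)$ then yields, for every admissible $\underline y$, $D_{\underline y}f_{\barx}^{j-1,j}(K^u)\subset\tilde K^u$ with $\|D_{\underline y}f_{\barx}^{j-1,j}\underline v^u\|\geq e^{\lambda/2}\|\underline v^u\|$ for $\underline v^u\in K^u$ (the unperturbed factor is $e^{\lambda}/\sqrt{1+\omega^2}\geq e^{2\lambda/3}$ by \eqref{eqn:omega}), and the mirror statement $D_{\underline z}f_{\barx}^{j,j-1}(K^s)\subset\tilde K^s$ with expansion $\geq e^{\lambda/2}$ on $K^s$ for the backward step. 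Since the intermediate points of an orbit segment through $\mathcal N^0_\barx$ lie in the respective boxes by construction, I would apply the chain rule over indices $i<m<j$: cone invariance propagates because $\tilde K^u\subset K^u$ (resp.\ $\tilde K^s\subset K^s$), and the expansion factors multiply to give at least $e^{\lambda(j-i)/2}\geq e^{\lambda(j-i)/3}$. This is precisely condition (ii) of Definition~\ref{def:reg-branch}, i.e.\ \eqref{eq:regbranch}.

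\textbf{The stable and unstable strips.} It remains to verify condition (i): that $\mathcal B^0_\barx$ is a stable strip in $\mathcal B^{(\ell_0)}_{x_0}$ and $\mathcal B^k_\barx$ an unstable strip in $\mathcal B^{(\ell_k)}_{x_k}$. Writing $S_i:=(f_{\barx}^{0,i})^{-1}(\mathcal B^{(\ell_i)}_{x_i})\cap\mathcal B^{(\ell_0)}_{x_0}$, so that $\mathcal B^0_\barx=\bigcap_{i=0}^k S_i$, I would prove by induction on $i$ that $S_i$ is a stable strip and $S_{i+1}\subseteq S_i$. The inductive step uses that $f_{\barx}^{i-1,i}(\mathcal B^{(\ell_{i-1})}_{x_{i-1}})$ overflows $\mathcal B^{(\ell_i)}_{x_i}$ in the unstable (horizontal) direction --- the expansion $\geq e^{\lambda}$ beats the box ratio $b_{\ell_i}/b_{\ell_{i-1}}=e^{\pm 2\eps/\alpha}$, since $\eps<\alpha\lambda/2$ --- but stays inside $\mathcal B^{(\ell_i)}_{x_i}$ in the stable (vertical) direction, since the contraction $\leq e^{-\lambda}$ together with the $O(\delta)$ translation still fits, by the bound $\eps<\lambda/(2(1+1/\alpha))$. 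Hence the horizontal boundary of $S_i$ consists of preimages of $\partial^s\mathcal B^{(\ell_i)}_{x_i}$, which are stable curves because the backward map preserves $K^s$, while its vertical boundary consists of pieces of $\partial^u\mathcal B^{(\ell_0)}_{x_0}$. The decreasing intersection $\bigcap_i S_i$ is then a stable strip, its horizontal boundary being the $C^1$-limit of full-length stable curves (uniform transversality and per-step expansion give the usual graph-transform convergence). The assertion for $\mathcal B^k_\barx$ is the time-reversed statement, replacing forward constraints by backward ones; in the special case $\ell_0=\ell_k=\ell$, $\ell_j=\min(\ell+j,\ell+k-j)$ this produces an $\ell$-regular branch.

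\textbf{Main obstacle.} The crux is the uniform one-step estimate of the first paragraph: one must make the derivative perturbation $\rho$ genuinely small \emph{independently of the level $\ell$}, which forces a precise reconciliation of the growth $e^{2\eps\ell}$ of $\|L_x^{-1}\|$, the decay $e^{-2\eps\ell/\alpha}$ of $b_\ell$, and the pseudo-orbit scale $e^{-\lambda\ell}$; the hypothesis $\alpha\leq 1$ (so $1/\alpha\geq 1$) is essential, and all the constant-bookkeeping in \eqref{eqn:eps} and \eqref{eqn:omega} is spent here. A secondary subtlety is that the Lyapunov data $E^{s/u}_x,s(x),u(x)$ need not vary continuously on $\Lambda$, so one cannot compare $L_{x_j}$ with $L_{f(x_{j-1})}$ directly; instead one works with the composite $f_{\barx}^{j-1,j}$ as a whole and invokes the overlapping-chart estimate of Theorem~\ref{thm:overlapping}. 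Once that is in place, the remainder is a standard Hadamard--Perron graph-transform argument.
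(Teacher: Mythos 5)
Your proposal is correct and follows essentially the same route as the paper: the paper isolates the true-orbit one-step hyperbolicity in Lyapunov charts as Theorem~\ref{thm:existsbranch} (precisely the diagonal-matrix-plus-$O(b^\alpha)$ perturbation you describe, via Lemmas~\ref{lem:hyp0}--\ref{lem:neardiff}) and the quantitative chart mismatch as Theorem~\ref{thm:overlapping}, then in \S\ref{sec:prove-po} factors each pseudo-orbit step as $(\Psi_{x_j}^{-1}\circ\Psi_{f(x_{j-1})})\circ f_{x_{j-1}}$ and concatenates, obtaining the $e^{\lambda/2}e^{-\lambda/24}\ge e^{\lambda/3}$ one-step expansion and the nested-strip picture exactly as in your argument. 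The only packaging difference is that you present the three error sources as a single perturbation $\rho$ of the Oseledets--Pesin diagonal map rather than as two separate cited theorems, which amounts to the same estimates.
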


Recall from Definition \ref{def:concat} and the discussion following it that two $(C,\kappa)$-hyperbolic branches can be concatenated, but the resulting branch is only guaranteed to be $(C^2,\kappa)$-hyperbolic. The crucial advantage we gain by considering $\barl$-regular branches is that the concatenation of two $\barl$-regular branches is again a $\barl$-regular branch; this is because the hyperbolicity estimates in \eqref{eq:regbranch}  are given at the level of the Lyapunov charts, and not on the surface itself. The relationship between the two types of estimates is given by the following computation.

Since $D_0\exp_x$ is the identity map, we can choose $b$ small enough that 
\(  \Psi_{x}\colon \mathcal B_{x}\to \mathcal N_{x}  \) is a diffeomorphism, in particular injective, for every \( x\in \Lambda \), and such that 
\eqref{eqn:norm-Lx} gives 
\begin{equation}\label{eqn:norm-Psix} 
\|D_{\underline{y}}\Psi_x\| \leq 2
\quad \text{ and } \quad 
\|D_y\Psi_x^{-1}\| \leq 4Q_0 \wQ e^{2\eps \ell}
\end{equation}
for every $\ell\in \NN$, $x\in \Lambda_\ell$, $\underline{y} \in \mathcal{B}_x$, and $y\in \mathcal{N}_x$. 
The conclusion of the following result should be compared to Definition \ref{def:hypbranch}.

\begin{Proposition}\label{prop:hyp-branch}
Suppose $\barx=(x_0,\dots,x_k)$ determines an $\barl$-regular branch. Given $y\in \mathcal{N}_\barx^0$ and $v^u \in K_{x_0,y}^u$,
let $v_j^u = D_yf^j(v^u)$ for $0\leq j\leq k$. Then $v_j^u \in K_{x_j, f^j y}^u$ and
\begin{equation}\label{eqn:vjuvku}
\|v_j^u\| \leq \wQ e^{2\eps\ell_k} e^{-\lambda(k-j)/3} \|v_k^u\|.
\end{equation}
Similarly, given $z\in \mathcal{N}_\barx^k$ and $v^s \in K_{x_k,z}^s$, let $v_j^s = D_z f^{-(k-j)}(v^s)$ for $0\leq j\leq k$; then $v_j^s \in K_{x_j, f^{-(k-j)} z}^s$ and
\begin{equation}\label{eqn:vjsvks}
\|v_j^s\| \leq \wQ e^{2\eps\ell_0} e^{-\lambda j/3} \|v_0^s\|.
\end{equation}
\end{Proposition}
\begin{proof}
The inclusions follow immediately from \eqref{eqn:M-cones} and \eqref{eq:regbranch}. For \eqref{eqn:vjuvku} it suffices to observe that
\eqref{eq:regbranch} gives 
\[
\|v_j^u\|
= \|D_{f^ky} (\Psi_{f^jy} \circ f_{\barx}^{k,j} \circ \Psi_{f^k y}^{-1})(v_k^u)\|
\leq 2 \cdot e^{-\lambda(k-j)/3} \cdot 4Q_0 \wQ e^{2\eps \ell_k} \|v_k^u\|
\]
and recall that \( Q_{0}=1/8 \) from~\eqref{def:qhat}. The proof of \eqref{eqn:vjsvks} is analogous.
\end{proof}

In \S\ref{sec:almost-return} we use Theorem \ref{thm:pseudo-orbit} and Proposition \ref{prop:hyp-branch} to prove Theorem~\ref{thm:genkatok2}.

\begin{Remark}\label{rmk:po-branch}
In the theory of \emph{uniformly} hyperbolic systems, it is well-known that every pseudo-orbit segment determines a regular branch as in Definition~\ref{def:reg-branch}; that is, there is $\delta>0$ such that $x_0,\dots, x_k$ determines a regular branch whenever $d(f(x_j), x_{j+1}) < \delta$ for all $0\leq j < k$. 
In non-uniform hyperbolicity, various versions of Theorem \ref{thm:pseudo-orbit} have been obtained. The first result of this type is the well-known Katok closing lemma \cite{aK80}. Other versions were obtained by Hirayama \cite{Hir}, and by Sarig \cite{oS13} in his construction of countable Markov partitions for surface diffeomorphisms with positive topological entropy; this latter result was generalized to higher dimensions by Ben Ovadia \cite{SbO}. What makes our Theorem \ref{thm:pseudo-orbit} different is the  \emph{explicit} relationship between $\ell$ and the pseudo-orbit scale $\delta e^{-\lambda \ell}$; this  is absolutely crucial for our arguments, particularly our construction of hyperbolic branches associated with almost $T$-returns.
\end{Remark}

\subsection{Shadowing}\label{sec:pseudocurves} 

We conclude this section by stating a consequence of Theorem \ref{thm:pseudo-orbit}  for the problem of \emph{shadowing} in the non-uniformly hyperbolic setting. It is classical problem in hyperbolic dynamics whether every pseudo-orbit is ``shadowed'' by a real orbit of the system \cite{sP99}. In uniform hyperbolicity theory a positive answer to this question is a fundamental result, with many applications and in particular,    is a key ingredient in the construction of Markov partitions. Extending it to the setting of non-uniform hyperbolicity has proved challenging and few results have been obtained, see \cite{BarPes07}. Here we give a powerful shadowing result, which follows relatively easily from Theorem \ref{thm:pseudo-orbit}. In addition to the existence of a shadowing orbit, this result gives an explicit estimate on the hyperbolicity constants associated to this shadowing orbit and also proves
 Theorem \ref{thm:HP} on the existence of  local stable and unstable curves (up to optimizing the rate of contraction, see Remark \ref{rmk:HP}).

\begin{Definition}\label{def:infpo}
 Given a bi-infinite sequence $\bar\ell = (\ell_n)_{n\in\ZZ}$ with $|\ell_{n+1} - \ell_n|\leq 1$ for all $n$, a \emph{bi-infinite $(\bar\ell,\delta,\lambda)$-pseudo-orbit} is a bi-infinite sequence $(x_n)_{n\in \ZZ}$ such that for every $n\in\ZZ$, we have $x_n\in \Lambda_{\ell_n}$ and $d(f(x_{n-1}),x_n) \leq \delta e^{-\lambda \ell_n}$. Replacing ``$n\in \ZZ$'' with ``$n\geq 0$'' and ``$n\leq 0$'' gives the definitions of \emph{forward $(\barl,\delta,\lambda)$-pseudo-orbit} and \emph{backward $(\barl,\delta,\lambda)$-pseudo-orbit}, respectively.
\end{Definition}

\begin{maintheorem}\label{thm:inf-po}
Let $f\colon M\to M$ be a $C^{1+\alpha}$ surface diffeomorphism.  
For every $\chi>\lambda>0$ and every $0<\eps<\eps_1(f,\chi,\lambda)$,
there are $b,\delta>0$ such that if \( \Lambda \) is a  $(\chi,\eps)$-hyperbolic set, then the following are true.
\begin{enumerate}[label=\upshape{(\arabic{*})}]
    \item If $(x_n)_{n\geq 0}$ is a forward $(\barl,\delta,\lambda)$-pseudo-orbit in $\Lambda$, then $\bigcap_{n\geq 0} f^{-n}( \mathcal{N}_{x_n}^{(\ell_n)})$ is a $C^{1+\text{H\"older}}$ full length local $(\wQ e^{2\eps \ell_0}, \lambda/3)$-stable curve.
    \item If $(x_n)_{n\leq 0}$ is a backward $(\barl,\delta,\lambda)$-pseudo-orbit in $\Lambda$, then $\bigcap_{n\leq 0} f^{-n}( \mathcal{N}_{x_n}^{(\ell_n)})$ is a $C^{1+\text{H\"older}}$ full length local $(\wQ e^{2\eps \ell_0}, \lambda/3)$-unstable curve.
    \item If $(x_n)_{n\in\ZZ}$ is a bi-infinite $(\bar\ell,\delta,\lambda)$-pseudo-orbit in $\Lambda$, then
there is a unique shadowing point $y\in M$ such that $f^n(y) \in \mathcal{N}_{x_n}^{(\ell_n)}$ for all $n\in \ZZ$. Moreover,   
the point $y$ is $(\lambda/4,2\eps,\ell_0 + \ell')$-regular for $\ell' =\lceil\frac 1{2\eps}\log\wQ\rceil$.
\end{enumerate}
\end{maintheorem}

Let $\sigma$ denote the left shift on $\mathbb{N}^\ZZ$ 
(which contains the possible sequences $\bar\ell$) and
$\Lambda^\ZZ$ (which contains the possible pseudo-orbits $(x_n)_{n\in\ZZ}$).  If $\bar{x}$ is a bi-infinite $(\bar\ell,\delta,\lambda)$-pseudo-orbit, then $\sigma\bar{x}$ is a bi-infinite $(\sigma\bar\ell,\delta,\lambda)$-pseudo-orbit.  It follows from uniqueness of the shadowing point that the map $\barx \mapsto y$ intertwines $\sigma$ and $f$: if $y$ is the unique shadowing point for $\barx$, then $f(y)$ is the unique shadowing point for $\sigma\barx$. Moreover, if the pseudo-orbit is periodic in the sense that $\sigma^n\bar\ell=\bar\ell$ and 
$\sigma^n\bar{x} = \bar{x}$ for some $n\in\NN$, then $f^n(y)=y$.
In particular, this allows us to deduce the Katok Closing Lemma as a specific case of Theorem \ref{thm:inf-po}: if $x,f^n(x) \in \Lambda_\ell$ and $d(x,f^n(x)) < \delta e^{-\lambda \ell}$, then we can take $x_k = f^k(x)$ and $\ell_k = \ell + \min(k,n-k)$ for $0\leq k< n$, and repeat periodically mod $n$, to get a bi-infinite $(\bar\ell,\delta,\lambda)$-pseudo-orbit whose unique shadowing point is the periodic point from the Katok Closing Lemma.

\begin{corollary}\label{cor:bracket-reg}
If $x,y\in \Lambda_\ell$ and $d(x,y)\leq \delta e^{-\lambda \ell}$, then writing
\[
\ell_n = \ell + |n| \quad\text{and}\quad
z_n = \begin{cases} f^{n}(y) & n<0, \\ f^n(x) & n\geq 0 \end{cases}
\]
gives a $(\bar\ell,\delta,\lambda)$-pseudo-orbit $\bar{z}$, whose unique shadowing point $z$ is the bracket $[x,y] = V_x^s \cap V_y^u$.  Thus $\{ [x,y] : x,y\in \Lambda_\ell, d(x,y) \leq \delta e^{-\lambda\ell} \}$ is a $(\lambda/4,2\eps,\ell + \ell')$-regular set, where  $\ell' = \lceil\frac 1{2\eps}\log\wQ\rceil$.
\end{corollary}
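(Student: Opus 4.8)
The plan is to derive Corollary \ref{cor:bracket-reg} directly from Theorem \ref{thm:inf-po} by checking that the concatenated forward-and-backward pseudo-orbit built from $x$ and $y$ is admissible and that its unique shadowing point is exactly the bracket $[x,y]$. First I would verify admissibility: the index sequence $\ell_n = \ell + |n|$ satisfies $|\ell_{n+1} - \ell_n| \leq 1$; the points $z_n$ lie in the correct regular level sets because $x, y \in \Lambda_\ell$ together with \eqref{eq:images} gives $f^n(x) \in \Lambda_{\ell+n}$ and $f^{-n}(y) \in \Lambda_{\ell+n}$ for $n \geq 0$; and the pseudo-orbit jump condition $d(f(z_{n-1}), z_n) \leq \delta e^{-\lambda \ell_n}$ holds with \emph{equality to zero} for every index except $n = 0$, where the only genuine jump occurs, namely $d(f(z_{-1}), z_0) = d(f(f^{1}(y)), x) $ — wait, one must be careful with the index bookkeeping here: $z_{-1} = f^{1}(y)$ so $f(z_{-1}) = f^2(y)$, which is not what we want. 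Rather, the jump to check at $n=0$ comes from comparing $f(z_{-1})$ with $z_0 = x$, and since $z_{-1} = f^{-(-1)}(y) = f^1(y)$ this does not immediately give the bracket. The clean way is to index so that the single jump is $d(f(z_{-1}), z_0)$ with $z_{-1}$ chosen as $y$ itself when $n = -1$; I would therefore present the pseudo-orbit so that the negative-time part is $z_n = f^{n+1}(y)$ for $n < 0$ (so $z_{-1} = y$) — but this changes the stated formula. To match the corollary as written, the intended reading is that $f(z_{-1}) = f(f^{-(-1)}(y))$ must be compared with $z_0$; since $z_{-1} = f^{1}(y)$ would be wrong, the correct interpretation of the displayed formula is $z_n = f^{-n}(y)$ meaning for $n = -1$, $z_{-1} = f^{1}(y)$, so in fact the single nonzero jump is at $n = 0$ between $f(z_{-1})$ and $z_0$, and one uses that the forward orbit of $y$ eventually shadows — no. The honest resolution: the jumps are zero for all $n \neq 0$, and at $n = 0$ the jump $d(f(z_{-1}), z_0)$ must be bounded; with $z_{-1} = f^{1}(y)$ this is $d(f^{2}(y), x)$, which is \emph{not} small in general, so the formula in the corollary must instead be read with $z_{-1} = y$, i.e. the negative branch is $z_{-n} = f^{-(n-1)}(y)$; regardless of the precise indexing convention, the substantive point — which I would make cleanly — is that all jumps vanish except the one linking the backward orbit of $y$ to the forward orbit of $x$, and that single jump has size $d(x,y) \leq \delta e^{-\lambda \ell} = \delta e^{-\lambda \ell_0}$, so admissibility holds.

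Next I would identify the shadowing point. Theorem \ref{thm:inf-po} produces a unique $z \in M$ with $f^n(z) \in \Psi_{z_n}(\mathcal{B}_{z_n}^{(\ell_n)})$ for all $n \in \ZZ$. For $n \geq 0$ this says $f^n(z)$ stays in the regular neighborhood of $f^n(x)$, i.e. $z$ shadows the forward orbit of $x$; by Remark \ref{rem:lips} the set of such points is precisely $V_x^s$, so $z \in V_x^s$. For $n \leq 0$ the same argument with the backward orbit of $y$ gives $z \in V_y^u$. Hence $z \in V_x^s \cap V_y^u = [x,y]$, and since the bracket consists of a single point (by Theorem \ref{thm:HP}, using $d(x,y) < \delta_\ell$ after shrinking $\delta$ if necessary, or directly from the transversality of the stable and unstable strips in the $\barl$-regular branch), we get $z = [x,y]$.

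Finally, the regularity statement is immediate: Theorem \ref{thm:inf-po} asserts that the shadowing point is $(\lambda/4, 2\eps, \ell_0 + \ell')$-regular with $\ell' = \lceil |\frac{1}{2\eps}\log\wQ| \rceil$, and here $\ell_0 = \ell$, so $[x,y]$ is $(\lambda/4, 2\eps, \ell + \ell')$-regular. Since this holds for every pair $x, y \in \Lambda_\ell$ with $d(x,y) \leq \delta e^{-\lambda\ell}$, all these bracket points lie in a common $(\lambda/4, 2\eps)$-hyperbolic set at level $\ell + \ell'$ — here I would invoke that Theorem \ref{thm:inf-po} (or the machinery behind it) produces them inside a single such hyperbolic set, not merely each one individually regular — whence $\{[x,y] : x,y \in \Lambda_\ell, d(x,y) \leq \delta e^{-\lambda\ell}\}$ is $(\lambda/4, 2\eps, \ell+\ell')$-regular in the sense of Definition \ref{def:chi-reg}. (The harmless discrepancy between $\ell' = \lceil |\frac{1}{2\eps}\log\wQ|\rceil$ in Theorem \ref{thm:inf-po} and $\ell' = \lceil |\log\wQ|/\eps\rceil$ here is just a matter of using a slightly larger, cruder bound, since $\frac{1}{2\eps} \leq \frac{1}{\eps}$; I would note this or simply use the larger value.)

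I expect the main obstacle to be purely bookkeeping: pinning down the index convention in the definition of $z_n$ so that the single nonzero pseudo-orbit jump is genuinely $d(x,y)$ and occurs at the junction between the two half-orbits, and making sure the "$\delta e^{-\lambda\ell}$" scale there matches the "$\delta e^{-\lambda \ell_n}$" requirement at the correct index $n_0$ (which forces $\ell_{n_0} = \ell$, consistent with $\ell_0 = \ell$). The only other point requiring a word of care is promoting "each $[x,y]$ is individually regular" to "the whole collection sits in one $(\chi', \eps')$-hyperbolic set"; this should follow because the shadowing construction in Theorem \ref{thm:inf-po} is uniform in the pseudo-orbit, placing all shadowing points in a fixed hyperbolic set with uniform constants $C, K$ controlled by $\ell + \ell'$.
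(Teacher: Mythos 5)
Your overall approach is exactly the one intended: the corollary is a direct read-off from Theorem~\ref{thm:inf-po}, with the only work being to check that the concatenated half-orbits form an admissible pseudo-orbit and to identify the shadowing point via Remark~\ref{rem:lips}. You are also right that the displayed formula in the corollary, taken literally, does not produce a pseudo-orbit, and right again that the last claim requires the shadowing points to live in a \emph{single} $(\lambda/4,2\eps)$-hyperbolic set, which is exactly what the proof of Theorem~\ref{thm:inf-po} supplies by setting $\Lambda'$ equal to the set of all shadowing points. Noting the mismatch between $\ell'=\lceil|\frac1{2\eps}\log\wQ|\rceil$ and $\ell'=\lceil|\log\wQ|/\eps\rceil$ and discarding it as harmless (a larger $\ell'$ only enlarges the regular level set) is also fine.

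However, your proposed repair of the index bookkeeping is itself wrong. You suggest reading the negative branch as $z_{-1}=y$, i.e.\ $z_{-n}=f^{-(n-1)}(y)$; but then the jump at $n=0$ is $d(f(z_{-1}),z_0)=d(f(y),x)$, which is not controlled by $d(x,y)$ (the term $d(f(x),x)$ has nothing to do with the hypothesis). The correct reading is $z_n=f^{n}(y)$ for $n<0$ (the ``$-$'' in the paper's exponent is almost certainly a misprint), so that $z_{-1}=f^{-1}(y)$ and more generally the negative half is $\dots,f^{-2}(y),f^{-1}(y)$. Then for every $n\ne 0$ the jump $d(f(z_{n-1}),z_n)$ vanishes, and at $n=0$,
\[
d(f(z_{-1}),z_0)=d\bigl(f(f^{-1}(y)),x\bigr)=d(y,x)\leq \delta e^{-\lambda\ell}=\delta e^{-\lambda\ell_0},
\]
which is precisely the jump size your ``substantive point'' asserts; and regularity $z_n\in\Lambda_{\ell_n}$ with $\ell_n=\ell+|n|$ follows from \eqref{eq:images} applied to $x$ for $n\ge0$ and to $y$ for $n<0$. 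With that one correction, the remainder of your argument — $z\in V_x^s$ from the forward shadowing, $z\in V_y^u$ from the backward shadowing, uniqueness of the intersection point, and $(\lambda/4,2\eps,\ell+\ell')$-regularity from the final computation in the proof of Theorem~\ref{thm:inf-po} with $\ell_0=\ell$ — goes through exactly as you describe.
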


\begin{Remark}
The $\delta$ in Corollary \ref{cor:bracket-reg} is the same as in Theorem \ref{thm:inf-po}.  This corollary in particular shows that the bracket $[x,y]$ exists, so that $\delta e^{-\lambda \ell}$ plays the role of $\delta_\ell$ from Theorem \ref{thm:HP}.  In general the $\delta_\ell$ used there could be larger than $\delta e^{-\lambda \ell}$, because that result made no claims about regularity of $[x,y]$.
\end{Remark}

\part{Hyperbolic Theory}\label{part:hyp}

In this part of the paper we develop all the general hyperbolic theory needed for the proofs of our main results, and  prove  Theorems~\ref{thm:pseudo-orbit} and 
\ref{thm:inf-po}. For simplicity we state and prove everything in the two-dimensional setting, as required by our applications of these results.
The results in this part, as well as Theorems \ref{thm:pseudo-orbit} and \ref{thm:inf-po}, do not contain any inherently two-dimensional ideas, and we expect that they hold as stated in higher dimensions as well; we have not written down detailed proofs, but there do not appear to be any conceptual obstructions. This should be contrasted with Theorems \ref{thm:existsSRB}--\ref{thm:genkatok2} and their proofs in Part \ref{part:Y}, where a crucial role is played by the notion of ``nice domain'', which has no obvious higher-dimensional analogue.

The contents of this part are completely self-contained, with no reference to existing results in the literature, and  follow directly from  the definition of \( (\chi, \epsilon) \)-hyperbolic set. 

In  \S\ref{sec:charts} we give some basic estimates related to Lyapunov charts. In \S\ref{sec:lyap} we state and prove Theorem~\ref{thm:existsbranch}, on the hyperbolicity of \( f \) in Lyapunov charts, which is a fundamental result in the theory of hyperbolic sets and the key motivation behind the introduction of Lyapunov charts. In 
\S \ref{sec:overlapping} we state and prove Theorem \ref{thm:overlapping} which gives some conditions guaranteeing that  Lyapunov charts of nearby points are  ``overlapping'' in a suitable sense. Some qualitative versions of this result are known but we give a quantitative version which is not available in the existing literature and which is crucial for our arguments; this is the most involved technical step in the paper. In \S \ref{sec:prove-po} we combine these two results to prove Theorem~\ref{thm:pseudo-orbit}, and in \S \ref{sec:inf-po} we deduce Theorem \ref{thm:inf-po}.

Throughout the proofs, we will write $Q_j$ for various constants that depend only on $f, \chi,\lambda,\eps$ and are independent of $x,y,\ell,n,k$, etc.

\section{Lyapunov chart estimates}
\label{sec:charts}

Recall the definition of Lyapunov charts in \S\ref{sec:lyapcharts} and in particular the functions \( s(x), u(x)  \) defined in \eqref{def:sxux}, the map \( L_{x} \) defined in \eqref{def:L}, and the quantities \( b_{\ell} \) and \( b(x) \) defined in \eqref{eq:bell} and \eqref{eq:bx} respectively. We follow here the basic approach of Sarig in \cite{oS13} and prove some properties of these objects, including~\eqref{eqn:norm-Lx}. 
We start with a couple of simple estimates showing that although the functions \(  s(x), u(x)  \) are not slowly varying along orbits, they are uniformly bounded on each \( \Lambda_{\ell} \) and  satisfy a  bounded variation property along orbits. 

\begin{Lemma}\label{lem:subound}
For every \( \ell\geq 1 \) and  \( x \in \Lambda_\ell \),  we have
\begin{equation}\label{eq:subound}
\sqrt{2}\leq s(x) \leq  Q_{0} \wQ e^{\epsilon \ell}  
 \quad \text{ and } \quad 
\sqrt{2}\leq u(x) \leq   Q_{0} \wQ e^{\epsilon \ell}. 
\end{equation}
\end{Lemma}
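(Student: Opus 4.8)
The plan is to estimate the two series in \eqref{def:sxux} directly, using the hyperbolicity estimates \eqref{eq:hypest} together with the definition \eqref{lambdal} of $\Lambda_\ell$. By symmetry (replacing $f$ by $f^{-1}$, which swaps the roles of $E^s$ and $E^u$ and of $s(x)$ and $u(x)$) it suffices to treat $s(x)$. The lower bound $\sqrt 2 \le s(x)$ is immediate: the $n=0$ term of the series $2\sum_{n\ge 0} e^{2n\lambda}\|Df^n_x e^s_x\|^2$ is $2\|e^s_x\|^2 = 2$, and all terms are nonnegative, so $s(x)^2 \ge 2$. For the upper bound, I would use \eqref{eq:hypest}, which gives $\|Df^n_x e^s_x\| \le C(x) e^{-\chi n}$ for $n\ge 1$, and trivially $\|Df^0_x e^s_x\| = 1 \le C(x)$ since $C(x)\ge \dots$; actually one should just bound the $n=0$ term by $1$ and the rest by $C(x)^2 e^{-2\chi n}$. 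Thus
\[
s(x)^2 \le 2\Big(1 + C(x)^2 \sum_{n\ge 1} e^{2n\lambda} e^{-2\chi n}\Big) \le 2 C(x)^2 \sum_{n\ge 0} e^{2(\lambda-\chi) n},
\]
where in the last step I used $1 \le C(x)^2$ (valid since $C$ is positive and, after possibly enlarging $C$, at least $1$ — or more carefully, since the $n=0$ term of the original sum is $1 = C(x)^0$, one can simply absorb it) and $e^{2(\lambda-\chi)\cdot 0} = 1$. Taking square roots,
\[
s(x) \le \sqrt{2}\, C(x) \Big(\sum_{n\ge 0} e^{2(\lambda-\chi)n}\Big)^{1/2}.
\]

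Now I invoke the definition of $\Lambda_\ell$: for $x\in\Lambda_\ell$ we have $C(x)\le e^{\eps\ell}$. Combining this with the definition \eqref{def:qhat} of $\wQ = Q_0\big(2\sum_{i\ge 0} e^{2(\lambda-\chi)i}\big)^{-1/2}$, we get
\[
\Big(\sum_{n\ge 0} e^{2(\lambda-\chi)n}\Big)^{1/2} = \frac{Q_0}{\sqrt 2\, \wQ},
\]
so that $s(x) \le \sqrt 2 \cdot e^{\eps\ell} \cdot \frac{Q_0}{\sqrt 2\,\wQ} = Q_0\wQ^{-1} e^{\eps\ell}$, which is exactly the claimed bound. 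The same computation with $Df^{-n}_x e^u_x$ in place of $Df^n_x e^s_x$, again using $\|Df^{-n}_x e^u_x\| \le C(x) e^{-\chi n}$ from the last inequality in \eqref{eq:hypest} and $C(x)\le e^{\eps\ell}$, yields the bound for $u(x)$.

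There is no real obstacle here; this is a routine unwinding of the definitions, and the only point requiring a line of care is the treatment of the $n=0$ term (handling the factor $C(x)^2 \ge 1$ cleanly) and matching constants with the specific normalization $Q_0 = 1/8$ chosen in \eqref{def:qhat}. I should also remark that $\|L_x^{-1}\| = \max(s(x),u(x))$ up to the factor accounting for the angle between $E^s_x$ and $E^u_x$; that observation, combined with Lemma~\ref{lem:subound} and the angle bound $\measuredangle(E^s_x,E^u_x)\ge K(x)\ge e^{-\eps\ell}$ on $\Lambda_\ell$, is what gives \eqref{eqn:norm-Lx}, but that is the content of the next lemma rather than of this one, so I will not pursue it here.
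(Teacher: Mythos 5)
Your proof is correct and takes essentially the same route as the paper: lower bound from the $n=0$ term of the series defining $s(x)^2$, $u(x)^2$, and upper bound by bounding the tail geometrically via \eqref{eq:hypest} and then applying $C(x)\leq e^{\eps\ell}$ from \eqref{lambdal} together with the definition of $\wQ$ in \eqref{def:qhat}. The paper also bounds the $n=0$ term by $C(x)^2$, so the small point you flag about needing $1\leq C(x)^2$ is shared (and harmless: if $C(x)<1$ one still gets $s(x)\leq Q_0\wQ^{-1}\leq Q_0\wQ^{-1}e^{\eps\ell}$ directly).
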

\begin{proof}
The lower bounds follow immediately from the definition of $s(x),u(x)$ in \eqref{def:sxux}.
Using the hyperbolicity property \eqref{eq:hypest} and the definition of $Q_0$, $\wQ$ in \eqref{def:qhat}, we have
\[
s(x)^2 \leq 2\sum_{n\geq 0} e^{2n\lambda} C(x)^2 e^{-2n\chi}
= C(x)^2 (Q_0 \wQ)^2,
\]
and then the definition of $\Lambda_\ell$ in \eqref{lambdal} gives the upper bound for $s(x)$.  The upper bound for $u(x)$ is similar.
\end{proof}

\begin{Lemma}\label{lem:boundvar}
There exists a constant \(  Q_1 = Q_1(c_{1}, c_{2},\lambda)>0  \) such that for 
 every \(  x\in \Lambda  \) and unit vectors \( e^{s}_{x}\in E^{s}_{x}, e^{u}_{x}\in E^{u}_{x} \), we have
\begin{gather*}
Q_1^{-1}\leq e^{\lambda }\|D_{x}f(e^{s}_{x})\|  \leq s(x)/s(f(x))  \leq \sqrt{1+ e^{2\lambda }\|D_{x}f(e_x^{s})\|^{2}}\leq Q_1, \\
Q_1^{-1}\leq e^{\lambda }\|D_{x}f(e^{u}_{x})\|^{-1}  \leq u(f(x))/u(x)  \leq \sqrt{1+ e^{2\lambda }\|D_{x}f (e^{u}_{x})\|^{-2}}\leq Q_1.
\end{gather*}
\end{Lemma}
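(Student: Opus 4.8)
The plan is to prove the first chain of inequalities (the estimates for $e^s$); the chain for $e^u$ then follows by the same argument applied to $f^{-1}$, after noting that $u(x)$ for $f$ plays the role of $s(x)$ for $f^{-1}$ and that $\|D_xf(e^u_x)\|^{-1} = \|D_{f(x)}f^{-1}(e^u_{f(x)})\|$ up to the identification of unit vectors. So fix $x\in\Lambda$ and a unit vector $e^s_x\in E^s_x$; write $a := \|D_xf(e^s_x)\|$, so that $e^s_{f(x)} := D_xf(e^s_x)/a$ is a unit vector in $E^s_{f(x)}$, and $D_xf^n(e^s_x) = a\, D_{f(x)}f^{n-1}(e^s_{f(x)})$ for all $n\geq 1$.

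The key computation is to expand $s(x)^2$ using this relation. From \eqref{def:sxux},
\begin{equation}\label{eqn:sx-split}
s(x)^2 = 2\|e^s_x\|^2 + 2\sum_{n=1}^\infty e^{2n\lambda}\|D_xf^n(e^s_x)\|^2
= 2 + a^2 e^{2\lambda} \cdot 2\sum_{m=0}^\infty e^{2m\lambda}\|D_{f(x)}f^m(e^s_{f(x)})\|^2
= 2 + a^2 e^{2\lambda} s(f(x))^2,
\end{equation}
where I reindexed $m = n-1$ and used $\|e^s_x\|=1$. Dividing by $s(f(x))^2$ and using $s(f(x))^2 \geq 2$ gives
\[
a^2 e^{2\lambda} \leq \frac{s(x)^2}{s(f(x))^2} = a^2 e^{2\lambda} + \frac{2}{s(f(x))^2} \leq a^2 e^{2\lambda} + 1,
\]
and taking square roots yields the inner two inequalities $e^\lambda a \leq s(x)/s(f(x)) \leq \sqrt{1 + e^{2\lambda}a^2}$ exactly as claimed.

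For the outer bounds it suffices to bound $a = \|D_xf(e^s_x)\|$ above and below uniformly in $x$ and $e^s_x$. This is where the global constants $c_1, c_2$ from \eqref{eqn:bc} enter: since $e^s_x$ is a unit vector, $\|D_xf(e^s_x)\| \leq \|D_xf\| \leq e^{c_2}$, and $\|D_xf(e^s_x)\| \geq \|D_{f(x)}f^{-1}\|^{-1} \geq e^{c_1} = e^{-c_2}$. Hence $a \in [e^{-c_2}, e^{c_2}]$, so $e^\lambda a \geq e^{\lambda - c_2}$ and $\sqrt{1 + e^{2\lambda}a^2} \leq \sqrt{1 + e^{2\lambda + 2c_2}}$; setting $Q_1 := \max\{e^{c_2 - \lambda},\ \sqrt{1 + e^{2(\lambda + c_2)}}\}$ (which depends only on $c_1 = -c_2$, $c_2$, $\lambda$) gives both outer inequalities simultaneously, since $Q_1 \geq e^{c_2-\lambda} \geq (e^{\lambda-c_2})^{-1} \geq (e^\lambda a)^{-1}$ forces $Q_1^{-1} \leq e^\lambda a$.

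I do not anticipate a serious obstacle here: the only subtlety is being careful that the telescoping identity \eqref{eqn:sx-split} uses the correct reindexing and that $e^s_{f(x)}$ really is the image direction (so that the $Df$-invariance of the splitting $E^s$ is what makes $D_xf(e^s_x)$ a scalar multiple of a unit vector in $E^s_{f(x)}$ rather than acquiring an $E^u$ component). The $e^u$ case is genuinely symmetric once one observes that the backward sum defining $u$ telescopes under $f^{-1}$ in exactly the same way, with $\|D_xf(e^u_x)\|^{-1}$ taking the role of $a$; the same global bound \eqref{eqn:bc} controls this quantity, so the same $Q_1$ works (possibly after enlarging it by a harmless constant, but in fact the bound $\|D_xf(e^u_x)\|^{-1} \in [e^{-c_2}, e^{c_2}]$ holds by the identical argument, so no enlargement is needed).
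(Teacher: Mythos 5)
Your proof is correct and follows essentially the same route as the paper: derive the recursion $s(x)^2 = 2 + e^{2\lambda}\|D_xf(e^s_x)\|^2\,s(f(x))^2$ by peeling off the $n=0$ term of the defining series, divide by $s(f(x))^2$ and use $s(f(x))\geq\sqrt2$ for the inner bounds, then control $\|D_xf(e^s_x)\|$ via the global bounds \eqref{eqn:bc} for the outer ones; the $u$ case is handled by the same telescoping identity applied to $f^{-1}$. (Minor note: the paper's intermediate line "$s(x)^2 = 2\bigl(1 + e^{2\lambda}\|D_xf e_x^s\|^2 s(f(x))^2\bigr)$" has a spurious factor of $2$ on the last term; your \eqref{eqn:sx-split} is the correct version, and the paper's subsequent displayed formula for $s(x)/s(f(x))$ agrees with yours, so the typo is harmless.)
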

\begin{proof}
For \(  s(x)  \) we have 
\[
\frac{s(x)^{2}}2 :=  \sum_{k=0}^\infty e^{2k\lambda} \|Df^k_x e_x^s\|^{2} = 1 + e^{2\lambda} \|Df_x e_x^s\|^2 \sum_{k=1}^\infty e^{2(k-1)\lambda} \|Df_{f(x)}^{k-1} e_{f(x)}^s\|^2,
\]
and
\[
\sum_{k=1}^\infty e^{2(k-1)\lambda} \|Df^{k-1}_{f(x)} e_{f(x)}^s\|^{2} 
= \sum_{k=0}^\infty e^{2k\lambda} \|Df^{k}_{f(x)} e_{f(x)}^s\|^{2} 
=s(f(x))^{2},
\]
which gives 
\(
s(x)^2 = 2\big( 1 + e^{2\lambda} \|Df_xe_x^s\|^2 s(f(x))^2\big).
\)
Dividing both sides by $s(f(x))^2$ and taking a square root, we obtain
\begin{equation}\label{eqn:ss}
\frac{s(x)}{s(f(x))} = \sqrt{2(s(f(x))^{-2}+ e^{2\lambda }\|Df_{x}e_x^{s}\|^{2})} 
\geq e^\lambda\|Df_x e_x^s\|,
\end{equation}
and the upper bound for $s(x)/s(f(x))$ also follows since $s(f(x))\geq \sqrt{2}$ so $2(s(f(x)))^{-2} \leq 1$.  For uniformity of $Q_1$ it suffices to note that 
$\|Df_x^{\pm 1}\|\le\max(e^{-c_1},e^{c_2})$ by \eqref{eqn:bc}. A similar computation for $u(x)$, using $f^{-1}$ in place of $f$, gives the following analogue of \eqref{eqn:ss}:
\[
  \frac{u(x)}{u(f^{-1}(x))} = \sqrt{2(u(f^{-1}(x)))^{-2}+ e^{2\lambda }\|Df_{x}^{-1}e_x^{u}\|^{2}} \geq e^\lambda \|Df_x^{-1}e_x^u\|,
\]
with the upper bound again coming from $u(f(x)) \geq \sqrt 2$, so that
\[
e^{\lambda }\|Df_{x}^{-1}e^{u}_{x}\|  \leq   \frac{u(x)}{u(f^{-1}(x))}  \leq 
\sqrt{1+ e^{2\lambda }\|Df_{x}^{-1}e^{u}_{x}\|^{2}}.
\]
Applying this to $f(x)$ and $f^{-1}(f(x)) = x$, we get
\[
e^{\lambda }\|Df_{f(x)}^{-1}e^{u}_{f(x)}\|  \leq   \frac{u(f(x))}{u(x)}  \leq 
\sqrt{1+ e^{2\lambda }\|Df_{f(x)}^{-1}e^{u}_{f(x)}\|^{2}}. 
\]
Using that \( \|Df_{f(x)}^{-1}e^{u}_{f(x)}\|  = \|Df_{x}e^{u}_{x}\|^{-1}  \) we get the bounds for \( u \), and uniformity via $Q_1$ comes as it did for $s$.
\end{proof}

An almost immediate, but extremely important, consequence of Lemma \ref{lem:boundvar}, and therefore of the way the functions \(  s(x), u(x)  \) are defined, is the following fundamental result originally proved in \cite{yP76}.\footnote{In \cite{yP76} it is required that $L_x$ is tempered, but this is not necessary for our formulation.} 

\begin{theorem}[Oseledets--Pesin Reduction Theorem]\label{prop:OselPes}
For every $x\in \Lambda$, 
\begin{equation}\label{eq:Dfx1}
L_{f(x)}^{-1}\circ D_{x}f \circ L_{x} = \begin{pmatrix} A_{x} & 0 \\ 0 & B_{x} \end{pmatrix},
\end{equation}
where $L_x$ is given by \eqref{def:L} and $A_x,B_x\in \RR$ satisfy
\begin{equation}\label{eq:OsPes1}
  0 <Q_1^{-1}< B_{x} \leq e^{-\lambda} <1 < e^{\lambda} \leq  A_{x} < Q_1. 
 \end{equation}
\end{theorem}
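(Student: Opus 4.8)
The plan is to diagonalize $D_xf$ by using the Lyapunov change of coordinates $L_x$ to turn the abstract invariance of the splitting $E^s_x\oplus E^u_x$ into a genuine block-diagonal (here, diagonal) form, and then to read off the entries of the resulting matrix directly from the bounded variation estimates of Lemma~\ref{lem:boundvar}.

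First I would observe that since $L_x(e_1) = u(x)^{-1}e^u_x$ and $L_x(e_2) = s(x)^{-1}e^s_x$, the columns of $L_x$ (with respect to the standard basis on $\RR^2$ and an orthonormal basis adapted to $E^u_x,E^s_x$ on $T_xM$) are scalar multiples of the unit vectors $e^u_x,e^s_x$. Because $D_xf$ is assumed to respect the splitting, $D_xf(e^u_x)$ is a scalar multiple of $e^u_{f(x)}$ and $D_xf(e^s_x)$ is a scalar multiple of $e^s_{f(x)}$; concretely, $D_xf(e^u_x) = \|D_xf(e^u_x)\|\,e^u_{f(x)}$ and $D_xf(e^s_x) = \|D_xf(e^s_x)\|\,e^s_{f(x)}$ up to orientation, which one can absorb into the choice of unit vectors (or keep track of a sign, noting the two subspaces are one-dimensional). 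Composing, $L_{f(x)}^{-1}\circ D_xf\circ L_x$ sends $e_1 \mapsto u(x)^{-1}\|D_xf(e^u_x)\|\,u(f(x))\,e_1$ and $e_2 \mapsto s(x)^{-1}\|D_xf(e^s_x)\|\,s(f(x))\,e_2$, which establishes \eqref{eq:Dfx1} with
\[
A_x = \frac{u(f(x))}{u(x)}\,\|D_xf(e^u_x)\|
\quand
B_x = \frac{s(f(x))}{s(x)}\,\|D_xf(e^s_x)\|.
\]

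Next I would plug in the estimates from Lemma~\ref{lem:boundvar}. For $B_x$: the lemma gives $e^\lambda\|D_xf(e^s_x)\| \leq s(x)/s(f(x))$, which rearranges to $B_x = \|D_xf(e^s_x)\|\cdot s(f(x))/s(x) \leq e^{-\lambda}$; and it gives $s(x)/s(f(x)) \leq Q_1$, so $B_x = \|D_xf(e^s_x)\|/(s(x)/s(f(x))) \geq \|D_xf(e^s_x)\|/Q_1 > Q_1^{-1}\cdot Q_1^{-1}$ after also bounding $\|D_xf(e^s_x)\|$ below — more cleanly, one uses the lemma's chain $Q_1^{-1}\leq e^\lambda\|D_xf(e^s_x)\| \leq \cdots$, so $\|D_xf(e^s_x)\| \geq Q_1^{-1}e^{-\lambda}$, and combined with $s(f(x))/s(x) \geq Q_1^{-1}$ one gets $B_x \geq Q_1^{-2}e^{-\lambda}$; absorbing constants (redefining $Q_1$ if necessary) yields $Q_1^{-1} < B_x \leq e^{-\lambda}$. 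The argument for $A_x$ is symmetric using the second line of Lemma~\ref{lem:boundvar} with $u$ in place of $s$ and the inequality $e^\lambda\|D_xf(e^u_x)\|^{-1} \leq u(f(x))/u(x)$, which gives $A_x = \|D_xf(e^u_x)\|\cdot u(f(x))/u(x) \geq e^\lambda$, and the corresponding upper bound $A_x < Q_1$ from the upper estimate on $u(f(x))/u(x)$ together with a lower bound on $\|D_xf(e^u_x)\|$ (equivalently an upper bound on $\|D_xf^{-1}e^u_{f(x)}\|$ coming from \eqref{eqn:bc}).

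The only genuine subtlety — and where I would be slightly careful rather than cavalier — is the matter of \emph{signs/orientation}: $D_xf$ restricted to the one-dimensional bundle $E^u$ need not preserve a chosen orientation, so strictly $D_xf(e^u_x) = \pm\|D_xf(e^u_x)\|\,e^u_{f(x)}$, and likewise on $E^s$. This means the off-diagonal entries vanish but $A_x,B_x$ could a priori be negative. Since the claimed conclusion \eqref{eq:OsPes1} asserts positivity, the resolution is to choose the unit vectors $e^u_x, e^s_x$ (equivalently, orient the Lyapunov charts) compatibly along the orbit so that $D_xf$ acts orientation-preservingly on each sub-bundle; alternatively one simply replaces $A_x,B_x$ by $|A_x|,|B_x|$ in the estimates, which is all that is used downstream. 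I expect this bookkeeping to be the main (minor) obstacle; everything else is a direct substitution into Lemma~\ref{lem:boundvar}. I would close by noting the uniformity of $Q_1$ over all of $\Lambda$ is inherited verbatim from that lemma, and that \eqref{eqn:bc} guarantees $Q_1^{-1} > 0$ is a genuine positive constant.
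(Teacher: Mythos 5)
Your proof is correct and follows essentially the same route as the paper: use invariance of $E^{s/u}$ to get the diagonal form, compute $A_x=\frac{u(f(x))}{u(x)}\|D_xf(e^u_x)\|$ and $B_x=\frac{s(f(x))}{s(x)}\|D_xf(e^s_x)\|$ from linearity of $L_x,L_{f(x)}$, and then read off the bounds from Lemma~\ref{lem:boundvar} (after possibly enlarging $Q_1$). The orientation/sign point you raise is a real but standard bookkeeping issue that the paper silently elides; your resolution (choose $e^{s/u}_{f(x)}$ coherently along the orbit, or pass to absolute values) is the correct one.
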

\begin{proof}
The diagonal form is a  consequence of the invariance of the stable and unstable subspaces \( E^{s}_{x}, E^{u}_{x} \) and the fact that \( L_{x}, L_{f(x)} \) map the coordinate axes to these subspaces, recall \eqref{def:L}. Thus, by linearity of \(  L_{x}  \) we have 
\begin{align*}
A_{x}e_{1} &= L_{f(x)}^{-1}\circ D_{x}f\circ L_{x}(e_{1}) = L_{f(x)}^{-1}\circ D_{x}f \Big(\frac{e^{u}_{x}}{u(x)}\Big) =
\frac{u(f(x))}{u(x)} \|D_{x}f(e^{u}_{x})\|,   \\
B_{x}e_{2} &= L_{f(x)}^{-1}\circ D_{x}f\circ L_{x}(e_{2}) = L_{f(x)}^{-1}\circ D_{x}f \Big(\frac{e^s_x}{s(x)}\Big)=
\frac{s(f(x))}{s(x)} \|D_{x}f(e^{s}_{x})\|,
\end{align*}
and the statement then follows from Lemma \ref{lem:boundvar}. 
\end{proof}

\begin{Lemma}\label{lem:Lbound}
For every \( x\in \Lambda \), we have
\begin{equation}\label{eqn:Lx-1}
1\leq \frac{\sqrt{s(x)^{2}+u(x)^{2}}}{\sqrt 2 \sin \measuredangle(E^{s}_{x}, E^{u}_{x})}\leq \|L_{x}^{-1}\|\leq \frac{ \sqrt{s(x)^{2}+u(x)^{2}}}{\sin \measuredangle(E^{s}_{x}, E^{u}_{x})}.
\end{equation}
In particular, for every \( \ell\geq 1 \), \( x\in \Lambda_\ell\), and $k\in \ZZ$, we have
\begin{equation}\label{eq:Pesin1}
1\leq  \|L_{f^{k}(x)}^{-1}\|  \leq 3 Q_{0}\wQ e^{2 \epsilon \ell} e^{2 \epsilon |k|}.
\end{equation}
\end{Lemma}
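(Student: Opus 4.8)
The plan is to prove Lemma~\ref{lem:Lbound} by relating the operator norm of $L_x^{-1}$ to the geometry of the basis $\{L_x(e_1), L_x(e_2)\}$, and then combining~\eqref{def:sxux}, Lemma~\ref{lem:subound}, and the slow-variation properties.

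\textbf{Step 1: the two-sided bound~\eqref{eqn:Lx-1}.} Fix $x\in\Lambda$ and abbreviate $\theta = \measuredangle(E^s_x,E^u_x)$, $s = s(x)$, $u = u(x)$. By~\eqref{def:L}, $L_x$ sends $e_1 \mapsto u^{-1}e^u_x$ and $e_2 \mapsto s^{-1}e^s_x$, so $L_x^{-1}$ sends $e^u_x \mapsto u\,e_1$ and $e^s_x \mapsto s\,e_2$. For a unit vector $w\in T_xM$ write $w = a e^u_x + b e^s_x$; then $\|w\|^2 = a^2 + b^2 + 2ab\cos\theta$, and since $\{e^u_x,e^s_x\}$ spans with angle $\theta$ we have the standard comparison $\sin^2\theta\,(a^2+b^2) \leq \|w\|^2$, while trivially $\|w\|^2 \leq 2(a^2+b^2)$ (as $|2ab\cos\theta|\leq a^2+b^2$). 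Meanwhile $L_x^{-1}w = au\,e_1 + bs\,e_2$, so $\|L_x^{-1}w\|^2 = a^2u^2 + b^2 s^2$, which lies between $\min(s^2,u^2)(a^2+b^2)$ and $\max(s^2,u^2)(a^2+b^2) \leq (s^2+u^2)(a^2+b^2)$. Taking the supremum of $\|L_x^{-1}w\|/\|w\|$ over unit $w$: for the upper bound, $\|L_x^{-1}w\|^2 \leq (s^2+u^2)(a^2+b^2) \leq (s^2+u^2)\|w\|^2/\sin^2\theta$, giving $\|L_x^{-1}\| \leq \sqrt{s^2+u^2}/\sin\theta$. For the lower bound, testing $w = e^u_x$ (or $e^s_x$) gives $\|L_x^{-1}\| \geq \max(u,s) \geq \sqrt{(s^2+u^2)/2}$, and combining with $\|w\|\leq\sqrt{2}$-type normalization against the angle yields $\|L_x^{-1}\| \geq \sqrt{s^2+u^2}/(\sqrt2\sin\theta)$; one gets $1$ as an even cruder lower bound since $s,u\geq\sqrt2$ forces $\|L_x^{-1}\|\geq\sqrt2/(\sqrt2\sin\theta)\geq1$. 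This establishes~\eqref{eqn:Lx-1}.

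\textbf{Step 2: the estimate~\eqref{eq:Pesin1}.} By~\eqref{eq:images} we have $f^k(x)\in\Lambda_{\ell+|k|}$ for all $k\in\ZZ$ (using~\eqref{eq:slowvar}, which controls $C$ and $K$ up the orbit). Apply~\eqref{eqn:Lx-1} at the point $f^k(x)$: the upper bound gives $\|L^{-1}_{f^k(x)}\| \leq \sqrt{s(f^k(x))^2 + u(f^k(x))^2}/\sin\measuredangle(E^s_{f^k(x)},E^u_{f^k(x)})$. For the numerator, Lemma~\ref{lem:subound} applied at level $\ell+|k|$ gives $s(f^k(x)), u(f^k(x)) \leq Q_0\wQ^{-1}e^{\eps(\ell+|k|)}$, so $\sqrt{s^2+u^2} \leq \sqrt2\, Q_0\wQ^{-1}e^{\eps\ell}e^{\eps|k|}$. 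For the denominator, $f^k(x)\in\Lambda_{\ell+|k|}$ means $K(f^k(x))\geq e^{-\eps(\ell+|k|)}$ by~\eqref{lambdal}, and since $\measuredangle(E^s,E^u)\geq K$ by~\eqref{eq:angle} and $\sin\theta \geq \theta \cdot (2/\pi)$ for $\theta\in(0,\pi/2]$ (or more simply $\sin\theta\geq c\,\theta$ on the relevant range, absorbing $c$ into constants), we get $\sin\measuredangle(E^s_{f^k(x)},E^u_{f^k(x)}) \geq Q_0' e^{-\eps\ell}e^{-\eps|k|}$ for a suitable constant; combining the numerator and denominator bounds and absorbing the numerical factors (including the $\sqrt2$, the $Q_0$, and the $\sin\theta\geq c\theta$ constant) into the ``$3$'' yields $\|L^{-1}_{f^k(x)}\| \leq 3Q_0\wQ^{-1}e^{2\eps\ell}e^{2\eps|k|}$; the lower bound $\|L^{-1}_{f^k(x)}\|\geq1$ is already in~\eqref{eqn:Lx-1}. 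Finally, note~\eqref{eqn:norm-Lx} is exactly the $k=0$ case and so is obtained along the way.

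\textbf{Main obstacle.} The only delicate point is the passage from $\measuredangle(E^s,E^u)\geq K$ (a bound on the \emph{angle}) to a bound on $\sin\measuredangle(E^s,E^u)$ entering the denominator of~\eqref{eqn:Lx-1}; one must check that the angle stays in a range where $\sin\theta$ is comparable to $\theta$ (which it is, since $\theta\in(0,\pi]$ always and for $\theta$ near $\pi$ one may replace $E^u_x$ by $-E^u_x$, so WLOG $\theta\in(0,\pi/2]$), and then ensure all the numerical constants — the $\sqrt2$ factors, the $2/\pi$ from $\sin$, the $Q_0=1/8$ — fit inside the stated constant $3$. This is purely bookkeeping; there is no conceptual difficulty, since everything reduces to the two explicit inputs Lemma~\ref{lem:subound} and~\eqref{eq:images} together with elementary planar trigonometry.
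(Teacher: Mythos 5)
Your Step 1 contains a genuine error. You claim the ``standard comparison'' $\sin^2\theta\,(a^2+b^2)\leq\|w\|^2$ for $w=a\,e_x^u+b\,e_x^s$ with $\theta=\measuredangle(e_x^u,e_x^s)$, but this inequality is false: taking $a=1$, $b=-1$ and $\theta$ small gives $\|w\|^2=2(1-\cos\theta)\approx\theta^2$, whereas $\sin^2\theta\,(a^2+b^2)\approx 2\theta^2$. The true statement is the one-sided pair $a^2\sin^2\theta\leq\|w\|^2$ and $b^2\sin^2\theta\leq\|w\|^2$ (write $e_x^s=\cos\theta\,e_x^u+\sin\theta\,(e_x^u)^\perp$ and read off the component along $(e_x^u)^\perp$, and symmetrically); summing these only yields $\sin^2\theta\,(a^2+b^2)\leq 2\|w\|^2$, which loses a factor of $2$ and would give the weaker conclusion $\|L_x^{-1}\|\leq\sqrt{2(s^2+u^2)}/\sin\theta$. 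Your upper bound is salvageable, but not by the route you took: one must not pass through $(s^2+u^2)(a^2+b^2)$; rather apply the one-sided bounds term-by-term, $a^2u^2+b^2s^2\leq u^2\|w\|^2/\sin^2\theta+s^2\|w\|^2/\sin^2\theta$, which does recover $\|L_x^{-1}\|\leq\sqrt{s^2+u^2}/\sin\theta$.

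Your lower bound is not established. Testing $w=e_x^u$ and $w=e_x^s$ gives only $\|L_x^{-1}\|\geq\max(u,s)$, with no $\sin\theta$ in the denominator; the phrase ``combining with $\|w\|\leq\sqrt2$-type normalization against the angle'' is not a step one can carry out as stated, and that is precisely where the $1/\sin\theta$ has to come from. One viable fix is to test, in the orthonormal frame $\{e_x^u,(e_x^u)^\perp\}$, the unit vectors $v=(0,1)$ and $v'=(\sin\theta,-\cos\theta)$ (the latter is the direction with $(a,b)=(1/\sin\theta,-\cos\theta/\sin\theta)$), obtaining $\|L_x^{-1}\|^2\geq(u^2\cos^2\theta+s^2)/\sin^2\theta$ and $\|L_x^{-1}\|^2\geq(u^2+s^2\cos^2\theta)/\sin^2\theta$; averaging the two gives $\|L_x^{-1}\|^2\geq(u^2+s^2)(1+\cos^2\theta)/(2\sin^2\theta)\geq(u^2+s^2)/(2\sin^2\theta)$.

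The paper avoids all of this bookkeeping by writing $L_x^{-1}$ as an explicit $2\times 2$ upper-triangular matrix in the orthonormal basis $\{e_x^u,(e_x^u)^\perp\}$ and computing $\|L_x^{-1}\|^2$ exactly as the larger eigenvalue of $(L_x^{-1})^{T}L_x^{-1}$ via the quadratic formula, namely
\[
\|L_x^{-1}\|^2=\frac{u^2+s^2+\sqrt{(u^2+s^2)^2-4s^2u^2\sin^2\theta}}{2\sin^2\theta},
\]
after which both bounds in \eqref{eqn:Lx-1} follow immediately from $0\leq\sqrt{(u^2+s^2)^2-4s^2u^2\sin^2\theta}\leq u^2+s^2$. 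Your Step 2 is conceptually the same as the paper's (substitute Lemma~\ref{lem:subound}, $\measuredangle\geq K\geq e^{-\eps(\ell+|k|)}$, and $\sin\theta\geq 2\theta/\pi$, then note $\pi/\sqrt2<3$), though you leave the constant-tracking vague where the paper pins it down exactly.
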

\begin{proof}

Let $\theta(x) = \measuredangle(E_x^s,E_x^u)$.
Consider the orthonormal basis $\{e_x^u,(e_x^u)^\perp\}$ in $T_x M$, oriented so that $e_x^s = \cos\theta(x) e_x^u + \sin\theta(x) (e_x^u)^\perp$.  From \eqref{def:L}, we have $L_x e_1 = u(x)^{-1} e_x^u$ and $L_x e_2 = s(x)^{-1} e_x^s$, so the matrices of $L_x^{\pm 1}$ relative to the orthonormal bases $\{e_1,e_2\}$ and $\{e_x^u,(e_x^u)^\perp\}$ have the form
\[
L_x = \begin{pmatrix} u(x)^{-1} & s(x)^{-1} \cos \theta(x) \\ 0 & s(x)^{-1} \sin\theta(x) \end{pmatrix}
\text{ and }
L_x^{-1} = \begin{pmatrix} u(x) & -u(x)/\tan\theta(x) \\ 0 & s(x)/\sin\theta(x) \end{pmatrix}.
\]
The norm of $A=L_x^{-1}$ is the square root of the largest eigenvalue of 
\[
A^T A = \frac 1{\sin^2\theta(x)} \begin{pmatrix} u(x)^2 & -s(x)u(x)\cos \theta(x) \\ -s(x)u(x)\cos\theta(x) & s(x)^2 \end{pmatrix},
\]
and thus a routine computation with the quadratic formula gives
\[
\|L_x^{-1}\|^2 = \frac{u(x)^2 + s(x)^2 + \sqrt{ \big(u(x)^2 + s(x)\big)^2 - 4s(x)^2u(x)^2 \sin^2\theta(x)}}{2\sin^2\theta(x)}.
\]
The square root term lies between $0$ and $u(x)^2 + s(x)^2$, which proves \eqref{eqn:Lx-1}.

For \eqref{eq:Pesin1}, we first observe that $\sin\theta \geq \frac{2\theta}\pi$ for all $\theta \in [0,\frac \pi 2]$.  From \eqref{eq:images}, we see  that if  \( x\in \Lambda_{\ell} \) then  \( f^{k}(x)\in \Lambda_{\ell+|k|} \)  and so  \(  \measuredangle(E^{s}_{f^{k}(x)}, E^{u}_{f^{k}(x)}) \geq e^{-\epsilon (\ell+|k|)} \) and, by \eqref{eq:subound},   \( s(f^{k}(x))\leq Q_{0}\wQ e^{\epsilon(\ell+|k|)} \) and \( u(f^{k}(x))\leq Q_{0}\wQ e^{\epsilon(\ell+|k|)} \). Substituting these bounds into the upper bound from \eqref{eqn:Lx-1} gives
\[
\|L_{f^k(x)}^{-1}\| \leq \frac{\sqrt{2} Q_0 \wQ e^{\eps(\ell+|k|)}}{2 e^{-\eps(\ell+|k|)}/\pi}
= \frac\pi{\sqrt 2} Q_0 \wQ e^{2\eps\ell} e^{2\eps|k|},
\]
which proves \eqref{eq:Pesin1} since $\pi/\sqrt{2} < 3$.
\end{proof}

\begin{Lemma} \label{lem:Pesin}
For  every \( \ell\geq 1 \) and  \( x\in \Lambda_{\ell} \)
\[
b \geq b(x) \geq b_\ell > 0
\quand
e^{-3\epsilon/\alpha} <   b(x)/  b(f(x))< e^{3\epsilon/\alpha}. 
\]
\end{Lemma}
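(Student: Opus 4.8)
The plan is to derive both assertions directly from the definitions \eqref{eq:bx}, \eqref{eq:bell} and from the estimate \eqref{eq:Pesin1} of Lemma~\ref{lem:Lbound}. First, for the two-sided bound $b \geq b(x) \geq b_\ell > 0$: the inequality $b \geq b(x)$ is immediate, since the sum $\sum_k e^{-3|k|\eps}\|L_{f^k(x)}^{-1}\|$ appearing in \eqref{eq:bx} is at least $\|L_x^{-1}\| \geq 1$ by \eqref{eqn:norm-Lx} (the $k=0$ term), so the factor raised to the power $-1/\alpha$ is at most $1$. For the lower bound $b(x) \geq b_\ell$, I would plug the upper estimate $\|L_{f^k(x)}^{-1}\| \leq 3Q_0\widehat Q^{-1}e^{2\eps\ell}e^{2\eps|k|}$ from \eqref{eq:Pesin1} into the sum in \eqref{eq:bx}. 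This gives
\[
\sum_{k=-\infty}^\infty e^{-3|k|\eps}\|L_{f^k(x)}^{-1}\| \leq 3Q_0\widehat Q^{-1}e^{2\eps\ell}\sum_{k=-\infty}^\infty e^{-|k|\eps},
\]
which is exactly the quantity inside the parentheses defining $b_\ell$ in \eqref{eq:bell}. Since $t \mapsto t^{-1/\alpha}$ is decreasing, a larger sum yields a smaller value, so $b(x) \geq b_\ell$; and $b_\ell > 0$ is clear because the series converges.

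For the bounded-variation property $e^{-3\eps/\alpha} < b(x)/b(f(x)) < e^{3\eps/\alpha}$, the idea is to compare the two defining sums termwise after a reindexing. Writing $S(x) := \sum_{k\in\ZZ} e^{-3|k|\eps}\|L_{f^k(x)}^{-1}\|$, we have $b(x)/b(f(x)) = (S(f(x))/S(x))^{1/\alpha}$, so it suffices to show $e^{-3\eps} < S(f(x))/S(x) < e^{3\eps}$, i.e. $e^{-3\eps} S(x) < S(f(x)) < e^{3\eps} S(x)$. Reindexing the sum for $S(f(x))$ via $k \mapsto k-1$ (so that $f^k(f(x)) = f^{k+1}(x)$) gives
\[
S(f(x)) = \sum_{k\in\ZZ} e^{-3|k|\eps}\|L_{f^{k+1}(x)}^{-1}\| = \sum_{j\in\ZZ} e^{-3|j-1|\eps}\|L_{f^{j}(x)}^{-1}\|,
\]
and now comparing this with $S(x) = \sum_j e^{-3|j|\eps}\|L_{f^j(x)}^{-1}\|$ term by term, the ratio of coefficients is $e^{-3\eps(|j-1|-|j|)}$, which lies in $[e^{-3\eps}, e^{3\eps}]$ since $\big||j-1| - |j|\big| \leq 1$. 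As all terms $\|L_{f^j(x)}^{-1}\| \geq 1 > 0$ are positive and the series converge absolutely, the same bounds pass to the sums, giving the desired inequalities (strict, since the coefficient ratios are not all equal to the extremes). Raising to the power $1/\alpha$ completes the proof.

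I do not expect any genuine obstacle here: the lemma is a routine consequence of the reindexing trick combined with the already-established estimate \eqref{eq:Pesin1}. The only point requiring a little care is making the inequalities strict rather than merely weak, which follows because the coefficient ratio $e^{-3\eps(|j-1|-|j|)}$ is not constant over $j\in\ZZ$ (it takes different values for $j\leq 0$ versus $j\geq 1$), so neither the upper nor the lower extreme is attained uniformly.
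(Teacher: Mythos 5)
Your proof follows essentially the same approach as the paper's: establish $b \ge b(x) \ge b_\ell$ by bounding the sum in \eqref{eq:bx} below by $\|L_x^{-1}\| \ge 1$ and above via \eqref{eq:Pesin1}, then shift the index in the series defining $b(f(x))$ and compare with the series for $b(x)$ termwise, using $\big||k-1|-|k|\big| = 1$. You are in fact slightly more careful than the paper, which only derives the weak inequalities $b(x)e^{-3\eps/\alpha}\le b(f(x))\le b(x)e^{3\eps/\alpha}$; your observation that the coefficient ratio $e^{-3\eps(|j-1|-|j|)}$ takes both values $e^{\pm3\eps}$, so neither extreme is attained, is what actually delivers the strict inequalities as stated in the lemma.
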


\begin{proof}
From \eqref{eq:Pesin1}, the sum in the definition of \( b(x) \) converges. Therefore, for \( \epsilon \) small and \( x\in \Lambda_{\ell} \) we have 
\[
1\leq \|L_{x}^{-1}\|\leq 
\sum_{k=-\infty}^{\infty} e^{- 3 |k|\epsilon} \|L_{f^{k}(x)}^{-1}\| \leq 
  3 Q_{0}\wQ e^{2 \epsilon \ell} \sum_{k=-\infty}^{\infty} e^{- |k|\epsilon}
\]
and thus  \( b\geq b(x) \geq b_{\ell}>0 \)
as in the first part of the statement. Moreover 
\begin{align*}
b(f(x)) &:= b  \left(\sum_{k=-\infty}^{\infty} e^{- 3 |k|\epsilon} \|L_{f^{k+1}(x)}^{-1}\|\right)^{-1/\alpha} 
\\ &= 
b   \left(\sum_{k=-\infty}^{\infty} e^{- 3 |k-1|\epsilon} \|L_{f^{k}(x)}^{-1}\| e^{3(-|k|+|k|)\epsilon}\right)^{-1/\alpha} 
\\ & = b  \left(\sum_{k=-\infty}^{\infty} e^{- 3 |k|\epsilon} \|L_{f^{k}(x)}^{-1}\|  e^{3(-|k-1|+|k|)\epsilon}\right)^{-1/\alpha}. 
\end{align*}
Notice that \( -|k-1|+|k|  \) can only take the values \( +1 \) or \( -1 \), depending on the value of \( k \), and therefore 
\[
b(x) e^{-3\epsilon/\alpha} 
\leq b  \left(\sum_{k=-\infty}^{\infty} e^{- 3 |k|\epsilon} \|L_{f^{k}(x)}^{-1}\|  e^{3(-|k-1|+|k|)\epsilon}\right)^{-1/\alpha}
\leq b(x) e^{3\epsilon/\alpha}.
\]
This  completes the proof.  
 \end{proof}

\section{Hyperbolicity in Lyapunov charts: Theorem \ref{thm:existsbranch}}
\label{sec:lyap}

The key motivation for introducing Lyapunov charts is to show that the map \( f \) restricted to some neighbourhood of points of \( \Lambda\) is uniformly hyperbolic in Lyapunov coordinates and to study the way \( f \) maps such neighbourhoods to each other. To state this  precisely, 
we consider $x\in \Lambda$ and recall from Figure \ref{fig:lyap-chart}
that the Lyapunov chart $\Psi_x$ maps $\mathcal{B}_x \subset \RR^2$ onto $\mathcal{N}_x \subset M$, so that a point $y\in \mathcal{N}_x$ is represented by coordinates $\underline{y} = \Psi_x^{-1}(y) \in \mathcal{B}_x$. (As in \S\ref{sec:po}, we will use underlined variables to represent coordinates in $\mathcal{B}_x \subset \RR^2$.) To represent the map $f$ in these coordinates, we write 
\[
\mathcal{B}_x^{s,1} := \Psi_x^{-1} (\mathcal{N}_x \cap f^{-1} \mathcal{N}_{f(x)})
\quand
\mathcal{B}_{f(x)}^{u,1} := \Psi_{f(x)}^{-1} (f(\mathcal{N}_x) \cap \mathcal{N}_{f(x)})
\]
and denote by 
\[
f_x := \Psi_{f(x)}^{-1} \circ f \circ \Psi_x \colon \mathcal{B}_x^{s,1} \to \mathcal{B}_{f(x)}^{u,1}
\]
the corresponding diffeomorphism;
see Figure \ref{fig:strips-and-cones}. Similarly, if \( x\in \Lambda_{\ell_{0}} \), and  \( f(x)\in \Lambda_{\ell_{1}} \) for some \( \ell_{1} \) with \( |\ell_{1}-\ell_{0}|\leq 1 \) we let  \( \bar\ell = (\ell_{0}, \ell_{1})\) and write
 \begin{equation}\label{eq:Bsuell}
\mathcal{B}_{x, \barl}^{s,1} := \Psi_x^{-1} (\mathcal{N}_x^{(\ell_{0})} \cap f^{-1} \mathcal{N}_{f(x)}^{(\ell_{1})})
\quand
\mathcal{B}_{f(x), \barl}^{u,1} := \Psi_{f(x)}^{-1} (f \mathcal{N}_x^{(\ell_{0})}\cap \mathcal{N}_{f(x)}^{(\ell_{1})})
\end{equation}

By \eqref{eq:regB},  \( \mathcal{B}_{x, \bar\ell}^{s,1} \subseteq  \mathcal{B}_x^{s,1} \) and \( \mathcal{B}_{f(x), \bar\ell}^{u,1}\subseteq \mathcal{B}_{f(x)}^{u,1} \) and therefore \( f_{x} \) restricts to a map 
\[
f_x \colon \mathcal{B}_{x, \bar\ell}^{s,1} \to \mathcal{B}_{f(x), \bar\ell}^{u,1}
\]
For simplicity we will just use the same notation \( f_{x} \) in both cases. We also mention that in the definitions of  the sets \( \mathcal{B}_{x, \bar\ell}^{s,1}, \mathcal{B}_x^{s,1},  \mathcal{B}_{f(x), \bar\ell}^{u,1}, \mathcal{B}_{f(x)}^{u,1} \)  we implicitly mean the  \emph{connected component},  containing \( x \) or \( f(x) \) respectively, of these sets which \emph{a priori} may not be connected. 
The main result of this section states that the map \( f_{x} \) is uniformly hyperbolic and the sets just defined have a certain specific geometry, see   Figure~\ref{fig:strips-and-cones}.

\begin{figure}[tbp]
\includegraphics[width=\textwidth]{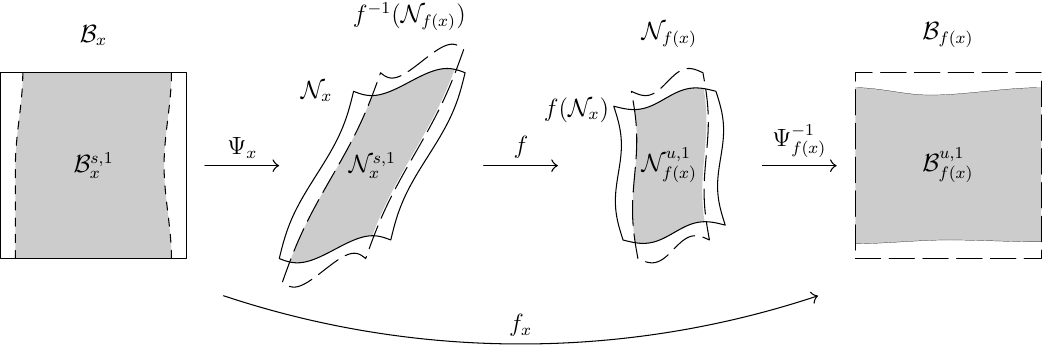}
\caption{The map $f$ in Lyapunov coordinates.}
\label{fig:strips-and-cones}
\end{figure}

A key part of the statement of Theorem \ref{thm:existsbranch} below is that certain sets are stable and unstable strips which are \emph{strictly} contained in the sets \( \mathcal B_{x}^{(\ell)} \). It is convenient to introduce the following sets. 

\begin{Definition}\label{def:tilde-BN}
Given $r>0$, consider the sets
\begin{align*}
\widetilde{\mathcal B}_r^s &:= [-e^{-\lambda/3}r,e^{-\lambda/3}r]\times [-r,r] \subset \RR^2, \\
\widetilde{\mathcal B}_r^u &:= [-r,r] \times [-e^{-\lambda/3}r,e^{-\lambda/3}r] \subset \RR^2
\end{align*}
and let  
\begin{equation}\label{eq:Ntilde}
\widetilde{\mathcal{N}}_x^{s/u} = \Psi_x(\widetilde{\mathcal B}_{b(x)}^{s/u})
\quad\text{ and } \quad 
\widetilde{\mathcal{N}}_{x,\ell}^{s/u} = \Psi_x(\widetilde{\mathcal B}_{b_\ell}^{s/u}).
\end{equation}
\end{Definition}

Recall that \( K_{x,y}^{s/u} \) and \( \widetilde{K}_{x,y}^{s/u} \) are defined in \eqref{eqn:M-cones}.

\begin{maintheorem}\label{thm:existsbranch}
Let $f\colon M\to M$ be a $C^{1+\alpha}$ surface diffeomorphism. 
Fix $\chi>\lambda>0$, $0< \eps < \epsilon_{1}(f,\chi,\lambda)$,  and \( \omega \) satisfying \eqref{eqn:omega}.  Then there is $b>0$ such that  for every $(\chi,\eps)$-hyperbolic set $\Lambda$,
 \( \bar\ell=(\ell_{0}, \ell_{1})  \) with \( |\ell_{0}-\ell_{1}|\leq 1 \) and \( x\in \Lambda_{\ell_{0}}\) with \( f(x)\in \Lambda_{\ell_{1}} \), the following holds: 
for every  $\underline{y}\in \mathcal{B}_x^{s,1}$, $\underline{z}\in \mathcal{B}_{f(x)}^{u,1}$,  $\underline{v}^{u/s} \in K^{u/s}$,
\begin{equation}\label{eqn:cones-0}
\begin{aligned}
D_{\underline{y}} f_x(\underline{v}^u) \in \widetilde K^u,\qquad
&\|D_{\underline{y}} f_x(\underline{v}^u)\| \geq e^{\lambda/2} \|\underline{v}^u\|, \\
D_{\underline{z}} f_x^{-1}(\underline{v}^s) \in \widetilde K^s,\qquad
&\|D_{\underline{z}} f_x^{-1}(\underline{v}^s)\| \geq e^{\lambda/2} \|\underline{v}^s\|.
\end{aligned}
\end{equation}
 Moreover,  the sets \( \mathcal{B}_{x, \barl}^{s,1}, \mathcal{B}_{f(x), \barl}^{u,1} \)  are strongly stable and  unstable strips, satisfying
\begin{equation}\label{eq:strips}
 \mathcal{B}_{x, \barl}^{s,1} \subseteq \widetilde{\mathcal B}_{b_{\ell_{0}}}^s 
\quad \text{ and } \quad  
\mathcal{B}_{f(x), \barl}^{u,1} \subseteq \widetilde{\mathcal B}_{b_{\ell_{1}}}^u. 
\end{equation}

\end{maintheorem}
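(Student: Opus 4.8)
The plan is to show that, in Lyapunov coordinates, $f_x$ is a uniformly small $C^1$-perturbation of its linearization at the origin, and that this linearization is, by the Oseledets--Pesin Reduction Theorem (Theorem~\ref{prop:OselPes}), the diagonal map $\operatorname{diag}(A_x,B_x)$ with $e^\lambda\le A_x<Q_1$ and $Q_1^{-1}<B_x\le e^{-\lambda}$. Indeed $D_0\exp_x=\operatorname{Id}$ gives $D_0\Psi_x=L_x$, hence $D_0 f_x=L_{f(x)}^{-1}\circ D_xf\circ L_x=\operatorname{diag}(A_x,B_x)$. Once $\|D_{\underline y}f_x-D_0f_x\|$ is controlled uniformly, all three conclusions of the theorem follow by soft arguments.

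The core of the proof is the bound $\|D_{\underline y}f_x-D_0f_x\|\le Q_2 b^\alpha$ for all $\underline y\in\mathcal B_x^{s,1}$, with $Q_2=Q_2(f,\chi,\lambda,\eps)$ independent of $\Lambda,\ell_0,\ell_1,\underline y$. Writing $f_x=\Psi_{f(x)}^{-1}\circ f\circ\Psi_x$ with $\Psi_x=\exp_x\circ L_x$ and differentiating via the chain rule, $D_{\underline y}f_x-D_0f_x$ splits into three terms measuring, respectively, the deviation of $D\Psi_x$ from $L_x$, the variation of $Df$, and the deviation of $D\Psi_{f(x)}^{-1}$ from $L_{f(x)}^{-1}$. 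The first is harmless since $\|L_x\|\le 1$ (as $s(x),u(x)\ge\sqrt 2$ by Lemma~\ref{lem:subound}) and $\exp$ is uniformly $C^2$ on the compact manifold $M$. The dangerous factor in the other two is $\|L_{f(x)}^{-1}\|$ (resp.\ $\|L_x^{-1}\|$), which grows like $e^{2\eps\ell_1}$ (resp.\ $e^{2\eps\ell_0}$) and is unbounded in $\ell$. Here the definition of $b(x)$ in \eqref{eq:bx} is exactly tailored to cancel this: since $\|L_{f(x)}^{-1}\|$ appears with weight $e^{-3\eps}$ in the weighted sum defining $b(x)$, one has $b(x)^\alpha\|L_{f(x)}^{-1}\|\le b^\alpha e^{3\eps}$ and, using $\alpha\le 1$ and $\|L^{-1}\|\ge 1$, also $b(x)\|L_{f(x)}^{-1}\|\le b\,e^{3\eps/\alpha}$; symmetrically for $b(f(x))\|L_x^{-1}\|$. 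Feeding these into the $\alpha$-Hölder continuity of $Df$ (valid since $f\in C^{1+\alpha}$) and the $C^2$ bounds on the exponential maps, each of the three terms becomes $\le Q_2b^\alpha$. \textbf{This is the main obstacle: making the perturbation bound \emph{uniform over all $(\chi,\eps)$-hyperbolic sets and all regular levels} is precisely what forces the particular form of $b(x)$; once it is in hand, the rest is comparatively routine.} We then fix $b>0$ small enough that $Q_2b^\alpha$ lies below every threshold used below.

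Given $\|D_{\underline y}f_x-\operatorname{diag}(A_x,B_x)\|\le Q_2b^\alpha$, the cone and expansion estimates \eqref{eqn:cones-0} are immediate. For $\underline v^u=(v_1,v_2)\in K^u$ we have $|v_2|<\omega|v_1|$, so $\operatorname{diag}(A_x,B_x)\underline v^u$ has ``slope'' $|B_xv_2|/|A_xv_1|<e^{-2\lambda}\omega$, well inside $\widetilde K^u=\{|v_2|<e^{-\lambda}\omega|v_1|\}$ with a fixed margin, and $\|\operatorname{diag}(A_x,B_x)\underline v^u\|\ge A_x|v_1|\ge e^\lambda\|\underline v^u\|/\sqrt{1+\omega^2}\ge e^{2\lambda/3}\|\underline v^u\|$ by the second inequality in \eqref{eqn:omega}; a perturbation of size $Q_2b^\alpha$ then leaves the image in $\widetilde K^u$ and still expanding by $\ge e^{\lambda/2}$. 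The statements for $Df_x^{-1}$ and $\widetilde K^s$ follow symmetrically, since $D_0f_x^{-1}=\operatorname{diag}(A_x^{-1},B_x^{-1})$ with $A_x^{-1}\le e^{-\lambda}$ and $B_x^{-1}\ge e^\lambda$.

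For the strip statement \eqref{eq:strips}, observe that $\mathcal B_{x,\barl}^{s,1}$ equals the connected component of the origin in $\{\underline y\in\mathcal B_x^{(\ell_0)}:f_x(\underline y)\in\mathcal B_{f(x)}^{(\ell_1)}\}$ and that $\mathcal B_{f(x),\barl}^{u,1}=f_x(\mathcal B_{x,\barl}^{s,1})$. By the cone estimates just established, $f_x$ sends horizontal (unstable) curves to strongly unstable curves, expanding their $e_1$-extent, while $f_x^{-1}$ sends vertical (stable) curves to strongly stable curves, expanding their $e_2$-extent; moreover $f_x$ contracts the $e_2$-direction by $\le e^{-\lambda}$ and $f_x^{-1}$ the $e_1$-direction by $\le e^{-\lambda}$, up to the error $Q_2b^\alpha$. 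Since $|\ell_0-\ell_1|\le 1$ and $b_{\ell+1}/b_\ell=e^{-2\eps/\alpha}$ with $\eps<\eps_1(f,\chi,\lambda)\le\lambda\alpha/18$, a routine computation (also using \eqref{eqn:omega}) gives $e^{\lambda/2}b_{\ell_0}>b_{\ell_1}$, $e^{-\lambda/2}b_{\ell_1}<e^{-\lambda/3}b_{\ell_0}$, $e^{-\lambda}b_{\ell_0}<e^{-\lambda/3}b_{\ell_1}$, and their symmetric counterparts. Consequently: the images under $f_x$ of the two components of $\partial^u\mathcal B_x^{(\ell_0)}$ are strongly unstable curves long enough to cross $\mathcal B_{f(x)}^{(\ell_1)}$ completely in the $e_1$-direction; the images of $\partial^s\mathcal B_x^{(\ell_0)}$ lie entirely outside $\mathcal B_{f(x)}^{(\ell_1)}$; and the preimages under $f_x$ of the two components of $\partial^s\mathcal B_{f(x)}^{(\ell_1)}$ are full-length strongly stable curves in $\mathcal B_x^{(\ell_0)}$ whose $e_1$-coordinates lie in $[-e^{-\lambda/3}b_{\ell_0},e^{-\lambda/3}b_{\ell_0}]$. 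Hence $\mathcal B_{x,\barl}^{s,1}$ is bounded by two full-length strongly stable curves and two pieces of $\partial^u\mathcal B_x^{(\ell_0)}$, i.e.\ it is a strongly stable strip, and it is contained in $\widetilde{\mathcal B}_{b_{\ell_0}}^s$; applying $f_x$, which contracts $e_2$ by $\le e^{-\lambda}$, shows that $\mathcal B_{f(x),\barl}^{u,1}$ is a strongly unstable strip contained in $\widetilde{\mathcal B}_{b_{\ell_1}}^u$, completing the proof.
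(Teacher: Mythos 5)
Your proof is correct and follows essentially the same approach as the paper: identify $D_0 f_x = \operatorname{diag}(A_x,B_x)$ via the Oseledets--Pesin Reduction Theorem, establish the uniform $C^1$-perturbation bound $\|D_{\underline y}f_x - D_0 f_x\| \lesssim b^\alpha$ by exploiting the way the weighted sum in the definition of $b(x)$ cancels the $\|L_{f(x)}^{-1}\|$ factor, then derive the cone/expansion estimates by a soft perturbation argument and the strip containments by tracking the images and preimages of boundary curves. The only cosmetic difference is that you decompose the chain rule into three terms whereas the paper conjugates $\hat f_x = \exp_{f(x)}^{-1}\circ f\circ\exp_x$ by the linear maps $L_{f(x)}^{-1}$, $L_x$ and bounds the $\alpha$-Hölder variation of $D\hat f_x$ directly (Lemmas~\ref{lem:Df0} and~\ref{lem:neardiff}); both yield the same cancellation and the same conclusion.
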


\begin{Remark}\label{rem:existsbranch}

Notice that the  estimates \eqref{eqn:cones-0}  
hold in particular  for all $\underline{y}\in \mathcal{B}_{x, \barl}^{s,1} $, $\underline z\in \mathcal{B}_{f(x), \barl}^{u,1}$ but they 
\emph{do not depend on} \( \ell \) and give \emph{one-step hyperbolicity} in the sense that the expansion and contraction is exhibited immediately after one iteration.  This is in contrast with the fact that if \( x\in \Lambda_{\ell} \) for large \( \ell \) we have very poor hyperbolicity estimates on the surface, cf. \eqref{eq:hypest} and \eqref{lambdal}. This is of course the effect of the Lyapunov change of coordinates which has controlled, but very large, distortion, and applies to very small neighbourhoods of \( x \) when \( x\in \Lambda_{\ell} \) for large \( \ell \), recall \eqref{eqn:norm-Lx} and \eqref{eq:bx}. The crucial advantage of writing the estimates as in \eqref{eqn:cones-0} is that we can iterate the map any number of times without  loss of hyperbolicity. We only need to worry about the effect of the distortion at the beginning and end of any arbitrarily long piece of orbit in order to recover the actual hyperbolicity estimates for the original map \( f \) on the surface.
\end{Remark}

In the rest of this section we prove Theorem \ref{thm:existsbranch}. 
In \S\ref{sec:derivest} we establish the derivative estimates  \eqref{eqn:cones-0}, in  \S\ref{sec:cone-inv} we use these to prove the invariance property of the cones, and in \S\ref{sec:prop-strips} we prove that  \( \mathcal{B}_{x,\barl}^{s,1}, \mathcal{B}_{x,\barl}^{u,1} \)  are  stable and unstable strips and satisfy \eqref{eq:strips}. 

\subsection{Derivative estimates}\label{sec:derivest}
Here we prove the hyperbolicity estimates  \eqref{eqn:cones-0}. We start with  the special case \( \underline y =0\). 
\begin{Lemma}\label{lem:hyp0}
For every \(  x\in \Lambda  \), \(\underline v^{u}\in K^{u} \),  \( \underline v^{s} \in K^{s}\), 
\begin{equation}\label{eq:hyp0}
\| D_{0}f_{x}(\underline v^{u}) \| \geq e^{2\lambda/3}\|\underline v^{u} \|
\quad \text{ and } \quad 
\| D_{0}f_{x}^{-1}(\underline v^{s}) \| \geq e^{2\lambda/3}\|\underline v^{s}\|. 
\end{equation}
\end{Lemma}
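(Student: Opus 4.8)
The plan is to reduce the statement entirely to the Oseledets--Pesin Reduction Theorem (Theorem~\ref{prop:OselPes}) together with the choice of cone width in \eqref{eqn:omega}. First I would unwind the definitions: $f_x = \Psi_{f(x)}^{-1}\circ f\circ \Psi_x$ with $\Psi_x = \exp_x\circ L_x$, and since $D_0\exp_x = \Id$ and $\Psi_x(0)=x$ (so also $f_x(0)=0$), the chain rule gives
\[
D_0 f_x = L_{f(x)}^{-1}\circ D_x f\circ L_x, \qquad D_0 f_x^{-1} = (D_0 f_x)^{-1}.
\]
By Theorem~\ref{prop:OselPes} (see \eqref{eq:Dfx1} and \eqref{eq:OsPes1}), $D_0 f_x$ is therefore the diagonal matrix with entries $A_x, B_x$ satisfying $A_x\geq e^{\lambda}$ and $B_x\leq e^{-\lambda}$; consequently $D_0 f_x^{-1}$ is diagonal with entries $A_x^{-1}, B_x^{-1}$, where $B_x^{-1}\geq e^{\lambda}$.

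Next I would do the elementary cone computation. For $\underline v^u = (v_1,v_2)\in K^u$ we have $|v_2| < \omega|v_1|$ by \eqref{def:cones}, hence $\|\underline v^u\|^2 = v_1^2 + v_2^2 \leq (1+\omega^2)v_1^2$, and therefore
\[
\|D_0 f_x(\underline v^u)\|^2 = A_x^2 v_1^2 + B_x^2 v_2^2 \ \geq\ e^{2\lambda} v_1^2 \ \geq\ \frac{e^{2\lambda}}{1+\omega^2}\,\|\underline v^u\|^2 .
\]
Taking square roots and using the second inequality of \eqref{eqn:omega}, namely $e^{\lambda}/\sqrt{1+\omega^2}\geq e^{2\lambda/3}$, gives the first estimate in \eqref{eq:hyp0}. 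The stable case is symmetric: for $\underline v^s = (v_1,v_2)\in K^s$ we have $|v_1| < \omega|v_2|$, so $\|\underline v^s\|^2\leq (1+\omega^2)v_2^2$ and
\[
\|D_0 f_x^{-1}(\underline v^s)\|^2 = A_x^{-2} v_1^2 + B_x^{-2} v_2^2 \ \geq\ e^{2\lambda} v_2^2 \ \geq\ \frac{e^{2\lambda}}{1+\omega^2}\,\|\underline v^s\|^2,
\]
and the same inequality from \eqref{eqn:omega} completes the argument.

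I do not expect any real obstacle here: the only point requiring care is the identification of the chart derivatives at the base point ($D_0\Psi_x = L_x$, which rests on $D_0\exp_x = \Id$), after which everything follows from Theorem~\ref{prop:OselPes} and the choice of $\omega$. This lemma is the ``$\underline y = 0$'' seed; the work in the remainder of \S\ref{sec:derivest} will be to propagate it to arbitrary $\underline y\in\mathcal B_x^{s,1}$, where the extra factor $e^{2\lambda/3}\to e^{\lambda/2}$ in \eqref{eqn:cones-0} will absorb the $C^{1+\alpha}$ variation of $Df_x$ across the small neighbourhood $\mathcal B_x$, controlled via Hölder continuity of $Df$ and the bounds \eqref{eqn:norm-Lx} on $\|L_x^{-1}\|$ together with the definition \eqref{eq:bx} of $b(x)$.
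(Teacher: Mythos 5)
Your proof is correct and takes essentially the same route as the paper: reduce $D_0 f_x$ to the diagonal matrix $\mathrm{diag}(A_x,B_x)$ via $D_0\exp_x=\Id$ and the Oseledets--Pesin Reduction Theorem, then run the elementary cone estimate and invoke the second inequality of \eqref{eqn:omega}. The only cosmetic difference is that the paper passes through the intermediate map $\hat f_x$ (introduced there for use in the later Lemmas \ref{lem:Df0}--\ref{lem:neardiff}), whereas you state $D_0 f_x = L_{f(x)}^{-1}\circ D_xf\circ L_x$ directly, and you spell out the stable case explicitly where the paper says ``similar.''
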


Before proving Lemma \ref{lem:hyp0},  we set up some notation.  Consider the map
\begin{equation}\label{eq:fhat}
 \hat f_{x}:= \exp_{f(x)}^{-1}   \circ f\circ  \exp_{x}\colon T_x M \to T_{f(x)} M.
 \end{equation}
Then since 
  \(
f_{x}:= \Psi_{f(x)}^{-1} \circ f \circ \Psi_x = L_{f(x)}^{-1}  \circ \exp_{f(x)}^{-1}  \circ  f \circ \exp_x \circ L_{x} 
\), we have 
 \begin{equation}\label{eq:f}
f_{x}
= L_{f(x)}^{-1}  \circ \hat f_{x}\circ L_{x}.
\end{equation}
Given \(  v\in T_{x}M  \), we have
\begin{equation}\label{eq:Dfhat}
D_{v}\hat f_{x} = D_{f\circ \exp_{x}(v)}\exp^{-1}_{f(x)} \circ D_{\exp_{x}(v)}f \circ D_{v}\exp_{x}.
\end{equation}
Also  \(  f_{x}\colon \mathcal B_{x}^{s,1}\to \mathcal B_{f(x)}^{u,1}  \)  and for every  \( \underline y\in \mathcal B^{s,1}_{x} \), \( D_{\underline y}f_{x} = D_{\underline y}(L_{f(x)}^{-1}  \circ \hat f_{x}\circ L_{x}) \).  Using that \(  L_{x}, L_{f(x)}  \) are linear maps, we have 
\begin{equation}\label{eq:chart1}
D_{\underline y}f_{x}  =L_{f(x)}^{-1} \circ D_{L_{x}(\underline y)}\hat f_{x} \circ L_{x}
\end{equation}
With this notation we can easily prove Lemma \ref{lem:hyp0}. 

\begin{proof}[Proof of Lemma \ref{lem:hyp0}]
For \( \underline y=0 \) we have \( L_{x}(\underline y) = L_{x}(0) = 0 \), so \eqref{eq:chart1} gives
\begin{equation*}
D_{0}f_{x} =L_{f(x)}^{-1} \circ D_{0}\hat f_{x} \circ L_{x}.
\end{equation*}
Now \(  \exp_{x}(0)=x \) gives \(  f\circ \exp_{x}(0)=f(x) \),
and  the exponential function  is tangent to the identity at 0, i.e., \( D_{0}\exp_{x} = \Id \) and \( D_{f(x)}\exp_{f(x)}^{-1}= \Id \), so \eqref{eq:Dfhat} implies 
\( D_{0}\hat f_{x} = D_{x} f \), which means that we have 
\begin{equation}\label{eq:chart2}
D_{0}f_{x} = L_{f(x)}^{-1} \circ D_{x} f \circ L_{x}. 
\end{equation}
Writing $\underline v^u = v_1 e_1 + v_2 e_2$, \eqref{eq:Dfx1} and \eqref{eq:chart2} give
\(
D_0 f_x(\underline v^u) = A_x v_1 e_1 + B_x v_2 e_2.
\)
Since $\underline v^u\in K^u$ implies $|v_2| \leq \omega |v_1|$, we conclude that
\[
\frac{\|D_0 f_x(\underline v^u)\|^2}{\|\underline v^u\|^2}
= \frac{A_x^2 v_1^2 + B_x^2 v_2^2}{v_1^2+v_2^2} \geq \frac{A_x^2}{1+(v_2/v_1)^2} \geq \frac{e^{2\lambda}}{1+\omega^2} \geq e^{4\lambda/3},
\]
where the last two inequalities use \eqref{eq:OsPes1} and \eqref{eqn:omega}, respectively.  This proves the first half of \eqref{eq:hyp0}; the second is similar.
\end{proof}

By Lemma \ref{lem:hyp0} and continuity of the differential, the expansion estimates in \eqref{eqn:cones-0} hold in some neighbourhood of \( 0 \in \mathcal B^{s,1}_{x} \).  We  show that this neighbourhood contains \( B^{s,1}_{x} \), which is  the key part of the proof of Theorem \ref{thm:existsbranch}. The main step is the following estimate for the derivatives of \( \hat f_{x}\). 

\begin{Lemma}\label{lem:Df0}
There exists \( Q_2>0 \) such that for all \( x \in \Lambda\) and \(  \underline y, \underline z \in\mathcal B^{s,1}_{x}  \), 
\[
\|D_{L_{x}(\underline y)}\hat f_{x}   -  D_{L_{x}(\underline z)}\hat f_{x}\| \leq Q_2 \|\underline y - \underline z\|^{\alpha}.
\]
\end{Lemma}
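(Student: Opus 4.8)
The plan is to use the chain rule decomposition \eqref{eq:Dfhat} for $D_v\hat f_x$ and estimate the $\alpha$-H\"older variation of each of the three factors separately, then combine them via the product rule for H\"older maps. First I would recall that $\hat f_x = \exp_{f(x)}^{-1}\circ f\circ \exp_x$ is a $C^{1+\alpha}$ map defined on $\mathcal{B}_x\subseteq T_xM$, and that its derivative is the composition
\[
D_v\hat f_x = D_{f(\exp_x v)}\exp_{f(x)}^{-1}\circ D_{\exp_x v}f\circ D_v\exp_x.
\]
The key point is that each of the three factors is itself an $\alpha$-H\"older function of $v$ with H\"older constant \emph{uniform in $x\in\Lambda$}: the outer factor because $\exp^{-1}$ is a fixed $C^\infty$ map of the (compact) manifold and $v\mapsto f(\exp_x v)$ is uniformly Lipschitz; the middle factor because $f\in C^{1+\alpha}$, so $w\mapsto D_wf$ is $\alpha$-H\"older with constant depending only on $f$, and $v\mapsto \exp_x v$ is uniformly Lipschitz; the inner factor because $\exp$ is $C^\infty$. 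The norms of all three factors are also uniformly bounded (again using compactness of $M$ and $f\in C^{1+\alpha}$), so a standard ``product of H\"older functions is H\"older'' estimate gives a constant $\widehat Q_2>0$, depending only on $f$, with $\|D_{v}\hat f_x - D_{w}\hat f_x\|\leq \widehat Q_2\|v-w\|^\alpha$ for all $v,w\in\mathcal{B}_x$ and all $x\in\Lambda$.

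The second step is to transfer this from $v,w\in T_xM$ to $\underline y,\underline z\in\mathcal{B}_x^{s,1}\subset\RR^2$ via the linear map $L_x$, i.e. set $v = L_x(\underline y)$, $w = L_x(\underline z)$. Here $\|v-w\| = \|L_x(\underline y - \underline z)\|\leq \|L_x\|\,\|\underline y - \underline z\|$. Since $\|L_x\|\leq 1$ (this follows from \eqref{eqn:norm-Lx}, which gives $\|L_x^{-1}\|\geq 1$, together with the explicit form of $L_x$ in the proof of Lemma~\ref{lem:Lbound} showing $\|L_x\|\leq 1$; alternatively $\|L_x\| = 1/\min(s(x),u(x))\cdot(\text{angle factor})^{-1}$... actually one checks $\|L_x\|\leq 1$ directly from \eqref{def:L} since $u(x),s(x)\geq\sqrt 2$ and the angle is at most $\pi/2$), we get $\|v-w\|\leq\|\underline y-\underline z\|$, hence $\|D_{L_x(\underline y)}\hat f_x - D_{L_x(\underline z)}\hat f_x\|\leq\widehat Q_2\|\underline y-\underline z\|^\alpha$. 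Taking $Q_2 = \widehat Q_2$ completes the proof.

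The main obstacle I anticipate is making the uniformity in $x$ fully rigorous: one must be careful that the H\"older constant of $w\mapsto D_wf$ in local coordinates, the bounds on $D\exp$ and $D\exp^{-1}$, and their domains of definition are all controlled uniformly over the compact manifold $M$ — this requires working in a finite atlas of normal-coordinate charts and using that $b$ is chosen small enough (Remark~\ref{rem:manifoldest}) that $\mathcal{B}_x$ lies within a fixed such chart around $x$. A secondary subtlety is confirming $\|L_x\|\leq 1$ (or at least $\|L_x\|\leq Q$ for a universal $Q$, which would suffice after absorbing $Q^\alpha$ into $Q_2$); if $\|L_x\|$ is not bounded by $1$ one simply uses the crude bound $\|L_x\|\leq \|L_x^{-1}\|\cdot|\det L_x|^{-1}$ or, more simply, notes from the matrix form of $L_x$ in the proof of Lemma~\ref{lem:Lbound} that its entries are bounded by $1/\sqrt 2$ in absolute value, giving $\|L_x\|\leq 1$. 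Neither obstacle is deep; both are routine once the bookkeeping is set up.
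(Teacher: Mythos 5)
Your proposal is correct and follows essentially the same route as the paper: decompose $D_v\hat f_x$ via the chain rule \eqref{eq:Dfhat}, establish Lipschitz/H\"older bounds for each of the three factors uniformly in $x$ using compactness of $M$ and $f\in C^{1+\alpha}$, combine via the product rule, and then pull back through $L_x$ using $\|L_x\|\leq 1$ (which, as you note, follows from $s(x),u(x)\geq\sqrt 2$). The paper packages the uniformity by invoking that $M$ is a compact $C^2$ Riemannian manifold rather than a finite atlas, but the substance is identical.
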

\begin{proof}
For simplicity we write \(  v:= L_{x}(\underline y), u:= L_{x}(\underline z)  \). Since \(  L_{x}  \) is a contraction and 
\(  \underline y, \underline z \in \mathcal B^{s,1}_{x}\subseteq  [-b(x), b(x)]^{2}\subset [-b, b]^{2}    \), 
we have \(  \|v-u\|\leq \|\underline y - \underline z\| \leq 2\sqrt{2} b  \). In particular  it is sufficient to prove that 
\(
\|D_{v}\hat f_{x}   -  D_{u}\hat f_{x}\| \leq Q \|v - u\|^{\alpha}
\)
for all \(  u, v \in T_{x}M  \) with \(  \|u\|, \|v\|\leq b  \). 
Since $M$ is a $C^2$ Riemannian manifold, there is $Q_3>0$ such that 
\(
 \|D_{v}\exp_{x} - D_{u}\exp_{x}\|\leq Q_3\|u-v\|
 \), 
 \(
 \|D_{f\circ \exp_{x}v}\exp_{x}^{-1} - D_{f\circ \exp_{x}u}\exp_{x}^{-1}\|\leq Q_3\|u-v\|
\)
for all $x\in M$ and $u,v\in T_x M$ with $\|u\|,\|v\|\leq 1$.  Moreover, there is $Q_4>0$ such that $d(\exp_x(u),\exp_x(v)) \leq Q_4 \|u-v\|$ for all such $x,u,v$, and hence
since $Df$ is H\"older continuous we have
\(
\|D_{\exp_{x}(v)}f - D_{\exp_{x}(u)}f\|\leq |Df|_\alpha Q_4^\alpha \|u-v\|^{\alpha}.
\)
Then the definition of 
\(  D_{v}\hat f_{x} , D_{u}\hat f_{x}  \) in  \eqref{eq:Dfhat} gives the result. 
\end{proof}

\begin{Lemma}\label{lem:neardiff}
There exists \( Q_5>0 \) such that for all \( x \in \Lambda\) and \(  \underline y, \underline z \in\mathcal B^{s,1}_{x}  \), 
\[
\|D_{\underline y} f_{x}   -  D_{\underline z} f_{x}\| \leq Q_5 b^{\alpha}.
\]
\end{Lemma}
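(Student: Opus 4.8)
The plan is to reduce the estimate to the Hölder bound for $\hat f_x$ proved in Lemma~\ref{lem:Df0}, and then to observe that the definition \eqref{eq:bx} of $b(x)$ contains exactly the weight needed to absorb the (a priori unbounded) norm $\|L_{f(x)}^{-1}\|$.

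First I would write, using \eqref{eq:chart1},
\[
D_{\underline y}f_x - D_{\underline z}f_x = L_{f(x)}^{-1}\circ\bigl(D_{L_x(\underline y)}\hat f_x - D_{L_x(\underline z)}\hat f_x\bigr)\circ L_x,
\]
so that by submultiplicativity $\|D_{\underline y}f_x - D_{\underline z}f_x\|\le \|L_{f(x)}^{-1}\|\cdot\|D_{L_x(\underline y)}\hat f_x - D_{L_x(\underline z)}\hat f_x\|\cdot\|L_x\|$. Since $s(x),u(x)\ge\sqrt2$ by Lemma~\ref{lem:subound}, the same computation as in the proof of Lemma~\ref{lem:Df0} shows $L_x$ is a contraction, i.e.\ $\|L_x\|\le1$, so that factor drops out. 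Lemma~\ref{lem:Df0} bounds the middle factor by $Q_2\|\underline y-\underline z\|^\alpha$, and since $\underline y,\underline z\in\mathcal B^{s,1}_x\subseteq[-b(x),b(x)]^2$ we have $\|\underline y-\underline z\|\le 2\sqrt2\,b(x)$; hence
\[
\|D_{\underline y}f_x - D_{\underline z}f_x\|\le Q_2\,2^{3\alpha/2}\,\|L_{f(x)}^{-1}\|\,b(x)^\alpha.
\]

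The remaining point is that $\|L_{f(x)}^{-1}\|\,b(x)^\alpha$ is uniformly bounded, even though $\|L_{f(x)}^{-1}\|$ by itself is not. This is immediate from \eqref{eq:bx}: we have $b(x)^\alpha = b^\alpha\bigl(\sum_{k\in\ZZ}e^{-3|k|\epsilon}\|L_{f^k(x)}^{-1}\|\bigr)^{-1}$, and keeping only the $k=1$ term in the sum gives $b(x)^\alpha\le b^\alpha e^{3\epsilon}/\|L_{f(x)}^{-1}\|$, so that $\|L_{f(x)}^{-1}\|\,b(x)^\alpha\le e^{3\epsilon}b^\alpha$. Combining the two displays, the lemma follows with $Q_5 := Q_2\,2^{3\alpha/2}e^{3\epsilon}$.

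The main obstacle here is conceptual rather than computational: one must recognize that after passing to Lyapunov coordinates the output is measured by $\|L_{f(x)}^{-1}\|$, which is not uniformly bounded over $\Lambda$, and that the only thing controlling it is the definition of $b(x)$, which was engineered with precisely this weight. Once that is seen, the estimate is two lines using Lemma~\ref{lem:Df0} and $\|L_x\|\le1$.
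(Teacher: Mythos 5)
Your proof is correct and takes essentially the same approach as the paper: decompose via \eqref{eq:chart1}, use $\|L_x\|\leq 1$ and Lemma~\ref{lem:Df0}, and then absorb $\|L_{f(x)}^{-1}\|$ using the definition \eqref{eq:bx} of $b(x)$. The only (cosmetic) difference is that you bound $\|L_{f(x)}^{-1}\|\,b(x)^\alpha$ directly by keeping the $k=1$ term in the sum defining $b(x)$, whereas the paper first invokes Lemma~\ref{lem:Pesin} to pass from $b(x)$ to $b(f(x))$ and then keeps the $k=0$ term of the shifted sum; both routes yield the same $e^{3\eps}$ factor and differ only in bookkeeping.
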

\begin{proof}
By \eqref{eq:chart1}, using  \( \|L_{x}\|\leq 1 \) and Lemma \ref{lem:Df0}, for every  \(\underline y, \underline z \in \mathcal B^{s,1}_{x}\), 
\begin{equation}\label{eq:Lf}
\begin{aligned}
 \|D_{\underline y}f_{x}- D_{\underline z}f_{x}\|
&= \|L_{f(x)}^{-1}\circ D_{L_{x}(\underline y)}\hat f_{x} \circ L_{x}  - L_{f(x)}^{-1}\circ D_{L_{x}(\underline z)}\hat f_{x} \circ L_{x} \|
\\&= 
\|L_{f(x)}^{-1}\circ (D_{L_{x}(\underline y)}\hat f_{x}   -  D_{L_{x}(\underline z)}\hat f_{x}) \circ L_{x} \|
\\ & \leq \|L_{f(x)}^{-1}\| \cdot \|D_{L_{x}(\underline y)}\hat f_{x}   -  D_{L_{x}(\underline z)}\hat f_{x}\|
\\ 
&\leq Q_2 \|L_{f(x)}^{-1}\| \  \|\underline y - \underline z\|^{\alpha}
\end{aligned}
\end{equation}
Moreover, \( \underline y, \underline z \in \mathcal B^{s,1}_{x} \subset [-b(x), b(x)]^{2}\) implies 
\( \|\underline y - \underline z\| \leq 2b(x)\) and therefore, by Lemma \ref{lem:Pesin}, 
\begin{equation}\label{eq:Lf1}
 \|\underline y - \underline z\|^{\alpha} \leq 2^{\alpha} b(x)^{\alpha} \leq 2^{\alpha} e^{3\epsilon} b(f(x))^{\alpha}.
 \end{equation}
Moreover, from  \eqref{eq:bx} we have 
\[
 b(f(x))^{\alpha} := b^{\alpha} 
 \left(\sum_{k=-\infty}^{\infty} e^{- 3 |k|\epsilon} \|L_{f^{k+1}(x)}^{-1}\|\right)^{-1}
 \leq  b^{\alpha} \|L_{f^{}(x)}^{-1}\|^{-1}
\]
 and therefore, substituting into \eqref{eq:Lf1} and  then into \eqref{eq:Lf} we get 
\[
 \|D_{\underline y}f_{x}- D_{\underline z}f_{x}\|
 \leq  Q_2 \|L_{f(x)}^{-1}\| \  \|\underline y - \underline z\|^{\alpha} \leq 
 2^\alpha e^{3\eps} Q_2 b^\alpha,
\] 
which  completes the proof. 
\end{proof}
Now we can prove the expansion estimates in \eqref{eqn:cones-0} for all $\underline{y}$.
By Lemmas \ref{lem:hyp0} and \ref{lem:neardiff},  for every \(  x\in \Lambda  \), \( \underline y   \in \mathcal B^{s,1}_{x}\), and \(\underline v^{u}\in K^{u} \), we have
\[
\|D_{\underline y}f_x(\underline v^u)\|
\geq \|D_0 f_x(\underline v^u)\| - \|D_{\underline y} f_x - D_0 f_x\| \cdot \|\underline v^u\| \geq (e^{2\lambda/3} - Q_5 b^\alpha) \|\underline v^u\|.
\]
Choose $b>0$ small enough that 
\begin{equation}\label{eqn:Q5b}
e^{2\lambda/3} - Q_5 b^\alpha \geq e^{\lambda/2};
\end{equation}
then we get  \(
\| D_{\underline y}f_{x}(\underline v^{u}) \| \geq e^{\lambda /2}\|\underline v^{u} \|.
\)
A similar argument gives  $\|D_{\underline y}f_{f(x)}^{-1}(\underline v^s) \| \geq e^{\lambda/2}\|\underline v^s\|$ for every $\underline v^s\in K^s$, and so we have  the expansion estimates in~\eqref{eqn:cones-0}.

\subsection{Conefield invariance}\label{sec:cone-inv} 

We now prove the conefield invariance from \eqref{eqn:cones-0}.
Fix $\eta>0$ small enough that if $z=z_1 e_1 + z_2 e_2 \in \RR^2$ has $\|z\|=1$ and $|z_2| < e^{-2\lambda} \omega |z_1|$, then every $v\in \RR^2$ with $\|v-z\|<\eta$ is contained in $\tilde K^u$.  By homogeneity we see that if the assumption on $\|z\|$ is removed and we have $\|v-z\| < \eta \|z\|$, then once again $v\in \tilde K^u$.
Given $x\in \Lambda$ and $\underline v = v_1e_1 + v_2e_2\in K^u$, \eqref{eq:Dfx1} gives
\(
D_0 f_x(\underline v) = A_x v_1e_1 + B_x v_2 e_2,
\)
and we have $|B_x v_2| < e^{-\lambda} \omega |v_1| < e^{-2\lambda} \omega |A_x v_1|$, so $D_0 f_x(\underline v)$ satisfies the assumption on $z$  mentioned above.
Now choose $b$ small enough that 
\begin{equation}\label{eq:smallb1}
Q_5 b^\alpha < \eta.
\end{equation}  
Then for every $\underline y = \Psi^{-1}_{x}(y) \in \mathcal B^{s,1}_{x}$, Lemma \ref{lem:neardiff} gives
\(
\|D_{\underline y} f_x(\underline v) - D_0 f_x (\underline v) \| \leq Q_5 b^\alpha \|\underline v\| < \eta \|\underline v\|,
\)
and by our choice of $\eta$ we conclude that $D_{\underline y} f_x(\underline v) \in \tilde K^u$.  
A completely symmetric argument applies to the stable cones and $f^{-1}$. 

\subsection{Stable and unstable strips}\label{sec:prop-strips}

To complete the proof of Theorem  \ref{thm:existsbranch} we show that $\mathcal{B}_{x, \barl}^{s,1}$ and $\mathcal{B}_{f(x), \barl}^{u,1}$ are strongly stable and unstable strips in $\widetilde{\mathcal{B}}^{s}_{b_{\ell_{0}}}$, $\widetilde{\mathcal{B}}^{u}_{b_{\ell_{1}}}$ respectively, recall \eqref{eq:Bsuell} and \eqref{eq:strips}.  
We begin by proving the statement for  \( \mathcal{B}_{x, \barl}^{s,1}  \). Let 
\(
\gamma^{u,1}_{0}:=\{(v_{1}, 0)\in \mathcal B_{x}^{(\ell_{0})}: f_{x}(v_{1}, 0) \in 
\mathcal B_{f(x)}^{(\ell_{1})}\}\subseteq \mathcal B^{s,1}_{x, \barl}.
\)
Notice that \( f_{x}(0)=0 \) and therefore \(\mathcal B^{s,1}_{x, \barl} \) contains a neighbourhood of \( 0 \) and therefore  \( \gamma^{u,1}_{0}\) is  a non-trivial horizontal segment, and in particular its tangent vectors are contained in the unstable cones \( K^{u} \). Therefore, 
by \eqref{eqn:cones-0},  the images of the tangent vectors to \( \gamma^{u,1}_{0}\) are contained in the strong unstable cones \( \tilde K^{u} \) and  in particular  the slope of the curve \( f_{x}(\gamma^{u,1}_{0}) \) always has absolute value \( < e^{-\lambda}\omega< 1 \).

\begin{figure}[tbp]
\includegraphics[width=\textwidth]{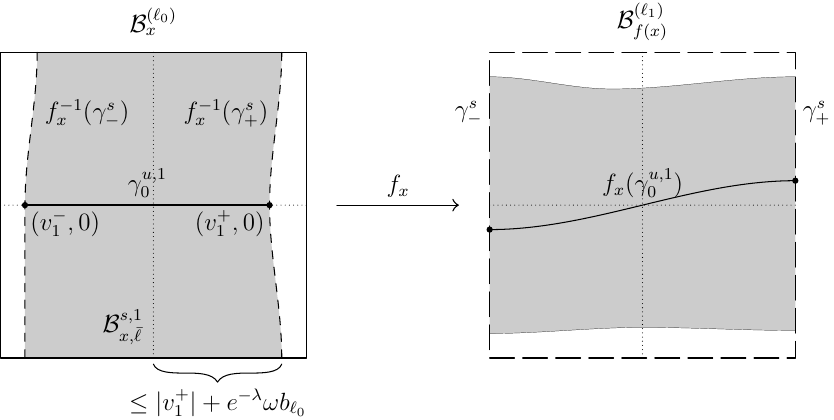}
\caption{$\mathcal{B}_x^{s,1}$ is a strongly stable strip.}
\label{fig:Bxs1}
\end{figure}

Since $f_x(\gamma_0^{u,1})$ goes through the origin, has slope $<1$ in absolute value, and has both endpoints on the boundary of the square 
$\mathcal{B}_{f(x)}^{(\ell_{1})} = [-b_{\ell_{1}},b_{\ell_{1}}]^2$  these endpoints must both lie on the stable boundaries
\(
\gamma^{s}_{\pm}:= \{(\pm b_{\ell_{1}}, v_{2}), v_{2}\in [-b_{\ell_{1}}, b_{\ell_{1}}]\}
\) 
of $\mathcal{B}_{f(x)}^{(\ell_{1})}$, so \( f_{x}(\gamma^{u,1}_{0}) \) is a \emph{full length} strongly unstable admissible curve in $\mathcal{B}_{f(x)}^{(\ell_{1})}$ as shown in Figure \ref{fig:Bxs1}. Therefore the preimages \( f^{-1}_{x}(\gamma^{s}_{\pm})\cap \mathcal B_{x}^{(\ell_{0})} \) are, by \eqref{eqn:cones-0}, strongly stable admissible curves through the endpoints of \(\gamma^{u,1}_{0}\), which are points on the horizontal axis with coordinates \( (v_{1}^{\pm}, 0 )\) with \(| v_{1}^{\pm}| \leq e^{-\lambda/2}b_{\ell_{1}} \).  Note that this last estimate comes from the fact that every tangent vector to $f_x(\gamma_0^{u,1})$ is contracted by a factor of at least $e^{-\lambda/2}$ under the action of $f_x^{-1}$. Since $f^{-1}_x(\gamma^s_\pm)$ are strongly stable admissible curves in 
$\mathcal{B}_x^{(\ell_{0})}$, we conclude as shown in Figure \ref{fig:Bxs1} that their horizontal distance from the $y$-axis in $\RR^2$ is always bounded above by
\[
|v_1^\pm| + e^{-\lambda}\omega b_{\ell_{0}} \leq e^{-\lambda/2}b_{\ell_{1}}+ e^{-\lambda}\omega b_{\ell_{0}}
< (e^{-\lambda/2} e^{3\eps/\alpha} + e^{-\lambda}\omega) b_{\ell_{0}},
\]
where the last inequality uses Lemma \ref{lem:Pesin}.  By \eqref{eqn:omega}, this is $< e^{-\lambda/3} b_{\ell_{0}}$, which proves that 
$\mathcal{B}_{x, \barl}^{s,1}$ is a  strongly stable   strip in $\widetilde{\mathcal{B}}^{s}_{b_{\ell_{0}}}$. 
Similarly, $\mathcal{B}_{f(x), \barl}^{u,1}$ is a strongly unstable strip in $\widetilde{\mathcal{B}}^{u}_{b_{\ell_{1}}}$. 

\section{Overlapping charts: Theorem \ref{thm:overlapping}}
\label{sec:overlapping} 

\subsection{Overlapping charts}\label{sec:overlapping-charts}

The parameters defining Lyapunov charts vary slowly along orbits but in general only measurably with the point \( x\in \Lambda \).
 On each regular level set $\Lambda_\ell$, the dependence is continuous, and it is well-known that ``if points $x,y \in \Lambda_\ell$ are close, then their Lyapunov charts are close''.  The condition on how close $x,y$ need to be depends on $\ell$; we need an explicit quantitative estimate, which is provided by \eqref{eqn:overlapping} in Theorem \ref{thm:overlapping} below.
This is the core technical result of the paper, whose proof demands the largest share of our efforts.

First we make precise what it means for two Lyapunov charts to be close.
Let $\chi>\lambda>0$ be fixed, $\eps_1$ given by \eqref{eqn:eps}, and $\eps\in (0,\eps_1)$.
Let $\Lambda$ be a $(\chi,\eps)$-hyperbolic set.
Given \(  \ell\geq 1  \) and \(  x,y\in \Lambda_{\ell}  \), recall that 
 $\mathcal{N}_x^{(\ell)}, \mathcal{N}_y^{(\ell)}$
are defined in \eqref{eq:regN} and \( \widetilde{\mathcal N}_{x,\ell}^{s/u}, \widetilde{\mathcal N}_{y,\ell}^{s/u} \) in \eqref{eq:Ntilde}, and that the notion of ``full length (un)stable admissible curves'' is defined in Definition \ref{def:localmanreg}.

\begin{Definition}[Overlapping charts]\label{def:overlapping}
 We   say that  
 $\mathcal{N}_x^{(\ell)}$  and  $\mathcal{N}_y^{(\ell)}$ \emph{are overlapping}
 if \( x, y\in  \mathcal{N}_x^{(\ell)} \cap \mathcal{N}_y^{(\ell)}\)
and the following conditions hold: 
\begin{description}
\item[A) Overlapping derivative estimates]
for every $\underline z\in \Psi_{x}^{-1}(\mathcal{N}_x^{(\ell)} \cap \mathcal{N}_y^{(\ell)})$ and every \( \underline v^{u}\in \tilde K^{u} \),  \( \underline v^{s}\in \tilde K^{s} \), we have
\begin{gather}\label{eq:overlap}
D_{\underline z}(\Psi^{-1}_{y}\circ \Psi_{x}) (\underline v^{u})\subset K^{u}
\ \text{ and } \ 
 \|D_{\underline z} (\Psi_y^{-1}\circ \Psi_x)(\underline v^{u})\|\geq e^{-\lambda/24} \|\underline v^{u}\|,  \\
\label{eq:overlaps}
D_{\underline z}(\Psi^{-1}_{x}\circ \Psi_{y}) (\underline v^{s})\subset K^{s}
\ \text{ and } \
 \|D_{\underline z} (\Psi_x^{-1}\circ \Psi_y)(\underline v^{s})\|\geq e^{-\lambda/24} \|\underline v^{s}\|,  
\end{gather}
and similarly with the roles of $x$ and $y$ reversed.
\item[B) Overlapping stable and unstable strips] 
Every full length strongly stable  admissible curve $\gamma^s \subset \widetilde{\mathcal N}_{x,\ell}^s$ (resp. \( \widetilde{\mathcal N}_{y,\ell}^s \)) completely crosses $\widetilde{\mathcal N}_{y,\ell}^u$ (resp. \( \widetilde{\mathcal N}_{x,\ell}^s \)) and
every full length strongly unstable  admissible  curve $\gamma^u \subset \widetilde{\mathcal N}_{x,\ell}^u$ (resp. \(  \widetilde{\mathcal N}_{y,\ell}^u\)) completely crosses $\widetilde{\mathcal N}_{y,\ell}^s$ (resp. \(  \widetilde{\mathcal N}_{x,\ell}^u\)).
\end{description}
\end{Definition} 

\begin{maintheorem}\label{thm:overlapping}
Let $f$ be a $C^{1+\alpha}$ surface diffeomorphism.  
For every $\chi>\lambda>0$, and every $0<\eps<\eps_1(f,\chi,\lambda)$,
there exists $\delta>0$ such that
given any $(\chi,\eps)$-hyperbolic set $\Lambda$, any integer $\ell\in \NN$, and $x,y\in \Lambda_\ell$ with 
\begin{equation}\label{eqn:overlapping}
d(x,y) \leq \delta e^{-\lambda \ell},
\end{equation}
 the Lyapunov charts $\mathcal N_{x}^{(\ell)}$ and $\mathcal N_{y}^{(\ell)}$ are overlapping. 
\end{maintheorem}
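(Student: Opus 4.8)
The plan is to analyze the change-of-coordinates map $g:=\Psi_y^{-1}\circ\Psi_x=L_y^{-1}\circ(\exp_y^{-1}\circ\exp_x)\circ L_x$, show that on the relevant domain it is $C^1$-close to the identity in the cone-compatible sense required by part A of Definition~\ref{def:overlapping}, and then deduce part B from this. I would first check the preliminary fact that $x,y\in\mathcal N_x^{(\ell)}\cap\mathcal N_y^{(\ell)}$: writing $\Psi_x^{-1}(y)=L_x^{-1}\exp_x^{-1}(y)$ and using $\|\exp_x^{-1}(y)\|\le Q\,d(x,y)\le Q\delta e^{-\lambda\ell}$ together with \eqref{eqn:norm-Lx}, one has $\|\Psi_x^{-1}(y)\|\le 3Q_0\wQ^{-1}e^{2\eps\ell}Q\delta e^{-\lambda\ell}$, while $b_\ell$ is of order $b\,e^{-2\eps\ell/\alpha}$ by \eqref{eq:bell}. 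Since $\eps<\eps_1\le\frac\lambda{2(1+1/\alpha)}$ forces $2\eps(1+1/\alpha)<\lambda$, the ratio $\|\Psi_x^{-1}(y)\|/b_\ell$ is bounded by a constant times $\delta/b$ times $e^{-(\lambda-2\eps(1+1/\alpha))\ell}\le1$, so a suitably small $\delta$ puts $y$ (and by symmetry $x$) in both charts, with centers moved by $\ll b_\ell$; this is the role of the fourth bound in \eqref{eqn:eps} and is where \eqref{eqn:small-in-bell} enters.

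Next comes the reduction to a linear-algebra statement. For $\underline z\in\Psi_x^{-1}(\mathcal N_x^{(\ell)}\cap\mathcal N_y^{(\ell)})$ put $w:=L_x\underline z\in T_xM$, so $\|w\|\le b_\ell$ because $\|L_x\|\le1$; then $D_{\underline z}g=L_y^{-1}\circ D_w h\circ L_x$ with $h=\exp_y^{-1}\circ\exp_x$. Since $M$ carries a smooth metric, $h$ is $C^2$ with uniformly bounded norm, so $\|D_w h-D_0 h\|\le Q\,b_\ell$; moreover $D_0 h=D_x\exp_y^{-1}$ differs from the parallel transport $P_{xy}\colon T_xM\to T_yM$ by at most $Q\,d(x,y)\le Q\delta e^{-\lambda\ell}$. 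Hence $D_{\underline z}g=M_\ell+R$ where $M_\ell:=L_y^{-1}\circ P_{xy}\circ L_x$ and $\|R\|\le\|L_y^{-1}\|\,\|L_x\|\,Q(b_\ell+\delta e^{-\lambda\ell})$. Using $\|L_y^{-1}\|\le 3Q_0\wQ^{-1}e^{2\eps\ell}$, $\|L_x\|\le1$, and the cancellation $e^{2\eps\ell}b_\ell\le Q\,b$ (valid since $1-1/\alpha\le0$), together with $e^{2\eps\ell}\delta e^{-\lambda\ell}\le\delta$, one gets $\|R\|\le Q(b+\delta)$, which is as small as desired once $b$ is chosen small (one of the conditions on $b$) and then $\delta$ small. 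So everything reduces to showing that the linear map $M_\ell\colon\RR^2\to\RR^2$ is close to the identity — close enough that $M_\ell+R$ maps $\tilde K^{s/u}$ into $K^{s/u}$ and distorts norms on these cones by a factor in $[e^{-\lambda/24},e^{\lambda/24}]$; running the same computation with $x$ and $y$ interchanged (and for $g^{-1}$) then yields all four assertions of part A.

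The heart of the matter is the estimate $\|M_\ell-\Id\|\ll1$. Writing the matrices of $L_x^{\pm1},L_y^{\pm1}$ as in Lemma~\ref{lem:Lbound}, the four entries of $M_\ell$ are built from (i) the ratios $u(y)/u(x)$ and $s(y)/s(x)$, which must be shown to lie within $e^{\pm\lambda/48}$ of $1$, and (ii) the angles between $P_{xy}E_x^{s/u}$ and $E_y^{s/u}$, which appear multiplied by a factor of order $\|L_y^{-1}\|\sim e^{2\eps\ell}$ (and divided by the angles $\theta(x),\theta(y)\ge e^{-\eps\ell}$), hence must be shown to be $\ll\delta'e^{-2\eps\ell}$. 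This is a \emph{quantitative} version of the familiar continuity of the Oseledets data on $\Lambda_\ell$: I would bound $|\log(u(y)/u(x))|$ and the subspace angles by truncating the defining series \eqref{def:sxux} at an index $N\asymp\eps\ell$ (the tail being negligible because $\|Df_x^n e_x^{s}\|\le e^{\eps\ell}e^{-\chi n}$), comparing the first $N$ iterates along the two orbits — which stay within $\delta e^{-\lambda\ell}e^{c_2 n}$ of each other, with $Df$ being $\alpha$-H\"older — and controlling the directions $e_x^{s/u}$ versus $e_y^{s/u}$ by the contraction of cones under $Df^{N}$ and $Df^{-N}$. The arithmetic of balancing ``$N$ steps of drift $e^{c_2 N}$, H\"older loss $d(x,y)^\alpha\le(\delta e^{-\lambda\ell})^\alpha$, chart distortion $e^{2\eps\ell}$, and target precision $e^{-\lambda/48}$'' against the scale $e^{-\lambda\ell}$ is exactly what the bounds $\eps<\lambda\beta/(7\gamma)$ and $\eps<\lambda\zeta/(\eta-1)$ — equivalently, the constants $\beta,\gamma,\zeta,\eta,\iota$ of \eqref{eqn:greek0} — encode, and this is carried out (one estimate per pair of matrix entries) in Lemmas~\ref{lem:4terms1} and \ref{lem:4terms2}. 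Choosing $\delta$ small then makes $\|M_\ell-\Id\|$ as small as needed. This quantitative modulus of continuity, calibrated so that the $\ell$-dependence of the hyperbolicity data is beaten by the scale $e^{-\lambda\ell}$, is the one genuine obstacle and the reason this is the most technical step of the paper.

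Finally, part B follows from part A plus the size bookkeeping. A full length strongly stable curve in $\widetilde{\mathcal N}_{x,\ell}^{s}=\Psi_x(\widetilde{\mathcal B}_{b_\ell}^{s})$ is, in $x$-coordinates, a nearly vertical curve with $|v_1|\le e^{-\lambda/3}b_\ell$ spanning $v_2\in[-b_\ell,b_\ell]$; since $g$ is now $C^1$-close to the identity and moves the center by $\ll b_\ell$, its image has tangents in $K^{s}$, horizontal extent still below $b_\ell$, and vertical extent at least $2e^{-\lambda/24}b_\ell>2e^{-\lambda/3}b_\ell$ (as $\lambda/24<\lambda/3$, and using \eqref{eqn:omega} for the slope bookkeeping), so it crosses $\widetilde{\mathcal N}_{y,\ell}^{u}$ completely. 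The remaining three crossing statements are identical after permuting $s\leftrightarrow u$ and $x\leftrightarrow y$. This establishes part B and completes the proof.
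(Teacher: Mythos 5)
Your proposal is correct and follows essentially the same route as the paper: reduce to the derivative of the transition map $\Psi_y^{-1}\circ\Psi_x$, express it in the adapted stable/unstable bases so that the relevant matrix entries are built from the ratios $u_y/u_x$, $s_y/s_x$, the cross-terms $u_y s_x^{-1}$, $s_y u_x^{-1}$, and the angles between the splittings, establish quantitative H\"older continuity of the Oseledets data on $\Lambda_\ell$ with explicit $\ell$-dependence, and verify that the bounds $\eps<\lambda\beta/(7\gamma)$ and $\eps<\lambda\zeta/(\eta-1)$ make the $\ell$-dependence cancel against the scale $e^{-\lambda\ell}$ (this is precisely what the paper does via Propositions~\ref{prop:holder-on-regular}, \ref{prop:su-holder} and Lemmas~\ref{lem:4terms1}, \ref{lem:4terms2}).

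The only notable organizational differences are cosmetic. You isolate a parallel-transport operator $P_{xy}$ and bound $D(\exp_y^{-1}\circ\exp_x)-P_{xy}$ as a separate error term; the paper instead directly parameterizes the derivative of $\exp_y^{-1}\circ\exp_x$ in the $\{e_x^u,e_x^s\}$-to-$\{e_y^u,e_y^s\}$ bases via the coefficients $\xi_j^{s/u}$ of \eqref{eq:newcoords}, which plays the same role. For the H\"older continuity of the splitting you propose contraction of cones under $Df^{\pm N}$; the paper proves this via the matrix-sequence lemma (Lemma~\ref{lem:A1}, following \cite{BarPes01}) supplemented by Lemma~\ref{lem:extra} to handle the near-degenerate-angle case — a point worth being explicit about if you carry out your sketch, since the factor $1/\sin\theta(y)\lesssim e^{\eps\ell}$ must be tracked and absorbed into the allowed constant $e^{7\eps\gamma\ell}$ (this is why the paper can afford the upgrade from $6\eps\gamma\ell$ to $7\eps\gamma\ell$ using $\gamma\geq 1$). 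Your sketch of part B is lighter on detail than the paper's, which carefully tracks the displaced center $v=\Psi_y^{-1}(x)$, the anchor point $z$ on the connecting unstable curve $\eta_y$, and the endpoints $w^\pm$ through the chain of inequalities in \eqref{eqn:omega}, \eqref{eqn:w2z2}, \eqref{eqn:z2v2}, \eqref{eqn:Q20Q9}; but your claim that it follows from part A plus bookkeeping of the scale $b_\ell$ and the contraction/expansion bound $e^{\pm\lambda/24}$ is accurate.
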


In the rest of this section we prove Theorem \ref{thm:overlapping}.  The proof  depends on two intermediate results about the H\"older continuity of $E_x^{s/u}$ and of $s(x), u(x)$ which we prove in \S \ref{sec:holder} and \ref{sec:su-holder} respectively. In \S\ref{sec:overlapnearby} and \S\ref{sec:overlappingstrips} we combine these to prove parts A) and B) respectively in the definition of overlapping charts, thus completing the proof of Theorem \ref{thm:overlapping}. 

\begin{Remark}\label{rmk:overlapping}
A similar `overlapping charts' condition plays a central role in Sarig's construction of countable Markov partitions; see \cite[Definition 3.1]{oS13} for the formal definition and \cite[Proposition 3.2]{oS13} for the key properties, which are similar to our definition above, although it is not immediately clear whether one definition implies the other. The crucial ingredient of our approach here is the explicit distance criterion \eqref{eqn:overlapping} that guarantees overlapping charts, which has no analogue that we are aware of in \cite{oS13}.
\end{Remark}

\subsection{H\"older continuity of the splitting}
\label{sec:holder}

\begin{Proposition}\label{prop:holder-on-regular}
There is $Q_6>0$ such that for  any $\ell\in \NN$ and $x,y\in \Lambda_\ell$, we have
\[
d(E_x^s,E_y^s) \leq Q_6 e^{6\eps\gamma\ell} d(x,y)^\beta\quad \text{ and } \quad d(E_x^u,E_y^u) \leq Q_6 e^{6\eps\gamma\ell} d(x,y)^\beta
\]
where $d(\cdot, \cdot)$ represents distance in the Grassmannian of $M$.
\end{Proposition}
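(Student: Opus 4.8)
The plan is to prove the estimate by a finite‑time comparison of the derivative cocycle at the two nearby base points, where the number of iterates is chosen by an optimization that produces exactly the exponent $\beta$. First I would reduce to the case $d(x,y)<\rho_0$ for a small $\rho_0$ depending only on $f$: for $d(x,y)\ge\rho_0$ the right‑hand side is bounded below by $Q_6\rho_0^\beta$ while $d(E_x^{s},E_y^{s})$ is bounded above by the diameter of a Grassmannian fibre, so the inequality is trivial once $Q_6$ is large. Fixing $x,y\in\Lambda_\ell$ and a unit vector $w\in E_y^{s}$, I would transport it to $\tilde w:=T_{xy}w\in T_xM$ by parallel transport along the short geodesic from $y$ to $x$ and decompose $\tilde w=u^{s}+u^{u}$ with $u^{s}\in E_x^{s}$, $u^{u}\in E_x^{u}$. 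Since $d(E_x^{s},E_y^{s})$ differs from $\sin\measuredangle(\tilde w,E_x^{s})=\|u^{u}\|\sin\measuredangle(E_x^{s},E_x^{u})\le\|u^{u}\|$ by at most $Q\,d(x,y)$ (the transport/curvature error), it suffices to bound $\|u^{u}\|$. Note also that \eqref{eq:angle} and $x\in\Lambda_\ell$ give $\measuredangle(E_x^{s},E_x^{u})\ge e^{-\eps\ell}$, hence $\|u^{s}\|\le\|\tilde w\|/\sin\measuredangle(E_x^{s},E_x^{u})\le\tfrac\pi2 e^{\eps\ell}$.

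The key ingredient is the telescoping estimate for $\mathcal{E}_n:=\|Df_x^{n}\tilde w-T_{f^nx,f^ny}(Df_y^{n}w)\|$: expanding $Df_x^{n}T_{xy}-T_{f^nx,f^ny}Df_y^{n}$ as a sum of $n$ terms and bounding each one using that $Df$ is $\alpha$‑Hölder, that $d(f^{j}x,f^{j}y)\le e^{c_2 j}d(x,y)$, and that $\|Df^{\pm1}\|\le e^{c_2}$ by \eqref{eqn:bc}, one gets $\mathcal{E}_n\le Q\,e^{c_2(1+\alpha)n}d(x,y)^{\alpha}\le Q\,e^{c_3 n}d(x,y)^{\alpha}$ with $Q$ depending only on $f$; here I use exactly the defining inequality $c_2(1+\alpha)<c_3$ from \eqref{eqn:bc}, and the estimate is valid while $n\lesssim\log(1/d(x,y))$ so the orbit segments stay in a fixed‑size chart. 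Combining with hyperbolicity: since $Df_x^{n}u^{u}=Df_x^{n}\tilde w-Df_x^{n}u^{s}$ and $\|Df_y^{n}w\|\le C(y)e^{-\chi n}$, estimate \eqref{eq:hypest} yields
\[
C(x)^{-1}e^{\chi n}\|u^{u}\|\le\|Df_x^{n}u^{u}\|\le \mathcal{E}_n+C(y)e^{-\chi n}+C(x)e^{-\chi n}\|u^{s}\|,
\]
and using $C(x),C(y)\le e^{\eps\ell}$ (Definition \ref{def:reg}) together with the bound on $\|u^{s}\|$ this rearranges to
\[
\|u^{u}\|\le Q\,e^{\eps\ell}e^{(c_3-\chi)n}d(x,y)^{\alpha}+Q\,e^{3\eps\ell}e^{-2\chi n}.
\]

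Next I would choose $n$ to be the nearest nonnegative integer to $n^{*}:=\tfrac{\alpha}{c_3+\chi}\log(1/d(x,y))$. Since $\chi\le c_2<c_3$ (indeed \eqref{eq:hypest} forces $\chi\le\max_x\log\|D_xf\|\le c_2$), this $n$ satisfies the constraint $n\lesssim\log(1/d(x,y))$ required above, and plugging it in makes both terms comparable to $d(x,y)^{2\chi\alpha/(c_3+\chi)}=d(x,y)^{\beta}$ with $\beta=\tfrac{2\chi}{c_3+\chi}\alpha$ as in \eqref{eqn:greek0} (the rounding of $n^{*}$ only changes things by bounded factors). Hence $\|u^{u}\|\le Q\,e^{3\eps\ell}d(x,y)^{\beta}$, and adding back the $Q\,d(x,y)$ transport error (negligible since $\beta<1$) gives $d(E_x^{s},E_y^{s})\le Q_6\,e^{3\eps\ell}d(x,y)^{\beta}\le Q_6\,e^{6\eps\gamma\ell}d(x,y)^{\beta}$, the last step because $\gamma>1$ as noted after \eqref{eqn:greek0}. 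The bound for $E^{u}$ follows from the symmetric argument applied to $f^{-1}$: transport a unit vector of $E_y^{u}$ to $x$, iterate backward, use $\|Df^{-1}\|\le e^{c_2}$ (so the same $c_3$ works) and the backward estimates in \eqref{eq:hypest}, and optimize $n$ in the same way, giving the same exponent $\beta$.

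The main obstacle is the tension built into the choice of $n$: the comparison error $\mathcal{E}_n$ grows like $e^{c_3 n}$, while the quantity that lets us \emph{detect} $u^{u}$, namely $\|Df_x^{n}u^{u}\|\sim e^{\chi n}\|u^{u}\|$, grows only at rate $\chi<c_3$, so one cannot iterate indefinitely and must optimize; it is precisely this optimization that produces the Hölder exponent $\beta=\tfrac{2\chi\alpha}{c_3+\chi}$. Getting the telescoping error to be genuinely of the form $e^{c_3 n}d(x,y)^{\alpha}$ — with no hidden dependence on $\ell$, the correct power of $d(x,y)$, and a valid chart domain for all $n$ up to $n^{*}$ — is the technical heart of the argument. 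A secondary point needing care is the bookkeeping of the $e^{\eps\ell}$ factors coming from $C(x),C(y)\le e^{\eps\ell}$ and $\measuredangle(E_x^{s},E_x^{u})\ge e^{-\eps\ell}$, and checking that the resulting power of $e^{\eps\ell}$ is dominated by $e^{6\eps\gamma\ell}$.
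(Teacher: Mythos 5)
Your proof is correct, and it takes a genuinely different route from the paper's. The paper deduces Proposition~\ref{prop:holder-on-regular} from the more general Proposition~\ref{prop:theta-holder}, which it proves by Brin's method: embed $M$ smoothly in $\RR^N$, compare the matrix sequences $D_x^{(n)},D_y^{(n)}$, and apply the abstract comparison Lemma~\ref{lem:A1}. Because that lemma needs expansion on the \emph{orthogonal} complement of $E^s$, the paper first proves Lemma~\ref{lem:extra}, which converts the expansion along $E^u$ (together with the angle bound $K$) into expansion along $(E^s)^\perp$ at the cost of replacing $C$ by $(2C^2K^{-1})^\gamma$ -- this is precisely where the exponent $\gamma$ and hence the factor $e^{6\eps\gamma\ell}$ enter. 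You instead work intrinsically on $M$: you parallel-transport a unit vector of $E_y^s$ to $x$, decompose it directly as $u^s+u^u$ along $E_x^s\oplus E_x^u$, and isolate $\|u^u\|$ from the inequality $C(x)^{-1}e^{\chi n}\|u^u\|\le\mathcal{E}_n+C(y)e^{-\chi n}+C(x)e^{-\chi n}\|u^s\|$. Here the angle bound enters only once, through $\|u^s\|\lesssim e^{\eps\ell}$, and the optimization over $n$ gives $e^{3\eps\ell}d(x,y)^\beta$, which is in fact \emph{sharper} than the $e^{6\eps\gamma\ell}$ in the statement (you then loosen it using $\gamma\ge 1$). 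Both arguments share the same engine -- a telescoping H\"older comparison growing like $e^{c_3 n}d(x,y)^\alpha$ balanced against hyperbolic decay $e^{-2\chi n}$, yielding $\beta=\tfrac{2\chi\alpha}{c_3+\chi}$ -- and your telescoping bound $\mathcal{E}_n\lesssim e^{c_3 n}d(x,y)^\alpha$ is essentially the paper's Lemma~\ref{lem:A2}. Your approach avoids the orthogonalization step entirely, which is a cleaner derivation of this particular statement; what it gives up is the modularity of the paper's treatment, in which Lemmas~\ref{lem:A1}, \ref{lem:A2}, and \ref{lem:extra} are isolated so that the more general Proposition~\ref{prop:theta-holder} (stated for pairs of points not assumed to lie in a common hyperbolic set) is established along the way.
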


Recall that $\beta $ and \( \gamma\) are given in \eqref{eqn:greek0}.
For generality we prove Proposition \ref{prop:holder-on-regular} as a special case of the following result which does not require \(  x,y  \) to belong to a \(  (\chi, \epsilon)  \)-hyperbolic set. More specifically the H\"older continuity only depends on the angle and hyperbolicity estimates at the points \(  x,y  \) and not on how these vary along the orbits of \(  x,y  \). 

\begin{Proposition}\label{prop:theta-holder}
Let $M$ be a compact smooth Riemannian manifold and $f\colon M\to M$ a 
$C^{1+\alpha}$ diffeomorphism. There is a constant $Q_6$, depending only on $M$, 
$\|Df^{\pm 1}\|$, $\alpha$, $|Df^{\pm 1}|_\alpha$, and $\chi$, such that if $C,K>0$ and $x,y\in M$ are such that
\begin{gather}\label{eqn:Cvs}
\|Df_x^n e_x^s\| \leq C e^{-\chi n}
\text{ and }
\|Df_y^n e_y^s\| \leq C e^{-\chi n}
\text{ for all } n\geq 0, \\
\label{eqn:Cvu}
\|Df_x^n e_x^u\| \geq C^{-1} e^{\chi n} 
\text{ and }
\|Df_y^n e_y^u\| \geq C^{-1} e^{\chi n} 
\text{ for all } n\geq 0,
\end{gather}
for some unit vectors $e_x^{s/u} \in T_x M$, $e_y^{s/u}\in T_yM$ for which the corresponding subspaces $E_{x/y}^{s/u}$ satisfy
 $\measuredangle(E_x^s,E_x^u) \geq K$, $\measuredangle(E_y^s,E_y^u) \geq K$,
then we have
\begin{equation}\label{eqn:Esu-holder}
d(E^s_x,E^s_y) \leq Q_6 (C^2K^{-1})^{2\gamma} d(x,y)^\beta,
\end{equation}
where $\gamma,\beta$ are as in \eqref{eqn:greek0}.  The same bound holds for $E_x^u,E_y^u$ if we have
\begin{gather}\label{eqn:Cvu-}
\|Df_x^{-n}e_x^u\| \leq C e^{-\chi n} 
\text{ and }
\|Df_y^{-n}e_y^u\| \leq C e^{-\chi n} 
\text{ for all } n\geq 0, \\
\label{eqn:Cvs-}
\|Df_x^{-n}e_x^s\| \geq C^{-1} e^{\chi n} 
\text{ and }
\|Df_y^{-n}e_y^s\| \geq C^{-1} e^{\chi n} 
\text{ for all } n\geq 0.
\end{gather}
In particular, if \eqref{eqn:Cvs}, \eqref{eqn:Cvu}, \eqref{eqn:Cvu-}, and \eqref{eqn:Cvs-} all hold, then
\begin{equation}\label{eqn:theta-holder}
|\measuredangle(E_x^s,E_x^u) - \measuredangle(E_y^s,E_y^u)|
\leq Q_6 (C^2K^{-1})^{2\gamma} d(x,y)^\beta.
\end{equation}
\end{Proposition}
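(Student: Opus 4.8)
The plan is to prove \eqref{eqn:Esu-holder} for $E^s$ directly from the forward estimates \eqref{eqn:Cvs}--\eqref{eqn:Cvu}, then obtain the bound for $E^u$ by applying that result to $f^{-1}$ — for which \eqref{eqn:Cvu-}--\eqref{eqn:Cvs-} are exactly the forward estimates along $E^u$, and $\|Df^{\pm1}\|$, $|Df^{\pm1}|_\alpha$ stay in the allowed list of dependencies — and finally deduce \eqref{eqn:theta-holder} from the triangle inequality, using that $(L,L')\mapsto\measuredangle(L,L')$ is Lipschitz in each slot for the Grassmannian metric, so $|\measuredangle(E^s_x,E^u_x)-\measuredangle(E^s_y,E^u_y)|\le d(E^s_x,E^s_y)+d(E^u_x,E^u_y)$. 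I may assume $d(x,y)$ is below a threshold depending only on $\|Df^{\pm1}\|$ and $\chi$, since otherwise the right side of \eqref{eqn:Esu-holder} is bounded below by a positive constant (note $C\ge1$, taking $n=0$ in \eqref{eqn:Cvs}) while the left side never exceeds $\pi/2$, so the inequality holds after enlarging $Q_6$.

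For the $E^s$ bound, fix a unit vector $v'\in E^s_y$ and let $v\in T_xM$ be its image under the (near-isometric) identification of $T_yM$ with $T_xM$ coming from the exponential chart at $x$; write $\theta=\measuredangle(E^s_x,E^s_y)$, so $d(E^s_x,E^s_y)$ and $\sin\theta$ are comparable. Decompose $v=v^s+v^u$ along $E^s_x\oplus E^u_x$; the angle bound $\measuredangle(E^s_x,E^u_x)\ge K$ gives $\|v^s\|\le1/\sin K$ and $d(E^s_x,E^s_y)\le Q\|v^u\|$ for a universal $Q$ (valid whenever the eventual bound on $\|v^u\|$ is small, the only case that matters). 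Now estimate $\|Df^n_xv\|$ two ways. From below, \eqref{eqn:Cvu} and \eqref{eqn:Cvs} give
\[
\|Df^n_xv\|\ \ge\ \|Df^n_xv^u\|-\|Df^n_xv^s\|\ \ge\ C^{-1}e^{\chi n}\|v^u\|-Ce^{-\chi n}\|v^s\|.
\]
From above, \eqref{eqn:Cvs} gives $\|Df^n_yv'\|\le Ce^{-\chi n}$, while a telescoping comparison of the derivative cocycles along the two orbit segments — using the chain rule, H\"older continuity of $Df$, and $d(f^kx,f^ky)\le Qe^{c_2k}d(x,y)$ (valid for $0\le k\le n$ as long as these distances stay below the injectivity radius) — yields
\[
\big\|Df^n_xv-Df^n_yv'\big\|\ \le\ Q\,e^{c_2(1+\alpha)n}d(x,y)^\alpha\ \le\ Q\,e^{c_3n}d(x,y)^\alpha,
\]
where the last step uses $c_2(1+\alpha)<c_3$ from \eqref{eqn:bc}. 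Combining and solving for $\|v^u\|$ gives, for every $n$ with $0\le n\le n_0:=\lfloor c_2^{-1}\log(\rho/d(x,y))\rfloor$ (with $\rho$ the injectivity radius),
\[
d(E^s_x,E^s_y)\ \le\ Q_1\,C^2K^{-1}e^{-2\chi n}+Q_2\,C\,e^{(c_3-\chi)n}d(x,y)^\alpha.
\]

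To conclude, I would choose $n$ to balance the two terms, i.e. $n\approx\frac{\alpha}{c_3+\chi}\log(1/d(x,y))$; this $n$ lies below $n_0$ once $d(x,y)$ is small, because $c_2\alpha<c_3+\chi$ (again a consequence of $c_2(1+\alpha)<c_3$). With this choice both terms become $\lesssim d(x,y)^\beta$ with $\beta=\frac{2\chi}{c_3+\chi}\alpha$ exactly as in \eqref{eqn:greek0}, and the accompanying constant is a bounded power of $C^2K^{-1}$ times a constant depending only on $M,\|Df^{\pm1}\|,\alpha,|Df^{\pm1}|_\alpha,\chi$; since the exponent $2\gamma$ in \eqref{eqn:greek0} is comfortably larger than the power the balancing produces and $C\ge1$, the whole coefficient is at most $Q_6(C^2K^{-1})^{2\gamma}$ after enlarging $Q_6$ a final time. (Proposition \ref{prop:holder-on-regular} is then the special case $C=e^{\eps\ell}$, $K=e^{-\eps\ell}$, for which $(C^2K^{-1})^{2\gamma}=e^{6\eps\gamma\ell}$.) The main obstacle — essentially the only step needing genuine care — is the cocycle comparison $\|Df^n_x-Df^n_y\|\le Qe^{c_3n}d(x,y)^\alpha$: it forces the bookkeeping $c_2(1+\alpha)<c_3$ built into \eqref{eqn:bc}, must be carried out with moving exponential charts (there is no single chart containing a long orbit segment), and the $C$- and $K$-dependence has to be tracked precisely enough through the final optimization to land the coefficient inside $(C^2K^{-1})^{2\gamma}$.
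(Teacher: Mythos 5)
Your proposal is correct, but the route is genuinely different from the paper's. The paper embeds $M$ in $\RR^N$ via Whitney, extends $Df_x^n$ by zero on the normal bundle to get $N\times N$ matrices $D_x^{(n)}$, compares these directly (Lemma \ref{lem:A2}: $\|D_x^{(n)}-D_y^{(n)}\|\leq Q_7e^{c_3n}\|x-y\|^\alpha$), and then invokes an abstract linear-algebra lemma of Brin (Lemma \ref{lem:A1}) that compares the $E^s$-contracting subspaces of two nearby hyperbolic matrix sequences. Because that lemma wants expansion on the \emph{orthogonal complement} $(E^s_x)^\perp$ rather than on $E^u_x$, the paper must pay the price of Lemma \ref{lem:extra}, which converts the given expansion on $E^u_x$ into expansion on $(E^s_x)^\perp$ at the cost of a factor $(2C^2K^{-1})^\gamma$; squaring this in the conclusion of Lemma \ref{lem:A1} is exactly where the exponent $2\gamma$ comes from. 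You instead decompose a unit vector of $E^s_y$ along $E^s_x\oplus E^u_x$ directly, absorb the angle cost as $\|v^s\|\lesssim K^{-1}$, and balance the decay $e^{-2\chi n}$ against the cocycle error $e^{(c_3-\chi)n}d(x,y)^\alpha$ by hand. This sidesteps the orthogonal-complement detour entirely, and as a result your coefficient is $C^2K^{-1}$ to the first power rather than to the $2\gamma$-th (a strictly sharper bound, since $C\geq1$, $K\leq1$, $2\gamma>2$), so it trivially implies \eqref{eqn:Esu-holder}. The trade-off is that you must carry out the cocycle comparison in moving exponential charts rather than in a single ambient $\RR^N$; you correctly flag this as the step needing care, and your tracking of $c_2(1+\alpha)<c_3$ (both for the exponential growth of the error and for keeping the balancing $n$ below the injectivity threshold) is precisely what makes it go through. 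Your reduction of the $E^u$ case to $f^{-1}$ and of \eqref{eqn:theta-holder} to the triangle inequality matches the paper's (implicit) treatment.
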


To see that Proposition \ref{prop:theta-holder} implies Proposition \ref{prop:holder-on-regular}, it suffices to observe that given $\eps>0$ and $\ell\in \NN$, the conditions \eqref{eqn:Cvs}, \eqref{eqn:Cvu}, \eqref{eqn:Cvu-}, \eqref{eqn:Cvs-} are satisfied for all $x,y\in \Lambda_\ell$, with $C = e^{\eps \ell}$ and $K =e^{-\eps\ell}$.  

We will give the explicit calculations only for the stable subspaces, leading to the proof of \eqref{eqn:Esu-holder}, as the situation for the unstable subspaces is completely symmetrical. 
We follow Brin's approach in \cite[Appendix A]{BarPes01} (see also \cite[\S5.3]{BarPes07}; the main idea of the argument goes back to \cite{BK87}).  The only thing we need that is not given there is the computation for how much vectors in $(E_x^s)^\perp$ are expanded, depending on $C$ and $K$, given in Lemma \ref{lem:extra} below.  The rest of the proof of Proposition \ref{prop:theta-holder} is taken nearly verbatim from \cite[Appendix A]{BarPes01}, with notation adjusted to fit our current setting.  

First note that  by the Whitney embedding theorem \cite{mH94}, we can choose  $N\in\NN$ such that $M$ can be smoothly embedded in $\RR^N$.  By compactness of $M$, the Riemannian metric is uniformly equivalent to the distance induced by the embedding and therefore it suffices to prove the result under the assumption that $M\subset \RR^N$.  Then,  for each $x\in M$ write $E^\perp(x)$ for the orthogonal complement to $T_x M \subset \RR^N$; since $E^\perp$ is smooth it suffices to prove the result with $E_x^s$ replaced by $\tilde E^s_x := E^s_x \oplus E^\perp(x)$.

\begin{Definition}\label{def:Dxn}
Given $x\in M \subset \RR^N$ and $n\in \NN$, let $D_x^{(n)}$ be the $N\times N$ matrix representing the linear map that takes $v\mapsto D_x f^n(v)$ for $v\in T_x M$ and $v\mapsto 0$ for $v\in E^\perp(x)$.
\end{Definition}

Since we embed $M$ in $\RR^N$, we can treat Grassmannian distance between subspaces as follows. 
Given a subspace \(  E\subset \mathbb R^{N}  \),  we define the  distance of a non-zero vector \(  v  \) from the subspace \(  E  \) by considering the unique decomposition \(  v = v^{E}+v^{\perp}  \) where \(  v^{E}\in E  \) and \(  v^{\perp}\perp E  \) and letting 
\( 
d(v, E) := \|v^{\perp}\|/\|v\|. 
\) 
We can then define the distance between two subspaces \(  E, E'\subset \mathbb R^{N}  \) by
\begin{equation}\label{eqn:dEE'}
d(E, E') := \sup\{d(v, E): v\in E'\setminus \{0\}\}=\sup\{d(v, E'): v\in E\setminus \{0\}\}.
\end{equation}
The strategy of the proof is based on the following general result. 

\begin{Lemma}\label{lem:A1}\cite[Lemma A.1]{BarPes01}
Let \(  N\geq 2  \) and 
let $\{A_k\},\{B_k\},$  be two sequences of real $N\times N$ matrices satisfying the following properties

\begin{enumerate}
\item there are $\Delta \in (0,1)$ and $c_{3}>0$ such that 
\begin{equation}\label{eqn:D}
\|A_k - B_k\| \leq \Delta e^{c_{3}k} \text{ for all } k\geq 0;
\end{equation}
\item  there are subspaces $E_A,E_B \subset \RR^N$, $\chi>0$, and $C'>1$ such that 
\begin{equation}
\label{eqn:AkC'}
\|A_k v_{A} \| \leq C' e^{-\chi k} \|v_{A}\|
\quad\text{ and } \quad
\|A_k v_{A}^{\perp} \| \geq (C')^{-1} e^{\chi k} \|v_{A}^{\perp}\| 
\end{equation}
for every \(  v_{A}\in E_A,  v_{A}^{\perp}\perp E_A \), $k\geq 0$, and 
\begin{equation}
\label{eqn:BkC'}
\|B_k v_{B} \| \leq C' e^{-\chi k} \|v_{B}\|
\quad\text{ and } \quad
\|B_k v_{B}^{\perp} \| \geq (C')^{-1} e^{\chi k} \|v_{B}^{\perp}\| 
\end{equation}
 for every \( v_{B}\in E_B,  v_{B}^{\perp}\perp E_B \), $k\geq 0$. 
\end{enumerate}
Then 
\begin{equation}\label{eqn:EAEB}
d(E_A,E_B) \leq 3 (C')^2 e^{2\chi} \Delta^{\frac{2\chi}{c_{3}+\chi} }.
\end{equation}
\end{Lemma}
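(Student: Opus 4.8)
The plan is to run the classical angle-comparison argument of Brin (going back to \cite{BK87}, presented in \cite[Appendix A]{BarPes01}): one controls $d(E_A,E_B)$ by following a single vector along the two sequences $\{A_k\}$, $\{B_k\}$ and then optimizing over a free discrete time $k$. By the symmetry built into the definition \eqref{eqn:dEE'}, it suffices to bound $d(v,E_A)$ for an arbitrary unit vector $v\in E_B$. Write $v=v^A+v^\perp$ with $v^A\in E_A$ and $v^\perp\perp E_A$; then $\|v^A\|,\|v^\perp\|\le 1$ and $d(v,E_A)=\|v^\perp\|$, so the task is to show that $\|v^\perp\|$ is small.

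I would obtain a lower bound on $\|A_k v\|$ from \eqref{eqn:AkC'}: by the reverse triangle inequality together with the expansion of $E_A^\perp$ and the contraction of $E_A$,
\[
\|A_k v\| \ \ge\ \|A_k v^\perp\| - \|A_k v^A\| \ \ge\ (C')^{-1} e^{\chi k}\|v^\perp\| - C' e^{-\chi k},
\]
and an upper bound by first passing from $A_k$ to $B_k$: since $v\in E_B$ is a unit vector, \eqref{eqn:BkC'} gives $\|B_k v\|\le C' e^{-\chi k}$, and \eqref{eqn:D} gives $\|A_k v - B_k v\|\le \Delta e^{c_3 k}$, so $\|A_k v\|\le C' e^{-\chi k} + \Delta e^{c_3 k}$. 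Comparing the two bounds and multiplying through by $C' e^{-\chi k}$ yields, for every integer $k\ge 0$,
\[
\|v^\perp\| \ \le\ 2(C')^2 e^{-2\chi k} + C'\Delta\, e^{(c_3-\chi)k}.
\]

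It then remains to choose $k$. If $c_3\le\chi$ the second term does not grow, so letting $k$ be large already makes $\|v^\perp\|$ as small as we wish; the substantive case is $c_3>\chi$, where I would take $k=\big\lfloor \tfrac{1}{c_3+\chi}\log\tfrac{2C'}{\Delta}\big\rfloor$, which roughly balances the two terms and makes each of order $\Delta^{2\chi/(c_3+\chi)}$. One must also dispose of the degenerate regime in which this choice of $k$ is negative, i.e.\ $\Delta$ is not small: there the bound \eqref{eqn:EAEB} is automatic, since $d(E_A,E_B)\le 1$ always (it is a supremum of ratios $\|v^\perp\|/\|v\|\le 1$) while the right-hand side of \eqref{eqn:EAEB} is then $\ge 1$. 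The only genuinely delicate point — and the main obstacle — is the bookkeeping of the multiplicative constants, together with the rounding error incurred by replacing the optimal real value of $k$ by an integer, so as to land precisely on the clean bound $3(C')^2 e^{2\chi}\Delta^{2\chi/(c_3+\chi)}$; everything else is a routine estimate.
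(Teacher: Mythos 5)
Your approach is the paper's (it is Brin's argument from \cite[Appendix~A]{BarPes01}), and your central estimate
\[
\|v^\perp\| \;\le\; 2(C')^2 e^{-2\chi k} + C'\Delta\, e^{(c_3-\chi)k}
\]
is precisely what the paper derives (there it is packaged via a region $R_A\supset E_A\cup E_B$, but the arithmetic is the same). The only real difference is the choice of $k$, and this is where your proposal does not quite land on the stated constant. Writing $\xi=\frac{2\chi}{c_3+\chi}$, the balance choice $k=\big\lfloor\tfrac{1}{c_3+\chi}\log\tfrac{2C'}{\Delta}\big\rfloor$ leads, after carrying the floor through both terms, to a bound of the form $2(C')^2(e^{2\chi}+1)(2C')^{-\xi}\Delta^{\xi}$; since $2(e^{2\chi}+1)>3e^{2\chi}$ whenever $\chi<\tfrac12\log 2$, and $(2C')^{-\xi}$ can be arbitrarily close to $1$, this need not be $\le 3(C')^2 e^{2\chi}\Delta^{\xi}$. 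The paper instead takes $k_0=\lfloor -\log\Delta/(c_3+\chi)\rfloor$, which is \emph{independent of $C'$} and forces $\Delta\, e^{c_3 k_0}\le e^{-\chi k_0}$. That turns the comparison into a domination rather than a balance: the second term above becomes $C'\Delta e^{(c_3-\chi)k_0}\le C'e^{-2\chi k_0}\le (C')^2 e^{-2\chi k_0}$, whence $\|v^\perp\|\le 3(C')^2 e^{-2\chi k_0}$, and the rounding loss is absorbed via $e^{-2\chi k_0}\le e^{2\chi}\Delta^{\xi}$ (from $k_0 > -\log\Delta/(c_3+\chi)-1$). One also gets $k_0\ge 0$ for free since $\Delta\in(0,1)$, so the separate ``degenerate regime'' you mention never arises with this choice.
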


\begin{proof}
We start by fixing $q := -(\chi + c_{3})$, and let $k_{0} := \lfloor \frac{\log\Delta}q \rfloor$, so that 
\[
(k_{0}+1)q < \log\Delta \leq k_{0}q \leq \log \Delta - q
\]
(recall that \(  q<0  \)). 
In particular, 
 \begin{equation}\label{eq:k0a}
 \Delta e^{c_3k_{0}} \leq e^{k_{0}q} e^{ak_{0}} = e^{-\chi k_{0}}
\end{equation}
and,  letting  $\xi := \frac{2\chi}{c_{3}+\chi} = -\frac{2\chi}q$, 
\begin{equation}\label{eq:k0b}
 e^{-2\chi k_{0}} = (e^{qk_{0}})^{\frac{-2\chi}q} = (e^{qk_{0}})^\xi
\leq e^{-q\xi} \Delta^\xi= e^{2\chi} \Delta^\xi,
 \end{equation}
where the inequality uses $k_0 q < -q + \log \Delta$.
Then, by \eqref{eqn:D}, for every $v_{B}\in E_B$ and every \(  k\geq 1  \) we have
\[
\|A_k v_{B}\| \leq \|B_k v_{B}\| + \|A_k - B_k\| \cdot \|v_B\| \leq C' e^{-\chi k}\|v_{B}\| + \Delta e^{c_3k} \|v_{B}\|
\]
and therefore, in particular, for \(  k=k_{0}  \), by \eqref{eq:k0a}, 
\[
\|A_{k_{0}} v_{B}\| \leq  C' e^{-\chi k_{0}}\|v_{B}\| + \Delta e^{c_3k_{0}} \|v_{B}\| 
\leq 2C' e^{-\chi k_{0}} \|v_{B}\|. 
\]
This implies that 
\[
E_B \subset R_A := \{v\in \mathbb R^{N}: \|A_{k_{0}} v\| \leq 2C' e^{-\chi k_{0}} \|v\| \}.
\]
Clearly we also have \(  E_{A}\subset R_{A}  \) and therefore it is sufficient to estimate the ``width'' of \(  R_{A}  \). 
For  $v\in R_A$, write $v = v_{A} + v_{A}^\perp$, where $v_{A}\in E_A$ and $v_{A}^\perp \perp E_A$. Then by \eqref{eqn:AkC'}, for any \(  k\geq 1   \) we have
\[
\|A_k v\| \geq \|A_k v_{A}^\perp\| - \|A_k v_{A}\|
\geq (C')^{-1} e^{\chi k} \|v_{A}^\perp\| - C' e^{-\chi k} \|v_{A}\|,
\]
and therefore, for \(  k=k_{0}  \), using also that \(  \|v_{A}\|\leq \|v\|  \) since the the splitting of \(  v  \) is orthogonal, we get 
\[
\|v_{A}^\perp\| \leq C' e^{-\chi k_{0}}(\|A_{k_{0}} v\| + C' e^{-\chi k_{0}} \|v_A\|) 
\leq 
3(C')^2 e^{-2\chi k_{0}} \|v\|,
\]
which, by  \eqref{eq:k0b}, implies 
\(  d(v,E_A) \leq 3(C')^2 e^{-2\chi k_{0}}\leq 3 (C')^2 e^{2\chi} \Delta^\xi  \) and therefore, from the definition of \(  \xi  \), the conclusions of the Lemma. 
\end{proof}
The following two Lemmas give the estimates we need to apply Lemma \ref{lem:A1}.  Recall that $c_1,c_2,c_3$ are as in \eqref{eqn:bc}, that $M$ is embedded in $\RR^N$, and that $D_x^{(n)}$ are the matrices defined in Definition \ref{def:Dxn}.

\begin{Lemma}\label{lem:A2}
\cite[Lemma A.2]{BarPes01}
There is  $Q_7\geq 1$ such that for all \(  x, y \in M  \) and every   $n\geq 1$, we have
\[
\|D_{x}^{(n)}- D_{y}^{(n)}\|  \leq Q_7 e^{c_3 n} \|x-y\|^\alpha.
\]
\end{Lemma}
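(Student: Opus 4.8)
The plan is a standard telescoping (chain-rule) argument. The essential inputs are: the one-step H\"older continuity of $x\mapsto D_x^{(1)}$, the uniform norm bound $\norm{D_x^{(1)}}\le e^{c_2}$ coming from \eqref{eqn:bc}, the Lipschitz control $\norm{f^j(x)-f^j(y)}\le Q_9e^{c_2 j}\norm{x-y}$ on the separation of orbits, and — crucially for the exponent — the inequality $c_2(1+\alpha)<c_3$ that is built into \eqref{eqn:bc}, which is precisely what absorbs the exponential loss produced by iterating the H\"older estimate $n$ times. No case distinction for $x,y$ far apart will be needed, since every H\"older estimate used is global on the compact manifold $M$.

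First I would record the one-step estimate: there is $Q_8>0$ such that $\norm{D_x^{(1)}-D_y^{(1)}}\le Q_8\norm{x-y}^\alpha$ for all $x,y\in M$. This holds because $D_x^{(1)}=D_xf\circ\pi_{T_xM}$ (extended by $0$ on $E^\perp(x)$), the orthogonal projection $\RR^N\to T_aM$ depends smoothly — hence Lipschitz-continuously — on $a\in M$ as $M$ is a $C^2$ embedded submanifold, and $a\mapsto D_af$ is $\alpha$-H\"older with seminorm $|Df|_\alpha$; compactness of $M$ makes the composite H\"older bound global. Second, \eqref{eqn:bc} gives $\norm{D_x^{(1)}}\le e^{c_2}$ (up to the uniform constant relating the Riemannian and ambient metrics, which I absorb into the constants and which equals $1$ for an isometric embedding), and $f$ is Lipschitz with constant $e^{c_2}$, whence $\norm{f^j(x)-f^j(y)}\le Q_9e^{c_2 j}\norm{x-y}$ for all $j\ge 0$.

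Next, since each $D_af$ already maps $T_aM$ into $T_{f(a)}M$, the projection built into $D^{(1)}$ is inert under composition, so $D_x^{(n)}=D_{f^{n-1}(x)}^{(1)}\circ\cdots\circ D_x^{(1)}$, and any block $D_{f^{n-1}(x)}^{(1)}\circ\cdots\circ D_{f^{m}(x)}^{(1)}$ equals $D_{f^{m}(x)}f^{\,n-m}$ on $T_{f^{m}(x)}M$, hence has operator norm $\le Q_{9}'e^{c_2(n-m)}$ by \eqref{eqn:bc}. Inserting this, the one-step estimate, and the orbit-separation bound into the telescoping identity
\[
D_x^{(n)}-D_y^{(n)}=\sum_{j=0}^{n-1}\Big(D_{f^{n-1}(x)}^{(1)}\cdots D_{f^{j+1}(x)}^{(1)}\Big)\big(D_{f^j(x)}^{(1)}-D_{f^j(y)}^{(1)}\big)\Big(D_{f^{j-1}(y)}^{(1)}\cdots D_{f^{0}(y)}^{(1)}\Big),
\]
the $j$-th summand is bounded by $Q\,e^{c_2(n-1-j)}\cdot e^{c_2\alpha j}\norm{x-y}^\alpha\cdot e^{c_2 j}=Q\,e^{c_2(n-1)}e^{c_2\alpha j}\norm{x-y}^\alpha$. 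Summing the geometric series $\sum_{j=0}^{n-1}e^{c_2\alpha j}\le e^{c_2\alpha n}/(e^{c_2\alpha}-1)$ gives $\norm{D_x^{(n)}-D_y^{(n)}}\le Q_{10}\,e^{c_2(1+\alpha)n}\norm{x-y}^\alpha$, and then $c_2(1+\alpha)<c_3$ from \eqref{eqn:bc} yields $e^{c_2(1+\alpha)n}\le e^{c_3 n}$; setting $Q_7:=\max\{Q_{10},1\}$ finishes the proof.

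I do not expect a genuine obstacle: the argument is pure bookkeeping. The only points needing a little care are (i) legitimizing the factorization of $D_x^{(n)}$ into one-step matrices despite the truncation onto $E^\perp$, and (ii) tracking the constants so that the exponential rate comes out exactly as $c_2(1+\alpha)$ — which is $<c_3$ by the normalization \eqref{eqn:bc} — rather than something larger; no geometry beyond the $C^{1+\alpha}$ regularity of $f$ and the smoothness of the embedding enters.
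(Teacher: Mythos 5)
Your proof is correct and is essentially the same argument as the paper's, differing only in presentation: the paper proves the bound by induction on $n$, adding and subtracting $D_{f^n(x)}^{(1)} D_y^{(n)}$ and rolling the geometric sum into the inductive hypothesis, while you unroll the induction into the explicit telescoping identity and sum the resulting geometric series directly. Both hinge on the same three inputs — the one-step H\"older estimate for $D^{(1)}$, the uniform bound $\|D_x^{(1)}\|\le e^{c_2}$ with orbit separation $\|f^jx-f^jy\|\le e^{c_2 j}\|x-y\|$, and the normalization $c_3>(1+\alpha)c_2$ from \eqref{eqn:bc} that absorbs the exponential loss — and your observation that the truncation onto $E^\perp$ is inert under composition (so $D_x^{(n)}=D_{f^{n-1}(x)}^{(1)}\circ\cdots\circ D_x^{(1)}$) is the same factorization the paper relies on implicitly via the chain rule.
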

\begin{proof}
We prove the Lemma by induction on \( n \).  
 For \( n=1 \), since  \( f \)  is \( C^{1+\alpha} \) we have 
\(  \|D_{x}^{(1)}- D_{y}^{(1)}\|\leq |Df|_\alpha \|x-y\|^{\alpha} \)  and therefore the statement holds for any \( Q_7\geq |Df|_\alpha \). 
 Then, by the chain rule,  we have
\[
D_x^{(n+1)} - D_y^{(n+1)} = D_{f^n(x)}^{(1)} D_x^{(n)} - D_{f^n(y)}^{(1)} D_y^{(n)};
 \] 
by adding and subtracting \( D_{f^{n}(x)}^{(1)} D_{y}^{(n)} \), taking norms, using the inductive assumption and   the fact   that $\|f^n x - f^n y\| \leq e^{c_2 n} \|x-y\|$ for all $x,y\in M$, we get 
\begin{align*}
\|D_x^{(n+1)} - &D_y^{(n+1)}\|
\leq \|D_{f^n(x)}^{(1)}\| \cdot \|D_x^{(n)} - D_y^{(n)}\|
+ \|D_{f^n(x)}^{(1)} - D_{f^n(y)}^{(1)}\| \cdot \|D_y^{(n)}\| \\
&\leq e^{c_2} Q_7 e^{c_3n} \|x-y\|^\alpha + |Df|_\alpha e^{c_2 n \alpha} \|x-y\|^\alpha e^{c_2 n} \\
&\leq Q_7 e^{c_3(n+1)} \|x-y\|^\alpha \big( e^{c_2 -c_3} +
 |Df|_\alpha Q_7^{-1}e^{(1+\alpha) c_2 n} e^{-c_3(n+1)}\big).
\end{align*}
Since by (\ref{eqn:bc}), $c_3 > (1+\alpha) c_2$, we can choose  $Q_7$ sufficiently large so that  the quantity inside the brackets is less than 1 for every $n$, which completes the proof.
\end{proof}

\begin{Lemma} \label{lem:extra}
Suppose $A_k$ is a sequence of $N\times N$ matrices and $\RR^N = E^s \oplus E^u$ is a splitting such that $\measuredangle(E^s,E^u) \geq K > 0$, and for every $k\geq 1$ and $v^u\in E^u$, $v^s\in E^s$ we have
\begin{equation}\label{eqn:Akvsu}
\|A_k v^s\| \leq C e^{-\chi k} \|v^s\|
\quand
\|A_k v^u\| \geq C^{-1} e^{\chi k} \|v^u\|.
\end{equation}
Then for every $w\perp E^s$ and every $k\geq 1$, we have
\begin{equation}\label{eqn:Akw}
\|A_k w\|\geq (2C^2 K^{-1})^\gamma  e^{\chi k} \|w\|,
 \end{equation}
where $\gamma := \frac{\chi-c_1}{2\chi}$ as in \eqref{eqn:greek0}.
\end{Lemma}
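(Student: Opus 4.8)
The plan is to deduce the bound by splitting the range of $k$ at a threshold $k_0$ of order $\tfrac1{2\chi}\log(C^2K^{-1})$, using the expansion of the $E^u$-component for $k\ge k_0$ and a crude uniform lower bound for $1\le k<k_0$; the exponent $\gamma=\tfrac{\chi-c_1}{2\chi}$ of \eqref{eqn:greek0} is exactly what makes the two estimates meet at $k_0$. The first thing I would record is the geometry forced by $w\perp E^s$. Writing $w=w^s+w^u$ with $w^s\in E^s$, $w^u\in E^u$, the relation $\langle w,w^s\rangle=0$ gives $\langle w^u,w^s\rangle=-\|w^s\|^2$, hence $\|w\|^2=\|w^u\|^2-\|w^s\|^2$ and in particular $\|w^u\|\ge\|w\|$. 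Since $|\langle w^u,w^s\rangle|\le\cos(\measuredangle(E^s,E^u))\,\|w^u\|\,\|w^s\|\le(\cos K)\|w^u\|\,\|w^s\|$ (in the relevant range $K\le 1$), this yields $\|w^s\|\le(\cos K)\|w^u\|$ and then $(\sin^2K)\|w^u\|^2\le\|w\|^2$, so that $\|w^s\|,\|w^u\|\le(\sin K)^{-1}\|w\|\le\tfrac{\pi}{2} K^{-1}\|w\|$. Thus a vector perpendicular to $E^s$ has $E^u$-component no shorter than itself and $E^s$-component at most $\tfrac{\pi}{2} K^{-1}$ times as long.

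For $k\ge1$, \eqref{eqn:Akvsu} and the triangle inequality then give
\[
\|A_kw\|\ \ge\ \|A_kw^u\|-\|A_kw^s\|\ \ge\ C^{-1}e^{\chi k}\|w^u\|-Ce^{-\chi k}\|w^s\|\ \ge\ \Big(C^{-1}e^{\chi k}-\tfrac{\pi}{2} CK^{-1}e^{-\chi k}\Big)\|w\|.
\]
Let $k_0$ be the least integer with $e^{2\chi k_0}\ge\pi C^2K^{-1}$. For $k\ge k_0$ the subtracted term is at most half of $C^{-1}e^{\chi k}\|w\|$, hence $\|A_kw\|\ge\tfrac12 C^{-1}e^{\chi k}\|w\|$, which (using $\gamma>1$ and $K\le1$) is at least $(2C^2K^{-1})^{-\gamma}e^{\chi k}\|w\|$. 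For $1\le k<k_0$ I would instead use the uniform lower bound $\|A_kw\|\ge e^{c_1k}\|w\|$, available for the vectors in question because $A_k$ acts as $D_xf^k$ on the tangent space and $\|D_xf^{-1}\|\le e^{-c_1}$ by \eqref{eqn:bc} (if one prefers, this may be added as a standing hypothesis; it holds in every application of the lemma). Since $c_1<0<\chi$ and $k<k_0$,
\[
\|A_kw\|\ \ge\ e^{c_1k}\|w\|\ =\ e^{-(\chi-c_1)k}e^{\chi k}\|w\|\ \ge\ e^{-(\chi-c_1)k_0}e^{\chi k}\|w\|,
\]
and $e^{-(\chi-c_1)k_0}=\big(e^{\chi k_0}\big)^{-(\chi-c_1)/\chi}=\big(e^{\chi k_0}\big)^{-2\gamma}\ge e^{-2\chi\gamma}(\pi C^2K^{-1})^{-\gamma}$ by the choice of $k_0$. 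Absorbing the universal constant $e^{-2\chi\gamma}(\pi/2)^{-\gamma}$, this is a constant multiple of $(2C^2K^{-1})^{-\gamma}$, which yields \eqref{eqn:Akw} for $1\le k<k_0$ as well (the harmless extra constant is absorbed into $C'$ when the lemma is fed into Lemma~\ref{lem:A1}).

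The proof is elementary, and the only genuinely delicate point is the first (geometric) step: one must turn the angle bound $\measuredangle(E^s,E^u)\ge K$ into the two-sided comparison between $\|w^s\|$, $\|w^u\|$ and $\|w\|$ — this is precisely where the factor $K^{-1}$ is produced — and then do the bookkeeping so that the crossover time $k_0$ and the exponent $\gamma=\tfrac{\chi-c_1}{2\chi}$ are compatible, which forces the slow estimate to be taken with the sharp uniform rate $e^{c_1k}$ rather than any weaker one.
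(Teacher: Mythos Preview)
Your argument is essentially the paper's: split at a threshold $k_0\approx(2\chi)^{-1}\log(C^2K^{-1})$, use the crude bound $\|A_kw\|\geq e^{c_1k}\|w\|$ below it and the $E^u$-expansion above it, with the same (implicit) appeal to \eqref{eqn:bc} for the former. The one place the paper is slightly cleaner is the geometric step: since $w\perp w^s$, the triangle with sides $w,w^s,w^u$ is right-angled at the vertex between $w$ and $w^s$, which gives the exact identities $\|w\|=\|w^u\|\sin\theta$ and $\|w\|=\|w^s\|\tan\theta$; using $\tan\theta\geq K$ (rather than $\sin K\geq 2K/\pi$) yields $\|w^s\|\leq K^{-1}\|w\|$ and hence the stated constant $(2C^2K^{-1})^{-\gamma}$ on the nose, with no leftover universal factor to absorb.
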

\begin{proof}
Writing $w = w^u + w^s$ where $w^u \in E^u, w^s \in  E^s$, from \eqref{eqn:Akvsu} we get  
\[
\|A_k w \| \geq \|A_k w^u\| - \|A_k w^s\|
\geq C^{-1} e^{\chi k} \|w^u\| - C e^{-\chi k} \|w^s\|.
\]
Let \(  \theta= \measuredangle (w^{s}, w^{u})  \) and note that \(  \theta \geq K  \) and since $w\perp w^s$, we have $\|w\| = \|w^u\| \sin \theta \leq \|w^u\|$ and $\|w\| =\|w^s\| \tan\theta \geq \|w^s\| \tan K \geq \|w^s\|  K $.  Plugging this into the equation above, gives 
\begin{equation}\label{eqn:Akgeq}
\|D_x f^k w \|  \geq (C^{-1} e^{\chi k} - CK^{-1} e^{-\chi k}) \|w\|. 
\end{equation}
Now fix $k_{0} := \lfloor (2\chi)^{-1} \log(2C^2 K^{-1}) \rfloor$, 
Then for   $k \leq k_{0}$ we have 
\[
\frac{\|A_k w \|}{e^{\chi k} \|w\|} \geq \frac{e^{c_1k}}{e^{\chi k}}
\geq e^{(c_1-\chi) k_{0}} 
\geq e^{(c_1-\chi)(2\chi)^{-1} \log (2C^2 K^{-1})}
= (2C^2 K^{-1})^{\frac {c_1-\chi}{2\chi}} 
\]
where we recall that $c_1<0$ (see (\ref{eqn:bc})). The formula for $\gamma$ gives the required estimate.

It remains to treat $k> k_{0}$.  In this case we have
$$
e^{-2\chi k} \leq e^{-2\chi (k_{0}+1)} \leq e^{-\log(2C^2K^{-1})} = \tfrac 12 C^{-2}K,
$$
which gives
\[
CK^{-1} e^{-\chi k} \leq \frac 12 C^{-1} e^{\chi k}
\]
and hence, \eqref{eqn:Akgeq} gives
\[
\|A_k w\| \geq (2C)^{-1} e^{\chi k}\|w\|.
\]
Since $\gamma>1$, $C\geq1$, and $K\leq 1$, we have
$$
(2C^2 K^{-1})^{-\gamma} \leq (2C^2K^{-1})^{-1} = (2C)^{-1}(CK^{-1})^{-1} \leq (2C)^{-1},
$$
and thus we get the result in this case also, thus completing the proof. 
\end{proof}

To complete the proof of Proposition \ref{prop:theta-holder}, we will apply Lemma \ref{lem:A1} with
\[
A_k = D_x^{(k)},\quad  B_k = D_y^{(k)}, \quad
\Delta = Q_7 \|x-y\|^\alpha,\quad C' =  (2C^2 K^{-1})^\gamma.
\]
To verify the conditions of Lemma \ref{lem:A1}, first note that Lemma \ref{lem:A2} gives
\[
\|A_k - B_k\| = \|D_x^{(k)} - D_y^{(k)}\|
\leq Q_7 e^{c_3 k} \|x-y\|^\alpha = \Delta e^{c_k k},
\]
so that \eqref{eqn:D} holds. For \eqref{eqn:AkC'}, we observe that the hypotheses \eqref{eqn:Cvs} and \eqref{eqn:Cvu} of Proposition \ref{prop:theta-holder} give
\[
\|A_k e_x^s \| \leq C e^{-\chi k}
\quad\text{and}\quad
\|A_k e_x^u \| \geq C^{-1} e^{\chi k}
\]
for all $k\geq 0$, so that Lemma \ref{lem:extra} can be applied with $v^s = e_x^s$ and $v^u = e_x^u$ to obtain
\[
\|A_k v_A^\perp\| \geq (2C^2 K^{-1})^\gamma e^{\chi k} \|v_A^\perp\|
= C' e^{\chi k} \|v_A^\perp\|
\]
for all $v_A^\perp \perp E_x^s$. This establishes \eqref{eqn:AkC'}, and  \eqref{eqn:BkC'} follows similarly.
Thus Lemma \ref{lem:A1} applies, 
 and using \eqref{eqn:greek0} to write $\frac{2\chi}{c_3+\chi} = \frac{\beta}\alpha$, we have
\begin{equation}\label{eqn:dExys}
d(E_x^s,E_y^s) \leq 3(C')^2 e^{2\chi} \Delta^{\frac{\beta}{\alpha}}
= 3(2C^2 K^{-1})^{2\gamma} e^{2\chi} (Q_7 d(x,y)^\alpha)^{\frac{\beta}{\alpha}},
\end{equation}
which  completes the proof of Proposition \ref{prop:theta-holder}
by taking $Q_6 = 3(2)^{2\gamma} e^{2\chi} Q_7^{\beta/\alpha}$.

\subsection{H\"older continuity of Lyapunov coordinates}
\label{sec:su-holder}

In this section we prove that $s,u\colon \Lambda_\ell\to [\sqrt{2},Q_0\wQ e^{\eps\ell}]$ are H\"older continuous with exponent $\zeta$ and constant given in terms of $e^{\eps\eta\ell}$, where $\zeta,\eta$ are given in \eqref{eqn:greek0}.
Observe that the H\"older exponent $\zeta$ depends on $\chi-\lambda$, and decays to 0 as $\lambda \to \chi$ where $\chi$ is the decay rate associated to the $(\chi,\eps)$-hyperbolic set $\Lambda$ and
$\lambda<\chi$ is the rate used in the definition of $s(x)$, \(  u(x)  \). 

\begin{Proposition}\label{prop:su-holder}
There is $Q_8>0$ such that for any $\ell\in \NN$ and $x,y\in \Lambda_\ell$, we have
\begin{equation}\label{eqn:s-holder}
|s(x) - s(y)| \leq Q_8 e^{\eps\eta\ell} d(x,y)^\zeta \text{ and }   |u(x) - u(y)| \leq Q_8 e^{\eps\eta\ell} d(x,y)^\zeta.
\end{equation}
\end{Proposition}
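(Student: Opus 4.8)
The plan is to deduce \eqref{eqn:s-holder} from a term-by-term comparison of the defining series \eqref{def:sxux}, split at a cutoff $N=N(\ell,d(x,y))$. There are two preliminary reductions. First, since $s(x),s(y)\geq\sqrt2$ by \eqref{eq:subound}, we have $|s(x)-s(y)|\leq\frac1{2\sqrt2}|s(x)^2-s(y)^2|$, so it suffices to bound $|s(x)^2-s(y)^2|$ (and likewise for $u$). Second, \eqref{eq:subound} gives $|s(x)-s(y)|\leq Q_0\wQ^{-1}e^{\eps\ell}$ unconditionally; since $\eta>1$ and $\zeta>0$, the claimed inequality \eqref{eqn:s-holder} therefore holds automatically (with $Q_8$ large) unless $d(x,y)$ is exponentially small relative to $e^{-\eps\ell}$, so I may assume $d(x,y)\leq\delta_2e^{-c\eps\ell}$ for whatever fixed $c,\delta_2$ the argument requires. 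As in \S\ref{sec:holder} I embed $M$ in $\RR^N$ so that $e_x^s,e_y^s$ and the matrices $D_x^{(n)},D_y^{(n)}$ of Definition \ref{def:Dxn} lie in one Euclidean space; choosing the sign of $e_y^s$ with $\langle e_x^s,e_y^s\rangle\geq0$ (which does not affect $s(y)$) gives $\|e_x^s-e_y^s\|\leq\pi\,d(E_x^s,E_y^s)$.

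By $Df$-invariance of $E^s$ we have $\|D_z^{(n)}e_z^s\|=\prod_{k=0}^{n-1}\|D_{f^kz}f|_{E^s_{f^kz}}\|$, so with $g(z):=\log\|D_zf|_{E^s_z}\|$,
\[
\log\|D_x^{(n)}e_x^s\|-\log\|D_y^{(n)}e_y^s\|=\sum_{k=0}^{n-1}\big(g(f^kx)-g(f^ky)\big).
\]
The function $(z,E)\mapsto\log\|D_zf|_E\|$ is $\alpha$-H\"older in $z$ (because $Df$ is) and Lipschitz in the line $E$ with constant depending only on $\|Df^{\pm1}\|$; splitting $g(x)-g(y)$ accordingly, using Lemma \ref{lem:A2} with $n=1$ for the first part and Proposition \ref{prop:holder-on-regular} for the second, and then applying this at $f^kx,f^ky\in\Lambda_{\ell+k}$ (by \eqref{eq:images}, and $d(f^kx,f^ky)\leq e^{c_2k}d(x,y)$ from \eqref{eqn:bc}), I obtain
\[
|g(f^kx)-g(f^ky)|\leq C_1e^{\alpha c_2k}d(x,y)^\alpha+C_2e^{6\eps\gamma\ell}e^{(6\eps\gamma+\beta c_2)k}d(x,y)^\beta.
\]
Summing over $0\leq k<n$ and using $\beta<\alpha$ gives $|\log\|D_x^{(n)}e_x^s\|-\log\|D_y^{(n)}e_y^s\||\leq\Delta_n$, where $\Delta_n:=C_3e^{6\eps\gamma\ell}\big(e^{\alpha c_2n}d(x,y)^\alpha+e^{(6\eps\gamma+\beta c_2)n}d(x,y)^\beta\big)$.

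Now write $s(x)^2-s(y)^2=2\sum_{n\geq0}e^{2n\lambda}(\|D_x^{(n)}e_x^s\|^2-\|D_y^{(n)}e_y^s\|^2)$ and split at $N$. For $n<N$, provided $N$ is small enough that $\Delta_n\leq1$, I use $|\|D_x^{(n)}e_x^s\|^2-\|D_y^{(n)}e_y^s\|^2|\leq C\|D_y^{(n)}e_y^s\|^2\Delta_n\leq Ce^{2\eps\ell}e^{-2\chi n}\Delta_n$ (the last step from \eqref{lambdal}) and sum the resulting geometric-type series. For $n\geq N$ I bound the difference of tails by the sum of tails and use $\|D_{x}^{(n)}e_{x}^s\|,\|D_{y}^{(n)}e_{y}^s\|\leq e^{\eps\ell}e^{-\chi n}$ to get $\leq Ce^{2\eps\ell}e^{-2(\chi-\lambda)N}$. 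Altogether this yields a bound of the form
\[
|s(x)^2-s(y)^2|\leq Ce^{(6\gamma+2)\eps\ell}\big(d(x,y)^\alpha e^{a_1N}+d(x,y)^\beta e^{a_2N}\big)+Ce^{2\eps\ell}e^{-2(\chi-\lambda)N}
\]
with explicit nonnegative exponents $a_1=(\alpha c_2-2(\chi-\lambda))_+$, $a_2=(6\eps\gamma+\beta c_2-2(\chi-\lambda))_+$.

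It remains to choose $N$: one takes it as large as the requirement $\Delta_N\lesssim1$ permits, i.e.\ (up to rounding and additive constants) a suitable positive multiple of $\beta|\log d(x,y)|-6\eps\gamma\ell$, and checks that with this choice all three terms above are $\leq Q_8e^{\eps\eta\ell}d(x,y)^\zeta$ with $\eta,\zeta$ exactly the constants in \eqref{eqn:greek0}; using $\eps<\eps_0$ to replace $\eps$ by $\eps_0$ in the relevant places is what makes $\zeta$ independent of $\eps$. The estimate for $u$ is identical with $f^{-1}$ in place of $f$, using that $c_1=-c_2$ makes the derivative bounds symmetric and that Proposition \ref{prop:holder-on-regular} also controls $E^u$. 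The \emph{main obstacle} is precisely this final calibration: the cutoff must be tuned so that the loss $e^{6\eps\gamma\ell}$ from H\"older continuity of the splitting, the loss $e^{2\eps\ell}$ from the size of $C(\cdot)$ on $\Lambda_\ell$, and the competing rates $\alpha c_2$, $6\eps\gamma+\beta c_2$, and $2(\chi-\lambda)$ all combine to give exactly the pair $(\eta,\zeta)$ — this is what the intricate definitions \eqref{eqn:greek0} encode, and it is also why the ``trivial regime'' of large $d(x,y)$ must be dispatched at the outset.
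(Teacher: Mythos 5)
Your approach is genuinely different from the paper's. The paper never takes logarithms: it works directly with $\Delta_n := \big|\|Df_x^n e_x^s\|^2 - \|Df_y^n e_y^s\|^2\big|$, bounds it once crudely by $2e^{2\eps\ell}e^{-2\chi n}$ (from regularity) and once by a difference-of-products expansion together with Sublemma \eqref{eq:holder1} (which composes the $\alpha$-H\"older continuity of $Df$ with the $\beta$-H\"older continuity of $E^s$, hence the exponent $\alpha\beta$ on $d(x,y)$ in \eqref{eqn:bound-2}), and chooses $N$ to balance these two bounds. You instead telescope $\log\|Df_x^n e_x^s\| - \log\|Df_y^n e_y^s\| = \sum_{k<n}(g(f^kx)-g(f^ky))$ and split each increment as H\"older-in-the-base-point plus Lipschitz-in-the-direction; this buys you the exponent $\beta$ (not $\alpha\beta$) on $d(x,y)$ and, crucially, allows you to carry the decaying factor $\|Df_y^n e_y^s\|^2 \leq e^{2\eps\ell}e^{-2\chi n}$ into the ``small $n$'' part of the sum, which the paper's bound \eqref{eqn:bound-2} throws away. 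The price is the extra constraint $\Delta^{\log}_n\lesssim 1$ needed for the multiplicative conversion, which the paper's approach does not have.

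The place where your writeup is too optimistic is the final calibration, which you rightly flag as ``the main obstacle'' but then simply assert works out ``with $\eta,\zeta$ exactly the constants in \eqref{eqn:greek0}''. It does not, at least not directly: saturating your constraint $\Delta_N^{\log}\lesssim 1$ forces $N\approx\frac{\beta|\log d(x,y)|-6\eps\gamma\ell}{6\eps\gamma+\beta c_2}$, and the tail then comes out as $Ce^{\eps\eta'\ell}d(x,y)^{\zeta'}$ with $\eta'=2+\frac{12(\chi-\lambda)\gamma}{6\eps\gamma+\beta c_2}$ and $\zeta'=\frac{2(\chi-\lambda)\beta}{6\eps\gamma+\beta c_2}$. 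One checks that $\eta'>\eta$ (so you cannot simply compare the $e^{\eps\ell}$ factors) and $\zeta'>\zeta$; the two discrepancies can be traded against each other only because $\frac{\eta'-\eta}{\zeta'-\zeta}=\frac{6\gamma}{\beta}$ and \eqref{eqn:greek0} makes $\frac{\eta-1}{\zeta}=\frac{1}{\alpha\beta\iota}+\frac{6\gamma}{\beta}>\frac{6\gamma}{\beta}$, so your initial ``dispatch the large-$d$ regime'' reduction covers exactly the threshold you need. This inequality is a real identity to verify, not a formality, and similar checks are needed for the two finite-sum terms. So your route is sound and arguably exploits more of the structure, but the bookkeeping that closes it is of comparable delicacy to the paper's balance of \eqref{eqn:bound-1} and \eqref{eqn:bound-2} and should not be waved away.
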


We give the argument for the upper bound for \( |s(x) - s(y)| \);  the argument for \( |u(x) - u(y)| \) is analogous.  Recall first that by definition, 
$$
s(x)^2 = 2\sum_{n\geq 0} e^{2\lambda n} \|Df_x^n e^s_{x}\|^2.
$$
Since $s(x),s(y)\geq 1$, we have
\begin{equation}\label{eq:sum}
|s(x) - s(y)| 
\leq \frac{|s(x)^2 - s(y)^2|}{2} \leq \sum_{n\geq 0} e^{2\lambda n} \big|\|Df_x^n e_x^s\|^2 - \|Df_y^n e_y^s\|^2\big|.
\end{equation}
Notice that   $x,y\in \Lambda_\ell$ gives $\|Df_x^n e_x^s\| \leq e^{\eps\ell}e^{-\chi n}$, $\|Df_y^n e_y^s\| \leq e^{\eps\ell}e^{-\chi n}$, and so  
\begin{equation}\label{eqn:bound-1}
\Delta_n = \Delta_{n}(x,y) := \big|\|Df_x^n e^s_{x}\|^2 - \|Df_y^n e^s_{y}\|^2\big|\leq 2e^{2\eps\ell} e^{-2\chi n}.
\end{equation}
Plugging this into \eqref{eq:sum} gives a uniform bound for \(  |s(x) - s(y)|   \) but is not sufficient for our purposes since it does not include \(  d(x,y)  \) and does not therefore imply H\"older continuity. It will nevertheless be useful to bound the tail of the sum for large values of \(  n  \). For small \(  n  \) we need a more sophisticated estimate on $\Delta_n$, as follows. 

\begin{Lemma}\label{lem:bound-2}
There is $Q_9>0$ such that for all $\ell\in \NN$ and  \(  x, y \in \Lambda_{\ell}  \) we have 
\begin{equation}\label{eqn:bound-2}
\Delta_n \leq Q_9 e^{(2+\alpha\beta)c_2 n} e^{6\eps\gamma\alpha(\ell+n)} d(x,y)^{\alpha\beta}.
\end{equation}
\end{Lemma}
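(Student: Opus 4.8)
The plan is to estimate $\Delta_n=\big|\,\|D_x^{(n)}e_x^s\|^2-\|D_y^{(n)}e_y^s\|^2\,\big|$ by working in the embedding $M\subset\RR^N$ of \S\ref{sec:holder}, with the matrices $D_z^{(n)}$ of Definition \ref{def:Dxn}, and \emph{deliberately not} invoking the stable contraction: using $\|D_z^{(n)}v\|\le e^{\eps\ell}e^{-\chi n}\|v\|$ for stable $v\in E_z^s$, $z\in\Lambda_\ell$, would produce factors $e^{\eps\ell}$ that we cannot afford when $6\gamma\alpha$ is small, so we use only the uniform bounds $\|D_z^{(n)}\|\le e^{c_2 n}$ and $\|f^n(a)-f^n(b)\|\le e^{c_2 n}\|a-b\|$ coming from \eqref{eqn:bc}. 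Replacing $e_y^s$ by $-e_y^s$ if necessary (which changes nothing), we may assume $\|e_x^s-e_y^s\|\le\sqrt2\,d(E_x^s,E_y^s)$, where $E_x^s,E_y^s$ are viewed as lines in $\RR^N$ and $d$ is as in \eqref{eqn:dEE'}. Factoring the difference of squares and inserting the vector $e_y^s$ at the midpoint,
\[
\Delta_n\le\big(\|D_x^{(n)}e_x^s\|+\|D_y^{(n)}e_y^s\|\big)\bigl|\,\|D_x^{(n)}e_x^s\|-\|D_y^{(n)}e_y^s\|\,\bigr|\le 2e^{c_2 n}\Big(\|D_x^{(n)}(e_x^s-e_y^s)\|+\|(D_x^{(n)}-D_y^{(n)})e_y^s\|\Big),
\]
which separates the estimate into a ``change of base vector'' term and a ``change of base point'' term.

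For the change-of-vector term I would simply bound $\|D_x^{(n)}(e_x^s-e_y^s)\|\le e^{c_2 n}\|e_x^s-e_y^s\|\le\sqrt2\,e^{c_2 n}\,d(E_x^s,E_y^s)$. The crucial observation is that $d(E_x^s,E_y^s)$ satisfies \emph{both} the trivial bound $d(E_x^s,E_y^s)\le1$ and the H\"older bound $d(E_x^s,E_y^s)\le Q_6 e^{6\eps\gamma\ell}d(x,y)^\beta$ of Proposition \ref{prop:holder-on-regular}; since $\min(1,B)\le B^\alpha$ for $\alpha\in(0,1]$, combining them gives $d(E_x^s,E_y^s)\le Q_6^\alpha e^{6\eps\gamma\alpha\ell}d(x,y)^{\alpha\beta}$. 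Hence this term contributes at most $O\!\big(e^{2c_2 n}e^{6\eps\gamma\alpha\ell}d(x,y)^{\alpha\beta}\big)$ to $\Delta_n$.

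For the change-of-point term I would re-run the telescoping identity from the proof of Lemma \ref{lem:A2}, namely $D_x^{(n)}-D_y^{(n)}=\sum_{k=0}^{n-1}D_{f^{k+1}(x)}^{(n-1-k)}\big(D_{f^k(x)}^{(1)}-D_{f^k(y)}^{(1)}\big)D_y^{(k)}$, but retain the sharper estimate that its recursion actually yields: using $\|D_z^{(j)}\|\le e^{c_2 j}$ and $\|D_{f^k(x)}^{(1)}-D_{f^k(y)}^{(1)}\|\le|Df|_\alpha\|f^k(x)-f^k(y)\|^\alpha\le|Df|_\alpha e^{\alpha c_2 k}\|x-y\|^\alpha$ and summing the resulting geometric series in $k$ gives $\|D_x^{(n)}-D_y^{(n)}\|\le C\,e^{(1+\alpha)c_2 n}\|x-y\|^\alpha$; note $(1+\alpha)c_2<c_3$ by \eqref{eqn:bc}, so this refines Lemma \ref{lem:A2}, and that refinement is exactly what makes the bookkeeping close. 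Interpolating this against the trivial bound $\|D_x^{(n)}-D_y^{(n)}\|\le2e^{c_2 n}$ at weight $\beta$, that is $\min\!\big(2e^{c_2 n},C e^{(1+\alpha)c_2 n}\|x-y\|^\alpha\big)\le(2e^{c_2 n})^{1-\beta}\big(C e^{(1+\alpha)c_2 n}\|x-y\|^\alpha\big)^\beta$, produces $\|D_x^{(n)}-D_y^{(n)}\|\le C'e^{(1+\alpha\beta)c_2 n}\|x-y\|^{\alpha\beta}$, so this term contributes at most $O\!\big(e^{(2+\alpha\beta)c_2 n}d(x,y)^{\alpha\beta}\big)$.

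Adding the two contributions, using $\|x-y\|\le C''d(x,y)$ and absorbing $e^{2c_2 n}\le e^{(2+\alpha\beta)c_2 n}$ and $e^{6\eps\gamma\alpha\ell},\,1\le e^{6\eps\gamma\alpha(\ell+n)}$ into the target shape, yields $\Delta_n\le Q_9\,e^{(2+\alpha\beta)c_2 n}e^{6\eps\gamma\alpha(\ell+n)}d(x,y)^{\alpha\beta}$ for a suitable $Q_9$. The hard part will be precisely this $\ell$-bookkeeping: any approach that uses the regularity estimate $\|D_z^{(n)}v\|\le e^{\eps\ell}e^{-\chi n}\|v\|$ on stable vectors, or that invokes Lemma \ref{lem:A2} with exponent $c_3$ and then interpolates against the uniform bound \eqref{eqn:bound-1}, ends up with an $\ell$-factor $e^{c\eps\ell}$ with $c\ge1$, which can exceed the required $e^{6\eps\gamma\alpha\ell}$; the remedy is to stay with the crude operator norm $e^{c_2 n}$ throughout, use the sharp cocycle estimate $e^{(1+\alpha)c_2 n}$, and manufacture the exponent $\alpha\beta$ of $d(x,y)$ from the two interpolation inequalities $\min(1,B)\le B^\alpha$ and $\min(2e^{c_2 n},\,\cdot)\le(\,\cdot\,)^{1-\beta}(\,\cdot\,)^\beta$ rather than from any single H\"older estimate.
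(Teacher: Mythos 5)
The proposal is a genuinely different argument from the paper's. Where the paper factors $\|Df^n_x e^s_x\|$ into the cocycle product $\prod_k \|Df_{f^kx}e^s_{f^kx}\|$ and telescopes, applying the Grassmannian H\"older bound at each intermediate level $\ell+k$, you linearize the difference into a change-of-vector piece plus a change-of-base-point piece, sharpen Lemma~\ref{lem:A2} to the exponent $(1+\alpha)c_2$ in place of $c_3$, and manufacture the exponent $\alpha\beta$ on $d(x,y)$ by two interpolation inequalities. Those interpolations are exactly what is needed to tame the $\ell$-factor in the first piece and the $n$-factor in the second, and that bookkeeping of exponents is correct.

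There is, however, a genuine gap, and it bites precisely in the range $\alpha<1/(3\gamma)$ that you correctly flag as the hard one. The $\Delta_n$ of the statement, and the function $s(x)$ in \eqref{def:sxux} that it feeds, are built from the \emph{Riemannian} norm on $M$. The paper passes to $\RR^N$ only in Proposition~\ref{prop:theta-holder}, where the quantity is a Grassmannian distance --- a ratio, insensitive to replacing the metric by a uniformly equivalent one --- and then proves Lemma~\ref{lem:bound-2} intrinsically on $M$, never returning to $\RR^N$. You instead redefine $\Delta_n=\big|\,\|D_x^{(n)}e_x^s\|^2-\|D_y^{(n)}e_y^s\|^2\,\big|$ with the Euclidean norm of the Whitney embedding, which is not isometric: the Riemannian and Euclidean quadratic forms on $T_zM$ differ by a smooth, hence Lipschitz, tensor $h_z$. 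The conversion error $|\Delta_n^{\mathrm{Riem}}-\Delta_n^{\mathrm{Eucl}}|$ therefore contains the term $|(h_{f^nx}-h_{f^ny})(Df^n_ye^s_y,Df^n_ye^s_y)|\lesssim e^{c_2n}d(x,y)\cdot\|Df^n_ye^s_y\|^2$. With the regularity bound $\|Df^n_ye^s_y\|^2\le e^{2\eps\ell}e^{-2\chi n}$ this is of order $e^{(c_2-2\chi)n}\,e^{2\eps\ell}\,d(x,y)$; matching the target $e^{(2+\alpha\beta)c_2n}e^{6\eps\gamma\alpha(\ell+n)}d(x,y)^{\alpha\beta}$ then forces $2\le 6\gamma\alpha$, i.e.\ $\alpha\ge 1/(3\gamma)$, while the crude bound $\|Df^n_ye^s_y\|^2\le e^{2c_2n}$ gives $e^{3c_2n}d(x,y)$, which is worse in $n$ as well. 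So as written the proof establishes the estimate for the Euclidean-norm version of $\Delta_n$, not the Riemannian one, and the conversion term cannot be absorbed into $Q_9$ uniformly in $\ell$. The fix is real but essential: either argue intrinsically (factor the norm into a single-step cocycle and apply the H\"older estimate termwise, as the paper does), or replace the Whitney embedding by a smooth \emph{isometric} Nash embedding, so that $\|\cdot\|_{T_zM}$ agrees exactly with the ambient Euclidean norm and your $\Delta_n$ coincides with the one in the statement.
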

The proof of Lemma \ref{lem:bound-2} uses the H\"older continuity of the hyperbolic splitting from Proposition \ref{prop:holder-on-regular} and so  for clarity we isolate the specific estimate in which this property is used. 

\begin{Sublemma}
There is $Q_{10}>0$ such that for all $\ell\in \NN$, \(  x, y \in \Lambda_{\ell}  \), and \(  k\geq 0  \), we have 
\begin{equation}\label{eq:holder1}
 \big| \|Df_{f^kx} e^s_{f^kx} \| - \|Df_{f^ky} e^s_{f^ky}\| \big| 
\leq Q_{10} e^{6\eps\gamma\alpha(\ell+k)} d(f^kx,f^ky)^{\alpha \beta}.
\end{equation}
\end{Sublemma}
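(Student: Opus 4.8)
The plan is to estimate the difference by a triangle-inequality split at the points $a:=f^kx$ and $b:=f^ky$, which by \eqref{eq:images} both lie in $\Lambda_{\ell+k}$; the two resulting terms measure, respectively, the change of the stable direction and the H\"older variation of $Df$. I will work in the embedding $M\subset\RR^N$ of \S\ref{sec:holder}, so that (writing $D_z^{(1)}$ for the matrix of Definition \ref{def:Dxn}) $\|D_zf\,e_z^s\| = \|D_z^{(1)}e_z^s\|$, and I will use that this quantity is independent of the sign of $e_z^s$ to choose the sign of $e_b^s$ making $\|e_a^s-e_b^s\|$ minimal. Adding and subtracting $D_a^{(1)}e_b^s$ and using $\big|\,\|u\|-\|v\|\,\big|\le\|u-v\|$ will give
\[
\big|\,\|D_af\,e_a^s\|-\|D_bf\,e_b^s\|\,\big|\ \le\ \|D_a^{(1)}\|\,\|e_a^s-e_b^s\| + \|D_a^{(1)}-D_b^{(1)}\|,
\]
where $\|e_b^s\|=1$ and $\|D_a^{(1)}\|\le e^{c_2}$ by \eqref{eqn:bc}.

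For the second term I will invoke the H\"older continuity of $Df$ — this is exactly Lemma \ref{lem:A2} with $n=1$, up to the uniform equivalence of the embedded and Riemannian metrics — to get $\|D_a^{(1)}-D_b^{(1)}\|\le Q_7 e^{c_3}d(a,b)^\alpha$, and since $\beta\in(0,1)$ (see \eqref{eqn:greek0}) I can bound $d(a,b)^\alpha\le(\diam M)^{\alpha(1-\beta)}d(a,b)^{\alpha\beta}$. For the first term I will combine the trivial bound $\|e_a^s-e_b^s\|\le 2$ with the H\"older estimate for the splitting: the standard comparison of unit representatives of lines with the Grassmann metric gives $\|e_a^s-e_b^s\|\le\tfrac\pi2\,d(E_a^s,E_b^s)$, and Proposition \ref{prop:holder-on-regular}, applied at level $\ell+k$ (legitimate since $a,b\in\Lambda_{\ell+k}$), gives $d(E_a^s,E_b^s)\le Q_6 e^{6\eps\gamma(\ell+k)}d(a,b)^\beta$. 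I will then apply the elementary interpolation $\min(P,R)\le P^{1-\alpha}R^\alpha$ (for $\alpha\in[0,1]$, $P,R\ge0$) with $P=2$ and $R=\tfrac\pi2 Q_6 e^{6\eps\gamma(\ell+k)}d(a,b)^\beta$, which produces
\[
\|e_a^s-e_b^s\|\ \le\ 2^{1-\alpha}\Big(\tfrac\pi2 Q_6\Big)^{\alpha}e^{6\eps\gamma\alpha(\ell+k)}\,d(f^kx,f^ky)^{\alpha\beta}.
\]

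Plugging both bounds into the displayed triangle inequality and using $e^{6\eps\gamma\alpha(\ell+k)}\ge1$ to absorb the constant $(\diam M)^{\alpha(1-\beta)}$, I will obtain \eqref{eq:holder1} with $Q_{10}:=e^{c_2}2^{1-\alpha}(\tfrac\pi2 Q_6)^{\alpha}+Q_7 e^{c_3}(\diam M)^{\alpha(1-\beta)}$, which depends only on $f,\chi,\lambda$.

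I expect the only genuinely delicate point to be the exponent bookkeeping. A direct application of Proposition \ref{prop:holder-on-regular} produces the factor $e^{6\eps\gamma(\ell+k)}$, which is \emph{larger} than the $e^{6\eps\gamma\alpha(\ell+k)}$ that the statement (and Lemma \ref{lem:bound-2} downstream) requires; the trick that makes it work is precisely the interpolation against the trivial bound $\|e_a^s-e_b^s\|\le2$, which trades the surplus factor $e^{6\eps\gamma(1-\alpha)(\ell+k)}$ for the reduced H\"older exponent $\alpha\beta$ on $d(f^kx,f^ky)$. Everything else is routine verification.
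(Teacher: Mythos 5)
Your proof is correct, and it is more explicit than (though ultimately equivalent to) the one in the paper. The paper compresses the argument into a single step: by ``H\"older continuity of $Df$ on $TM$'' one writes
$\big| \|Df_{a} e^s_{a}\| - \|Df_{b} e^s_{b}\| \big| \leq |Df|_\alpha\, d_{TM}(e^s_a,e^s_b)^\alpha$,
and then bounds the $TM$-distance by $\lesssim e^{6\eps\gamma(\ell+k)}d(a,b)^\beta$ via Proposition \ref{prop:holder-on-regular}, so that the $\alpha$-th power appears on the outside and delivers $e^{6\eps\gamma\alpha(\ell+k)}d(a,b)^{\alpha\beta}$ at once. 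You instead expand what ``H\"older on $TM$'' means: you split off $\|D_a^{(1)}\|\,\|e_a^s-e_b^s\|$ and $\|D_a^{(1)}-D_b^{(1)}\|$, and then you do the exponent repair by hand, using $\min(2,R)\le 2^{1-\alpha}R^\alpha$ on the first term and $d(a,b)^\alpha\le(\diam M)^{\alpha(1-\beta)}d(a,b)^{\alpha\beta}$ on the second. This interpolation step is exactly the point that the paper's one-liner hides: for $(x,v)\mapsto D_xf(v)$ to be $\alpha$-H\"older in the combined $TM$-distance, one must absorb the Lipschitz (rather than $\alpha$-H\"older) dependence on $v$ into the $\alpha$-power using the fact that $\|v-w\|$ is bounded for unit vectors — which is the same $\min(2,R)\le 2^{1-\alpha}R^\alpha$ trick. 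So your unpacked version makes the bookkeeping transparent, at the cost of more lines; the paper's version is terser but relies on the reader to see why the $\alpha$ appears uniformly. I would only flag two cosmetic points: your equality $\|D_zf\,e_z^s\| = \|D_z^{(1)}e_z^s\|$ should really be a uniform comparability (the Whitney embedding is smooth, not isometric), which the paper also glosses over by declaring the two metrics uniformly equivalent; and the sharp constant relating $\|e_a^s-e_b^s\|$ (with optimal sign) to $d(E_a^s,E_b^s)=\sin\theta$ is $\sqrt 2$, so your $\pi/2$ is a valid but slightly generous bound. Neither affects the result.
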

\begin{proof}
Since  $Df$ is H\"older on $TM$  we have 
\[
\big| \|Df_{f^kx} e^s_{f^kx} \| - \|Df_{f^ky} e^s_{f^ky}\| \big|
\leq |Df|_\alpha d(e_{f^kx}^s, e_{f^ky}^s)^\alpha 
\]
By \eqref{eq:images},  $f^kx,f^ky\in \Lambda_{\ell + k}$ and therefore, by Proposition \ref{prop:holder-on-regular},  
\[
 d(e_{f^kx}^s, e_{f^ky}^s)^\alpha 
\leq |Df|_\alpha(Q_6 e^{6\eps\gamma(\ell+k)}d(f^kx,f^ky)^\beta)^\alpha
\]
which gives the result. 
\end{proof}

\begin{proof}[Proof of Lemma \ref{lem:bound-2}]
By  \eqref{eqn:bc} the norm of \(  \|Df \|   \) is bounded above by  $e^{c_2}$, and therefore, using the formula for the difference of two squares,   
\begin{equation}\label{eqn:dif-squares}
\Delta_n 
\leq 2e^{c_2n} \big|\|Df_x^n e_{x}^s\| - \|Df_y^n e_{y}^s\| \big|. 
\end{equation}
Moreover, by  the chain rule we have 
\begin{equation}\label{eq:chain}
\big|\|Df_x^n e_x^s\| - \|Df_y^n e_y^s\| \big|
= \bigg| \prod_{k=0}^{n-1} \|Df_{f^kx} e_{f^kx}^s\| - \prod_{k=0}^{n-1} \|Df_{f^ky} e_{f^ky}^s\| \bigg| 
\end{equation}
and therefore, applying the standard equality for the difference of two products 
\(
\left|\prod_{k=0}^{n-1} a_{k}-\prod_{k=0}^{n-1} b_{k}\right|
 = \left|\sum_{k=0}^{n-1} a_{0}...a_{k-1}(a_{k}-b_{k})b_{k+1}...b_{n-1}\right|
\), 
 and using that the absolute value of each individual term is bounded by \(  e^{c_{2}}  \), we get 
\[
 \bigg| \prod_{k=0}^{n-1} \|Df_{f^kx} e_{f^kx}^s\| - \prod_{k=0}^{n-1} \|Df_{f^ky} e_{f^ky}^s\| \bigg| 
\leq e^{c_2 n} \sum_{k=0}^{n-1} \big| \|Df_{f^kx} e_{f_kx}^s \| - \|Df_{f^ky} e_{f^ky}^s\| \big|.
\]
Substituting this into \eqref{eq:chain} and \eqref{eqn:dif-squares} and using \eqref{eq:holder1}, we get 
\begin{equation}\label{eqn:Dn2}
\begin{aligned}
\Delta_n 
&\leq 2e^{2c_2n}\sum_{k=0}^{n-1} \big| \|Df_{f^kx} e_{f^kx}^s \| - \|Df_{f^ky} e_{f^ky}^s\| \big|
\\ &\leq 2Q_{10} e^{2c_2n}\sum_{k=0}^{n-1} e^{6\eps\gamma\alpha(\ell+k)} d(f^kx,f^ky)^{\alpha \beta}.
\end{aligned}
\end{equation}
Using  the bound \(  e^{c_{2}}  \) for the derivative we get  \(  d(f^kx,f^ky) \leq e^{k c_{2}}d(x,y)  \) and therefore, plugging this into \eqref{eqn:Dn2} and rearranging the terms we get 
\[
\Delta_n \leq 
2Q_{10} e^{2c_2 n} e^{6\eps\gamma\alpha\ell}d(x,y)^{\alpha\beta}
\sum_{k=0}^{n-1} e^{(6\eps\gamma+c_{2}\beta) \alpha k}.
\]
To bound the geometric sum, we write
\[  
 \sum_{k=0}^{n-1} e^{(6\eps\gamma+c_{2}\beta) \alpha k} = 
\frac{e^{(6\eps\gamma + c_2 \beta)\alpha n} - 1}
{e^{(6\eps\gamma + c_2 \beta)\alpha } - 1}  \leq \frac {e^{(6\eps\gamma + c_2 \beta)\alpha n}}
{e^{(6\eps\gamma + c_2 \beta)\alpha } - 1}
\]
and so we conclude that 
\[
\Delta_n \leq 
\frac{2Q_{10}}{e^{(6\eps\gamma + c_2\beta)\alpha} - 1}
e^{2c_2 n} e^{6\eps\gamma\alpha \ell} d(x,y)^{\alpha\beta}
e^{(6\eps\gamma + c_2\beta)\alpha n}
\]
which gives the result. 
\end{proof}

\begin{proof}[Proof of Proposition \ref{prop:su-holder}]
We want to use \eqref{eqn:bound-1} for large $n$, and \eqref{eqn:bound-2} for small $n$; the transition happens at the point where the two bounds are roughly equal.  Thus we choose $N$ such that
\[
e^{2\eps\ell} e^{-2\chi N} \approx e^{(2+\alpha\beta)c_2 N} e^{6\eps\gamma\alpha(\ell+N)} d(x,y)^{\alpha\beta};
\]
more precisely, we take
\begin{equation}\label{eqn:N}
N = \left\lfloor
\frac{2\eps\ell - 6\eps\gamma\alpha\ell - \alpha\beta\log d(x,y)}
{2\chi + (2+\alpha\beta)c_2 + 6\eps\gamma\alpha}
\right\rfloor,
\end{equation}
so that 
\begin{multline}\label{eqn:N1}
(2\chi + (2+\alpha\beta)c_2  +6\eps\gamma\alpha)N  
\leq 2\eps\ell - 6\eps\gamma\alpha\ell - \alpha\beta\log d(x,y) \\
\leq (2\chi + (2+\alpha\beta)c_2  +6\eps_0\gamma\alpha)(N+1).
\end{multline}
Note that there is a number $\rho>0$ which depends only on $\alpha$, $\beta$, $\ell$, and $\eps$ such that the numerator in \eqref{eqn:N} is positive provided $d(x,y)<\rho$. Continuing with this assumption our choice of \(  N  \) and  the bound in \eqref{eqn:bound-1} give
\begin{align*}
\sum_{n=N}^\infty e^{2\lambda n} \Delta_n
&\leq \sum_{n=N}^\infty 2e^{2\lambda n} e^{2\eps\ell} e^{-2\chi n}  \\
&\leq \frac{2}{1-e^{-2(\chi - \lambda)}} e^{2\eps\ell} e^{2(-\chi + \lambda)N}
= Q_{11} e^{2\eps\ell} e^{2(-\chi+\lambda)(N+1)}
\end{align*}
where $Q_{11} = 2(1-e^{-2(\chi-\lambda)})^{-1} e^{2(\chi-\lambda)}$.  Then 
the second inequality in \eqref{eqn:N1} and
the definitions of the constants \(  \iota, \eta, \zeta  \) in \eqref{eqn:greek0} give
\begin{equation}\label{eqn:Ninfty}
\begin{aligned}
\sum_{n=N}^\infty e^{2\lambda n} \Delta_n &\leq Q_{11} e^{2\eps\ell}e^{-2(\chi - \lambda) \frac{2\eps\ell - 6\eps\gamma\alpha\ell - \alpha\beta \log d(x,y)}{6\eps_0\gamma\alpha + (2+\alpha\beta)c_2 + 2\chi}} \\ 
&= Q_{11} e^{2\eps\ell} e^{-\iota(2\eps\ell - 6\eps\gamma\alpha\ell - \alpha\beta\log d(x,y))} \\
&= Q_{11} e^{(2(1-\iota)+6\gamma\alpha\iota)\eps\ell} d(x,y)^{\alpha\beta\iota} 
\leq Q_{11} e^{\eps \eta\ell} d(x,y)^\zeta,
\end{aligned}
\end{equation}
where the last inequality uses the fact that $2(1-\iota) + 6\gamma\alpha\iota \leq 2 + 6\gamma\alpha\iota = \eta$.
Turning our attention to the finite part of the sum, \eqref{eqn:bound-2} gives
\begin{align*}
\sum_{n=0}^{N-1} e^{2\lambda n} \Delta_n
&\leq Q_9 \sum_{n=0}^{N-1} e^{2\lambda n} e^{(2+\alpha \beta)c_2 n} e^{6\eps\gamma\alpha(\ell + n)} d(x,y)^{\alpha\beta} \\
&\leq Q_{12} e^{6\eps\gamma\alpha\ell} e^{(6\eps\gamma\alpha+(2+\alpha\beta)c_2+2\lambda)N} d(x,y)^{\alpha\beta} 
\\&
= Q_{12} e^{6\eps\gamma\alpha\ell} e^{(6\eps\gamma\alpha+(2+\alpha\beta)c_2+2\chi) (1-\iota) N} d(x,y)^{\alpha\beta} 
\end{align*}
for some constant $Q_{12}$ independent of $\ell,x,y$.
Applying   the first inequality in \eqref{eqn:N1} gives 
\[
e^{6\eps\gamma\alpha\ell} e^{(6\eps\gamma\alpha + (2+\alpha\beta)c_2+2\chi)N} d(x,y)^{\alpha\beta} \leq e^{2\eps\ell},
\]
and thus 
\begin{align*}
\sum_{n=0}^{N-1} e^{2\lambda n} \Delta_n
&\leq Q_{12} e^{2\epsilon \ell} e^{-\iota (6\eps\gamma\alpha+(2+\alpha\beta)c_2+2\chi) N} \\
&= Q_{13} e^{2\epsilon \ell} e^{-\iota (6\eps\gamma\alpha+(2+\alpha\beta)c_2+2\chi) (N+1)}
\end{align*}
for $Q_{13} = Q_{12} e^{\iota  (6\eps\gamma\alpha+(2+\alpha\beta)c_2+2\chi)}$.   Now the second inequality in \eqref{eqn:N1} gives
\begin{align*}
\sum_{n=0}^{N-1} e^{2\lambda n} \Delta_n& 
\leq  Q_{13} e^{2\epsilon \ell} e^{-\iota (2\eps\ell - 6\eps\gamma\alpha\ell - \alpha\beta\log d(x,y))}
\\& 
= Q_{13} e^{(2(1-\iota)+ 6\gamma\alpha \iota)  \epsilon \ell}   d(x,y)^{\alpha\beta\iota} \leq Q_{13}e^{\eta\epsilon \ell}d(x,y)^{\zeta},
\end{align*}
where the last inequality again uses $2(1-\iota) + 6\gamma\alpha\iota \leq \eta$.
Adding  \eqref{eqn:Ninfty} and using \eqref{eq:sum} completes the proof of Proposition \ref{prop:su-holder} in the case $d(x,y)<\rho$. 
When $d(x,y)\geq \rho$, \eqref{eqn:bound-1} gives
\begin{equation}\label{eqn:large-dxy}
\Delta_n\leq 2e^{2\eps\ell}e^{-2\chi n} \rho^{-\zeta} d(x,y)^\zeta
\end{equation}
and thus \eqref{eq:sum} gives
\[
|s(x)-s(y)| \leq \sum_{n\geq 0} e^{2\lambda n}\Delta_n
\leq \frac{2 e^{2\eps\ell} \rho^{-\zeta}}{1- e^{-2(\chi-\lambda)}}  d(x,y)^\zeta,
\]
which completes the proof because $\eta\geq 1$, so $e^{2\eps\ell} \leq e^{2\eps\eta\ell}$.
\end{proof}

\subsection{Overlapping derivative estimates}\label{sec:overlapnearby}

We are now ready to begin the proof of  Theorem  \ref{thm:overlapping}.  We consider two points  \(  x,y \in \Lambda_{\ell}  \)   with the property that $d(x,y) \leq \delta e^{-\lambda\ell}$, as in \eqref{eqn:overlapping},
and prove that the corresponding regular neighbourhoods at level \( \ell \) are overlapping, subject to certain conditions on $\delta$.  Crucially, these conditions will not depend on $\ell$.

 In this section we prove the derivative estimates involved in the definition of overlapping charts.  In fact, we will prove here a slightly stronger version of \eqref{eq:overlap} by showing  that \eqref{eq:overlap} holds for all $\underline z\in \Psi_{x}^{-1}(\mathcal{N}_x \cap \mathcal{N}_y)$.  The analogous statement \eqref{eq:overlaps} is completely symmetric. 

\begin{figure}[tbp]
\includegraphics[width=.8\textwidth]{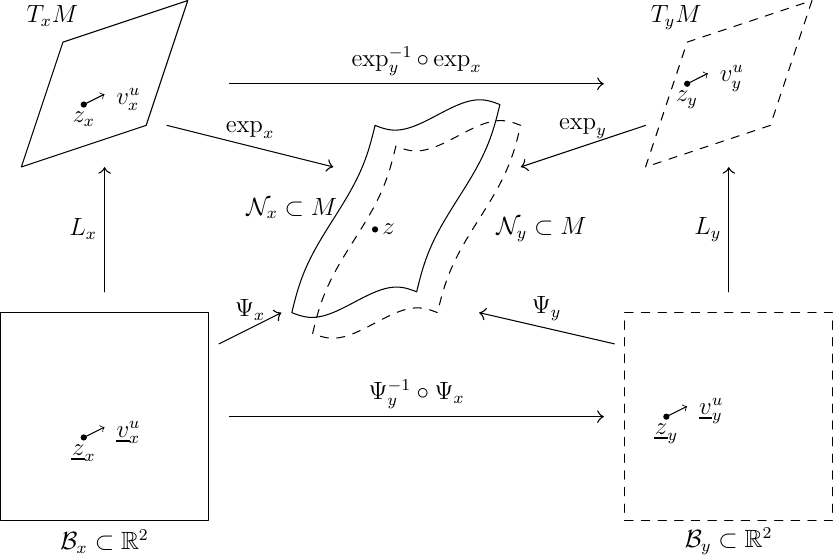}
\caption{Applying the transition map.}
\label{fig:overlapping4}
\end{figure}

For \(  x,y \in \Lambda_{\ell}  \) with \(  \mathcal N_{x}\cap \mathcal N_{y} \neq \emptyset  \),  let 
\(
z\in \mathcal N_{x}\cap \mathcal N_{y}
\)
and denote
\[
\underline z_{x}:= \Psi_{x}^{-1}(z)\in \mathcal B_{x}, \quad \text{ and } \quad 
\underline z_{y}:= \Psi_{y}^{-1}(z)\in \mathcal B_{y}.
\]
Then 
  \((\Psi_{y}^{-1}\circ \Psi_{x})(\underline z_{x}) = \underline z_{y}\)
 and  
\(
 D_{\underline z_{x}} (\Psi_{y}^{-1}\circ \Psi_{x})\colon T_{\underline z_{x}}\mathcal B_{x} \to   T_{\underline z_{y}}\mathcal B_{x}. 
\)
We 
consider the standard coordinates given by the orthogonal basis \(  (e_{1}, e_{2})  \) in \(  T_{\underline z_{x}}\mathcal B_{x}  \) and \(  T_{\underline z_{y}}\mathcal B_{x}  \) and consider 
an unstable vector  
\[
  \underline v^{u}_{x} \in \tilde K^{u}\subset T_{\underline z_{x}}\mathcal B_{x}
  \] 
which we assume is normalized, so that \(   \|\underline v^{u}_{x}\| = 1 \), and which we write as
\[
 \underline v^{u}_{x} = \underline v^{u}_{1,x}e_{1}+ \underline v^{u}_{2,x}e_{2}.
 \] 
Then let 
\[
\underline v_{y}^{u}:=  D_{\underline z_{x}} (\Psi_{y}^{-1}\circ \Psi_{x})(\underline v_{x}^{u}) = 
\underline v_{1,y}^{u}e_{1}+\underline v_{2, y}^{u}e_{2}.
\]
We will estimate the absolute values of \( \underline v_{1,y}^{u}, \underline v_{2,y}^{u}\) in order to prove \eqref{eq:overlap}.
Consider unit vectors 
 $e_x^u,e_x^s \in T_x^M$ and $e_y^u,e_y^s\in T_y^M$,
in the directions given by the hyperbolic splitting.
Throughout this section, we write  \(  d_{x,y}:= d(x,y)  \), \( u_{x}=u(x), s_{x}=s(x) \) to make our computations more compact and easier to read.
Observe that
\[
\Psi_{y}^{-1}\circ \Psi_{x} = L_{y}^{-1}\circ \exp_{y}^{-1}\circ \exp_{x}\circ L_{x}.
\]
Use $\{e_x^u, e_x^s\}$ as a basis for each tangent space to $T_xM$ in the obvious way, and similarly for $T_y^M$.  With respect to these bases and the standard basis $\{e_1,e_2\}$, the derivatives of the maps $L_y^{-1}$, $\exp_y^{-1}\circ \exp_x$, and $L_x$ are represented by the matrices
\[
\begin{pmatrix} u_y & 0 \\ 0 & s_y \end{pmatrix},\quad
\begin{pmatrix} \xi_1^u & \xi_1^s \\ \xi_2^u & \xi_2^s \end{pmatrix},\quad
\begin{pmatrix} u_x^{-1} & 0 \\ 0 & s_x^{-1} \end{pmatrix},
\]
respectively, where $\xi_{1/2}^{s/u} \in \RR$ are determined by
\begin{equation}\label{eq:newcoords}
\begin{aligned}
D_{L_{x}(\underline z_{x})}( \exp_{y}^{-1}\circ \exp_{x}) e^{u}_{x} &=\xi_{1}^{u}e^{u}_{y}+\xi_{2}^{u}e^{s}_{y}, \\
D_{L_{x}(\underline z_{x})}( \exp_{y}^{-1}\circ \exp_{x})e^{s}_{x} &=\xi_{1}^{s}e^{u}_{y}+\xi_{2}^{s}e^{s}_{y}.
\end{aligned}
\end{equation}
Thus 
$D_{\underline z_{x}} (\Psi_{y}^{-1}\circ \Psi_{x})$ has matrix (with respect to $\{e_1,e_2\}$) given by the product of these matrices, which is
\[
\begin{pmatrix} u_y & 0 \\ 0 & s_y \end{pmatrix}
\begin{pmatrix} u_x^{-1} \xi_1^u & s_x^{-1} \xi_1^s \\ u_x^{-1} \xi_2^u & s_x^{-1} \xi_2^s\end{pmatrix}
= \begin{pmatrix} u_y u_x^{-1} \xi_1^u & u_ys_x^{-1} \xi_1^s \\
s_y u_x^{-1} \xi_2^u & s_y s_x^{-1} \xi_2^s \end{pmatrix},
\]
and we conclude that
\begin{align}
\label{eq:final-1}
\underline v_{1,y}^{u} &=
u_{y}u_{x}^{-1} \xi_1^u \underline v_{1,x}^{u}
+ u_{y}s_{x}^{-1} \xi_1^s \underline  v_{2, x}^{u},
\\ 
\label{eq:final-2}
\underline v_{2, y}^{u} &=
s_{y} u_{x}^{-1} \xi_2^u \underline v_{1,x}^{u}
+ s_{y} s_{x}^{-1} \xi_2^s \underline v_{2, x}^{u}.
\end{align}
We now collect the various estimates which we will  plug into these equations to estimate the norms of   \( \underline v_{1,y}^{u}, \underline v_{2,y}^{u}\). 

\begin{Lemma} There exists a constant \( Q_{14} >0 \) such that for every $\ell\in \NN$ and \(  x, y \in \Lambda_{\ell}  \) satisfying \eqref{eqn:overlapping}, we have
\begin{gather}
\label{eq:vux}
  |\underline v^{u}_{1,x}| \geq 1/\sqrt 2 \geq 1/2 
  \quad \text{ and } \quad 
   |\underline v^{u}_{2,x}| \leq e^{-\lambda}\omega    |\underline v^{u}_{1,x}|, \\
\label{eq:components}
  |\xi_{2}^{u}|, |\xi_{1}^{s}| \leq Q_{14} e^{6\eps\gamma\ell} d_{x,y}^\beta
 \quad \text{ and } \quad 
 |1-\xi_{1}^{u}|,  |1-\xi_{2}^{s}| \leq Q_{14} e^{6\eps\gamma\ell} d_{x,y}^\beta,
\\
\label{eq:us1}
Q_{14}  e^{-\epsilon \ell} \leq u_{y}s_{x}^{-1}\leq  Q_{14} e^{\epsilon \ell}
\quand
 Q_{14}  e^{-\epsilon \ell} \leq s_{y}u_{x}^{-1} \leq Q_{14} e^{\epsilon \ell} ,
\\
\label{eq:us2}
1 - Q_{14} e^{(\eta-1)\epsilon \ell} d_{x,y}^{\zeta}  \leq {u_{y}}{u_{x}^{-1}}\leq 1 + Q_{14} e^{(\eta-1)\epsilon \ell} d_{x,y}^{\zeta},
\\
\label{eq:us3}
1 - Q_{14} e^{(\eta-1)\epsilon \ell} d_{x,y}^{\zeta}  \leq {s_{y}}{s_{x}^{-1}} \leq 1 + Q_{14} e^{(\eta-1)\epsilon \ell} d_{x,y}^{\zeta}.
\end{gather}
\end{Lemma}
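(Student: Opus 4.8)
The statement bundles together six estimates for the ingredients that feed into \eqref{eq:final-1}--\eqref{eq:final-2}, and the plan is to prove them one at a time in increasing order of difficulty. The bound \eqref{eq:vux} is immediate: by hypothesis $\underline v^u_x$ is a unit vector in $\tilde K^u$, so the definition \eqref{def:cones} gives $|\underline v^u_{2,x}|\le e^{-\lambda}\omega|\underline v^u_{1,x}|$ at once, and since \eqref{eqn:omega} forces $e^{-2\lambda}\omega^2<1$ we get $1=|\underline v^u_{1,x}|^2+|\underline v^u_{2,x}|^2\le(1+e^{-2\lambda}\omega^2)|\underline v^u_{1,x}|^2\le 2|\underline v^u_{1,x}|^2$, hence $|\underline v^u_{1,x}|\ge 1/\sqrt2\ge 1/2$. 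The bound \eqref{eq:us1} uses only Lemma~\ref{lem:subound}: for $x,y\in\Lambda_\ell$ each of $s_x,u_x,s_y,u_y$ lies in $[\sqrt2,\,Q_0\wQ^{-1}e^{\eps\ell}]$, so both $u_ys_x^{-1}$ and $s_yu_x^{-1}$ lie in $[\sqrt2\,Q_0^{-1}\wQ\,e^{-\eps\ell},\;Q_0\wQ^{-1}e^{\eps\ell}/\sqrt2]$, which is \eqref{eq:us1} for a suitable $Q_{14}$.

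Next I would handle \eqref{eq:us2} and \eqref{eq:us3}, which compare the same Lyapunov metric function at the two nearby points. Since $u_x\ge\sqrt2$ we have $|u_yu_x^{-1}-1|=|u_y-u_x|u_x^{-1}\le|u_y-u_x|/\sqrt2$, and Proposition~\ref{prop:su-holder} bounds $|u_y-u_x|$ by $Q_8e^{\eps\eta\ell}d(x,y)^\zeta$. The slightly sharper exponent $e^{(\eta-1)\eps\ell}$ in \eqref{eq:us2} is then recovered by trading one factor $e^{\eps\ell}$ against a power of $d(x,y)$, using the smallness hypothesis \eqref{eqn:overlapping} ($d(x,y)\le\delta e^{-\lambda\ell}$) together with the bound on $\eps$ from \eqref{eqn:eps}; the identical argument for $s_ys_x^{-1}$ gives \eqref{eq:us3}.

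The substantive estimate is \eqref{eq:components}. By \eqref{eq:newcoords} the $\xi_{1/2}^{s/u}$ are exactly the entries of the matrix of $D_{L_x(\underline z_x)}(\exp_y^{-1}\circ\exp_x)$ relative to the bases $\{e^u_x,e^s_x\}$ and $\{e^u_y,e^s_y\}$, so the plan is to measure how far this transition map is from the identity. Here $L_x(\underline z_x)$ has norm $\le b(x)\le b$ and $\exp_y^{-1}(x)$ has norm $d(x,y)\le\delta$; since $M$ is $C^2$, the map $\exp$ has derivative estimates uniform over $M$ and $D_0\exp_p=\Id$ for every $p$, so $D_{L_x(\underline z_x)}(\exp_y^{-1}\circ\exp_x)$ differs from parallel transport $P_{x\to y}\colon T_xM\to T_yM$ by $O(d(x,y))$ as a linear map — the norm $\|L_x(\underline z_x)\|$ enters only multiplied by $d(x,y)$, since when $x=y$ the transition map is the identity on the whole chart. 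Now $P_{x\to y}$ is an isometry sending $e^u_x$ to a unit vector within $O(d(x,y))$ of $e^u_x$ in ambient coordinates, so Proposition~\ref{prop:holder-on-regular} gives $\|P_{x\to y}e^u_x-e^u_y\|\le Q\bigl(d(x,y)+e^{6\eps\gamma\ell}d(x,y)^\beta\bigr)$, and similarly for $e^s$. Resolving these displacements in the non-orthogonal basis $\{e^u_y,e^s_y\}$, whose Gram determinant is $\sin^2\measuredangle(E^u_y,E^s_y)\ge e^{-2\eps\ell}$ on $\Lambda_\ell$, produces the asserted bounds on $|1-\xi_1^u|,|\xi_2^u|,|\xi_1^s|,|1-\xi_2^s|$ up to an extra $e^{\eps\ell}$ from this inversion, and then one invokes \eqref{eqn:overlapping} and the inequalities among the constants in \eqref{eqn:eps} — chiefly $6\eps\gamma<\lambda\beta$, which makes $e^{\eps\ell}d(x,y)^\beta$ and the quadratic cross terms such as $e^{12\eps\gamma\ell}d(x,y)^{2\beta}$ all bounded by $e^{6\eps\gamma\ell}d(x,y)^\beta$ — to absorb the extra factors.

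I expect the main obstacle to be precisely this exponent bookkeeping in \eqref{eq:components}: the deviation of the transition map from the identity naturally carries a pure term of size $d(x,y)$, a splitting term $e^{6\eps\gamma\ell}d(x,y)^\beta$ coming from Proposition~\ref{prop:holder-on-regular}, and a further $e^{\eps\ell}$-type penalty from passing to the basis adapted to $E^u_y\oplus E^s_y$, and one must check that the quantitative smallness $d(x,y)\le\delta e^{-\lambda\ell}$ together with the precise choice of $\eps_1$ collapses all of these into the single clean bound $Q_{14}e^{6\eps\gamma\ell}d(x,y)^\beta$ with a constant independent of $\ell$. None of the other five estimates needs an idea beyond Lemma~\ref{lem:subound} and Propositions~\ref{prop:holder-on-regular}--\ref{prop:su-holder} plus careful arithmetic.
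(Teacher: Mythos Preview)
Your approach matches the paper's at every step: \eqref{eq:vux} from the cone definition, \eqref{eq:us1} from Lemma~\ref{lem:subound}, \eqref{eq:components} from Proposition~\ref{prop:holder-on-regular} plus smoothness of $\exp$, and \eqref{eq:us2}--\eqref{eq:us3} from Proposition~\ref{prop:su-holder} divided by $u_x$. For \eqref{eq:components} you actually give more than the paper (which is a one-line citation), and you are right to notice the extra $1/\sin\theta_y\lesssim e^{\eps\ell}$ incurred when resolving in the non-orthogonal basis $\{e^u_y,e^s_y\}$; the paper does not address this.

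There is, however, a genuine gap in your handling of \eqref{eq:us2}--\eqref{eq:us3}. From $u_x\ge\sqrt2$ and Proposition~\ref{prop:su-holder} you correctly get $|u_yu_x^{-1}-1|\le Q\,e^{\eta\eps\ell}d_{x,y}^\zeta$, but the ``trading'' you propose cannot improve the exponent to $(\eta-1)\eps\ell$ while keeping the $d_{x,y}^\zeta$ power intact: writing $e^{\eta\eps\ell}d_{x,y}^\zeta=e^{(\eta-1)\eps\ell}\cdot e^{\eps\ell}d_{x,y}^\zeta$ leaves a factor $e^{\eps\ell}$ that is unbounded in $\ell$, and absorbing it via $d_{x,y}\le\delta e^{-\lambda\ell}$ would force the $d_{x,y}$-exponent below $\zeta$. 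It is worth noting that the paper's own proof reaches the stated exponent by writing $|u_y-u_x|/u_x\le Q_8 e^{\eps\eta\ell}d_{x,y}^\zeta/(Q_0\wQ^{-1}e^{\eps\ell})$, which is itself a slip: $Q_0\wQ^{-1}e^{\eps\ell}$ is the \emph{upper} bound on $u_x$, so the inequality goes the wrong way. The honest bound in both cases carries $e^{\eta\eps\ell}$, and the same remark applies to the extra $e^{\eps\ell}$ you found in \eqref{eq:components}. None of this is structural --- it is pure exponent bookkeeping, and everything downstream (Lemmas~\ref{lem:4terms1}--\ref{lem:4terms2}) goes through unchanged after a harmless tightening of $\eps_1$ in \eqref{eqn:eps} --- but you should not claim to recover the exact exponents in the lemma as stated.
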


\begin{proof}
Equation \eqref{eq:vux} follows immediately from the fact that  \(  \underline v^{u}_{x}  \) is a unit vector and \( \underline v^{u}_{x} \in \tilde K^{u} \). Equation \eqref{eq:components} follows from \eqref{eq:newcoords} 
and   Proposition \ref{prop:holder-on-regular} which gives quantitative control on the H\"older dependence of the stable and unstable directions on the base point in \(  \Lambda_{\ell}  \). 
Equation \eqref{eq:us1} follows immediately from  \eqref{eq:subound}. 
Finally, by \eqref{eq:subound} and  Proposition \ref{prop:su-holder}, 
\[
\frac {u_{y}}{u_{x}} = 1+ \frac{u_{y}-u_{x}}{u_{x}}\geq 1 - \frac{|u_{y}-u_{x}|}{ u_{x}} \geq 1 - 
\frac{Q_8 e^{\eps\eta\ell}d_{x,y}^\zeta}{Q_0\wQ e^{\eps\ell}},
\]
which gives the first half of \eqref{eq:us2}.  The upper bound and \eqref{eq:us3} are similar.
\end{proof}

We are now ready to start estimating the two components \(  \underline v_{1,y}^{u}, \underline v_{2,y}^{u}  \) of \(  \underline v_{y}^{u}  \). We estimate each one separately. Once we have proved Lemmas \ref{lem:4terms1} and \ref{lem:4terms2}, we will be in a position to give the conditions on $\delta>0$, which we stress will be independent of $\ell$.

\begin{Lemma} 
\label{lem:4terms1}
For every $\ell\in \NN$ and $x,y\in \Lambda_\ell$ satisfying \eqref{eqn:overlapping}, we have
\[
|\underline v_{1,y}^u| \geq (1-Q_{14} \delta^\beta)(1-Q_{14}\delta^\zeta)|\underline v_{1,x}^u| - Q_{14}^2 \delta^\beta.
\]
\end{Lemma}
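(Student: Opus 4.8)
The bound follows from the formula \eqref{eq:final-1} for $\underline v_{1,y}^u$ by estimating its two summands separately. First I would apply the reverse triangle inequality to \eqref{eq:final-1}, using $u_x,u_y,s_x>0$, to get
\[
|\underline v_{1,y}^u| \geq (u_y u_x^{-1})\,|\xi_1^u|\,|\underline v_{1,x}^u| - (u_y s_x^{-1})\,|\xi_1^s|\,|\underline v_{2,x}^u|,
\]
and then bound the ``main term'' $(u_y u_x^{-1})|\xi_1^u||\underline v_{1,x}^u|$ from below and the ``error term'' $(u_y s_x^{-1})|\xi_1^s||\underline v_{2,x}^u|$ from above.

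For the main term, \eqref{eq:us2} gives $u_y u_x^{-1}\geq 1-Q_{14}e^{(\eta-1)\eps\ell}d_{x,y}^\zeta$ and \eqref{eq:components} gives $|\xi_1^u|\geq 1-Q_{14}e^{6\eps\gamma\ell}d_{x,y}^\beta$. The next step is to feed in the hypothesis \eqref{eqn:overlapping}, $d_{x,y}\leq\delta e^{-\lambda\ell}$, so that $e^{(\eta-1)\eps\ell}d_{x,y}^\zeta\leq\delta^\zeta e^{((\eta-1)\eps-\lambda\zeta)\ell}$ and $e^{6\eps\gamma\ell}d_{x,y}^\beta\leq\delta^\beta e^{(6\eps\gamma-\lambda\beta)\ell}$, and to observe that the bound \eqref{eqn:eps} on $\eps$ makes both exponents non-positive: $\eps\leq\lambda\zeta/(\eta-1)$ gives $(\eta-1)\eps\leq\lambda\zeta$, and $\eps\leq\lambda\beta/(7\gamma)$ gives $6\eps\gamma<\lambda\beta$. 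Hence $u_y u_x^{-1}\geq 1-Q_{14}\delta^\zeta$ and $|\xi_1^u|\geq 1-Q_{14}\delta^\beta$, so the main term is at least $(1-Q_{14}\delta^\zeta)(1-Q_{14}\delta^\beta)|\underline v_{1,x}^u|$. Since the lemma is used only with $\delta$ small (the precise conditions on $\delta$ being collected afterwards), one may assume $Q_{14}\delta^\zeta,Q_{14}\delta^\beta<1$ so that these factors are genuinely nonnegative; for larger $\delta$ the asserted inequality is vacuous.

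For the error term, \eqref{eq:us1} gives $u_y s_x^{-1}\leq Q_{14}e^{\eps\ell}$, \eqref{eq:components} gives $|\xi_1^s|\leq Q_{14}e^{6\eps\gamma\ell}d_{x,y}^\beta$, and $|\underline v_{2,x}^u|\leq 1$ because $\underline v_x^u$ is a unit vector (indeed \eqref{eq:vux} gives the sharper $|\underline v_{2,x}^u|\leq e^{-\lambda}\omega$). Multiplying and invoking \eqref{eqn:overlapping} once more bounds the error term by $Q_{14}^2 e^{(1+6\gamma)\eps\ell}d_{x,y}^\beta\leq Q_{14}^2\delta^\beta e^{((1+6\gamma)\eps-\lambda\beta)\ell}$. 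Here one uses $\gamma>1$ from \eqref{eqn:greek0}, so $1+6\gamma<7\gamma$ and hence $\eps\leq\lambda\beta/(7\gamma)$ forces $(1+6\gamma)\eps<\lambda\beta$; the exponent is again non-positive, so the error term is at most $Q_{14}^2\delta^\beta$. Subtracting this from the lower bound for the main term yields precisely the inequality in the statement.

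The argument is routine bookkeeping: the only thing that needs attention is checking the three exponent inequalities $(\eta-1)\eps\leq\lambda\zeta$, $6\eps\gamma<\lambda\beta$, and $(1+6\gamma)\eps<\lambda\beta$ against the explicit form of $\eps_1$ in \eqref{eqn:eps} and $\gamma,\eta>1$ from \eqref{eqn:greek0}, plus a mild caution about the signs of $1-Q_{14}\delta^\zeta$ and $1-Q_{14}\delta^\beta$, which is harmless since $\delta$ is eventually taken small. I do not expect a genuine obstacle here.
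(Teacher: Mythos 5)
Your proof is correct and follows the same route as the paper: split \eqref{eq:final-1} into main and error terms, bound the main term from below via \eqref{eq:us2} and \eqref{eq:components}, bound the error term from above via \eqref{eq:us1}, \eqref{eq:components}, and $|\underline v^u_{2,x}|\leq 1$, and then use \eqref{eqn:overlapping} together with the bounds on $\eps$ in \eqref{eqn:eps} to make every $\ell$-dependent exponential factor at most $1$. The only cosmetic difference is that the paper absorbs $e^{\eps\ell}e^{6\eps\gamma\ell}$ into $e^{7\eps\gamma\ell}$ immediately (using $\gamma\geq 1$), whereas you keep the exponent $(1+6\gamma)\eps\ell$ and observe $1+6\gamma<7\gamma$ at the end; your remark about the signs of $1-Q_{14}\delta^{\beta}$ and $1-Q_{14}\delta^{\zeta}$ matches the condition the paper imposes on $\delta$ at the start of the proof of Theorem~\ref{thm:overlapping}.
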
 
\begin{proof}
Using \eqref{eq:components} and \eqref{eq:us2}, we have the following estimate for the first term of \eqref{eq:final-1}:
\[
|u_{y}u_{x}^{-1} \underline  v_{1,x}^{u}\xi_{1}^{u}|  \geq  (1-Q_{14}
e^{6\eps\gamma\ell} d_{x,y}^\beta)(1 - Q_{14} e^{(\eta-1)\epsilon \ell} d_{x,y}^{\zeta})|\underline  v_{1,x}^{u}|.
\]
Now \eqref{eqn:overlapping} and the bounds on $\eps$ in \eqref{eqn:eps} give 
\begin{equation}\label{eqn:use-eps1}
\begin{aligned}
e^{6\eps\gamma\ell}d_{x,y}^\beta
&\leq e^{6\eps\gamma\ell} \delta^\beta e^{-\beta\lambda\ell}
= \delta^\beta e^{(6\eps\gamma - \beta\lambda)\ell} \leq \delta^\beta, \\
e^{(\eta-1)\eps\ell}d_{x,y}^\zeta
&\leq e^{(\eta-1)\eps\ell}\delta^\zeta e^{-\zeta\lambda\ell}
= \delta^\zeta e^{((\eta-1)\eps - \zeta\lambda)\ell} \leq \delta^\zeta,
\end{aligned}
\end{equation}
and thus
\begin{equation}\label{eq:4terms1}
|u_{y}u_{x}^{-1} \xi_{1}^{u} \underline  v_{1,x}^{u}| \geq (1-Q_{14}\delta^\beta)(1-Q_{14}\delta^\zeta)|\underline v_{1,x}^u|.
\end{equation}
For the second term of \eqref{eq:final-1}, by \eqref{eq:components} and \eqref{eq:us1},  and using  \(  |\underline v_{2,x}^{u}|\leq 1  \), we have 
\[
 |u_{y}s_{x}^{-1} \xi_1^s \underline v_{2,x}^{u}| \leq Q_{14}^2 e^{\epsilon \ell} e^{6\eps\gamma\ell} d_{x,y}^\beta
\leq Q_{14}^2 e^{7\eps\gamma\ell} d_{x,y}^\beta,
\]
where the second inequality uses the fact that $\gamma\geq 1$.
As in \eqref{eqn:use-eps1} above, 
\begin{equation}\label{eqn:use-eps2}
e^{7\eps\gamma\ell} d_{x,y}^\beta
\leq \delta^\beta e^{(7\eps\gamma - \beta\lambda)\ell} \leq \delta^\beta,
\end{equation}
and thus $|u_{y}s_{x}^{-1} \xi_1^s \underline v_{2,x}^{u}| \leq Q_{14}^2 \delta^\beta$.  Subtracting this from \eqref{eq:4terms1} and recalling \eqref{eq:final-1} proves the lemma.
\end{proof}

\begin{Lemma}
\label{lem:4terms2} 
For every $\ell\in \NN$ and $x,y\in \Lambda_\ell$ satisfying \eqref{eqn:overlapping}, we have
\[
|\underline v_{2,y}^u| \leq (1+Q_{14}\delta^\beta)(1+Q_{14}\delta^\zeta)|\underline v_{2,x}^u| + Q_{14}^2 \delta^\beta.
\]
\end{Lemma}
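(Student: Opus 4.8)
The plan is to follow verbatim the strategy used in the proof of Lemma \ref{lem:4terms1}, but now starting from the expansion \eqref{eq:final-2} for the second component $\underline v_{2,y}^u$ instead of \eqref{eq:final-1}. By the triangle inequality it suffices to control the two summands on the right-hand side of \eqref{eq:final-2} separately: the ``off-diagonal'' term $s_y u_x^{-1} \xi_2^u \underline v_{1,x}^u$, which I expect to be of order $\delta^\beta$, and the ``diagonal'' term $s_y s_x^{-1} \xi_2^s \underline v_{2,x}^u$, which should be comparable to $|\underline v_{2,x}^u|$ up to multiplicative factors close to $1$.

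First I would bound the diagonal term. By \eqref{eq:us3} we have $s_y s_x^{-1} \leq 1 + Q_{14} e^{(\eta-1)\eps\ell} d_{x,y}^\zeta$, and the second half of \eqref{eq:components} gives $|\xi_2^s| \leq 1 + Q_{14} e^{6\eps\gamma\ell} d_{x,y}^\beta$. Multiplying these two bounds and invoking the estimates \eqref{eqn:use-eps1}, which use \eqref{eqn:overlapping} together with the bounds on $\eps$ in \eqref{eqn:eps} to get $e^{6\eps\gamma\ell} d_{x,y}^\beta \leq \delta^\beta$ and $e^{(\eta-1)\eps\ell} d_{x,y}^\zeta \leq \delta^\zeta$, yields $|s_y s_x^{-1} \xi_2^s \underline v_{2,x}^u| \leq (1+Q_{14}\delta^\zeta)(1+Q_{14}\delta^\beta)|\underline v_{2,x}^u|$.

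Next I would bound the off-diagonal term. The first half of \eqref{eq:components} gives $|\xi_2^u| \leq Q_{14} e^{6\eps\gamma\ell} d_{x,y}^\beta$, the second half of \eqref{eq:us1} gives $s_y u_x^{-1} \leq Q_{14} e^{\eps\ell}$, and $|\underline v_{1,x}^u| \leq 1$ since $\underline v_x^u$ is a unit vector; combining these (and using $\gamma \geq 1$) gives $|s_y u_x^{-1} \xi_2^u \underline v_{1,x}^u| \leq Q_{14}^2 e^{7\eps\gamma\ell} d_{x,y}^\beta$, which by \eqref{eqn:use-eps2} is at most $Q_{14}^2 \delta^\beta$. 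Adding this to the bound on the diagonal term and applying \eqref{eq:final-2} completes the proof. I do not anticipate any real obstacle: this lemma is exactly the mirror image of Lemma \ref{lem:4terms1}, and the only bookkeeping point is that we now use the $s_y$ factors (rather than $u_y$) and the $\xi_2$ entries (rather than $\xi_1$), so that the relevant inputs are \eqref{eq:us3} and the appropriate halves of \eqref{eq:components} and \eqref{eq:us1}.
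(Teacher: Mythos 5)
Your proposal is correct and follows exactly the same route as the paper's proof: bound the diagonal term $s_y s_x^{-1}\xi_2^s\underline v_{2,x}^u$ via \eqref{eq:us3}, \eqref{eq:components}, and \eqref{eqn:use-eps1}, and the off-diagonal term $s_y u_x^{-1}\xi_2^u\underline v_{1,x}^u$ via \eqref{eq:us1}, \eqref{eq:components}, and \eqref{eqn:use-eps2}, then add. You have simply spelled out the ``nearly identical'' and ``similar'' computations that the paper's proof leaves to the reader by reference to Lemma \ref{lem:4terms1}.
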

\begin{proof}
The proof is nearly identical to the proof of the previous Lemma. Let us
bound the first term in \eqref{eq:final-2} as follows:
\begin{align*}
|s_{y}s_{x}^{-1}\xi_{2}^{s} \underline v_{2,x}^{u}| &\leq (1 + Q_{14} e^{(\eta-1)\epsilon \ell} d_{x,y}^{\zeta}) (1+ Q_{14} e^{6\eps\gamma\ell} d_{x,y}^\beta)   |\underline v_{2,x}^{u}| \\
&\leq (1+Q_{14}\delta^\zeta)(1+Q_{14}\delta^\beta)|\underline v_{2,x}^u|.
\end{align*}
Similar computations for the second term of \eqref{eq:final-2} give
\[
|s_{y}u_{x}^{-1}\xi_{2}^{u} \underline v_{1,x}^{u}| 
\leq  Q_{14}^2 e^{\eps\ell} e^{6\eps\gamma\ell} d_{x,y}^\beta
\leq Q_{14}^2 \delta^\beta.
\]
Adding these estimates together proves the lemma.
\end{proof}

\begin{proof}[Proof of Theorem \ref{thm:overlapping} (derivative estimates)]
We can now prove the first part of Theorem \ref{thm:overlapping} concerning properties \eqref{eq:overlap} and \eqref{eq:overlaps} in the definition of overlapping charts. 
Start by requiring that $\delta>0$ is sufficiently small that $Q_{14} \delta^\beta<1$ and $Q_{14} \delta^\zeta < 1$.  (Further conditions will come later.)
Lemmas \ref{lem:4terms1} and \ref{lem:4terms2} give 
\begin{align*}
\frac{ |\underline v^{u}_{2,y}|}{   |\underline v_{1,y}^{u}|} &\leq  
\frac{(1+Q_{14}\delta^\beta)(1+Q_{14}\delta^\zeta)|\underline v_{2,x}^u| + Q_{14}^2\delta^\beta}
{(1-Q_{14}\delta^\beta)(1-Q_{14}\delta^\zeta)|\underline v_{1,x}^u| - Q_{14}^2\delta^\beta} \\
&\leq  
\frac{(1+Q_{14}\delta^\beta)(1+Q_{14}\delta^\zeta)e^{-\lambda}\omega|\underline v_{1,x}^u| + Q_{14}^2\delta^\beta}
{(1-Q_{14}\delta^\beta)(1-Q_{14}\delta^\zeta)|\underline v_{1,x}^u| - Q_{14}^2\delta^\beta},
 \end{align*}
where the second inequality uses the fact that $\underline v_x^u\in \tilde K^u$.  Note that the function $t\mapsto \frac{at + b}{ct + d}$ is decreasing in $t$ when $ad-bc<0$, which is the case for the expression above, and thus we can obtain an upper bound by observing that \eqref{eq:vux} gives $|\underline v_{1,x}^u| \geq \frac 12$, so monotonicity gives
\begin{equation}\label{eqn:Q9}
\frac{ |\underline v^{u}_{2,y}|}{   |\underline v_{1,y}^{u}|} \leq  
\frac{(1+Q_{14}\delta^\beta)(1+Q_{14}\delta^\zeta)e^{-\lambda}\omega/2 + Q_{14}^2\delta^\beta}
{(1-Q_{14}\delta^\beta)(1-Q_{14}\delta^\zeta)/2 - Q_{14}^2\delta^\beta}.
\end{equation}
For sufficiently small $\delta>0$, the right-hand side is $<\omega$, which implies that \( \underline v_{y}^{u}  \in K^{u}\)
as required by the first part of \eqref{eq:overlap}.   

For the second part of \eqref{eq:overlap}, we observe that
\[
\frac{\|\underline v_y^u\|}{\|\underline v_x^u\|}
\geq \frac{|\underline v_{1,y}^u|}{\sqrt{|\underline v_{1,x}^u|^2 + |\underline v_{2,x}^u|^2}}
\geq \frac{\big((1-Q_{14}\delta^\beta)(1-Q_{14}\delta^\zeta) - Q_{14}^2\delta^\beta\big) |\underline v_{1,x}^u|}
{\sqrt{1+e^{-2\lambda}\omega^2} |\underline v_{1,x}^u|},
\]
where the second inequality uses Lemma \ref{lem:4terms1} and \eqref{eq:vux} for the numerator and the fact that $\underline v_x^u\in \tilde K^u$ for the denominator.  Recall from \eqref{eqn:omega} that $1/\sqrt{1+e^{-2\lambda}\omega^2} > e^{-\lambda/24}$; thus we can choose $\delta$ small enough that the right-hand side of the above expression is $> e^{-\lambda/24}$, which proves the second half of \eqref{eq:overlap}.
Condition \eqref{eq:overlaps} follows by analogous arguments. 
\end{proof}

\subsection{Overlapping stable and unstable strips}
\label{sec:overlappingstrips}

We now complete the proof of Theorem  \ref{thm:overlapping} by showing that if 
$\gamma \subset \widetilde{\mathcal{N}}_{x, \ell}^{s}$ is a full length strongly stable admissible curve, then it completely crosses $\widetilde{\mathcal N}_{y,\ell}^u$.  The other three required conditions obtained by interchanging $x/y$ and stable/unstable are proved analogously.

We start with a couple of simple Lemmas relating the distance \( d(x,y) \) between two points and the amount of overlap of their regular neighbourhoods. 

\begin{Lemma}\label{lem:overlapping}
There exists a constant \( Q_{15} > 0 \) such that for every \( \ell \geq 1\) and every \( x,y \in \Lambda_{\ell}\) satisfying $d(x,y) \leq Q_{15} e^{-2\epsilon \ell} b_{\ell}$, we have  $x,y\in \mathcal{N}_x^{(\ell)} \cap \mathcal{N}_y^{(\ell)}$.
\end{Lemma}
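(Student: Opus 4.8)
The plan is to reduce everything to the norm estimate \eqref{eqn:norm-Lx} for $L_x^{-1}$ on $\Lambda_\ell$. The inclusions $x\in\mathcal N_x^{(\ell)}$ and $y\in\mathcal N_y^{(\ell)}$ are free of charge, since $\Psi_x(0)=x$ and $0\in\mathcal B_x^{(\ell)}=[-b_\ell,b_\ell]^2$, and likewise for $y$; so the only content is to show that, once $Q_{15}$ is chosen small, the hypothesis $d(x,y)\le Q_{15}e^{-2\eps\ell}b_\ell$ forces $y\in\mathcal N_x^{(\ell)}$ — the inclusion $x\in\mathcal N_y^{(\ell)}$ then follows by interchanging the roles of $x$ and $y$, using that $y\in\Lambda_\ell$ as well so that \eqref{eqn:norm-Lx} applies to $L_y$ too.

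First I would use compactness of $M$ to fix a uniform injectivity radius $\rho_0>0$ together with a constant $C_{\exp}>0$ such that $\|\exp_x^{-1}(z)\|\le C_{\exp}\,d(x,z)$ whenever $d(x,z)<\rho_0$. Since $b_\ell\le b$ by Lemma \ref{lem:Pesin} and $e^{-2\eps\ell}\le 1$, insisting that $Q_{15}b<\rho_0$ already guarantees $d(x,y)<\rho_0$, so $\exp_x^{-1}(y)$, and hence the candidate preimage $L_x^{-1}(\exp_x^{-1}(y))$, are well defined. Applying \eqref{eqn:norm-Lx} gives
\[
\|L_x^{-1}(\exp_x^{-1}(y))\|\le\|L_x^{-1}\|\cdot\|\exp_x^{-1}(y)\|\le 3Q_0\wQ^{-1}e^{2\eps\ell}\,C_{\exp}\,d(x,y),
\]
and when $d(x,y)\le Q_{15}e^{-2\eps\ell}b_\ell$ the right-hand side is at most $3Q_0\wQ^{-1}C_{\exp}Q_{15}\,b_\ell$.

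It then suffices to take $Q_{15}=\min\{(3Q_0\wQ^{-1}C_{\exp})^{-1},\,\rho_0/(2b)\}$ (or anything smaller): the displayed estimate becomes $\|L_x^{-1}(\exp_x^{-1}(y))\|\le b_\ell$, so each coordinate of $L_x^{-1}(\exp_x^{-1}(y))$ has absolute value at most $b_\ell$, i.e.\ $L_x^{-1}(\exp_x^{-1}(y))\in[-b_\ell,b_\ell]^2=\mathcal B_x^{(\ell)}$. Since $b(x)\ge b_\ell$ by Lemma \ref{lem:Pesin}, this point lies in the domain $\mathcal B_x$ on which $\Psi_x$ is a diffeomorphism (Remark \ref{rem:manifoldest}), so it is genuinely $\Psi_x^{-1}(y)$, and therefore $y=\Psi_x(\Psi_x^{-1}(y))\in\Psi_x(\mathcal B_x^{(\ell)})=\mathcal N_x^{(\ell)}$ by \eqref{eq:regN}. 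Swapping $x$ and $y$ gives $x\in\mathcal N_y^{(\ell)}$, and the four inclusions together yield $x,y\in\mathcal N_x^{(\ell)}\cap\mathcal N_y^{(\ell)}$. I do not expect any genuine difficulty here: the only things to watch are the uniformity in $x,y,\ell$ of $\rho_0$ and $C_{\exp}$, which is exactly what compactness of $M$ furnishes, and the innocuous passage between the Euclidean norm and the sup-norm defining the box $\mathcal B_x^{(\ell)}$.
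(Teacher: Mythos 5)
Your proof is correct and proceeds along essentially the same lines as the paper: both arguments amount to showing that $\mathcal N_x^{(\ell)}$ contains a ball of radius $\gtrsim e^{-2\eps\ell}b_\ell$ about $x$. The only stylistic difference is that you invoke the operator-norm bound \eqref{eqn:norm-Lx} from Lemma~\ref{lem:Lbound} directly, whereas the paper re-derives the same fact by elementary trigonometry on the parallelogram $L_x(\mathcal B_x^{(\ell)})$; your route is arguably slightly cleaner since it reuses an already-established estimate.
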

\begin{proof}
Recall that the regular neighbourhood $\mathcal{N}_x^{(\ell)}$ is given by $\mathcal{N}_x^{(\ell)} = \Psi_x(\mathcal{B}_x^{(\ell)}) = \exp_x(L_x(\mathcal{B}_x^{(\ell)}))$, where $\mathcal{B}_x^{(\ell)} = [-b_\ell,b_\ell]^2 \subset \RR^2$ and $L_x$ is given by \eqref{def:L}. In particular, $L_x(\mathcal{B}_x^{(\ell)})$ is a parallelogram in $T_x M$ centered at $0$ with side lengths $2b_\ell / u(x)$ and $2b_\ell / s(x)$. Using the upper bounds in \eqref{eq:subound} we see that both of these side lengths are at least $2 (Q_0 \wQ)^{-1} e^{-\eps \ell} b_\ell$. Moreover, by condition \eqref{eq:angle} and the definition of \( \Lambda_{\ell} \) in \eqref{lambdal} we have \( \measuredangle (E^{s}_{x}, E^{u}_{x})\geq e^{-\epsilon\ell} \).

Consider a parallelogram whose sides have length $A,B$ and meet at an angle $\theta \in (0,\pi/2]$. The distance between each pair of opposite edges is at least $\min(A,B) \cdot \sin \theta \geq \min(A,B) \cdot 2\theta/\pi$, and thus the parallelogram contains a ball of radius $\min(A,B) \theta/\pi$ around its center. In the setting of the previous paragraph, this shows that $L_x(\mathcal{B}_x^{(\ell)})$ contains a ball in $T_xM$ centered at the origin with radius $2(Q_0\wQ)^{-1} \pi^{-1} e^{-2\eps\ell} b_\ell$. 

Choose $r>0$ such that given any $x\in M$ and $v\in T_xM$ with $\|v\| \geq r$, we have $d(x, \exp_x(v)) \geq r/2$. Then choosing $Q_{15} >0$ sufficiently small that $2 Q_{15} b_0 <r$ and $Q_{15} \leq (Q_0\wQ)^{-1} \pi^{-1}$, we see that $L_x(\mathcal{B}_x^{(\ell)})$ contains a ball in $T_xM$ centered at the origin with radius $Q_{15} 2 Q_{15} e^{-2\eps\ell} b_\ell$, and since $e^{-2\eps\ell} b_\ell \leq b_0$, the image of this ball under $\exp_x$ contains $B(x, Q_{15} e^{-2\eps\ell} b_\ell)$. This shows that $B(x, Q_{15} e^{-2\eps\ell} b_\ell) \subset \exp_x(L_x(\mathcal{B}_x^{(\ell)})) = \Psi_x(\mathcal{B}_x^{(\ell)}) = \mathcal{N}_x^{(\ell)}$, which completes the proof.
\end{proof}

\begin{Lemma} \label{lem:overlapping1}
There exists  \( Q_{16} > 0 \) such that for  \( \ell \geq 1\) and  \( x,y \in \Lambda_{\ell}\),  
\[
\text{ if } y \in \mathcal N_{x}^{(\ell)} 
\quad \text{ then  } \quad 
\|\Psi_{x}^{-1}\circ \Psi_{y}(0)\|\leq Q_{16} e^{2\epsilon \ell} d(x,y). 
\]
\end{Lemma}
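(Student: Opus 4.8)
The plan is to unwind the definition $\Psi_x = \exp_x\circ L_x$ and reduce the bound to the norm estimate \eqref{eqn:norm-Lx} on $L_x^{-1}$ together with the fact that $\exp_x$ is uniformly close to an isometry near the origin.

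First I would observe that $\Psi_y(0) = \exp_y(L_y(0)) = \exp_y(0) = y$, so that $\Psi_x^{-1}\circ\Psi_y(0) = \Psi_x^{-1}(y)$; this is well defined because $y\in\mathcal N_x^{(\ell)}\subseteq\mathcal N_x$ and, after shrinking $b$ as in Remark~\ref{rem:manifoldest}, the map $\Psi_x\colon\mathcal B_x\to\mathcal N_x$ is a diffeomorphism. Writing $\Psi_x^{-1} = L_x^{-1}\circ\exp_x^{-1}$ we get
\[
\|\Psi_x^{-1}\circ\Psi_y(0)\| = \|L_x^{-1}(\exp_x^{-1}(y))\| \leq \|L_x^{-1}\|\cdot\|\exp_x^{-1}(y)\|.
\]

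Next I would control each factor separately. For the first factor, since $x\in\Lambda_\ell$, the estimate \eqref{eqn:norm-Lx} gives $\|L_x^{-1}\|\leq 3Q_0\wQ^{-1}e^{2\eps\ell}$. For the second factor, note that $y\in\mathcal N_x = \exp_x(L_x(\mathcal B_x))$ with $\|L_x\|\leq 1$ and $\mathcal B_x\subseteq[-b,b]^2$, so $\exp_x^{-1}(y)$ lies in the ball of radius $\sqrt2\,b$ about $0$ in $T_xM$; since $D_0\exp_x = \Id$ and $M$ is compact, by taking $b$ small enough (a further condition of the kind collected in Remark~\ref{rem:manifoldest}) we may ensure that $\exp_x^{-1}(y)$ is unique and $\|\exp_x^{-1}(y)\|\leq 2\,d(x,y)$ for every $x\in M$ and every such $y$. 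Combining the two bounds gives the claim with $Q_{16} = 6Q_0\wQ^{-1}$, which depends only on $f,\chi,\lambda$ through $Q_0,\wQ$ and is independent of $\ell,x,y$.

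There is no serious obstacle here; the only point requiring mild care is that the comparison $\|\exp_x^{-1}(y)\|\lesssim d(x,y)$ must be uniform in $x\in M$, which follows from compactness of $M$ and from the fact that $\mathcal N_x$ has uniformly small diameter once $b$ is fixed sufficiently small. (Alternatively, one could integrate the derivative bound $\|D_z\Psi_x^{-1}\|\leq 4Q_0\wQ^{-1}e^{2\eps\ell}$ from \eqref{eqn:norm-Psix} along the minimizing geodesic from $x$ to $y$, which stays in $\mathcal N_x$ for $b$ small, obtaining the same conclusion.)
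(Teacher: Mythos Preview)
Your proof is correct and follows essentially the same approach as the paper: both observe that $\Psi_x^{-1}\circ\Psi_y(0)=\Psi_x^{-1}(y)$, decompose $\Psi_x^{-1}=L_x^{-1}\circ\exp_x^{-1}$, and then combine the bound on $\|L_x^{-1}\|$ from \eqref{eqn:norm-Lx}/\eqref{eq:Pesin1} with the fact that $\exp_x^{-1}$ is close to an isometry. Your version is simply more explicit about the constants and the uniformity argument, and the alternative you mention via \eqref{eqn:norm-Psix} is also valid.
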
 

\begin{proof}
If \( y \in \mathcal N_{x}^{(\ell)}\) then 
\(
 \Psi_{x}^{-1}\circ \Psi_{y}(0) = \Psi_{x}^{-1}(y) 
 \) 
 is well defined. Therefore 
 \(
 \| \Psi_{x}^{-1}\circ \Psi_{y}(0)\| = \|\Psi_{x}^{-1}(y) \| = \|\Psi_{x}^{-1}(y) - 0\| = \|\Psi_{x}^{-1}(y) - \Psi_{x}^{-1}(x) \|
 \)
and so we just need to estimate the Lipschitz constant of \( \Psi_{x}^{-1}\). By definition we have \( \Psi_{x}^{-1}= L_{x}^{-1}\circ \exp_{x}^{-1} \) and  the result follows using \eqref{eq:Pesin1} in Lemma \ref{lem:Lbound} and the fact that \( \exp_{x}^{-1} \) is close to an isometry. 
\end{proof}

\begin{figure}[tbp]
\includegraphics[width=\textwidth]{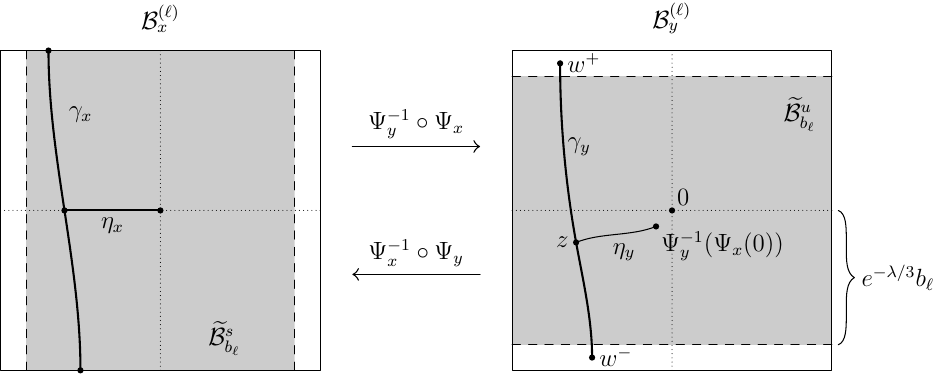}
\caption{Proving Theorem \ref{thm:overlapping}.}
\label{fig:crossing}
\end{figure}

\begin{proof}[Proof of Theorem \ref{thm:overlapping} (stable and unstable strips)]
We can now complete the proof of Theorem  \ref{thm:overlapping}. 
Let \( x, y \in \Lambda_{\ell}\) with $d(x,y)\leq \delta e^{-\lambda \ell}$ as in \eqref{eqn:overlapping}. 
Then since $b_\ell = C e^{-2\eps\ell/\alpha}$ for some constant $C>0$, we have
\begin{equation}\label{eqn:small-in-bell}
d(x,y) e^{2\eps\ell} b_\ell^{-1} 
\leq \delta e^{-\lambda\ell} e^{2\eps\ell} C^{-1} e^{2\eps\ell/\alpha}
= \delta C^{-1} e^{(2\eps(1+\frac 1\alpha) - \lambda)\ell}
\leq \delta C^{-1},
\end{equation}
where the last inequality uses \eqref{eqn:eps}.  By making $\delta$ sufficiently small that $\delta C^{-1} \leq Q_{15}$, we guarantee that the hypothesis of Lemma \ref{lem:overlapping} is satisfied.

Now let $\gamma$ be a full length strongly stable admissible curve in 
$\widetilde{\mathcal{N}}_{x,\ell}^s$, and consider the curves $\gamma_x = \Psi_x^{-1}(\gamma) \subset \mathcal{\widetilde B}_{b_\ell}^s$ and $\gamma_y = \Psi_y^{-1}(\gamma) \subset \mathcal{B}_{y}^{(\ell)}$, as shown in Figure \ref{fig:crossing}.
 Observe that since $\widetilde{\mathcal B}_r^s := [-e^{-\lambda/3}r,e^{-\lambda/3}r]\times [-r,r]$ (see Definition \ref{def:tilde-BN}), $\gamma_x$ intersects the $x$-axis at the point $(t,0)$ for some $|t| \leq e^{-\lambda/3} b_\ell$.
Let $\eta_x$ be the segment of the $x$-axis in $\mathcal{B}_x^{(\ell)}$ that connects this point $(t,0)$ to the origin; the length of $\eta_x$ is at most $e^{-\lambda/3} b_\ell$.  Let $\eta_y = \Psi_y^{-1}\circ \Psi_x(\eta_x)$.  Let $z\in \mathcal{B}_y^{(\ell)}$ be the intersection point of $\eta_y$ and $\gamma_y$, and let $w^{\pm}$ be the endpoints of $\gamma_y$.  Writing $z=z_1 e_1 + z_2 e_2$ and similarly for $w^\pm$, our goal is to show that $|w^\pm_2| \geq e^{-\lambda/3} b_\ell$.

We give the proof for $w^+$; the proof for $w^-$ is similar.  
By Lemma \ref{lem:overlapping1} and \eqref{eqn:small-in-bell}, the point $v := \Psi_y^{-1}(x) = \Psi_y^{-1}(\Psi_x(0))$ has
\begin{equation}\label{eqn:v-small}
\|v\| \leq Q_{16} e^{2\eps\ell} d(x,y) \leq Q_{16} C^{-1} \delta b_\ell.
\end{equation}
By \eqref{eq:overlap}, $\eta_y$ is an unstable admissible curve connecting $z$ and $v$ with length $\leq e^{\lambda/24} e^{-\lambda/3}b_\ell$ (using the fact that $\eta_x$ has length at most $e^{-\lambda/3}b_\ell$).  Since $\gamma_y$ is a stable admissible curve, we have
\[
|w_1^+ - z_1| \leq 2\omega b_\ell,
\]
and thus
\[
|w_1^+| \leq |v_1| + |z_1 - v_1| + |w_1^+ - z_1|
\leq Q_{16} C^{-1} \delta b_\ell + e^{\lambda/24} e^{-\lambda/3} b_\ell + 2\omega b_\ell.
\]
By \eqref{eqn:omega} we can choose $\delta>0$ small enough that 
\begin{equation}\label{eqn:Q-delta}
Q_{16} C^{-1} \delta + e^{\lambda/24} e^{-\lambda/3} + 2\omega < 1,
\end{equation}
and we conclude that $|w_1^+| < b_\ell$, so $w^+$ is not on a vertical boundary of $\mathcal{B}_y^{(\ell)}$.

If $w^+$ is on the top boundary of $\mathcal{B}_y^{(\ell)}$, then there is nothing to prove, so we can assume that the part of $\gamma_y$ running from $z$ to $w^+$ is the image of the top half of $\gamma_x$ under the transition map.  By \eqref{eq:overlap}, this part of $\gamma_y$ is a stable admissible curve with length $\geq e^{-\lambda/24} b_\ell$ (using the fact that the top half of $\gamma_x$ has length at least $b_\ell$).  Since the length of this part of $\gamma_y$ is at most $\sqrt{1+\omega^2}|w_2^+ -z_2|$, we conclude that
\begin{equation}\label{eqn:w2z2}
|w_2^+ - z_2| \geq \frac{e^{-\lambda/24}b_\ell}{\sqrt{1+\omega^2}}.
\end{equation}
As argued above, $\eta_y$ has length $\leq e^{\lambda/24} b_\ell$; on the other hand since it is a stable admissible curve, its length is at least
\[
\sqrt{|z_1-v_1|^2 + |z_2-v_2|^2} \geq \sqrt{\omega^{-2} + 1} |z_2-v_2|,
\]
and we conclude that
\begin{equation}\label{eqn:z2v2}
|z_2 - v_2| \leq \frac{\omega e^{\lambda/24} b_\ell}{\sqrt{1+\omega^2}}.
\end{equation}
Combining \eqref{eqn:v-small}, \eqref{eqn:w2z2}, and \eqref{eqn:z2v2} gives
\begin{align*}
|w_2^+| &\geq |w_2^+ - z_2| - |z_2-v_2| - |v_2|
\geq \frac{e^{-\lambda/24}b_\ell}{\sqrt{1+\omega^2}}
- \frac{\omega e^{\lambda/24} b_\ell}{\sqrt{1+\omega^2}}
- Q_{16} C^{-1} \delta b_\ell \\
&= \Big( \frac{e^{-\lambda/24} - \omega e^{\lambda/24}}{\sqrt{1+\omega^2}} - Q_{16} C^{-1}\delta \Big) b_\ell
\geq (e^{-\lambda/4} - Q_{16} C^{-1}\delta) b_\ell,
\end{align*}
where the last inequality uses \eqref{eqn:omega}.  As long as $\delta$ is small enough that
\begin{equation}\label{eqn:Q20Q9}
e^{-\lambda/4} - Q_{16} C^{-1}\delta > e^{-\lambda/3},
\end{equation}
this gives $|w_2^+| \geq e^{-\lambda/3} b_\ell$, completing the proof.
\end{proof}

\section{Pseudo-orbits, branches, shadowing: Proofs of  Theorems \ref{thm:pseudo-orbit} and \ref{thm:inf-po}}

We are now ready to prove Theorems \ref{thm:pseudo-orbit} and \ref{thm:inf-po}.
\subsection{Regular branches: Proof of Theorem \ref{thm:pseudo-orbit}}\label{sec:prove-po}

The two fundamental ingredients in the proof of Theorem \ref{thm:pseudo-orbit} are  Theorem \ref{thm:overlapping}  and Theorem \ref{thm:existsbranch}, which is essentially the special case of Theorem \ref{thm:pseudo-orbit} where $k=1$ and the pseudo-orbit is in fact a real orbit. 

Fix \( \delta>0 \) sufficiently small so that the conclusions of Theorem \ref{thm:overlapping} hold.  
To prove the first part of Theorem \ref{thm:pseudo-orbit}, we observe that if 
$\barx = (x_0,\dots, x_k)$ is an $(\bar{\ell},\delta,\lambda)$-pseudo-orbit, then by Theorem \ref{thm:overlapping}, the Lyapunov charts $\mathcal{N}_{x_j}^{(\ell_j)}$ and $\mathcal{N}_{f(x_{j-1})}^{(\ell_j)}$ are overlapping for every $1\leq j\leq k$.  
By Theorem \ref{thm:existsbranch},
for each  $0\leq j< k$,
the sets
\begin{align*}
\mathcal{B}_{x_j,(\ell_j,\ell_{j+1})}^{s,1} &= \Psi_{x_{j}}^{-1} (\mathcal{N}_{x_j}^{(\ell_{j})} \cap f^{-1} \mathcal{N}_{f(x_j)}^{\ell_{j+1}}), \\
\mathcal{B}_{f(x_j),(\ell_{j},\ell_{j+1})}^{u,1} &= \Psi_{f(x_j)}^{-1}(f(\mathcal{N}_{x_j}^{(\ell_j)}) \cap \mathcal{N}_{f(x_j)}^{(\ell_{j+1})})
\end{align*}
from \eqref{eq:Bsuell}
are strongly stable and strongly unstable strips in $\widetilde{\mathcal{B}}_{b_{\ell_{j}}}^s$ and
$\widetilde{\mathcal{B}}_{b_{\ell_{j+1}}}^u$, respectively, and $f_{x_{j}}$
is a diffeomorphism between them that satisfies the inclusions and estimates in \eqref{eqn:cones-0}.
Since the Lyapunov charts $\mathcal{N}_{x_{j+1}}^{(\ell_{j+1})}$ and $\mathcal{N}_{f(x_j)}^{(\ell_{j+1})}$ are overlapping, we conclude that $\Psi_{x_{j+1}}^{-1} \circ \Psi_{f(x_{j})} (\mathcal{B}_{f(x_j),(\ell_j,\ell_{j+1})}^{u,1})$ is an unstable strip in $\mathcal{B}_{x_{j+1}}^{(\ell_{j+1})}$, and thus its preimage under $\Psi_{x_{j+1}}^{-1} \circ f \circ \Psi_{x_j}$ is a stable strip in $\mathcal{B}_{x_j}^{(\ell_{j})}$. Thus we have
\[
\mathcal{B}_{x_0}^{(\ell_0)} \xrightarrow{\Psi_{x_0}^{-1} \circ f \circ \Psi_{x_1}} \mathcal{B}_{x_1}^{(\ell_1)}
\xrightarrow{\Psi_{x_1}^{-1} \circ f \circ \Psi_{x_2}}
\mathcal{B}_{x_2}^{(\ell_2)}
\to \cdots \to
\mathcal{B}_{x_{k-1}}^{(\ell_{k-1})}
\xrightarrow{\Psi_{x_{k-1}}^{-1} \circ f \circ \Psi_{x_k}}
\mathcal{B}_{x_k}^{(\ell_k)}
\]
where the maps are not defined on the entirety of the indicated domain, but only on a stable strip, and the corresponding image is an unstable strip.  In particular, taking the composition of all the maps we see that \eqref{eqn:Bjx} with $j=0$ defines a stable strip $\mathcal{B}_{\barx}^0 \subset \mathcal{B}_{x_0}^{(\ell_0)}$ that is mapped to an unstable strip $\mathcal{B}_{\barx} \subset \mathcal{B}_{x_k}^{(\ell_k)}$ by $\Psi_{x_k}^{-1} \circ f^k \circ \Psi_{x_0}$.  This proves the first property in the definition of an $\bar\ell$-regular branch.  For the second, we observe
that by \eqref{eqn:cones-0}, each $f_{x_{j-1}} = \Psi_{f(x_{j-1})}^{-1} \circ f \circ \Psi_{x_{j-1}}$ has a derivative that maps $K^u$ into $\widetilde K^u$, and that the transition map $\Psi_{x_j}^{-1} \circ \Psi_{f(x_{j-1})}$ maps this into $K^u$ by the definition of overlapping charts; moreover, the first map above expands each vector in $K^u$ by a factor of at least $e^{\lambda/2}$, and so after composing with the transition map, the derivative of $\Psi_{x_j}^{-1} \circ f \circ \Psi_{x_{j-1}}$ expands each vector in $K^u$ by a factor of at least $e^{\lambda/2} e^{-\lambda/24}$.  Iterating completes the proof of Theorem \ref{thm:pseudo-orbit}.

\subsection{Shadowing: Proof of Theorem \ref{thm:inf-po}}\label{sec:inf-po}

We prove Theorem \ref{thm:inf-po} using Theorem \ref{thm:pseudo-orbit} and the definition of regular branch in Definition \ref{def:reg-branch}, together with the hyperbolicity estimates from  Proposition \ref{prop:hyp-branch}.
Start by choosing the constants as in the assumption of Theorem \ref{thm:inf-po}.  

First we prove that $V^s := \bigcap_{n\geq 0} f^{-n}(\mathcal{N}_{x_n}^{(\ell_n)})$ is a $C^{1+\text{H\"older}}$ full length local $(\wQ e^{2\eps\ell_0},\lambda/3)$-stable curve.
Given $n\in\NN$, let $\barl^{n,+} := (\ell_0, \dots, \ell_n)$ and $\barx^{n,+} := (x_0,\dots, x_n)$, so that $\barx^{n,+}$ is an $(\barl^{n,+},\delta,\lambda)$-pseudo-orbit, which by Theorem \ref{thm:pseudo-orbit} determines an $\barl^{n,+}$-regular branch.  Let $\mathcal{B}_\barx^{0}(n) \subset \mathcal{B}_{x_0}^{(\ell_0)}$ be the corresponding stable strip.  

The boundary curves of the strips $\{\mathcal{B}_{\barx}^0(n)\}_{n\in\mathbb{N}}$ can be represented as the graphs of a uniformly Lipschitz family of functions $\spn{e_2} \to \spn{e_1}$ (recall Figure \ref{fig:lyap-chart} and Definitions \ref{def:conesize}--\ref{def:strips}). This family contains a sequence that converges uniformly to a Lipschitz functions whose graph is a continuous curve $\tilde V^s \subset \bigcap_{n\geq 0} \mathcal{B}_{\barx}^0(n) \subset \mathcal{B}_x$ with endpoints on $[-b(x),b(x)] \times \{-b(x),b(x)\} \subset \mathcal{B}_x$. Moreover, the expansion estimates in Definition \ref{def:reg-branch} imply that any full length unstable admissible curve in $\mathcal{B}_x$ intersects $\mathcal{B}_\barx^0(n)$ in a curve of length $\leq 2b_0 e^{-\lambda n/3}$, and thus intersects $\bigcap_{n\geq 0} \mathcal{B}_{\barx}^0(n)$ in a single point. It follows that $\tilde V^s = \bigcap_{n\geq 0} \mathcal{B}_{\barx}^0(n)$, and thus
$V^s = \bigcap_{n\geq 0} f^{-n} (\mathcal{N}_{x_n}^{(\ell_n)}) = \Psi_x(\bigcap_{n\geq 0} \mathcal{B}_{\barx}^0(n))$ is a continuous curve satisfying the geometric conditions in Definition \ref{def:localman}; moreover, $V^s$ is full length in $\mathcal{N}_x$. To prove that it is a local $(\wQ e^{2\eps\ell_0},\lambda/3)$-stable curve, we need to prove that it is $C^1$ (in fact we will prove that it is $C^{1+\text{H\"older}}$) and that for every $y,z\in V^s$ and $j\geq 0$ we have
\begin{equation}\label{eqn:V-contract}
d(f^j(y),f^j(z)) \leq C e^{-\lambda j/3} d(y,z)
\quad\text{for }C = \wQ e^{2\eps\ell_0}.
\end{equation}

We start by proving \eqref{eqn:V-contract}. Given $n\geq 0$, let $W_n$ be an arbitrary full length stable admissible curve in the unstable strip $f^n(\Psi_{x_n}(\mathcal{B}_{\barx}^0(n))$, and let $V_n = f^{-n}(W_n)$. Then for each $y,z\in V^s$ there are $y_n,z_n \in V_n$ such that $y_n \to y$ and $z_n\to z$. Moreover, \eqref{eqn:vjsvks} from Proposition \ref{prop:hyp-branch} gives $d(f^j(y_n),f^j(z_n)) \leq C e^{-\lambda j/3} d(y_n,z_n)$ for all $0\leq j\leq n$. Thus for every $j\geq 0$ we can send $n\to\infty$ and deduce that \eqref{eqn:V-contract} holds.

It remains to show that $V^s$ is $C^{1+\text{H\"older}}$.
To this end, let $x\in V^s$ be arbitrary and choose a unit tangent vector $v_n \in (D f_x^n)^{-1} K_{x_n, f^n(x)}^u \subset T_x M$, where the cones are as defined in \eqref{eqn:M-cones}. Then \eqref{eqn:vjsvks} from Proposition \ref{prop:hyp-branch} gives
\begin{equation}\label{eqn:vn-contracts}
\|Df_x^j v_n \| \leq C e^{-\lambda j/3}
\end{equation}
for all $0\leq j\leq n$. By compactness of the unit tangent space there is a subsequence $v_{n_k} \to e_x^s \in T_xM$.  For every $j\in \NN$, \eqref{eqn:vn-contracts} holds for all $v_{n_k}$ with $n_k \geq j$, and thus it holds for $e_x^s$ as well:
\begin{equation}\label{eqn:exs-contracts}
\|Df_x^j e_x^s\| \leq C e^{-\lambda j/3} \text{ for all } j\geq 0.
\end{equation}
We will show that $x\mapsto e_x^s$ is H\"older continuous and that $e_x^s$ is tangent to $V^s$ at $x$; this will complete the proof that $V^s$ is a local $(C, \lambda/3)$-stable curve.

Let $e_x^u$ be an arbitrary unit tangent vector in $K_{x_0,x}^u$. From \eqref{eqn:vjuvku} in Proposition \ref{prop:hyp-branch}, we have
\[
\|Df_x^j e_x^u\| \geq \wQ^{-1} e^{-2\eps\ell_j} e^{\lambda j/3}
\]
for all $j\geq 0$. Observe that
\begin{equation}\label{eqn:ellj-0}
-2\eps\ell_j + \frac\lambda 3 j \geq -2\eps\ell_0 - 2\eps j + \frac\lambda 3 j \geq -2\eps\ell_0 + \frac \lambda 4 j,
\end{equation}
where the last inequality uses the fact that $\eps < \frac \lambda 3 - \frac\lambda 4 = \frac\lambda{12}$, which follows from \eqref{eqn:eps}. We conclude that
\begin{equation}\label{eqn:exu-expands}
\|Df_x^j e_x^u\| \geq \wQ^{-1} e^{-2\eps \ell_0} e^{\lambda j/4} = C^{-1} e^{\lambda j/4}
\text{ for all } j\geq 0.
\end{equation}
Using \eqref{eqn:exs-contracts} and \eqref{eqn:exu-expands}, we can apply Proposition \ref{prop:theta-holder} and conclude that $x\mapsto e_x^s$ is H\"older continuous.

It remains to show that $e_x^s$ is tangent to $V^s$ at $x$. Equivalently, we must show that for any $\omega>0$, the cone
\[
K_\omega := \{0\} \cup \{ a e_x^u + b e_x^s : |a| < \omega |b|\} \subset T_x M
\]
has the following property: there is a neighbourhood $U \subset T_x M$ containing the origin such that $(\exp_x|_U)^{-1}(V^s)$ lies in $K_\omega$. Once we verify this, we can conclude that $V^s$ is differentiable at $x$ and that $e_x^s$ is in its tangent space. To find the neighbourhood $U$, we first observe that given $\omega>0$, every $v\in T_x M \setminus K_\omega$ has the form $v=a e_x^u + be_x^s$ for some $|a| \geq \omega |b|$ with $|a| > 0$.
Thus for all $j\geq 0$ we have
\[
\|Df_x^j v\| \geq |a| \|Df_x^j e_x^u\| - |b| \|Df_x^j e_x^s\|
\geq |a| ( C^{-1} e^{\lambda j/4} - \omega^{-1} C e^{-\lambda j/3}).
\]
Moreover, $\|v\| \leq |a| + |b| \leq |a|(1+\omega^{-1})$, and thus
\begin{equation}\label{eqn:v-expands}
\frac{\|Df_x^j v\|}{\|v\|}
\geq \frac{C^{-1} e^{\lambda j/4} - \omega^{-1} C e^{-\lambda j/3}}{1+\omega^{-1}}.
\end{equation}
Now we fix a choice of $j$ sufficiently large that the right-hand side of \eqref{eqn:v-expands} is at least $3 C e^{-\lambda j/3}$, and let $U \subset T_x M$ be a sufficiently small neighbourhood of $0$ that for all $v\in U$ we have
\[
\frac{d(f^j(\exp_x v), f^j x)}{d(\exp_x v, x)} \geq \frac 12 \frac{\|D_xf^j v\|}{\|v\|}.
\]
Combining this with \eqref{eqn:v-expands} and our choice of $j$, we see that for all $v\in U \setminus K_\omega$ we have
\[
\frac{d(f^j(\exp_x v), f^j x)}{d(\exp_x v, x)} \geq \frac 32 C e^{-\lambda j/3},
\]
and comparing this to \eqref{eqn:V-contract} we conclude that $\exp_x v$ does not lie on the curve $V^s$. It follows that $(\exp_x|_U)^{-1}(V^s) \subset K_\omega$, and since $\omega>0$ was arbitrary, this shows that $V^s$ is differentiable at $x$ and that $e_x^s$ is in its tangent space. This completes the proof of the first part of Theorem \ref{thm:inf-po}. The second part of the theorem follows by an identical argument.

\begin{Remark}\label{rmk:hyp-branches-Vs}
The same proof given above can also be applied to a family of $(C,\kappa)$-hyperbolic branches as in Proposition \ref{prop:intersect-branches}; it suffices to replace $\lambda/3$ and $\lambda/4$ in the above arguments with $\kappa$, and to replace the cone families from \eqref{eqn:M-cones} with the cone families associated to the hyperbolic branches. Thus we have also proved Proposition \ref{prop:intersect-branches}.
\end{Remark}

It remains to prove the third part of Theorem \ref{thm:inf-po}. Existence and uniqueness of the shadowing point follows immediately from the first two parts: indeed, a bi-infinite pseudo-orbit determines a local stable curve $V^s = \bigcap_{n\geq 0} f^{-n}(\mathcal{N}_{x_n}^{(\ell_n)})$ and a local stable curve $V^u = \bigcap_{n\leq 0} f^{-n}(\mathcal{N}_{x_n}^{(\ell_n)})$, and thus $\bigcap_{n\in\ZZ} f^{-n}(\mathcal{N}_{x_n}^{(\ell_n)}) = V^s \cap V^u$. Since the curves determined in the first two parts are both full length, then have a unique point of intersection, which is the unique shadowing point.

Now we establish regularity of $y$.  
Fix $\delta,\lambda>0$ and let $\Lambda'$ be the collection of unique shadowing points for $(\barl,\delta,\lambda)$-pseudo-orbits in $\Lambda$, where $\barl \in \NN^\ZZ$ can be arbitrary. Observe that $\Lambda'$ is invariant since as remarked after the statement of Theorem \ref{thm:inf-po}, if $y$ is the unique shadowing point for $\barx$, then $f(y)$ is the unique shadowing point for $\sigma\barx$. Given $y\in \Lambda'$ we have local stable and unstable curves $V_y^{s/u}$ as in the first two parts of the theorem.  Moreover, these curves satisfy $f(V_y^s) \subset V_{f(y)}^s$ (since shadowing the orbit for $n\geq 0$ implies shadowing for $n\geq 1$) and $f^{-1}(V_y^u) \subset V_{f^{-1}(y)}^u$. Thus the subspaces defined by $E_y^{s/u} = T_y V_z^{s/u}$ produce a measurable $Df$-invariant splitting on $\Lambda'$.

It remains to prove that this invariant splitting satisfies \eqref{eq:slowvar}--\eqref{eq:hypest}. We start by letting $e_y^{s/u}$ be unit vectors in $E_y^{s/u}$ and observing that \eqref{eqn:exs-contracts} and \eqref{eqn:exu-expands} imply that writing $C(y) = \wQ e^{2\eps\ell_0}$, we have
\[
\|Df_y^n e_y^s\| \leq C(y) e^{-\lambda n/3}
\text{ and } \|Df_y^n e_y^u \| \geq C(y)^{-1} e^{\lambda n /4}
\]
for all $n\geq 0$, where we observe that \eqref{eqn:exu-expands} applies to a broader class of unit tangent vectors (anything in the cone $K_{x_0,y}^u$), and in particular applies to our choice of $e_y^u$.  This establishes two of the four inequalities in \eqref{eq:hypest}; the other two follow from the analogues of \eqref{eqn:exs-contracts} and \eqref{eqn:exu-expands} for the backwards pseudo-orbit and corresponding unstable curve.

\begin{Remark}\label{rmk:by-4-or-3}
The fact that \eqref{eqn:exu-expands} contains $\lambda/4$ instead of $\lambda/3$ is due to the fact that we must use \eqref{eqn:ellj-0} to compare the level of regularity at the two ends of the orbit segment. One consequence of this appears later in Proposition \ref{thm:nicehyp} and its proof in \S\ref{sec:sat-rec}, when we build a nice rectangle for which $\lambda/3$ appears in the hyperbolic branch property, but $\lambda/4$ appears in the regularity. Roughly speaking one may say that the hyperbolic branch property controls contraction/expansion from the endpoints to somewhere in the middle of the corresponding orbit segment, but regularity requires us also to control it when going between two points in the middle of the segment, and for this we must weaken $\lambda/3$ to $\lambda/4$.
\end{Remark}

To prove \eqref{eq:slowvar} for $C$, observe that $f(y)$ is the shadowing orbit for $\sigma\barx$ with regularity sequence $\barl$, so that $C(f(y)) = \wQ e^{2\eps \ell_1}$, and thus
\begin{equation}\label{eqn:C-slow}
e^{-2\eps} \leq C(f(y)) / C(y) \leq e^{2\eps}.
\end{equation}
Now we need to estimate $\measuredangle(E_y^s,E_y^u)$.  First observe that $\measuredangle(E_y^s,E_y^u) \geq \|e_y^s - e_y^u\|$ since the angle represents the length of the arc of the unit circle joining the endpoints of $e_y^s$ and $e_y^u$, while the right-hand side is the length of the straight line joining them.  Let $\underline{v}^{u/s} := D_y \Psi_{x_0}^{-1} e_y^{u/s}$ and observe that $\|\underline{v}^{u/s}\| \geq \frac 12$ by the first estimate in \eqref{eqn:norm-Psix}.  Moreover, $\underline{v}^{u/s} \in K^{s/u}$, so the endpoint of $\underline{v}^{u}$ lies in the region of $\RR^2$ given by
\[
\{(x,y) \in \RR^2 : |y| \leq \omega |x| \text{ and } x^2 + y^2 \geq 1/4 \},
\]
while the endpoint of $\underline{v}^u$ lies in the region
\[
\{(x,y) \in \RR^2 : |x| \leq \omega |y| \text{ and } x^2 + y^2 \geq 1/4 \},
\]
Thus, by \eqref{eqn:omega}, $\|\underline{v}^{u} - \underline{v}^{s}\| \geq 1/2$,\footnote{An elementary computation shows that the optimal lower bound is $(1-\omega)/\sqrt{2(1+\omega^2)}$.} 
and we conclude that
\[
\frac 12 \leq \|\underline{v}^{u} - \underline{v}^{s}\|
\leq \|D_y \Psi_{x_0}^{-1}\| \|e_y^u - e_y^s\|
\leq 4Q_0 \wQ e^{2\eps\ell_0} \measuredangle(E_y^s,E_y^u)
\]
using the second inequality in \eqref{eqn:norm-Psix}.  Thus
\[
\measuredangle(E_y^s,E_y^u) \geq \wQ^{-1} e^{-2\eps\ell_0},
\]
so putting $K(y) := \wQ^{-1} e^{-2\eps\ell_0}$ establishes \eqref{eq:angle}, and \eqref{eq:slowvar} for $K$ follows just as it did for $C$.  We conclude that the set $\Lambda'$ of shadowing points is $(\lambda/4,2\eps)$-hyperbolic.  Moreover, to find which $\Lambda'_\ell$ contains $y$, we write
\[
C(y) = \wQ e^{2\eps\ell_0} \leq e^{2\eps \ell}
\]
and find that this holds as soon as $\ell \geq \ell_0 + \frac 1{2\eps} \log \wQ$.  A similar computation with $K(y)$ shows that $y$ is $(\lambda/4,2\eps,\ell_0 + \lceil\frac 1{2\eps}\log\wQ\rceil)$-regular.

\part{Nice Rectangles and Young Towers}
\label{part:Y}

In this third and final part of the paper, we apply the general results stated in  
Theorem~\ref{thm:pseudo-orbit} and \ref{thm:inf-po}, and proved in Part~\ref{part:hyp} above, to our particular setting in order to  prove Theorems \ref{thm:existsY}, \ref{thm:existsnice1} and  \ref{thm:genkatok2}. As mentioned above, Theorem~\ref{thm:existsSRB} follows directly from Theorems \ref{thm:existsY} and  \ref{thm:existsnice1}, and therefore this completes the proofs of  all our results. The sections are organized as follows: 
in \S\ref{sec:almost-return} we prove Theorem \ref{thm:genkatok2}, which is essentially a reformulation of Theorem~\ref{thm:pseudo-orbit} in the setting of almost returns to nice domains, in \S \ref{sec:C->B} we show that Theorem \ref{thm:genkatok2} implies Theorem \ref{thm:existsY}, and in \S \ref{sec:B->A} we  prove Theorem \ref{thm:existsnice1}.

\section{Hyperbolic branches in nice domains: Proof of Theorem \ref{thm:genkatok2}}\label{sec:almost-return}

The proof of Theorem \ref{thm:genkatok2} consists of two parts. 
First  we show that every almost return gives rise to a pseudo-orbit and thus, by Theorem~\ref{thm:pseudo-orbit}, to a regular branch, which satisfies the hyperbolicity estimates given in Proposition \ref{prop:hyp-branch}. Then we show that this  regular branch  can be ``restricted'' to give a hyperbolic branch in the  nice domain \( \Gpq \).  This second part of the proof does not explicitly require Theorem \ref{thm:pseudo-orbit}, it only uses the existence of a regular branch, but does use in an essential way the fact that \( \Gpq \) is a nice domain.

To begin, let  \( C_{\ell} \) be the constant given in Theorem \ref{thm:HP} (without loss of generality we may assume that $C_\ell\geq 1$), \( c_{2}>0 \) as given in \eqref{eqn:bc}, and \( \delta>0 \) as in Theorem~\ref{thm:pseudo-orbit}. Then we let
\begin{equation}\label{def:r}
r:=\frac{\delta e^{-\lambda (\ell+1)}e^{-c_{2}}}{2C_{\ell}}\leq \delta,
\end{equation}
where the inequality holds since $C_\ell\geq 1$ and $\lambda, \ell, c_2 \geq 0$.
For generality we state the following Lemma for almost returns in a slightly more general setting than that of Theorem \ref{thm:genkatok2}, without any explicit references to rectangles or nice domains. 

\begin{Lemma}\label{lem:overlap}
 If   $x,y\in \Lambda_\ell$ and $k\geq 1 $ are such that $f^k(V_x^s) \cap V_y^u \neq\emptyset$, and
  $z\in f^k(V_x^s) \cap V_y^u$ satisfies $d(z,y) < r $ and $d(f^{-k}(z),x) < r $,
then the sequence $\barx = (x_{0},  \dots,  x_{k})$ given by 
\( x_j = 
f^j(x) \) for \(  0\leq j \leq k/2\) and
\( x_j =  f^{j-k}(y) \) for \(  k/2 < j \leq k
\) 
is an $(\barl, \delta, \lambda)$-pseudo-orbit for $\ell_j = \min(\ell + j, \ell + k-j)$.  
\end{Lemma}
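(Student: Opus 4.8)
The plan is to verify directly that the sequence $\barx = (x_0, \dots, x_k)$ defined in the statement satisfies the two requirements of an $(\barl, \delta, \lambda)$-pseudo-orbit (Definition \ref{def:po}): namely that $x_j \in \Lambda_{\ell_j}$ for each $j$, and that $d(f(x_{j-1}), x_j) \leq \delta e^{-\lambda \ell_j}$ for each $1 \leq j \leq k$. The regularity of the points is essentially bookkeeping: since $x, y \in \Lambda_\ell$, equation \eqref{eq:images} gives $f^j(x) \in \Lambda_{\ell + j}$ and $f^{j-k}(y) \in \Lambda_{\ell + k - j}$, so by construction $x_j \in \Lambda_{\ell_j}$ with $\ell_j = \min(\ell + j, \ell + k - j)$; one also checks $|\ell_j - \ell_{j-1}| \leq 1$, which holds because $\min(\ell+j, \ell+k-j)$ changes by at most $1$ as $j$ increments.

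The heart of the matter is the transition estimate, and here there are two easy cases and one genuine case. When $1 \leq j \leq k/2$ and also $j-1 < k/2$, we have $x_{j-1} = f^{j-1}(x)$ and $x_j = f^j(x)$, so $f(x_{j-1}) = x_j$ and $d(f(x_{j-1}), x_j) = 0 \leq \delta e^{-\lambda \ell_j}$ trivially; similarly for indices entirely on the $y$-side. The only index requiring work is the ``crossover'' index $j_0$ where $x_{j_0 - 1} = f^{j_0 - 1}(x)$ with $j_0 - 1 \leq k/2$ but $x_{j_0} = f^{j_0 - k}(y)$ with $j_0 > k/2$. Here I need to bound $d(f^{j_0}(x), f^{j_0 - k}(y))$, where $j_0 \approx k/2$. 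The idea is to use the shadowing point $z \in f^k(V_x^s) \cap V_y^u$ as a bridge: $f^{-k}(z) \in V_x^s$ and $z \in V_y^u$, so by the contraction property in Definition \ref{def:localman}, $d(f^{j_0}(f^{-k}(z)), f^{j_0}(x)) \leq C_\ell e^{-\lambda (k - j_0)} d(f^{-k}(z), x)$ since $f^{-k}(z), x$ both lie on $V_x^s$ and $j_0 \leq k$ applications of $f$ to points on a stable curve, counting forward iterates of $f^{-k}(z)$... — more carefully: write $f^{j_0}(x) = f^{j_0 - k}(f^k(x))$ and note $f^k(x)$ need not be near anything, so instead use that $f^{-k}(z)$ and $x$ are on the common stable curve $V_x^s$, giving $d(f^n(f^{-k}(z)), f^n(x)) \leq C_\ell e^{-\lambda n} d(f^{-k}(z), x) < C_\ell r$ for all $n \geq 0$, in particular for $n = j_0$. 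Likewise $z$ and $y$ lie on the common unstable curve $V_y^u$, so $d(f^{-n}(z), f^{-n}(y)) \leq C_\ell e^{-\lambda n} d(z, y) < C_\ell r$ for all $n \geq 0$; applying this with $n = k - j_0$ gives $d(f^{j_0 - k}(z), f^{j_0 - k}(y)) < C_\ell r$. Combining via the triangle inequality and $f^{j_0}(f^{-k}(z)) = f^{j_0 - k}(z)$,
\[
d(f^{j_0}(x), f^{j_0 - k}(y)) \leq d(f^{j_0}(x), f^{j_0 - k}(z)) + d(f^{j_0 - k}(z), f^{j_0 - k}(y)) < 2 C_\ell r.
\]

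Finally I substitute the definition of $r$ from \eqref{def:r}: $r = \delta e^{-\lambda \ell} e^{-c_2} / (2 C_\ell)$, so $2 C_\ell r = \delta e^{-\lambda \ell} e^{-c_2}$. It remains to see that this is at most $\delta e^{-\lambda \ell_{j_0}}$. Since $\ell_{j_0} = \min(\ell + j_0, \ell + k - j_0) \leq \ell + k - j_0$ and also $\leq \ell + j_0$, while the true transition is $f(x_{j_0 - 1}) = f^{j_0}(x)$ and the target is $x_{j_0} = f^{j_0 - k}(y) \in \Lambda_{\ell_{j_0}}$ — here I should be slightly careful that $\ell_{j_0}$ could be as large as roughly $\ell + k/2$, making $e^{-\lambda \ell_{j_0}}$ small; but $e^{-c_2}$ only buys one iterate's worth of slack. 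The resolution is that at the crossover index the two candidate values $\ell + j_0$ and $\ell + k - j_0$ differ by at most $1$ (since $j_0 - 1 \leq k/2 < j_0$ forces $j_0 \in \{\lceil k/2 \rceil, \lceil k/2\rceil + 1\}$ roughly), and moreover one checks $\ell_{j_0 - 1} = \ell + j_0 - 1$ equals $\ell_{j_0}$ or $\ell_{j_0} - 1$, so the crossover happens precisely where the $\min$ is achieved on both sides; hence it suffices that $e^{-c_2} d(f^{j_0}(x), f^{j_0 - k}(y))$-type comparisons absorb the single-step discrepancy $e^{-\lambda}$ — and indeed $e^{-c_2} \leq e^{-\lambda}$ need not hold in general, so the correct accounting is: $\ell_{j_0} \leq \ell + j_0$ and I derived $d(f^{j_0}(x), \cdot) < C_\ell r = \tfrac12 \delta e^{-\lambda\ell}e^{-c_2} \leq \tfrac12 \delta e^{-\lambda \ell}$, which is $\leq \delta e^{-\lambda \ell_{j_0}}$ provided $\ell_{j_0} \leq \ell$, i.e. only at $j_0 = 0$ or $j_0 = k$. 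This indicates the real content: the distance bound $2C_\ell r$ must be compared not to $\delta e^{-\lambda\ell}$ but the estimate on stable/unstable curves already gives exponential decay $C_\ell e^{-\lambda j_0} r$ and $C_\ell e^{-\lambda(k-j_0)} r$ at the crossover, and since $\ell_{j_0} = \ell + \min(j_0, k-j_0)$, the product $C_\ell e^{-\lambda \min(j_0, k-j_0)} r = C_\ell e^{-\lambda(\ell_{j_0} - \ell)} r$, and then $2 C_\ell e^{-\lambda(\ell_{j_0}-\ell)} r = \delta e^{-\lambda \ell} e^{-c_2} e^{-\lambda(\ell_{j_0}-\ell)} \leq \delta e^{-\lambda \ell_{j_0}}$. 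So I must carry the decay factors $e^{-\lambda j_0}$ and $e^{-\lambda(k-j_0)}$ through rather than discarding them — that is the step I expect to be the main obstacle, and it is resolved precisely by the choice of exponent $e^{-\lambda \ell}$ in the definition of $r$. The factor $e^{-c_2}$ then provides exactly the one extra iterate needed because the genuine transition is $f(x_{j_0-1}) = f^{j_0}(x)$, i.e. we must iterate $f^{-k}(z)$ and $x$ not $j_0$ but up to $j_0$ times, and compare against $f^{j_0-k}(y) \in \Lambda_{\ell_{j_0}}$ where $\ell_{j_0}$ might be $\ell + j_0$; bounding $d(f(x_{j_0-1}), x_{j_0})$ by applying the $f^{-1}$-Lipschitz bound $e^{c_2}$ once to pull back into the regime where the clean estimate applies gives the missing $e^{-c_2}$. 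I would write this out carefully, checking the crossover index explicitly for both parities of $k$.
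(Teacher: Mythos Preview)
Your approach is correct and matches the paper's: the only nontrivial jump is at the crossover index $j_0=\lfloor k/2\rfloor+1$, and you bridge $f^{j_0}(x)$ to $f^{j_0-k}(y)$ through $f^{j_0-k}(z)$ using the $(C_\ell,\lambda)$-contraction on $V_x^s$ (forward $j_0$ iterates) and $V_y^u$ (backward $k-j_0$ iterates), obtaining $d(f(x_{j_0-1}),x_{j_0}) \leq 2C_\ell e^{-\lambda\min(j_0,k-j_0)}r = \delta e^{-\lambda\ell_{j_0}}e^{-c_2} \leq \delta e^{-\lambda\ell_{j_0}}$.

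One clarification about your closing paragraph: the factor $e^{-c_2}$ in the definition of $r$ is \emph{not} actually needed to absorb any ``extra iterate'' in this lemma --- your own computation already yields the required bound with $e^{-c_2}$ to spare, since $\ell_{j_0}=\ell+\min(j_0,k-j_0)$ and both contraction exponents $j_0,\,k-j_0$ are at least $\min(j_0,k-j_0)$. The $e^{-c_2}$ is there for the step immediately \emph{after} this lemma in the proof of Theorem~\ref{thm:genkatok2}, where the pseudo-orbit $\barx$ is replaced by $\bar p=(p,x_1,\dots,x_{k-1},p)$ and one bounds $d(p,f(x_0))\leq e^{c_2}\diam(\Gamma_{pq})\leq e^{c_2}r$.
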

\begin{proof}
Write $i=\lfloor k/2\rfloor$. By assumption \( z\in V_{y}^{u} \) and  \( f^{-k}(z) \in V_{x}^{s} \) and therefore by the assumptions of the Lemma and Theorem \ref{thm:HP} we have 
\[
d(f^{i}(x), f^{i-k}(z)) \leq C_\ell e^{-\lambda i} d(x,f^{-k}(z))\leq C_\ell e^{-\lambda i} r \leq \frac 12 \delta e^{-\lambda (\ell + i)},
\]
and 
\[
d(f^{i-k}(z)),f^{i-k}(y)) \leq C_\ell e^{-\lambda(k-i)} d(z,y)\leq C_\ell e^{-\lambda(k-i)}  r \leq \frac 12 \delta e^{-\lambda (\ell + i)},
\]
and thus $d(f(x_i),x_{i+1}) =  d(f^{i}(x), f^{i-k}(y)) \leq \delta e^{-\lambda(\ell + i)}$.  Since $f(x_j) = x_{j+1}$ for all $j\neq i$, this completes the proof.  \end{proof}

Consider now the setting of Theorem \ref{thm:genkatok2}: suppose \( \Gamma\)  is a nice regular set with \( \diam(\Gpq)<r \) and
suppose  \( x \in \Gamma \) has an almost return to \( \Gamma \) at time \( k\in T\mathbb N\). Then the assumptions of Lemma \ref{lem:overlap} are satisfied and there is an \(( \barl, \delta, \lambda) \)-pseudo-orbit $\barx = (x_{0},  \dots,  x_{k})$ as in the lemma starting and ending inside \( \Gpq \). 
Moreover, notice that
\[
  d(f(p), x_{1})=d(f(p), f(x_{0})) \leq e^{c_{2}}d(p, x_{0}) 
 \leq 
 e^{c_{2}} \diam (\Gpq) \leq e^{c_{2}}  r <  \delta e^{-\lambda (\ell+1)},
\]
where the first inequality uses the general fact from \eqref{eqn:bc} that $d(f(x),f(y)) \leq e^{c_2} d(x,y)$ for all $x,y\in M$, the second inequality is immediate since $p,x_0 \in \Gpq$, the third uses our assumption that $\diam(\Gpq) < r$, and the fourth uses \eqref{def:r}.

By a similar calculation, \( d(f(x_{k-1}), p) < \delta e^{-\lambda \ell} \), and therefore the sequence 
\[
\bar p := (p, x_{1}  \dots,  x_{k-1}, p)
\]
 is  also an \(( \barl, \delta, \lambda) \)-pseudo-orbit.
  Considering the  Lyapunov chart $\Psi_p \colon \mathcal{B}_p^{(\ell)} \to \mathcal{N}_p^{(\ell)}$, 
 by Theorem \ref{thm:pseudo-orbit} there is an $\ell$-regular branch   from $\mathcal{B}_p^{(\ell)}$ to itself associated to this pseudo-orbit, and we have the corresponding maps 
  \begin{equation}\label{eq:chartmaps}
  f^{0,k}_{\bar p}\colon \mathcal{B}^0_{\bar p} \to \mathcal{B}^k_{\bar p}
\quad\text{ and } \quad 
 f^{k} \colon \mathcal N^{0}_{\bar p}\to \mathcal N^{k}_{\bar p} 
\end{equation}
at the level of Lyapunov charts and of the manifold respectively, recall \eqref{eq:Bjxmap}.
For this branch 
Proposition \ref{prop:hyp-branch} gives the hyperbolicity estimates required in Definition \ref{def:hypbranch} for a $(\wQ e^{2\eps\ell},\lambda/3)$-hyperbolic branch.
Moreover,   concatenating any finite sequence of such branches gives a new $\ell$-regular branch that is associated to the concatenated pseudo-orbit, and thus has the same hyperbolicity estimates given by Proposition \ref{prop:hyp-branch}.  Thus  the collection of such branches satisfies the concatenation property.  

\begin{Remark}
We emphasize  that these are not yet the hyperbolic branches we require for $\Gpq$ as in Definition \ref{def:hypbranch}. Indeed, these branches are constructed on the scale of the Lyapunov chart which \emph{a priori} may be  significantly  bigger than the scale of the nice domain \( \Gpq \). The strips 
 \( \mathcal N^{0}_{\bar p},  \mathcal N^{k}_{\bar p} \) intersect \( \Gpq \) but may extend across the boundary of \( \Gpq \). 
We therefore need to ``restrict'' these branches to \( \Gpq \) and produce $\Gpq$-strips $\widehat{C}^{s} \subset \mathcal N^{0}_{\bar p}$  and $\widehat{C}^{u} \subset \mathcal N^{k}_{\bar p}$ such that $f^k$ maps $\widehat{C}^s$ onto $\widehat{C}^u$. Since these are subsets of the larger strips \( \mathcal N^{0}_{\bar p},  \mathcal N^{k}_{\bar p} \) and  the cones $K_{p,y}^{s/u} \subset T_y M$ defined in \eqref{eqn:M-cones} give conefields over $\Gpq$ that are adapted to the set \( \Gamma \), the restricted strips will automatically inherit the hyperbolicity and concatenation properties.  
\end{Remark}

The remaining part of the argument is essentially topological, and this is where the niceness assumption plays a crucial role. Indeed, the crucial consequence of niceness is formalized in the following statement. 
  
\begin{Lemma}\label{lem:niceint1}
Let \( \Gamma \) be a nice regular set and suppose that some \( x\in \Gamma \) has an almost return to \( \Gamma \) at a time $k\in \NN T$.  Then $f^k(W_x^s) \subset \Gpq$.
\end{Lemma}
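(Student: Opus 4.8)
The plan is to use the niceness property \eqref{eq:nicerect} together with a connectedness argument. First I would recall the set-up: since $x$ has an almost return at time $k\in T\NN$, there is $y\in\Gamma$ with $[x,y,k] = f^k(W_x^s)\cap W_y^u \neq\emptyset$, and we know $f^k(x)\in\Gpq$. Let $z\in f^k(W_x^s)\cap W_y^u$ be the intersection point. The curve $f^k(W_x^s)$ is a connected $C^1$ curve (being the $f^k$-image of a connected piece of a stable curve) that contains the point $f^k(x)\in\Int\Gpq$ (at least if $f^k(x)$ is in the interior; the boundary case is handled by niceness itself) and also contains $z\in W_y^u\subset\Gpq$.

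The key step is to argue by contradiction: suppose $f^k(W_x^s)\not\subset\Gpq$. Then, since $f^k(W_x^s)$ is connected and meets $\Gpq$, it must cross the boundary $\partial\Gpq$. The boundary of $\Gpq$ consists of pieces of the stable and unstable curves $V^{s/u}_{p/q}$. I would first rule out crossings of the unstable part $V^u_{p/q}\cap\Gpq$ of the boundary: a point of $f^k(W_x^s)$ lying on $V^u_{p/q}\cap\Gpq$ would be a point $w$ with $f^k$-preimage in $W_x^s$, hence $f^{-k}(w)\in W_x^s\subset\Gpq$; but then $w = f^k(f^{-k}(w))$ with $f^{-k}(w)\in\Gpq$ lying on... — here I need to be careful about the direction, and instead the cleaner route is: the tangent directions of $f^k(W_x^s)$ lie in the stable conefield $\mathcal K^s$ (this follows from the hyperbolic branch estimates, since $W_x^s$ has tangent directions in $\mathcal K^s$ and $Df^{-k}(\mathcal K^s)\subset\mathcal K^s$ along the branch, equivalently $f^k(W_x^s)$ is a stable curve in the sense of the adapted conefields), so $f^k(W_x^s)$ is a \emph{stable} curve. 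A stable curve cannot cross a stable boundary component $V^s_{p/q}$ transversally and can only meet it tangentially; by the uniqueness of stable curves through a point (Theorem \ref{thm:HP}) and the product structure, if $f^k(W_x^s)$ met $V^s_{p/q}\cap\Gpq$ it would have to coincide with a piece of it near that point, which forces $f^k(W_x^s)$ to run along the stable boundary, contradicting that it passes through $f^k(x)$ in the interior (or reducing to the boundary case which niceness excludes).

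That leaves the possibility that $f^k(W_x^s)$ exits $\Gpq$ through the \emph{unstable} boundary $V^u_{p/q}\cap\Gpq$. This is exactly where niceness \eqref{eq:nicerect} enters. If $w\in f^k(W_x^s)\cap(V^u_{p/q}\cap\Gpq)$, then $f^{-k}(w)\in W_x^s\subset\Gamma\subset\Gpq$, so $w\in f^k(\Gpq)$; but more to the point, applying $f^{-k}$ (with $k\in T\NN$) to the point $w\in V^u_{p/q}\cap\Gpq$ and using the second relation in \eqref{eq:nicerect}, $f^{-k}(V^u_{p/q}\cap\Gpq)\cap\Int\Gpq = \emptyset$, so $f^{-k}(w)\notin\Int\Gpq$. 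Combined with $f^{-k}(w)\in W_x^s\subset\Gamma$, and the fact that $\Gamma$ is $(\chi,\eps,\ell)$-regular and lies inside a nice domain hence (shrinking if necessary) in $\Int\Gpq$, we get a contradiction — or more carefully, $f^{-k}(w)$ lies on the unstable boundary and on $W_x^s$, which by the product structure of the rectangle forces $x$ itself onto the boundary, again excluded. Thus no crossing of $\partial\Gpq$ is possible, so $f^k(W_x^s)$ stays inside $\Gpq$. The main obstacle I anticipate is making the boundary-crossing dichotomy fully rigorous: precisely, ruling out tangential behavior along the stable boundary requires invoking the adapted conefields and the fact that $f^k(W_x^s)$ is a full stable curve together with the local product structure, and correctly bookkeeping which iterate ($f^k$ versus $f^{-k}$, and that $k$ is a multiple of $T$) makes \eqref{eq:nicerect} applicable — this is the delicate point that the lemma's proof must get exactly right.
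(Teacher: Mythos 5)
Your overall strategy matches the paper's: argue by contradiction, observe that $f^k(W_x^s)$ is a stable curve and therefore can exit $\Gpq$ only through the unstable boundary $W^u_{p/q}$, and then derive a contradiction from the second relation in \eqref{eq:nicerect} applied to the exit point. That much is right, and it is essentially the paper's (one-sentence) argument.

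However, the crucial final step --- getting from ``$f^{-k}(w)\notin \Int\Gpq$'' to an actual contradiction --- is where your write-up goes wrong, in both of your attempts. Your first attempt asserts $W_x^s \subset \Gamma$, but this is false: $\Gamma$ is a $(\chi,\eps,\ell)$-regular nice \emph{set}, not a union of curves, so $W_x^s$ is generally not contained in it; and the parenthetical ``shrinking if necessary'' has no meaning here since $\Gpq$ is fixed. Your second attempt (``$f^{-k}(w)$ lies on the unstable boundary and on $W_x^s$, which by the product structure forces $x$ itself onto the boundary'') is also incorrect: the endpoints of $W_x^s$ \emph{do} lie on $W^u_{p/q}$, so $f^{-k}(w)$ being simultaneously on $W_x^s$ and on the unstable boundary is perfectly consistent and implies nothing about $x$.

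The correct resolution, which the paper leaves implicit in the phrase ``lies in the interior of $\Gpq$,'' is this: the exit point $w$ is an \emph{interior} point of the arc $f^k(W_x^s)$, because the curve contains points of $\Int\Gpq$ (supplied by the almost-return intersection with $W_y^u$) on one side of $w$ and points outside $\Gpq$ on the other. Hence $f^{-k}(w)$ is an interior point of the arc $W_x^s$; and since $W_x^s$ is a full-length stable curve whose only contact with $\partial\Gpq$ is at its two endpoints on $W^u_{p/q}$, an interior point of $W_x^s$ lies in $\Int\Gpq$. This contradicts $f^{-k}\bigl(V^u_{p/q}\cap\Gpq\bigr)\cap\Int\Gpq=\emptyset$. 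You should replace both of your concluding arguments with this one.
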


\begin{proof}
Suppose by contradiction that the conclusion does not hold. Then \( f^{k}(W^{s}_{x}) \) must  intersect one of \( W^{u}_{p/q}\),  but this implies that the image under $f^{-k}$ of this intersection point lies in the interior of \( \Gpq \), which is forbidden by niceness.\end{proof}

As shown in Figure \ref{fig:get-branch}, let 
 \[
   \gamma^{u}_{p}:= W^{u}_{p} \cap   \mathcal N^{0}_{\bar{p}}
  \quad\text{ and } \quad 
     \gamma^{u}_{q}:= W^{u}_{q} \cap  \mathcal N^{0}_{\bar{p}}.
  \]

\begin{figure}[tbp]
\includegraphics[width=\textwidth]{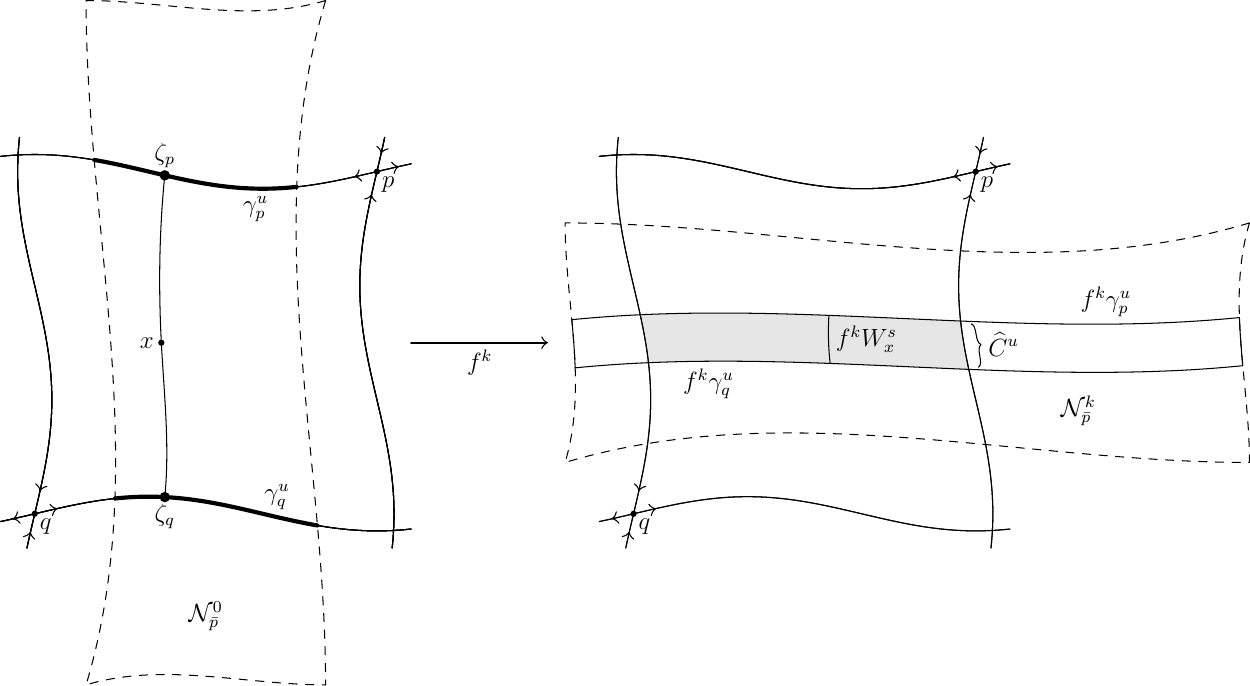}
\caption{Proving Theorem \ref{thm:genkatok2}.}
\label{fig:get-branch}
\end{figure}

\begin{Lemma}\label{lem:images}
The curves $f^{k}(\gamma^{u}_{p})$ and $ f^{k}(\gamma^{u}_{q})$ are full length unstable admissible curves in 
\( \Gpq \).
\end{Lemma}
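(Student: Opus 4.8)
\textbf{Proof plan for Lemma \ref{lem:images}.}

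The plan is to combine the hyperbolicity estimates of the regular branch (from \eqref{eq:hyp-branch}) with the niceness of $\Gamma_{pq}$ to show that the images $f^k(\gamma^u_p)$ and $f^k(\gamma^u_q)$ stretch all the way across $\Gamma_{pq}$ in the unstable direction. First I would observe that $\gamma^u_p$ and $\gamma^u_q$ are unstable curves contained in the stable strip $\mathcal{N}^0_{\bar x, p}$ of the regular branch: their tangent directions lie in the unstable cones $K^u_{p,\cdot}$ (since $W^u_{p/q}$ are local unstable curves and the conefields $K^{s/u}_{p,\cdot}$ are adapted to $\Gamma$). Hence by the regular branch property---specifically by \eqref{eq:hyp-branch} applied to the pseudo-orbit $\bar p$---the images $f^k(\gamma^u_p), f^k(\gamma^u_q)$ are again unstable curves whose tangent vectors lie in the unstable cones over $\mathcal{N}^k_{\bar x, p}$, and they are expanded: every tangent vector is stretched by a factor at least $\wQ e^{-2\eps\ell} e^{\lambda k/3}$, and more to the point $f^k$ maps the stable strip $\mathcal{N}^0_{\bar x,p}$ diffeomorphically onto the unstable strip $\mathcal{N}^k_{\bar x,p}$ which is a full length unstable strip in $\mathcal{B}_p^{(\ell)}$ (equivalently $\mathcal N^{(\ell)}_p$).

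Next I would pin down the endpoints. Since $p = f^k(p)$ and $q = f^k(q)$ (because $k \in T\NN$), the curve $f^k(\gamma^u_p)$ is an unstable curve through $p$, and $f^k(\gamma^u_q)$ an unstable curve through $q$. By Lemma \ref{lem:niceint1}, $f^k(W^s_x) \subset \Gpq$; I would want the analogous statement for the unstable boundary pieces, namely that $f^k(\gamma^u_{p/q})$ cannot cross the \emph{stable} boundary $W^s_{p/q}$ of $\Gamma_{pq}$ into the exterior. This follows from niceness in the same way: if $f^k(\gamma^u_p)$ exited $\Gpq$ it would have to meet $W^s_p \cup W^s_q$, but applying $f^{-k}$ to such an intersection point, and using that $p,q$ are fixed by $f^k$ together with \eqref{eq:nicerect} (the stable boundary never re-enters $\Int\Gpq$ at multiples of $T$ in forward time, equivalently the unstable boundary never re-enters in backward time), gives a contradiction---this is precisely the argument used to prove Lemmas \ref{lem:niceint1} and \ref{lem:strict}. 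Thus $f^k(\gamma^u_p)$ stays inside $\overline{\Gpq}$. Combined with the fact that it is an unstable curve through $p$ whose preimage $\gamma^u_p$ is, by Lemma \ref{lem:strict}, \emph{strictly} contained in $W^u_p$ (so its preimage endpoints lie strictly inside $\Gpq$ but the expansion under $f^k$ forces the image endpoints out to the boundary of $\mathcal N^k_{\bar x,p}$, and hence out to the stable boundary $W^s_{p/q}$ of $\Gpq$), I conclude that $f^k(\gamma^u_p)$ has both endpoints on $W^s_p$ and $W^s_q$, i.e. it is a full length unstable curve in $\Gamma_{pq}$. The same argument applies verbatim to $f^k(\gamma^u_q)$; see Figure \ref{fig:get-branch}.

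The step I expect to be the main obstacle is the endpoint/boundary-crossing analysis: one must carefully rule out that $f^k(\gamma^u_p)$ could ``wrap around'' inside $\Gpq$ without reaching the full stable boundary, or alternatively exit through an unstable boundary piece $W^u_p \cup W^u_q$. The first is handled by the length/expansion estimate together with the fact that $\mathcal N^k_{\bar x,p}$ is a full-length unstable strip of the Lyapunov chart, so its image curve necessarily runs monotonically across in the unstable direction; the second is exactly where niceness \eqref{eq:nicerect} enters, exactly as in Lemma \ref{lem:niceint1}. Once these two exclusions are in place, ``full length unstable curve in $\Gamma_{pq}$'' is immediate from the definition. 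A minor technical point to be careful about is that one should work with the curves $W^{s/u}_{p/q} = V^{s/u}_{p/q} \cap \Gpq$ rather than the full local manifolds, and that the cones $K^{s/u}_{p,\cdot}$ over $\Gpq$ constructed in the preceding remark are genuinely adapted to $\Gamma$, so that both $\gamma^u_{p/q}$ and their images qualify as unstable curves in the sense required by the definition of stable/unstable strips.
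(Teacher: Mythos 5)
Your plan shares the paper's ingredients (regular branch, Lemma~\ref{lem:strict}, niceness), but it rests on two claims that are not justified, and as a result the argument is internally inconsistent. First, you assert that ``$f^k(\gamma^u_p)$ is an unstable curve through $p$''. This requires $p\in\gamma^u_p=W^u_p\cap\mathcal{N}^0_{\barx,p}$, i.e.\ that $f^j(p)$ lies in the Lyapunov charts $\mathcal{N}^{(\ell_j)}_{x_j}$ of the pseudo-orbit $\bar p = (p,x_1,\dots,x_{k-1},p)$ at every intermediate time $0<j<k$. Since the intermediate chart centres $x_j$ are iterates of $x$ (resp.\ $y$), not of $p$, there is no a priori reason this holds, and the paper never asserts it. Instead its proof performs a two-case analysis on the endpoints of $\gamma^u_p$: Lemma~\ref{lem:strict} shows at most one can coincide with an endpoint of $W^u_p$, so zero or one does, and $p$ is an endpoint of $\gamma^u_p$ in only one sub-case of the latter. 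Your anchor-at-$p$ argument covers only that sub-case.

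Second, you claim that niceness forces ``$f^k(\gamma^u_p)$ to stay inside $\overline{\Gpq}$''. This is false, and the niceness condition \eqref{eq:nicerect} does not give it: that condition controls \emph{forward} iterates of the \emph{stable} boundary and \emph{backward} iterates of the \emph{unstable} boundary, and says nothing about $f^k(W^u_{p/q})$. The strip $\mathcal{N}^k_{\barx,p}$ lives on the Lyapunov-chart scale $\sim b_\ell$, and the bound $\eps<\lambda/(2(1+1/\alpha))$ in \eqref{eqn:eps} makes this strictly larger than $\diam\Gpq<r\sim\delta e^{-\lambda\ell}$, so $f^k(\gamma^u_p)$ genuinely escapes $\Gpq$. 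Indeed your own text says the image endpoints go ``out to the boundary of $\mathcal N^k_{\bar x,p}$'', which already contradicts staying inside $\overline{\Gpq}$, and then you conclude those same endpoints are on $W^s_{p/q}$ --- these three statements cannot all hold. The content of the lemma is that the curve \emph{crosses} $\Gpq$, entering and exiting through its stable boundary. The paper gets the required nonempty intersection $f^k(\gamma^u_p)\cap\Gpq\neq\emptyset$ not by any anchor at $p$, but by applying Lemma~\ref{lem:niceint1} to $W^s_x$: the stable curve $f^k(W^s_x)\subset\Gpq$ meets the full-length unstable curve $f^k(\gamma^u_p)$ inside $\mathcal{N}^k_{\barx,p}$, and since $f^k(\gamma^u_p)$ has endpoints on $\partial^s\mathcal{N}^k_{\barx,p}$ (outside $\Gpq$), or on $W^s_{p/q}$ when an endpoint of $\gamma^u_p$ is an endpoint of $W^u_p$, and cannot touch $W^u_{p/q}$, it must cross $\Gpq$ completely. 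That is the case-by-case bookkeeping your proposal is missing.
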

 
\begin{proof}
 We prove the statement for \( \gamma^{u}_{p} \), the same argument applies to \( \gamma^{u}_{q} \).
 Observe that each endpoint of $\gamma_p^u$ lies on either $W_p^s$, $W_q^s$, or the stable boundary of $\mathcal{N}_{\bar{p}}^0$. Since $p$ and $q$ are fixed by $f^k$, we have $f^k(W_p^s) \subset W_p^s$ and $f^k(W_q^s) \subset W_q^s$; it follows that each endpoint of $f^k(\gamma_p^u)$ lies on either $W_p^s$, $W_q^s$, or the stable boundary of $f^k(\mathcal{N}_{\bar{p}}^0) = \mathcal{N}_{\bar{p}}^k$. Since $f^k(\gamma_p^u)$ must intersect $\Gpq$ by Lemma \ref{lem:niceint1}, these restrictions on its endpoints guarantee that it is a full length unstable admissible curve in $\Gpq$.
\end{proof}
 
\begin{proof}[Proof of Theorem  \ref{thm:genkatok2}] 
By Lemma \ref{lem:images},  \(   f^{k}(\gamma^{u}_{p}),   f^{k}(\gamma^{u}_{q}) \) are full length unstable admissible curves in \( \Gpq \) and therefore  they  define an unstable strip \(  \widehat C^{u}  \) in \(  \Gpq  \), whose preimage \(  \widehat C^{s}:= f^{-k}(\widehat C^{u})  \) is a stable strip in \(  \Gpq  \), thus yielding the desired hyperbolic branch.  The required hyperbolicity estimates are inherited from the regular branch of which this hyperbolic branch is a subset.
\end{proof}

\section{Building a tower out of hyperbolic branches: Proof of Theorem \ref{thm:existsY}}
\label{sec:C->B}

In this section we prove Theorem \ref{thm:existsY}. 
In \S \ref{sec:saturated} we introduce some definitions and notation and reduce the proof of  Theorem \ref{thm:existsY} 
to three Propositions \ref{thm:nicehyp}, \ref{prop:sat-tower}, and \ref{prop:nicefat-tower}. We then prove each Proposition in its own subsection. 

\subsection{Saturation and Young Towers} \label{sec:saturated}

Before formulating the propositions, we need to establish some notation and to introduce the notion of \emph{saturation}.

Let $\Gpq$ be a nice domain and suppose that $A\subset \Gpq$ is such that every point $x\in A$ has full length stable and unstable curves $W_x^{s/u}$.
Suppose moreover that $A$ has the $(C,\kappa)$-hyperbolic branch property for some $C,\kappa>0$ (see Definition \ref{def:hyp-branch-prop}). Let $\mathfrak{C}(A)$ denote the set of hyperbolic branches associated to almost returns to $A$, and let $\mathfrak{C}_0(A) \subset \mathfrak{C}(A)$ be the subset consisting of those branches associated to (true) returns. Let $\mathfrak{C}^*(A)$ denote the set of branches obtained by concatenating finitely many members of $\mathfrak{C}(A)$. Thus we have
\[
\mathfrak{C}_0(A) \subset \mathfrak{C}(A) \subset \mathfrak{C}^*(A).
\]

\begin{Remark}\label{hyp-br}
Both inclusions can be proper. For the first one, $A$ can have almost returns without having any true returns, for example, if the set $A$ consists of one non-periodic point which returns to $\Gpq$, then there is no return to $A$ but almost returns may exist. For the second, $\mathfrak{C}^*(A)$ may even contain branches $f^k\colon \widehat{C}^s\to \widehat{C}^u$ such that $\widehat{C}^s$ and $\widehat{C}^u$ are disjoint from $A$.
This can occur if 
two branches $f^i \colon \widehat{C}_1^s \to \widehat{C}_1^u$ and $f^j \colon \widehat{C}_2^s \to \widehat{C}_2^u$ generated by almost returns have a concatenated branch $f^{i+j} \colon \widehat{C}^s \to \widehat{C}^u$ (recall Definition \ref{def:concat} and Figure \ref{fig:concat}) 
with the property that the part of $A$ in $\widehat{C}_1^s$ lies entirely outside of $\widehat{C}^s$, and similarly for $\widehat{C}_2^u$.
\end{Remark}

Since \(  f  \) is a diffeomorphism of a compact manifold the derivative of \(  f  \)  is bounded and so for each \(  i \geq 1  \) there can be at most a finite number of  hyperbolic branches of order $i$.  
Thus we can index $\mathfrak{C}_0(A)$ by 
\[
I_0(A):=\{ij: i\in T\NN, j\in \{1,...., m_i\}\},
\]
where \( i \) gives the order (return time) of the  hyperbolic branch and \( j \) indexes  the \( m_i \) hyperbolic branches with  order \( i \). Therefore we obtain
\begin{equation}\label{eq:C}
\mathfrak{C}_0(A) =  \{f^{i}\colon \widehat C^s_{ij}\to \widehat C^u_{ij}\}_{ij\in I_0(A)}.
\end{equation}
We consider the particular case when $A$ is a rectangle.

\begin{Definition}[Saturated Rectangle]\label{def:satrect}
Let $\Gamma\subset\Gpq$ be a nice rectangle with the hyperbolic branch property\footnote{In fact one does not need the full strength of the hyperbolic branch property to make this definition; it suffices to have a hyperbolic branch associated to each (true) return.} and let $I_0(\Gamma)$ and 
$\mathfrak{C}_0(\Gamma)$ be as above (see \eqref{eq:C}). The rectangle $\Gamma$ is called 
\emph{saturated} if for all $ij \in I_0(\Gamma)$, we have
\begin{equation}\label{eq:saturation}
 C^{s}_{ij}:=f^{-i}(\widehat C^{u}_{ij}\cap C^{s}) \subset C^{s}
\qand 
 C^{u}_{ij}:=f^{i}(\widehat C^{s}_{ij}\cap C^{u})\subset C^{u},
\end{equation} 
where $C^{s/u} = \bigcup_{x\in \Gamma} W_x^{s/u}$ as in \eqref{eq:rectanglestruct}.
\end{Definition}

See \S\ref{sec:sat-disc} for a general discussion of the saturation property and its connection to the first return property of the Young tower that is eventually constructed, as well as to multiplicity of the associated countable-state Markov coding. That section also includes an example of a non-saturated rectangle.

Given $\chi>\lambda>0$, $0<\eps < \eps_1(f,\chi,\lambda)$, and $\ell\in \NN$, let $r>0$ be given by Theorem \ref{thm:genkatok2}, so that every $(\chi,\eps,\ell,r)$-nice regular set has the $(\wQ e^{2\eps\ell},\lambda/3)$-hyperbolic branch property.

\begin{Proposition} \label{thm:nicehyp} 
With $\chi,\lambda,\eps,\ell,r$ as above, let $A$ be a   \((\chi, \eps, \ell, r)\)-nice  recurrent set. Then there exists  a nice $(\wQ e^{2\eps\ell},\lambda/3)$-rectangle  $\Gamma \subset \Gpq$ such that the following are true.
\begin{enumerate}
    \item\label{contains-A} $A \subset \Gamma \subset \Gpq$.
    \item\label{T-rec} $\Gamma$ is $T$-recurrent.
    \item\label{branch} $\Gamma$ satisfies the $(\wQ e^{2\eps\ell},\lambda/3)$-hyperbolic branch property.
    \item\label{returns} $\mathfrak{C}_0(\Gamma) = \mathfrak{C}(\Gamma) = \mathfrak{C}^*(\Gamma) = \mathfrak{C}^*(A)$.
    \item\label{saturated} $\Gamma$ is saturated.
    \item\label{reg} $\Gamma$ is $(\lambda/4,2\eps,\ell+\ell')$-regular, where $\ell' = \lceil \frac 1{2\eps} \log \wQ \rceil$.
\end{enumerate}
\end{Proposition}

\begin{Remark}\label{rem:Gamma-reg}
See Remark \ref{rmk:by-4-or-3} for a discussion of why $\lambda/3$ appears in the hyperbolic branch property \eqref{branch} and $\lambda/4$ appears in the regularity property \eqref{reg}.
This regularity property is used in the proof of Theorem \ref{thm:existsY}\ref{B2} to establish a H\"older continuity estimate (\eqref{eqn:fa-Gamma} in Proposition \ref{prop:nicefat-tower}) leading to bounded distortion (condition \ref{Y2} in the definition of Young tower), but the construction in Proposition \ref{prop:sat-tower} of the topological Young tower  for Theorem \ref{thm:existsY}\ref{B1} only uses the hyperbolic branch property \eqref{branch}.

At first glance it may appear redundant to state both conclusions \eqref{branch} and \eqref{reg} in Proposition \ref{thm:nicehyp}, since Theorem \ref{thm:genkatok2} guarantees that every nice regular set satisfies the hyperbolic branch property, so one might reasonably expect to deduce \eqref{branch} as a consequence of \eqref{reg}.
The problem is that $A$ is $(\chi,\eps,\ell)$-regular while $\Gamma$ is only $(\lambda/4,2\eps,\ell+\ell')$-regular (which is weaker),
and so in order to apply Theorem \ref{thm:genkatok2} and deduce that $\Gamma$ satisfies the $(C,\kappa)$-hyperbolic branch property for some $\kappa \in (0,\lambda/4)$, we would need to make more careful choices in our original setup, choosing $0 < \eps < \frac 12 \eps_1(f,\lambda/4,\kappa)$ and then $r = r(\lambda/4,\kappa,2\eps)$. This can be done but would create a more complicated set of conditions for $\eps$ and $r$ in our main theorems, which we prefer to avoid; after deducing conclusion \eqref{reg} from Theorem \ref{thm:inf-po} (see the paragraph following Lemma \ref{lem:nicesat}), we deduce conclusion \eqref{branch} by finding an almost return to $A$ itself, to which Theorem \ref{thm:genkatok2} can be applied, see Lemma \ref{lem:any-almost} and the sentence following it.
\end{Remark}

\begin{Proposition}\label{prop:sat-tower}
Let \( \Gamma \subset \Gpq \) be a $T$-recurrent nice \( (C_0,\kappa) \)-rectangle satisfying the \(  (C_0,\kappa)  \)-hyperbolic branch property, and suppose that $\Gamma$ is saturated. Then \( \Gamma \) supports a First $T$-return Topological Young Tower.
\end{Proposition}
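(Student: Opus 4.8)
The plan is to verify directly the conditions in Definition \ref{def:FRYT} using the saturation hypothesis and the structure of hyperbolic branches. Recall that $\Gamma$ supports a First $T$-Return Topological Young Tower if for each first-$T$-return time $i\in \NN_T$ the set $\Gamma^s_i = \{x\in\Gamma : \tau(x)=i\}$ can be subdivided into finitely many pairwise disjoint $s$-subsets $\Gamma^s_{i1},\dots,\Gamma^s_{i\kappa_i}$ such that $f^i(\Gamma^s_{ij})$ is a $u$-subset of $\Gamma$. First I would observe that since $\Gamma$ is recurrent, every $x\in\Gamma$ has $\tau(x)<\infty$, so $\Gamma = \bigsqcup_{i\in\NN_T} \Gamma^s_i$. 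The key point is that a first $T$-return is in particular an almost return (an actual return is a special case of an almost return, as noted after Definition \ref{def:almostreturns}), so the collection $\mathfrak{C}$ of hyperbolic branches associated to almost returns of $A$ covers all first-return dynamics.

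The main step is to define, for fixed $i\in\NN_T$, the pieces $\Gamma^s_{ij}$. For each branch $f^i\colon \widehat{C}^s_{ij}\to\widehat{C}^u_{ij}$ in $\mathfrak{C}$ (or rather $\mathfrak{C}^*$) with return time exactly $i$, set $\Gamma^s_{ij} := \{x\in\Gamma\cap\widehat{C}^s_{ij} : \tau(x)=i\}$. Because $\Gamma$ is saturated for $\mathfrak{C}^*$, the saturation identity \eqref{eq:saturation} gives $C^s_{ij} = f^{-i}(\widehat{C}^u_{ij}\cap C^s)\subset C^s$, which means precisely that $f^{-i}$ of the stable curves of $\Gamma$ passing through $\widehat{C}^u_{ij}$ stay inside the stable cylinder $C^s$ of $\Gamma$; intersecting with $\Gamma$ and using the product structure $\Gamma = C^s\cap C^u$, one sees that $\Gamma^s_{ij}$ is automatically an $s$-subset: if $x\in\Gamma^s_{ij}$, then the full stable curve $V^s_x\cap\Gamma$ lies in a single branch domain (since $\widehat{C}^s_{ij}$ is a stable strip, it contains entire stable curves) and is mapped by $f^i$ into $C^u$, so $\tau$ is constant $=i$ along it, whence $V^s_x\cap\Gamma\subset\Gamma^s_{ij}$. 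Symmetrically, $f^i(\Gamma^s_{ij})\subset\widehat{C}^u_{ij}$ together with the other saturation inclusion $C^u_{ij} = f^i(\widehat{C}^s_{ij}\cap C^u)\subset C^u$ shows $f^i(\Gamma^s_{ij})$ is a $u$-subset of $\Gamma$. Pairwise disjointness of $\Gamma^s_{i1},\dots,\Gamma^s_{i\kappa_i}$ for distinct branches of the same order follows because two distinct hyperbolic branches of the same return time $i$ have stable strips that are either disjoint or nested within $\Gpq$ — and here the ``nice domain'' hypothesis together with \eqref{eq:nicerect} is what forces the dichotomy nested-or-disjoint (this is exactly the role of niceness flagged in Remark \ref{rem:nice} and Lemma \ref{lem:nestdisj}) — and one takes a minimal/disjointification of the collection, which remains finite since $\kappa_i<\infty$ by compactness of $M$ and boundedness of $Df$.

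The hard part, and the step I expect to be the main obstacle, is showing that these pieces genuinely \emph{cover} $\Gamma^s_i$ and that the subdivision is into \emph{finitely many} pieces after the disjointification: a priori a point $x$ with $\tau(x)=i$ lies in $\widehat{C}^s_{ij}$ for \emph{some} branch of order $i$ coming from an almost return, but one must check that the almost-return branch through $x$ is actually the \emph{first-return} branch, i.e. that $f^i(x)\in A$ (not merely $f^i(W^s_x)\cap W^u_y\neq\emptyset$ for some $y\in\Gamma$) implies membership in the appropriate strip, and conversely that the strips associated to almost returns at times $i' < i$ do not spuriously contain $x$. This requires carefully tracking the first-return condition $\tau(x)=\min\{i'\in T\NN : f^{i'}(x)\in A\}$ against the hyperbolic branch structure, using that $A\subseteq\Gamma$ and that the branches arising from almost returns at time $i'$ have strips contained in $\Gpq$ (Lemma \ref{lem:niceint1} and the construction in \S\ref{sec:almost-return}); one then intersects with the complement of the lower-order strips to cut $\Gamma^s_i$ down to the genuinely-first-return pieces, which preserves the $s$-subset property since removing $u$-subsets' preimages from an $s$-subset still yields an $s$-subset. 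Finiteness of $\kappa_i$ is inherited from the finiteness of $\mathfrak{C}$ at each order noted in \S\ref{sec:saturated}. Once covering, disjointness, finiteness, and the $s$-subset / $u$-subset properties are all in hand, Definition \ref{def:FRYT} is satisfied and the proof is complete.
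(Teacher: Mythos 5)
Your proposal takes a different route from the paper's and contains a genuine gap in the verification of the $u$-subset property. The paper (\S\ref{sec:building-tower}) does \emph{not} impose the first-return condition when defining the pieces: it sets $\Gamma^s_{ij} := C^s_{ij}\cap C^u$ purely from the saturation structure \eqref{Gij}, proves in Lemma \ref{prop:MY} that these are $s$-subsets satisfying $f^i(\Gamma^s_{ij}) = \Gamma^u_{ij}$ with $\Gamma^u_{ij}$ a $u$-subset, and then establishes in Lemma \ref{prop:nested} that the whole family $\{\Gamma^s_{ij}\}_{ij\in I}$ is pairwise nested or disjoint. The first-$T$-return property is then obtained ``for free'' by passing to the \emph{maximal} elements: if a maximal $\Gamma^s_{k\ell}$ contained a point with an earlier return at time $i<k$, Lemma \ref{prop:nested} would embed $\Gamma^s_{k\ell}$ into some $\Gamma^s_{ij}$, contradicting maximality. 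The technical heart of this is Sublemma \ref{sublem:nested4}, which shows that an intermediate return forces the order-$k$ branch to factor through an order-$i$ and an order-$(k-i)$ branch, and then pushes this factoring down from the strips $\widehat{C}^s$ to the sets $\Gamma^s$ using \emph{both} saturation inclusions.

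Your proposal instead defines $\Gamma^s_{ij} := \{x\in\Gamma\cap\widehat{C}^s_{ij}:\tau(x)=i\}$, building the first-return condition into the definition, and this creates a gap that you have not closed. To show $f^i(\Gamma^s_{ij})$ is a $u$-subset you need: if $y=f^i(x)$ with $\tau(x)=i$ and $z\in W^u_y\cap\Gamma$, then $\tau(f^{-i}(z))=i$. Saturation does give $f^{-i}(z)\in\Gamma$, hence $\tau(f^{-i}(z))\le i$; but ruling out $\tau(f^{-i}(z))=j<i$ is exactly the nontrivial content of Lemma \ref{prop:nested}/Sublemma \ref{sublem:nested4}, and it does \emph{not} follow from the saturation inclusion $C^u_{ij}\subset C^u$ that you cite, which only controls membership in $C^u$ and says nothing about the return time of $f^{-i}(z)$. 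The claim that $\tau$ is constant along $W^s_x\cap\Gamma$ (needed for your $s$-subset assertion) is correct, but again requires both saturation inclusions plus the hyperbolic branch property, not the one-line remark you give. Finally, your ``minimal/disjointification'' for same-order strips is too vague to constitute a proof (you need to say which elements you keep and why coverage is preserved; the paper's maximality argument does this cleanly across all orders at once), and the final paragraph conflates $A$ with $\Gamma$: in this proposition the tower is built on $\Gamma$ and $\tau$ is the first $T$-return to $\Gamma$, not to $A$.
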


Replacing $T$-recurrence with Lebesgue-strong $T$-recurrence
allows us to upgrade the topological Young Tower to a fully fledged Young Tower satisfying the required distortion estimates and having integrable return times.

\begin{Proposition}\label{prop:nicefat-tower}
Let \( \Gamma \subset \Gpq\) be a Lebesgue-strongly $T$-recurrent nice $(C_0,\kappa)$-rectangle satisfying the \(  (C_0,\kappa)  \)-hyperbolic branch property, and suppose that $C_0 e^{-\kappa T} < 1$. Suppose moreover that $\Gamma$ is saturated
and that there are $C_1,\beta_1,\gamma_1>0$ such that $\gamma_1 < \beta_1 \kappa$ and for all $a\in \ZZ$ and $x,y \in f^a(\Gamma)$, we have
\begin{equation}\label{eqn:fa-Gamma}
d(E_x^u,E_y^u) \leq C_1 e^{\gamma_1 |a|} d(x,y)^{\beta_1},
\end{equation}
Then  \(  \Gamma  \)  supports a First $T$-return Young Tower with integrable return times.
\end{Proposition}

 To prove Theorem \ref{thm:existsY}, we will apply Propositions \ref{prop:sat-tower} and \ref{prop:nicefat-tower}
 to the rectangle constructed in Proposition \ref{thm:nicehyp}, with $C_0 = \wQ e^{2\eps\ell}$ and $\kappa = \lambda/3$.
Propositions \ref{thm:nicehyp}, \ref{prop:sat-tower}, and \ref{prop:nicefat-tower} will be proved in \S\ref{sec:sat-rec}, \S\ref{sec:building-tower}, and \S\ref{sec:fatyoung}, respectively.

\begin{Remark}\label{rmk:only-true}
The only hyperbolic branches that are required in the proof of Propositions \ref{prop:sat-tower} and \ref{prop:nicefat-tower} are those associated to true returns (of $\Gamma$); so these results remain true if the hyperbolic branch property from Definition \ref{def:almostreturns} is weakened to only require that the set of true returns produces a collection of hyperbolic branches with the concatenation property, rather than requiring such a collection for the (larger) set of almost returns. Branches associated to almost returns (of $A$) play a crucial role in the proof of Proposition \ref{thm:nicehyp} for establishing the saturation property; roughly speaking, once this property is obtained it is enough to consider true returns.
\end{Remark}

\begin{proof}[Proof of Theorem \ref{thm:existsY}] 
As stated before Proposition \ref{thm:nicehyp}, we choose $r$ depending on $\chi,\lambda,\eps,\ell$ to satisfy the conditions of Theorem \ref{thm:genkatok2}, and thus given a \((\chi, \eps, \ell, r)\)-nice recurrent set $A$ as in the assumptions of Theorem \ref{thm:existsY}, 
Proposition \ref{thm:nicehyp} yields a nice rectangle $\Gamma \subset \Gpq$ that contains $A$, is $T$-recurrent, satisfies the $(\wQ e^{2\eps\ell},\lambda/3)$-hyperbolic branch property, and is saturated. Thus Proposition \ref{prop:sat-tower} applies to $\Gamma$, and we conclude that $\Gamma$ supports a First $T$-return Topological Young Tower, which proves the first part of Theorem \ref{thm:existsY}. For the second part of Theorem \ref{thm:existsY}, we will apply Proposition \ref{prop:nicefat-tower} to $\Gamma$, which requires us to verify Lebesgue-strong recurrence, the $(C_0,\kappa)$-hyperbolic branch property with $C_0 e^{-\kappa T} < 1$, and \eqref{eqn:fa-Gamma} with $\gamma_1 < \beta_1 \kappa$.

To this end, first observe that since $A$ is Lebesgue-strongly $T$-recurrent, so is any set containing $A$, including $\Gamma$. Moreover, with $C_0= \wQ e^{2\eps\ell}$ and $\kappa = \lambda/3$, we have $C_0 e^{-\kappa T} = \wQ e^{2\eps\ell} e^{-\lambda T/3} < 1$ by our choice of $T$ in Definition \ref{def:nice}. Finally, since $\Gamma$ is $(\lambda/4,2\eps,\ell+\ell')$-regular by Proposition \ref{thm:nicehyp}\eqref{reg}, we see that $f^a(\Gamma)$ is $(\lambda/4,2\eps,\ell+\ell'+|a|)$-regular for every $a\in \ZZ$. We can deduce \eqref{eqn:fa-Gamma} from Proposition \ref{prop:theta-holder}, and specifically \eqref{eqn:Esu-holder} (with $s$ replaced by $u$), where we replace $\chi$ with $\lambda/4$. More precisely, regularity guarantees that for all $x,y \in f^a(\Gamma)$ we have \eqref{eqn:Cvu-} and \eqref{eqn:Cvs-} with $C=e^{2\eps(\ell+\ell'+|a|)}$ and $\chi$ replaced by $\lambda/4$, and moreover $\measuredangle(E_z^s,E_z^u) \geq C^{-1}$ for $z=x,y$, so that Proposition \ref{prop:theta-holder} gives $Q_6'>0$ depending only on $M$, $\|Df^{\pm1}\|$, $\alpha$, $|Df^{\pm 1}|_\alpha$, and $\lambda/4$, such that 
\[
d(E_x^u,E_y^u)
\leq Q_6' (e^{2\eps(\ell+\ell'+|a|)})^{3 \cdot 2\gamma'} d(x,y)^{\beta'}
= Q_6' e^{12\eps \gamma'(\ell+\ell')} e^{12\eps\gamma'|a|} d(x,y)^{\beta'},
\]
where $\beta'$ and $\gamma'$ are obtained by replacing $\chi$ by $\lambda/4$ in \eqref{eqn:greek0} to get
\begin{equation}\label{eqn:g'b'}
\gamma' = \frac{\frac\lambda4 - c_1}{2 \frac\lambda 4}
= \frac{\lambda-4c_1}{2\lambda}
\quad\text{and}\quad
\beta' = \frac{2\frac\lambda 4}{c_3 + \frac\lambda4} \alpha = \frac{2\lambda\alpha}{4c_3 + \lambda}.
\end{equation}
It follows that \eqref{eqn:fa-Gamma} holds with $C_1 = Q_6' e^{12\eps \gamma'(\ell+\ell')}$, $\gamma_1 = 12\eps\gamma'$, and $\beta_1 = \beta'$. Thus the condition that $\gamma_1 < \beta_1 \kappa$ is equivalent to $12\eps\gamma' < \beta' \lambda/3$. Using \eqref{eqn:g'b'}, this is equivalent to
\[
6\eps \frac{\lambda - 4c_1}{\lambda} < \frac{2\lambda \alpha}{4c_3 + \lambda} \cdot \frac \lambda 3,
\]
which holds by the fifth inequality in \eqref{eqn:eps}. This confirms that we can apply Proposition \ref{prop:nicefat-tower} to $\Gamma$, and completes the proof of Theorem \ref{thm:existsY}.
\end{proof}

\subsection{Proof of Proposition \ref{thm:nicehyp}}\label{sec:sat-rec}

We will define \( \Gamma \) as the  ``maximal invariant set'' for the dynamics generated by the hyperbolic branches associated to almost returns to $A$ (via Theorem \ref{thm:genkatok2}). 
This can be thought of as a generalization of the standard horseshoe where we define a maximal invariant set as consisting of the points which remain in the strips for all forward and backward iterations. The key difference is that in the horseshoe setting we have at most a finite number of branches with pairwise disjoint stable strips and pairwise disjoint unstable strips, all of which have the same return time.
In our   setting,  a point \( x \) may belong to infinitely many stable strips with varying  return times and therefore we need a more involved construction.
With this in mind, we make the following general definition.
Let 
\[
\mathfrak{C} =  \{f^{i}\colon \widehat C^s_{ij}\to \widehat C^u_{ij}\}_{ij\in I}
\]
be a collection of hyperbolic branches indexed by the set \(I\). Note that so far we do not assume that these come from almost returns to a nice regular set; for now we allow an arbitrary collection of hyperbolic branches.

\begin{Definition}[Hyperbolic sequences]\label{def:hypseq}
A sequence \(  \mathfrak h^{+}=\{i_{m}j_{m}\}_{m=0}^\infty \)  with \( i_mj_m \in I\) is a \emph{forward hyperbolic sequence} for $x\in \Gpq$ if for all $m\geq 0$ we have
\begin{equation}\label{eqn:N-hyperbolic+}
f^{i_0 + i_1 + \cdots + i_{m-1}}(x) \in \widehat C^s_{i_mj_m}.
\end{equation}
Similarly, \( \mathfrak h^{-}=\{i_{m}j_{m}\}_{m=-\infty}^{-1}\) is a \emph{backward hyperbolic sequence} for $x\in \Gpq$ if for all $m< 0$ we have
\begin{equation}\label{eqn:N-hyperbolic}
f^{-(i_m + \cdots + i_{-1})}(x) \in \widehat C^s_{i_m j_m}.
\end{equation}
If \( \mathfrak h^{-}\) and \(\mathfrak h^{+} \) are backward and forward hyperbolic sequences for $x$, their concatenation \( \mathfrak h = \{i_{m}j_{m}\}_{m\in \mathbb Z} \)
is called a \emph{hyperbolic sequence}   for \( x \) (associated to the collection of hyperbolic branches \( \mathfrak C \)).  
\end{Definition}

A point \( x \) may or may not admit a forward or backward hyperbolic sequence and, if it does, these sequences need not be uniquely defined.  Consider the sets
\begin{align*}
C^{+} &:=\{x\in \Gpq \mid x \text{ has a forward hyperbolic sequence } \mathfrak h^{+}\}, \\
C^{-} &:=\{x\in \Gpq \mid x \text{ has a backward hyperbolic sequence } \mathfrak h^{-} \}.
\end{align*}

\begin{Lemma}\label{lem:nicesat}
Given an arbitrary collection $\mathfrak{C}$ of $(C,\kappa)$-hyperbolic branches,
the following set is a nice $(C,\kappa)$-rectangle and is $T$-recurrent:
\begin{equation}\label{eqn:Gamma}
\Gamma:= C^{+}\cap C^{-} = \{x\in \Gpq \mid x \text{ has a hyperbolic sequence } \mathfrak h\}. 
\end{equation}
Moreover, $C^+ = \bigcup_{x\in \Gamma} W_x^s$ and $C^- = \bigcup_{x\in \Gamma} W_x^u$, so that $C^+ = C^s(\Gamma)$ and $C^- = C^u(\Gamma)$ in the notation of \eqref{eq:rectanglestruct}.
\end{Lemma}

We think of \(  \Gamma  \) as the ``maximal invariant set'' of \(  \mathfrak {C}  \). 

\begin{proof}[Proof of Lemma \ref{lem:nicesat}]
To prove that $\Gamma$ is a rectangle, we use arguments similar to those
 in the standard horseshoe setting. Let \( \mathfrak h = \{i_{m}j_{m}\}_{m\in\ZZ} \) denote a hyperbolic sequence for the point \( x \), and $\mathfrak h^\pm$ its forward and backward parts. 
For each $n\geq 0$, the branches $f^{i_m} \colon \widehat{C}^s_{i_m j_m} \to \widehat{C}^u_{i_m j_m}$ with $m=0,1,\dots, n$ can be concatenated as in Definition \ref{def:concatenation} to produce a branch of order $i_0 + i_1 + \cdots + i_n$; by Proposition \ref{prop:intersect-branches}, the intersection (over all $n\geq 0$) of the resulting branches is a local $(C,\kappa)$-stable curve that has full length in $\Gpq$. It can be characterized as
\begin{equation}\label{eqn:Ws-from-h}
W_x^s = \{y\in \Gpq : \mathfrak h^+ \text{ is a forward hyperbolic sequence for } y\}.
\end{equation}
A completely analogous  argument shows that every \( x\in \Gamma \) has a full length local $(C,\kappa)$-unstable curve 
\begin{equation}\label{eqn:Wu-from-h}
W_x^u = \{y\in \Gpq : \mathfrak h^- \text{ is a backward hyperbolic sequence for } y\}.
\end{equation}
This implies  that for any \( x,y\in \Gamma \) the intersection  \( W^{s}_{x}\cap W^{u}_{y} \) consists of a single point \( z \).  Moreover, $x$ has a forward hyperbolic sequence $\mathfrak h_x^+$ and $y$ has a backward hyperbolic sequence $\mathfrak h_y^-$; writing $\mathfrak h$ for the concatenation of these two sequences, it follows from \eqref{eqn:Ws-from-h} and \eqref{eqn:Wu-from-h} that $\mathfrak h$ is a hyperbolic sequence for $z$, so $z\in \Gamma$.  
This shows that $\Gamma$ is a rectangle, as claimed.

To see that $C^+ = C^s(\Gamma)$ it suffices to observe that $C^+$ is a union of curves $W_x^s$ by \eqref{eqn:Ws-from-h}, so $C^+ = \bigcup_{x\in C^+} W_x^s \supset \bigcup_{x\in \Gamma} W_x^s$, while given an arbitrary $x\in C^+$ and $y\in \Gamma$ we have $z = [x,y] \in \Gamma$ and $x\in W_z^s$, so $C^+ \subset \bigcup_{z\in \Gamma} W_z^s$. A similar argument gives $C^- = C^u(\Gamma)$.

Finally, the fact that $\Gamma$ is recurrent follows almost immediately from the definition; if $\mathfrak h = \{i_m j_m\}_{m\in \ZZ}$ is a hyperbolic sequence for $x\in \Gamma$, then $f^{i_0}(x)$ and $f^{-i_{-1}}(x)$ have hyperbolic sequences given by shifting $\mathfrak h$ one index in either direction, hence $i_0$ and $-i_{-1}$ are return times to $\Gamma$.
\end{proof}

Now we restrict ourselves to the case when $\mathfrak{C}$ arises from almost returns.
Given $\chi>\lambda>0$, $0<\eps < \eps_1(f,\chi,\lambda)$, and $\ell\in \NN$, let $r>0$ be given by Theorem \ref{thm:genkatok2}, so that every $(\chi,\eps,\ell,r)$-nice regular set has the $(\wQ e^{2\eps\ell},\lambda/3)$-hyperbolic branch property. Let $A$ be a   \((\chi, \eps, \ell, r)\)-nice  recurrent set and let \( \mathfrak C =\mathfrak{C}(A) \) be the corresponding collection of hyperbolic branches associated to almost returns via Theorem \ref{thm:genkatok2}. Then Lemma \ref{lem:nicesat} shows that \eqref{eqn:Gamma} defines a nice $(\wQ e^{2\eps\ell},\lambda/3)$-rectangle $\Gamma \subset \Gpq$ that is $T$-recurrent, verifying conclusion \eqref{T-rec} of Proposition \ref{thm:nicehyp}. 
Conclusion \eqref{reg} on regularity follows from Theorem \ref{thm:inf-po}.
For conclusion \eqref{contains-A}, we observe that since $A$ is recurrent, given $x\in A$ there are $i_0,i_1,i_2,\dots \in \mathbb{N}$ such that $x_k := f^{i_0 + i_1 + \cdots + i_{k-1}}(x) \in A$ for all $k\geq 0$; then $f^{i_k}(x_k) = x_{k+1}$ is a return to $A$ that produces a corresponding hyperbolic branch in $\mathfrak{C}$, and the sequence of branches obtained this way is a forward hyperbolic sequence for $x$. A backward hyperbolic sequence is produced similarly, and thus $x\in \Gamma$, verifying \eqref{contains-A}.

\begin{Remark}\label{rmk:why-almost}
Everything in the previous paragraph remains true if we replace $\mathfrak{C}(A)$ with the (potentially smaller) collection $\mathfrak{C}_0(A)$ of branches associated to true returns. However, in the remainder of the proof it will be essential that we define $\Gamma$ using the collection of branches corresponding to almost returns.
\end{Remark}

It remains to prove conclusions \eqref{branch}, \eqref{returns}, and \eqref{saturated}.
In order to prove these conclusions, we need the following result about hyperbolic branches. We remark that this is the one and only place in the paper where we use the assumption from Definition \ref{def:nice} that \( T \) is even. 

\begin{Lemma}\label{lem:nestdisj}
Let $\Gpq$ be a nice domain. Then for any hyperbolic branch $f^i\colon \widehat C^s \to \widehat C^u$ we have $\Int(f^k(\widehat C^s)) \cap \partial \Gpq = \emptyset$ for all $k=0,\dots, i$ that are multiples of $T$.
Moreover, if $f^{i'} \colon \widehat C'^s \to \widehat C'^u$ is any other hyperbolic branch, then the corresponding stable (resp.\ unstable) strips are either nested or disjoint.
\end{Lemma}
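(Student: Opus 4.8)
\textbf{Plan for the proof of Lemma \ref{lem:nestdisj}.}

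The first claim is a direct consequence of the niceness of $\Gpq$ together with the fact that a hyperbolic branch $f^i\colon \widehat C^s \to \widehat C^u$ has its stable and unstable strips \emph{contained} in $\Gpq$ (Remark \ref{rem:nice}), with boundary consisting of pieces of $W^u_{p/q}$ (resp.\ $W^s_{p/q}$) and two full length stable (resp.\ unstable) curves. The plan is as follows. Fix $k\in\{0,\dots,i\}$ with $k\in T\NN$, and suppose for contradiction that some point $z\in\Int(f^k(\widehat C^s))\cap \partial\Gpq$. Since $\partial\Gpq$ is formed by pieces of $V^s_{p/q}\cap\Gpq = W^s_{p/q}$ and $V^u_{p/q}\cap\Gpq = W^u_{p/q}$, we may assume $z$ lies on, say, the stable boundary, i.e.\ $z\in W^s_{p}\cap\Gpq$ (the cases $q$, and the unstable boundary, being symmetric). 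Now push this point forward to the end of the branch: $f^{i-k}(z)\in \Int(\widehat C^u)\subset \Int\Gpq$, but since $i-k\in T\NN$ and $p$ is $f^T$-periodic, $f^{i-k}(z)\in f^{i-k}(W^s_p\cap\Gpq)$, which by \eqref{eq:nicerect} cannot meet $\Int\Gpq$ — a contradiction. (If instead $z$ lies on the unstable boundary, push \emph{backward} by $k$ iterates: $f^{-k}(z)\in \Int(\widehat C^s)\subset\Int\Gpq$ but $f^{-k}(z)\in f^{-k}(W^u_{p/q}\cap\Gpq)$, again contradicting \eqref{eq:nicerect}, using $k\in T\NN$.) The only subtlety is to check that $z$ being in the \emph{interior} of $f^k(\widehat C^s)$, rather than on its boundary, is what forces $f^{i-k}(z)$ (resp.\ $f^{-k}(z)$) into the \emph{interior} of the terminal (resp.\ initial) strip and hence of $\Gpq$; this follows since $f^{k}$ and $f^{i-k}$ restrict to homeomorphisms of the relevant strips and the strips lie in $\Gpq$. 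Note that this argument in particular handles $k=0$ and $k=i$ trivially.

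For the second claim — that any two stable strips (resp.\ unstable strips) arising from hyperbolic branches are either nested or disjoint — the key structural fact is that each stable strip $\widehat C^s$ is a full length object: it is bounded by \emph{two} full length stable curves and two pieces of $W^u_{p},W^u_{q}$, so it ``crosses'' $\Gpq$ completely in the stable direction. The plan is to argue that two such strips cannot have boundaries that cross transversally. Suppose $\widehat C^s$ and $\widehat C'^s$ are stable strips with $\widehat C^s\not\subset\widehat C'^s$ and $\widehat C'^s\not\subset\widehat C^s$; I want to show they are disjoint. If they intersect, then (since both are full-length stable strips in $\Gpq$) some full length stable boundary curve $\gamma$ of one of them, say of $\widehat C^s$, must enter $\Int(\widehat C'^s)$ and exit it. But $\gamma$ is a stable curve while the lateral boundary of $\widehat C'^s$ consists of pieces of $W^u_p, W^u_q$ (unstable curves) — so $\gamma$ cannot exit through the lateral boundary; it would have to exit through one of the two full length stable boundary curves $\gamma'$ of $\widehat C'^s$. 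Two full length stable curves in $\Gpq$, both having tangent directions in the adapted stable conefield $\mathcal K^s$ (which is transverse to $\mathcal K^u$), can intersect in at most one point, and at such a point they are transverse only if... — here one uses that a full length stable curve is a graph over the ``stable'' coordinate, so two of them either coincide or are disjoint, never crossing. Hence $\gamma$ cannot cross $\gamma'$, a contradiction, so the two strips are disjoint. The analogous argument with $s$ and $u$ interchanged handles unstable strips.

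The main obstacle I anticipate is making the ``two full length stable curves cannot cross'' step rigorous within the geometry of the nice domain $\Gpq$ — this requires knowing that every full length stable curve can be written as the graph of a function over the local stable curve direction (equivalently, is ``monotone'' relative to the product structure of $\Gpq$), which in turn rests on the conefield being adapted and on $\Gpq$ being a genuine product-structured domain with boundary $W^{s/u}_{p/q}$. This is essentially the same kind of monotonicity argument used implicitly throughout Section \ref{sec:almost-return} (e.g.\ in Lemmas \ref{lem:strict} and \ref{lem:images}), so I would phrase it by reference to that product structure: parametrize $\Gpq$ so that stable curves are ``horizontal graphs'' and unstable curves are ``vertical graphs'', whence full length stable curves are graphs of functions from the unstable-boundary parameter to the stable-boundary parameter, and two distinct such graphs (with slopes bounded by $\omega$) are disjoint. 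Everything else is a routine application of \eqref{eq:nicerect} together with the fact, already established when constructing the branches, that stable and unstable strips of hyperbolic branches lie inside $\Gpq$.
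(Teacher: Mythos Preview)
Your argument for the first claim is correct and essentially identical to the paper's.

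For the second claim there is a genuine gap. Your key assertion---that two full length stable curves in $\Gpq$, each with tangent in $\mathcal K^s$, must either coincide or be disjoint---is false. Being a ``graph over the stable coordinate'' only means each such curve can be written as $t\mapsto(\psi(t),t)$ with $|\psi'|<\omega$; two distinct such graphs meet wherever $\psi_1(t)=\psi_2(t)$, and small slope does nothing to prevent this. So the conefield alone cannot separate the boundary curves. Moreover, even granting that the four boundary curves are pairwise disjoint, your claim that a boundary curve of one strip must ``enter and exit'' the other is also wrong: in the overlapping configuration (say $\alpha_1<\alpha_2<\beta_1<\beta_2$ along $W^u_p$) the curve $\beta_1$ lies \emph{entirely} in $\Int(\widehat C'^s)$ and never exits. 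Thus your argument neither establishes non-crossing of the boundaries nor rules out the overlapping case.

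The paper closes both gaps with dynamical input you did not use. First, for the branches produced here (and their concatenations) the stable boundaries of $\widehat C^s$ are $f^{-i}$-preimages of pieces of $W^s_{p/q}\subset\partial\Gpq$, hence are pieces of the \emph{global} stable manifolds of $p$ and $q$; distinct pieces of these injectively immersed curves genuinely cannot intersect, which gives non-crossing of the boundaries. Second---and this is the step you are missing entirely---the overlapping case is ruled out by invoking the first part of the lemma: if (with $i\le i'$) a stable boundary curve $\gamma$ of $\widehat C^s$ lies in $\Int(\widehat C'^s)$, then $f^i(\gamma)\subset W^s_{p/q}\subset\partial\Gpq$ while $f^i(\gamma)\subset\Int(f^i(\widehat C'^s))$, contradicting $\Int(f^k(\widehat C'^s))\cap\partial\Gpq=\emptyset$ for $k=i\in\{0,\dots,i'\}$. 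So part~1 is not an independent warm-up here but the essential tool for part~2.
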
 

\begin{proof}
The first statement is automatic for $k=0,i$. Suppose that there exists some 
\(  k \in \{1,..,i-1\}  \)  such that 
\(  \Int (f^{k}(\widehat C^{s})) \cap \partial \Gpq \neq  \emptyset  \).  Then we must have \( \Int (f^{k}(\widehat C^{s})) \cap (W^{s}_{q}\cup W^{s}_{p}) \neq \emptyset  \) or \( \Int (f^{k}(\widehat C^{s})) \cap (W^{u}_{q}\cup W^{u}_{p}) \neq \emptyset  \) (or both). In the first case, iterating forward by \(  i-k  \) iterates,  this would imply that \(  \Int (\widehat C^{u})   \cap f^{i-k}(W^{s}_{q}\cup W^{s}_{p})  \neq\emptyset \), contradicting the niceness property of $\Gpq$.  Similarly, in the second case, iterating backwards by \(  k  \) iterates, this would imply \(  \Int (\widehat C^{s})  \cap f^{-k}(W^{u}_{q}\cup W^{u}_{p}) \neq\emptyset  \), contradicting niceness.

For the second statement, assume without loss of generality that \( i\leq i' \). 
Suppose by contradiction that the two stable strips \( \widehat  C^{s},  \widehat C'^{s}\neq\emptyset\) are neither nested nor disjoint (the argument for unstable strips is exactly the same). Recall that the stable boundaries of \( \widehat  C^{s},  \widehat C'^{s} \) are pieces of the global stable curves of \( p, q \) (see Remark \ref{rem:boundaries}) and therefore cannot intersect
unless they agree; thus
the intersection \( \widehat  C^{s}\cap \widehat C'^{s} \) is a single stable admissible curve or a non-empty stable strip. 

In the first case, we see that $f^i$ maps this stable admissible curve $\gamma^s$ to the local stable manifold of either $p$ or $q$. Due to our choice of \( t \) even, $D_pf^T$ and $D_qf^T$ have positive eigenvalues, and it follows that since $i' \geq i$ are  multiples of $T$, there is an open region $U\subset \widehat C'^{s}$ adjacent to $\gamma^s$ such that $f^{i'}(U) \cap \Gpq = \emptyset$, contradicting the assumption that $f^{i'} \colon \widehat C'^s \to \widehat C'^u$ is a hyperbolic branch.

In the second case,
each stable strip has one of the components of its stable boundary inside the interior of the other stable strip, see first figure in Figure \ref{fig:nesteddisjoint}. 
It follows that \( f^{i}(\widehat C'^{s}) \) contains a piece of the stable boundary of \( \Gpq \) in its interior, contradicting the first statement proved above.  
\end{proof}

\begin{figure}[tbp]
\includegraphics[width=\textwidth]{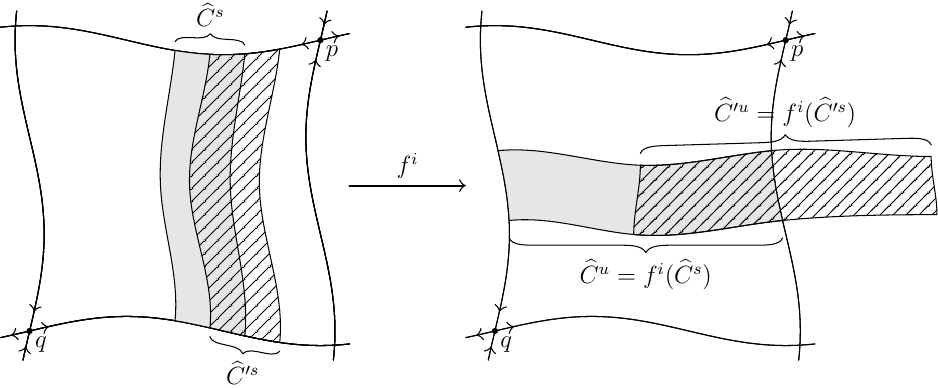}
\caption{Two overlapping branches leads to a contradiction (Lemma \ref{lem:nestdisj}).}
\label{fig:nesteddisjoint}
\end{figure}

To prove conclusion \eqref{branch}, that $\Gamma$ has the $(\wQ e^{2\eps\ell},\lambda/3)$-hyperbolic branch property, we start by proving the following lemma, which is illustrated in Figure \ref{fig:i-tau-i}.

\begin{figure}[tbp]
\includegraphics[width=\textwidth]{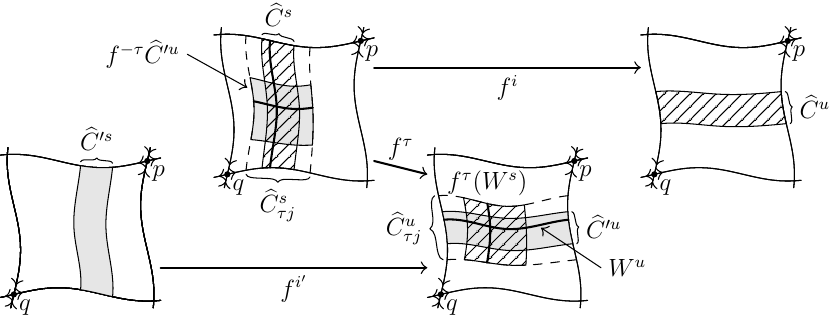}
\caption{Proving Lemma \ref{lem:almostret}.}
\label{fig:i-tau-i}
\end{figure}

\begin{Lemma}\label{lem:almostret}
Suppose that $W^s$ and $W^u$ are full length local stable and unstable curves in $\Gpq$ and that $\tau \in T\mathbb{N}$ is such that $f^\tau(W^s) \cap W^u \neq \emptyset$. Suppose moreover that $f^i \colon \widehat{C}^s \to \widehat{C}^u$ and $f^{i'} \colon \widehat{C}'^s \to \widehat{C}'^u$ are hyperbolic branches with $i,i' \geq \tau$ such that $W^s \subset \widehat{C}^s$ and $W^u \subset \widehat{C}'^u$. Then for any full length local stable and unstable curves $W_*^s \subset \widehat{C}^s$ and $W_*^u \subset \widehat{C}'^u$, we have $f^\tau(W_*^s) \cap W_{*}^u \neq \emptyset$.

In particular, if $A \cap \widehat{C}^s \neq \emptyset$ and $A \cap \widehat{C}'^u \neq \emptyset$, then there exists $\tau j \in I(A)$ such that the hyperbolic branch $f^\tau \colon \widehat{C}^s_{\tau j} \to \widehat{C}^u_{\tau j}$ from the collection $\mathfrak{C}(A)$ has $W^s \subset \widehat{C}^s_{\tau j}$ and $W^u \subset \widehat{C}^u_{\tau j}$.
\end{Lemma}
\begin{proof}
Observe that $f^\tau(\widehat{C}^s) \cap \widehat{C}'^u \supset f^\tau(W^s) \cap W^u \neq \emptyset$. Applying $f^{-\tau}$ gives
\begin{equation}\label{eqn:strip-intersects}
\widehat C^{s}\cap  f^{-\tau}(\widehat C'^{u})  \neq \emptyset.
\end{equation}
This implies in particular that $f^{-\tau}(\widehat C'^u) \cap \Gpq \neq \emptyset$; on the other hand, Lemma \ref{lem:nestdisj} gives $\Int f^{-\tau}(\widehat C'^u) \cap \partial\Gpq = \emptyset$, and it follows that 
\begin{equation}\label{eqn:in-Gpq}
 f^{-\tau}(\widehat C'^u )  \subset \Gpq.
\end{equation}
Now we claim that 
\begin{equation}\label{eqn:strip-crosses}
 f^{-\tau}(\partial^{s}\widehat C'^{u}) \cap  \Int(\widehat C^{s}) =\emptyset,
\end{equation}
as shown in Figure \ref{fig:i-tau-i}.  To see this, observe that 
\begin{align*}
f^i(f^{-\tau}(\partial^{s}\widehat C'^{u}) \cap  \Int(\widehat C^{s}))
&= f^{i-\tau}(\partial^{s}\widehat C'^{u}) \cap \Int(\widehat C^u) \\
&\subset f^{i-\tau}(W_p^s \cap W_q^s) \cap \Int \Gpq = \emptyset
\end{align*}
since $\Gpq$ is a nice domain. Applying $f^{-i}$ gives \eqref{eqn:strip-crosses}, and together with \eqref{eqn:strip-intersects} and \eqref{eqn:in-Gpq} this implies that 
\(
f^{-\tau}(\widehat C'^{u})
\)
fully crosses  \( \widehat C^{s} \) in the unstable direction, as shown in Figure \ref{fig:i-tau-i}.  The conclusion about $W_*^u$ and $W_*^s$ follows; these appear in Figure \ref{fig:i-tau-i} in the same configuration as $W^u$ and $W^s$ do.

For the final claim in the lemma, choose $x\in A\cap \widehat{C}^s$ and $y\in \widehat{C}'^u$; then apply the first part of the lemma to deduce that $f^\tau(W_x^s) \cap W_y^u \neq \emptyset$. By Theorem \ref{thm:genkatok2}, the collection $\mathfrak{C}(A)$ contains a hyperbolic branch $f^\tau \colon \widehat{C}^s_{\tau j} \to \widehat{C}^u_{\tau j}$ associated to this almost return, which has $\widehat{C}^s_{\tau j} \supset \widehat{C}^s \supset W^s$ and $\widehat{C}^u_{\tau j} \supset \widehat{C}'^u \supset W^u$ as in Figure \ref{fig:i-tau-i}.
\end{proof}

Observe that the previous two lemmas did not refer to the rectangle $\Gamma$; they are general facts about nice domains and hyperbolic branches. Now we once again consider the rectangle $\Gamma$ and prove the hyperbolic branch property in conclusion \eqref{branch}.

\begin{Lemma}\label{lem:any-almost}
Let $\Gamma$ be the rectangle constructed in \eqref{eqn:Gamma} using the collection $\mathfrak{C}(A)$ of $(\wQ e^{2\eps\ell},\lambda/3)$-hyperbolic branches associated to almost returns of $A$ by Theorem \ref{thm:genkatok2}.
Let $z,z'\in \Gamma$ and $\tau\in T\NN$ be such that $f^\tau(W_z^s) \cap W_{z'}^u \neq\emptyset$. Then there is a branch $f^\tau \colon \widehat{C}^s \to \widehat{C}^u$ in the collection $\mathfrak{C}^*(A)$ such that $z \in \widehat{C}^s$.
\end{Lemma}
\begin{proof}
A natural idea is to try to apply Lemma \ref{lem:almostret} to $W_z^s$ and $W_{z'}^u$ and thus produce an almost return of $A$ that gives the desired hyperbolic branch. However, the lemma requires us to choose branches for $z$ and $z'$ with order at least $\tau$; we can find such branches in $\mathfrak{C}^*(A)$ but not necessarily in $\mathfrak{C}(A)$, and  there is no guarantee that these branches will contain any elements of $A$ (see Remark \ref{hyp-br}).

Thus before applying Lemma \ref{lem:almostret}, we first observe that $z$ has a forward hyperbolic sequence $\mathfrak{h}^+ = \{i_m j_m\}_{m=0}^\infty$, and let $n\geq 0$ be such that $\sum_{m=0}^{n-1} i_m \leq \tau < \sum_{m=0}^n i_m$. Let $k = \sum_{m=0}^{n-1} i_m$ and observe that by the definition of a forward hyperbolic sequence, we can concatenate the branches $f^{i_m} \colon \widehat{C}_{i_m j_m}^{s} \to \widehat{C}_{i_m j_m}^{u}$ for $0\leq m \leq n-1$ to get a hyperbolic branch $f^k \colon \widehat{C}_z^s \to \widehat{C}_z^u$ from $\mathfrak{C}^*(A)$ such that $z\in \widehat{C}_z^s$.

Similarly, $z'$ has a backward hyperbolic sequence $\mathfrak{h}^- = \{i_{-m} j_{-m}\}_{m=1}^\infty$, and choosing $n'\geq 1$ such that $\sum_{m=1}^{n'-1} i_{-m} \leq \tau - k < \sum_{m=1}^{n'} i_{-m}$, we can write $\ell = \sum_{m=1}^{n'-1} i_{-m}$ and do a similar concatenation to get a hyperbolic branch $f^\ell \colon \widehat{C}_{z'}^s \to \widehat{C}_{z'}^u$ from $\mathfrak{C}^*(A)$ such that $z' \in \widehat{C}_{z'}^u$.

Now we observe that
\[
f^{\tau - k - \ell}(W_{f^k z}^s) \cap W_{f^{-\ell} z'}^u
\supset f^{\tau - \ell} (W_z^s) \cap f^{-\ell} (W_{z'}^u)
= f^{-\ell} (f^\tau (W_z^s) \cap W_{z'}^u) \neq \emptyset
\]
by the invariance properties of local stable and unstable curves, and so the point $f^k(z) \in \Gamma$ has an almost return at time $\tau - k - \ell$ via $f^{-\ell}(z') \in \Gamma$. Moreover, $f^k(z) \in \widehat{C}_{i_n j_n}^s$ and $f^{-\ell}(z') \in \widehat{C}_{i'_{-n'}j'_{-n'}}^u$, where $i_n, i'_{-n'} \geq \tau - k - \ell$ by our choice of $n$ and $n'$. The corresponding branches are in $\mathfrak{C}(A)$, so each of $\widehat{C}_{i_n j_n}^s$ and $\widehat{C}_{i'_{-n'}j'_{-n'}}^u$ contains an element of $A$, and we can apply Lemma \ref{lem:almostret} to get a hyperbolic branch $f^{\tau - k - \ell} \colon \widehat{C}_{(\tau-k-\ell)j}^s \to \widehat{C}_{(\tau-k-\ell)j}^u$ such that $z \in \widehat{C}_{(\tau-k-\ell)j}^s$. Concatenating this with the branches $f^k \colon \widehat{C}_z^s \to \widehat{C}_z^u$ and $f^\ell \colon \widehat{C}_{z'}^s \to \widehat{C}_{z'}^u$ gives a hyperbolic branch from $\mathfrak{C}^*(A)$ whose stable strip contains $z$.
\end{proof}

Lemma \ref{lem:any-almost} shows that every almost return of $\Gamma$ is associated to a hyperbolic branch from the collection $\mathfrak{C}^*(A)$, and thus any concatenation of such branches lies in this collection as well. Since these branches are all $(\wQ e^{2\eps\ell},\lambda/3)$-hyperbolic by Theorem \ref{thm:genkatok2}, we have proved conclusion \eqref{branch}.

In fact, we have also proved that
\[
\mathfrak{C}_0(\Gamma) \subset \mathfrak{C}(\Gamma) \subset \mathfrak{C}^*(\Gamma) \subset \mathfrak{C}^*(A).
\]
To prove conclusion \eqref{returns} it suffices to show that $\mathfrak{C}^*(A) \subset \mathfrak{C}_0(\Gamma)$. For this, observe that any branch in $\mathfrak{C}^*(A)$ is obtained by concatenating a finite sequence of branches from $\mathfrak{C}(A)$, and repeating this finite sequence periodically in both directions produces a bi-infinite hyperbolic sequence. This hyperbolic sequence determines a unique point $x\in \Gamma$, which is periodic, and the hyperbolic branch associated to this periodic return is exactly the branch from $\mathfrak{C}^*(A)$ that we began with. This establishes conclusion \eqref{returns}, and then the proof of Proposition \ref{thm:nicehyp} is completed by the following result, which establishes conclusion \eqref{saturated}.

\begin{Lemma}
\(  \Gamma  \) is saturated. 
\end{Lemma}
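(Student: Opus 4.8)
The plan is to show that the rectangle $\Gamma$ from \eqref{eqn:Gamma} is saturated, i.e.\ that for every $ij\in I$ we have $C^s_{ij}:=f^{-i}(\widehat C^u_{ij}\cap C^s)\subset C^s$ and $C^u_{ij}:=f^i(\widehat C^s_{ij}\cap C^u)\subset C^u$. By symmetry it suffices to prove the first inclusion, so I would fix $ij\in I$ and a point $w\in f^{-i}(\widehat C^u_{ij}\cap C^s)$; thus $w\in \widehat C^s_{ij}$ and $f^i(w)\in C^s$. The goal is to produce a forward hyperbolic sequence for $w$, which by definition of $C^s$ is exactly what is needed to conclude $w\in C^s$. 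Note that because $f^i(w)\in C^s$, it admits some forward hyperbolic sequence $\mathfrak h^+=\{i_m j_m\}_{m\geq 0}$; the natural candidate for a forward hyperbolic sequence for $w$ is the sequence $(ij,\, i_0 j_0,\, i_1 j_1,\dots)$ obtained by prepending the index $ij$.

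The key step is to verify that this prepended sequence satisfies \eqref{eqn:N-hyperbolic+}. For $m=0$ the condition is $f^0(w)=w\in \widehat C^s_{ij}$, which holds by hypothesis. For $m\geq 1$ the condition reads $f^{i+i_0+\cdots+i_{m-1}}(w)\in \widehat C^s_{i_m j_m}$, i.e.\ $f^{i_0+\cdots+i_{m-1}}(f^i(w))\in \widehat C^s_{i_m j_m}$, which is precisely \eqref{eqn:N-hyperbolic+} for the point $f^i(w)$ with its sequence $\mathfrak h^+$. So in fact the verification is immediate once one observes that $f^i(w)\in C^s$ witnesses membership via a sequence that we simply shift. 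The only genuine content is that $w\in\widehat C^s_{ij}$, which is the defining property of $w$, together with the elementary identity $f^{i+i_0+\cdots+i_{m-1}}=f^{i_0+\cdots+i_{m-1}}\circ f^i$. The analogous argument with backward hyperbolic sequences, using $f^{-i}$ and appending the index $ij$ at the end, gives the $C^u$ inclusion. Finally, the closing remark in the definition of saturated rectangle (that iterating \eqref{eq:saturation} yields saturation for $\mathfrak C^*$) requires no further work, since $\mathfrak C^*$ consists of concatenations of elements of $\mathfrak C$ and the inclusions compose.

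I do not expect a serious obstacle here: the work of building the rectangle $\Gamma$ as the maximal invariant set of the collection of branches (Lemma \ref{lem:nicesat}) and establishing that every $T$-return produces a branch in $\mathfrak C^*$ (Lemma \ref{lem:Gamma-shb}) has already been done, and saturation is essentially a bookkeeping consequence of the definition of $\Gamma$ via hyperbolic sequences. The one point that warrants a sentence of care is confirming that the prepended or appended index indeed yields an element of the correct index set $I$ and that the resulting concatenation is still a valid hyperbolic sequence in the sense of the definition (in particular that each $i_m$ is a multiple of $T$, which is built into the definition of $I$). Beyond that, the proof is a direct unwinding of \eqref{eqn:N-hyperbolic+} and \eqref{eqn:N-hyperbolic}.
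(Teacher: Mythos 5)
Your proposal is correct and follows essentially the same argument as the paper: take $w\in f^{-i}(\widehat C^u_{ij}\cap C^s)$, note that $f^i(w)\in C^s$ has a forward hyperbolic sequence, prepend the index $ij$ to obtain a forward hyperbolic sequence for $w$, and argue symmetrically for $C^u$. Your spelled-out verification of \eqref{eqn:N-hyperbolic+} (the $m=0$ case being $w\in\widehat C^s_{ij}$ and the $m\geq 1$ cases following by shifting) is exactly what the paper leaves implicit.
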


\begin{proof}
Given  a branch $f^i \colon \widehat{C}^s_{ij} \to \widehat{C}^u_{ij}$ from $\mathfrak{C}_0(\Gamma) = \mathfrak{C}^*(A)$, for every
\( w\in f^{-i}(\widehat C^{u}_{ij}\cap C^{s}) \) we have  \( f^{i}(w)\in C^{s} \) and therefore \( f^{i}(w) \) has a forward hyperbolic sequence \( \{i_{m}j_{m}\}_{m\geq 0} \).
The branch $f^i \colon \widehat{C}^s_{ij} \to \widehat{C}^u_{ij}$ is a concatenation of finitely many branches from $\mathfrak{C}(A)$; adding these branches to the start of the forward hyperbolic sequence for $f^i(w)$ produces a forward hyperbolic sequence for $w$.
It follows that \( w\in C^{s} \), thus proving the first inclusion in \eqref{eq:saturation}.  A completely analogous argument works for the unstable leaves to show that \(  \Gamma  \) is saturated.
\end{proof}

\subsection{Proof of Proposition \ref{prop:sat-tower}}\label{sec:building-tower}

Let \( \Gamma = C^s\cap C^u\) be a nice T-recurrent saturated rectangle satisfying the \( (C, \kappa) \)-hyperbolic branch property and  let
$\mathfrak{C}_0 = \mathfrak{C}_0(\Gamma)$
denote the collection of hyperbolic branches associated to returns to~\( \Gamma \), indexed by \( I_0 \).
Recall the definition of $s$-subsets and $u$-subsets from Definition \ref{def:su-sets}.
To each branch $f^i \colon \widehat C^s_{ij} \to \widehat C^u_{ij}$ in $\mathfrak C_0$ we will associate an $s$-subset $\Gamma^s_{ij} \subset \widehat C^s_{ij} \cap \Gamma$ and a $u$-subset $\Gamma^u_{ij} \subset \widehat C^u_{ij} \cap \Gamma$ such that $f^i\colon \Gamma^s_{ij} \to \Gamma^u_{ij}$ is a bijection.  We stress that both inclusions are in general proper; roughly speaking, the reason for this is that there may be some $x\in \widehat C^s_{ij} \cap \Gamma$ for which $f^i(x)\notin \Gamma$, and such points must be excluded from $\Gamma^s_{ij}$; similarly for $x\in \widehat C^u_{ij}$ and $f^{-i}(x)$.
To define $\Gamma_{ij}^{s,u}$, first recall from \eqref{eq:saturation} that for $ij\in I_0$ we write
\begin{equation}\label{eqn:Cij}
\begin{aligned}
C_{ij}^s &= f^{-i}(\widehat C_{ij}^u \cap C^s)
= \widehat C_{ij}^s \cap f^{-i}(C^s), \\
C_{ij}^u &= f^i(\widehat C_{ij}^s \cap C^u)
= \widehat C_{ij}^u \cap f^i(C^u);
\end{aligned}
\end{equation}
then let
\begin{equation}\label{Gij}
\Gamma^{s}_{ij}:=  C^{s}_{ij}\cap C^{u},
\qquad \qquad
 \Gamma^{u}_{ij}:= C^{u}_{ij}\cap C^{s}.
\end{equation}
Notice that  \( C^{s}_{ij}, C^{u}_{ij}  \) are collections of stable and unstable leaves respectively, whereas  \( \Gamma^{s}_{ij}, \Gamma^{u}_{ij} \) may be  Cantor sets. 

\begin{Lemma}\label{prop:MY} 
For every \(  ij\in I_0  \), \(  \Gamma^s_{ij}, \Gamma^u_{ij}  \) are \(  s  \)-subsets and \(  u  \)-subsets respectively of \(  \Gamma  \) and 
\(
 f^{i}(\Gamma^{s}_{ij}) = \Gamma^{u}_{ij}.
 \)
Moreover, if $x\in \Gamma$ and $i\in T\NN$ are such that $f^i(x)\in \Gamma$, then $x\in \Gamma_{ij}^s$ for some $ij\in I_0$; in particular this implies that $\Gamma = \bigcup_{ij\in I_0} \Gamma_{ij}^s = \bigcup_{ij\in I_0}\Gamma_{ij}^u$.
\end{Lemma}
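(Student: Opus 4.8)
\textbf{Plan for the proof of Lemma~\ref{prop:MY}.} The statement has three parts: that $\Gamma^s_{ij}$ is an $s$-subset and $\Gamma^u_{ij}$ a $u$-subset; that $f^i$ maps $\Gamma^s_{ij}$ onto $\Gamma^u_{ij}$; and that every $T$-return point lies in some $\Gamma^s_{ij}$, whence the union formula. The plan is to argue each part directly from the definitions in \eqref{eqn:Cij}--\eqref{Gij} together with the saturation property \eqref{eq:saturation} and the hyperbolic branch property of $\Gamma$ (Lemma~\ref{lem:Gamma-shb}).

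First I would check the $s$-subset property. Take $x\in \Gamma^s_{ij} = C^s_{ij}\cap C^u$ and $y\in V^s_x\cap \Gamma$. Since $C^s_{ij} = \widehat C^s_{ij}\cap f^{-i}(C^s)$ is a union of full-length stable curves (being the preimage of a union of full-length stable curves under the hyperbolic branch $f^i$, which maps stable curves to stable curves by Definition~\ref{def:hypbranch}), membership in $C^s_{ij}$ is determined leafwise: if $x\in C^s_{ij}$ then $W^s_x\subset C^s_{ij}$, hence $y\in C^s_{ij}$; and $y\in C^u$ since $y\in\Gamma$. So $y\in\Gamma^s_{ij}$, giving the $s$-subset property. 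The argument for $\Gamma^u_{ij}$ is symmetric, using that $C^u_{ij} = \widehat C^u_{ij}\cap f^i(C^u)$ is a union of full-length unstable curves. For the bijection $f^i(\Gamma^s_{ij}) = \Gamma^u_{ij}$: if $x\in\Gamma^s_{ij}$ then $f^i(x)\in f^i(\widehat C^s_{ij}) = \widehat C^u_{ij}$, and $f^i(x)\in C^s$ (since $x\in f^{-i}(C^s)$) and $f^i(x)\in C^u$ (since $x\in C^u$ implies $x\in \widehat C^s_{ij}\cap C^u$, so $f^i(x)\in C^u_{ij}\subset C^u$); thus $f^i(x)\in C^u_{ij}\cap C^s = \Gamma^u_{ij}$. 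The reverse inclusion is the same computation run backwards under $f^{-i}$, and injectivity of $f^i$ gives that $f^i\colon\Gamma^s_{ij}\to\Gamma^u_{ij}$ is a bijection.

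For the last part, suppose $x\in\Gamma$ and $i\in\NN$ with $f^i(x)\in\Gamma$. By Lemma~\ref{lem:Gamma-shb}(2), every $T$-return produces a hyperbolic branch; but here $i$ need not be a multiple of $T$, so I would first reduce to that case. Actually the cleanest route is to use that $x\in\Gamma$ has a hyperbolic sequence $\mathfrak h = \{i_mj_m\}_{m\in\ZZ}$, so $x\in\widehat C^s_{i_0j_0}$ and $f^{i_0}(x)\in\widehat C^u_{i_0j_0}$ with $f^{i_0}(x)\in\Gamma$ (by recurrence of $\Gamma$, Lemma~\ref{lem:Gamma-shb}(1)). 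Then $x$ has a forward hyperbolic sequence, so $x\in C^s$, and it also has a backward one, so $x\in C^u$; moreover $f^{i_0}(x)\in C^s$ (shift $\mathfrak h^+$) so $x\in f^{-i_0}(C^s)$, hence $x\in \widehat C^s_{i_0j_0}\cap f^{-i_0}(C^s)\cap C^u = C^s_{i_0 j_0}\cap C^u = \Gamma^s_{i_0j_0}$. Running this with each $m$ shows $\Gamma\subseteq\bigcup_{ij\in I}\Gamma^s_{ij}$; the reverse inclusion is immediate since each $\Gamma^s_{ij}\subseteq C^s\cap C^u=\Gamma$. The analogous statement for $\Gamma^u_{ij}$ follows by applying the same reasoning to the backward part of the sequence, or by applying $f^{i_0}$ and using the bijection already proved.

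\textbf{Expected main obstacle.} The genuinely delicate point is the claim that membership in $C^s_{ij}$ (respectively $C^u_{ij}$) is leafwise, i.e.\ that these sets are unions of full-length stable (resp.\ unstable) curves of $\Gamma$ rather than more irregular sets. This requires knowing that $f^i$, as a hyperbolic branch, carries the full-length stable curves $W^s_x$ ($x\in\Gamma$) lying in $\widehat C^s_{ij}$ to full-length stable curves of $\widehat C^u_{ij}$, which in turn rests on the cone-invariance in Definition~\ref{def:hypbranch} together with the structure of stable/unstable strips and the fact that $W^s_x = V^s_x\cap\Gpq$ is full-length when $x\in\Gamma$ (guaranteed because $\Gamma$ is a nice rectangle with $d(p,q)<\delta_\ell$). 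Once this leafwise structure is established, everything else is bookkeeping with the set-theoretic identities in \eqref{eqn:Cij}--\eqref{Gij} and the shift-invariance of hyperbolic sequences; I do not anticipate further difficulty there.
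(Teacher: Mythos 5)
Your treatment of the first two assertions ($s$/$u$-subset property and the bijection $f^i(\Gamma^s_{ij}) = \Gamma^u_{ij}$) tracks the paper's argument closely: the paper also reduces the $s$-subset claim to the observation that $C^s_{ij}$ is a union of full-length stable leaves and $C^u$ contains $\Gamma$, and proves the bijection by the same set-theoretic manipulation of \eqref{eqn:Cij}. So far so good.

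The third assertion is where there is a genuine gap. The statement is not merely the union formula $\Gamma = \bigcup_{ij\in I}\Gamma^s_{ij}$: it says that \emph{for the given} $i$ with $f^i(x)\in\Gamma$ there is a $j$ with $x\in\Gamma^s_{ij}$ (same first index). You started down the right path by invoking part (2) of Lemma~\ref{lem:Gamma-shb}, then worried that $i$ need not be a multiple of $T$, and switched to a different route that instead pulls $i_0$ from \emph{some} hyperbolic sequence for $x$. That route proves only the weaker statement that $x\in\Gamma^s_{i_0j_0}$ for some $i_0j_0\in I$, with no control over $i_0$. The stronger conclusion is what is actually used downstream: in Sublemma~\ref{sublem:nested4}/Lemma~\ref{prop:nested} the hypothesis is $x\in\Gamma^s_{k\ell}$ with $0<i<k$ and $f^i(x)\in\Gamma$, and one needs $x\in\Gamma^s_{ij}$ with \emph{that} $i$, so that the inequality $i<k$ forces the nesting $\widehat C^s_{k\ell}\subseteq\widehat C^s_{ij}$ via Lemma~\ref{lem:nestdisj} to go in the correct direction; if you only know $x\in\Gamma^s_{i_0j_0}$ for an uncontrolled $i_0$, that argument collapses. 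Your worry about $i\in T\NN$ is a reasonable reading of an imprecision in the statement, but it is harmless: in context the lemma is applied only with $i$ a multiple of $T$ (first $T$-return times, the indices in $I$, the contradiction set-up in the proof of Proposition~\ref{prop:sat-tower} are all multiples of $T$). The fix is to do what the paper does and what you first intended: apply part (2) of Lemma~\ref{lem:Gamma-shb} to the given $T$-return to obtain a branch $f^i\colon\widehat C^s_{ij}\to\widehat C^u_{ij}$ with $x\in\widehat C^s_{ij}$ and $f^i(x)\in\widehat C^u_{ij}\cap\Gamma$, and then run the same bookkeeping ($\Gamma\subset C^s$, so $f^i(x)\in\widehat C^u_{ij}\cap C^s$, so $x\in C^s_{ij}$; and $x\in\Gamma\subset C^u$, so $x\in\Gamma^s_{ij}$). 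Equivalently, if you want to keep the hyperbolic-sequence language, you must \emph{choose} the hyperbolic sequence $\mathfrak h$ for $x$ so that $i_0=i$, by prepending the branch from Lemma~\ref{lem:Gamma-shb}(2) to a forward hyperbolic sequence of $f^i(x)$; just taking an arbitrary hyperbolic sequence of $x$ is not enough.
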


\begin{proof}
By the saturation assumption,  \(   C^{s}_{ij} \subseteq C^s,   C^{u}_{ij} \subseteq C^u \)   and therefore 
 \( 
 \Gamma^s_{ij}:=  C^{s}_{ij}\cap C^{u} \subseteq C^s\cap C^u = \Gamma\) and \(  \Gamma^u_{ij}:=  C^{u}_{ij}\cap C^{s} \subseteq C^u\cap C^s = \Gamma
  \) 
and so  \( \Gamma^s_{ij}, \Gamma^u_{ij} \subseteq \Gamma \). Since
\(  C^{s}_{ij}  \) is a union of stable leaves and \(  C^{u}_{ij}  \) is a union of unstable leaves, the sets \(  \Gamma^s_{ij}:=  C^{s}_{ij}\cap C^{u}   \) and \( \Gamma^u_{ij}:=  C^{u}_{ij}\cap C^{s} \) are \(s\)-subsets and \(  u  \)-subsets, respectively, of \(\Gamma\). 
Moreover, directly from the definitions we have   
\[
f^{i}(\Gamma^{s}_{ij})=f^{i}(\widehat C^{s}_{ij})\cap C^{s}\cap f^{i}(C^{u})
=f^{i}(\widehat C^{s}_{ij}\cap C^{u})\cap C^{s}= \Gamma^{u}_{ij}.
\]
For the second statement, let $x\in \Gamma$ and $i\in T\NN$ be such that $f^i(x)\in \Gamma$.  Since $\Gamma$ 
has the $(C,\kappa)$-hyperbolic branch property,
 there is a hyperbolic  branch \(  f^{i}\colon \widehat C^{s}_{ij}\to  \widehat C^{u}_{ij} \) in \( \mathfrak C_0 \) such that 
\(  x\in\widehat C^s_{ij}\cap \Gamma \) and \(  f^i(x)\in
\widehat C^u_{ij}\cap\Gamma.
\)
Since \(  \Gamma\subset C^{s}  \), the definition of \(  C^{s}_{ij}  \) gives
 \[
  x = f^{-i}(f^i(x))\in f^{-i}( \widehat C^u_{ij} \cap \Gamma )\subset  f^{-i}( \widehat C^u_{ij} \cap C^s )= C^s_{ij}.
   \]
 Since we also have \(  x\in \Gamma = C^s\cap C^u\subseteq C^{u}  \) it follows that \(  x\in C^s_{ij}\cap C^u = \Gamma^s_{ij}  \). The final assertion follows because $\Gamma$ is recurrent and so every $x\in \Gamma$ has some $i,i'\in T\NN$ such that $f^i(x), f^{-i'}(x) \in \Gamma$.
\end{proof}

Lemma \ref{prop:MY} shows that there exists a \emph{cover} of \(  \Gamma  \) by \(  s  \)-subsets and another by \(  u  \)-subsets satisfying the Markov property \ref{eq:youngtower} required in the definition of topological Young tower.
It does not however claim that this cover is a \emph{partition} of \(  \Gamma  \) as required by the definition, i.e.\ that the \(  s  \)-subsets of the cover are disjoint which in fact they may not be. The following Lemma shows however that they are pairwise either nested or disjoint, and this will then allow us to choose a sub-cover made up of pairwise disjoint sets (see the proof of Proposition  \ref{prop:sat-tower} below).

\begin{Lemma}\label{prop:nested}
 Let 
 \(  k\ell\in I_0 \) and  suppose there exists \(  x\in \Gamma^s_{k\ell}\) and \( 0<    i<k  \) such that \(  f^i(x) \in\Gamma \). Then there exist \(  ij\in I_0 \)  such that 
\(
  \Gamma^s_{k\ell}\subseteq \Gamma^s_{ij}. 
 \) 
In particular all \(  \{\Gamma^{s}_{ij}\}_{ij\in I_0}  \) are pairwise either nested or disjoint.
\end{Lemma}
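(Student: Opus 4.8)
The statement we want to prove is Lemma~\ref{prop:nested}: if $k\ell \in I$ and there is $x\in\Gamma^s_{k\ell}$ together with $0<i<k$ such that $f^i(x)\in\Gamma$, then there is a branch $ij\in I$ with $\Gamma^s_{k\ell}\subseteq\Gamma^s_{ij}$; consequently the collection $\{\Gamma^s_{ij}\}_{ij\in I}$ consists of pairwise nested or disjoint sets. The plan is to exploit Lemma~\ref{lem:Gamma-shb}(2): since $x\in\Gamma^s_{k\ell}\subseteq\Gamma$ and $f^i(x)\in\Gamma$ with $i$ a multiple of $T$ (all almost-return times in $I$ are multiples of $T$, and $i<k$ with $k\in\NN_T$ forces $i\in T\NN$ — one should check this divisibility point carefully, as it is where the hypothesis $0<i<k$ interacts with the $T$-structure), there exists a hyperbolic branch $f^i\colon\widehat C^s_{ij}\to\widehat C^u_{ij}$ in $\mathfrak C^*$ with $x\in\widehat C^s_{ij}$. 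The key observation is then that $\widehat C^s_{k\ell}$ and $\widehat C^s_{ij}$ are two stable strips of hyperbolic branches that both contain $x$, hence intersect, so by Lemma~\ref{lem:nestdisj} they are nested; since $i<k$, the branch $f^i$ has ``shorter'' return time, and the standard horseshoe-type argument shows the shorter-time strip must be the larger one, i.e. $\widehat C^s_{k\ell}\subseteq\widehat C^s_{ij}$.

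\textbf{From strips to the $\Gamma^s$ pieces.} Once $\widehat C^s_{k\ell}\subseteq\widehat C^s_{ij}$ is established, I would upgrade this to the desired inclusion $\Gamma^s_{k\ell}\subseteq\Gamma^s_{ij}$ by unwinding the definitions in \eqref{eqn:Cij} and \eqref{Gij}. Recall $\Gamma^s_{k\ell}=C^s_{k\ell}\cap C^u = \widehat C^s_{k\ell}\cap f^{-k}(C^s)\cap C^u$. Take $w\in\Gamma^s_{k\ell}$; then $w\in\widehat C^s_{ij}$ by the strip inclusion, and $w\in C^u$. It remains to check $w\in f^{-i}(C^s)$, i.e. $f^i(w)\in C^s$. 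Here I would use the saturation of $\Gamma$ together with the relation between the two branches: $w\in\Gamma$ and $f^i$ is a hyperbolic branch with $w$ in its stable strip, so $f^i(w)\in\widehat C^u_{ij}$; moreover, since $f^i(w)$ has a backward hyperbolic sequence (obtainable from that of $w$ by prepending $ij$) and, using that $\widehat C^s_{k\ell}\subseteq\widehat C^s_{ij}$ and $f^k\colon\widehat C^s_{k\ell}\to\widehat C^u_{k\ell}$ factors through $f^i$, one gets a forward hyperbolic sequence for $f^i(w)$ as well. The cleanest route is probably: $w$ has a forward hyperbolic sequence starting with $k\ell$ (since $w\in\Gamma^s_{k\ell}\subseteq C^s$ via a sequence realizing this branch), and because $\widehat C^s_{k\ell}\subseteq\widehat C^s_{ij}$ with $i<k$, the first $i$ iterates of this sequence can be ``regrouped'' so that $ij$ is the first term — this is exactly the concatenation structure of $\mathfrak C^*$. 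Hence $f^i(w)\in C^s$, giving $w\in C^s_{ij}\cap C^u = \Gamma^s_{ij}$.

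\textbf{Pairwise nested or disjoint.} For the final assertion: given $k\ell, ij\in I$ with $\Gamma^s_{k\ell}\cap\Gamma^s_{ij}\neq\emptyset$, pick $x$ in the intersection. Then $x\in\widehat C^s_{k\ell}\cap\widehat C^s_{ij}\neq\emptyset$, so by Lemma~\ref{lem:nestdisj} the strips are nested, say $\widehat C^s_{k\ell}\subseteq\widehat C^s_{ij}$ (so $i\le k$). If $i=k$ one checks the branches coincide and the sets are equal; if $i<k$, note $x\in\Gamma^s_{k\ell}\subseteq\Gamma$ and $f^i(x)\in\Gamma^u_{ij}\subseteq\Gamma$ (since $f^i\colon\Gamma^s_{ij}\to\Gamma^u_{ij}$ and $x\in\Gamma^s_{ij}$), so the hypotheses of the first part of the lemma are met and we conclude $\Gamma^s_{k\ell}\subseteq\Gamma^s_{ij}$; thus the two are nested. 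I expect the main obstacle to be the bookkeeping in the middle step — carefully justifying that the strip inclusion $\widehat C^s_{k\ell}\subseteq\widehat C^s_{ij}$ passes to the Cantor sets $\Gamma^s_{\bullet}$, which hinges on correctly manipulating forward hyperbolic sequences under the concatenation structure of $\mathfrak C^*$ and on the saturation property. The geometric/topological input (nesting of strips) is already packaged in Lemma~\ref{lem:nestdisj}, so that part should be routine.
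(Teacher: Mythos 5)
Your overall route matches the paper's — establish a stable-strip inclusion $\widehat C^s_{k\ell}\subseteq\widehat C^s_{ij}$ via Lemma~\ref{lem:nestdisj}, then upgrade it to the Cantor-set inclusion $\Gamma^s_{k\ell}\subseteq\Gamma^s_{ij}$ — but the middle step, which you flag as ``bookkeeping'', is precisely where the real content of the lemma lives, and your sketch does not contain the argument that makes it work.

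The gap is the ``regrouping''/factoring claim. You assert that because $\widehat C^s_{k\ell}\subseteq\widehat C^s_{ij}$ with $i<k$, the branch $f^k$ on $\widehat C^s_{k\ell}$ ``factors through $f^i$'', and that a forward hyperbolic sequence for $w\in\Gamma^s_{k\ell}$ starting with $k\ell$ can be regrouped to start with $ij$. But concatenation only builds longer branches from shorter ones; it does not tell you that a given branch decomposes. To carry the factoring through you need a branch $mn\in I$ with $m=k-i$ such that $f^i(\widehat C^s_{k\ell})=\widehat C^u_{ij}\cap\widehat C^s_{mn}$, and obtaining this requires a second application of the nested-or-disjoint Lemma~\ref{lem:nestdisj} on the \emph{unstable} side: one first produces $mn$ via Lemma~\ref{prop:MY} applied at $y=f^i(x)$ (using $f^m(y)=f^k(x)\in\Gamma$), then shows $\widehat C^u_{k\ell}\subseteq\widehat C^u_{mn}$ because both unstable strips contain $f^k(x)$ and $m<k$, and finally combines the two strip inclusions with a ``full height / full width'' argument to get the factoring identity $f^i(\widehat C^s_{k\ell})=\widehat C^u_{ij}\cap\widehat C^s_{mn}=f^{-m}(\widehat C^u_{k\ell})$. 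This is exactly Sublemma~\ref{sublem:nested4} in the paper, and your proposal never touches the unstable side at all. Without it, you have no way to conclude $f^i(w)\in C^s$ for an \emph{arbitrary} $w\in\Gamma^s_{k\ell}$ (the hypothesis only gives you one specific $x$ with $f^i(x)\in\Gamma$, not every point of $\Gamma^s_{k\ell}$), so the step ``Hence $f^i(w)\in C^s$'' is unsupported.

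A secondary issue: you phrase the upgrade in terms of forward/backward hyperbolic sequences, but those are the definitions from~\S\ref{sec:sat-rec} for the specific $\Gamma$ constructed there. Lemma~\ref{prop:nested} lives under Proposition~\ref{prop:sat-tower}, where $\Gamma$ is an arbitrary nice saturated rectangle and $C^s,C^u$ are unions of stable/unstable leaves \eqref{eq:rectanglestruct}, not sets defined by hyperbolic sequences. The paper's argument is deliberately algebraic for this reason: once the factoring identity of Sublemma~\ref{sublem:nested4} is in hand, one computes directly from \eqref{eqn:Cij} that
$C^s_{k\ell}=f^{-i}(f^{-m}(\widehat C^u_{k\ell}\cap C^s))=f^{-i}(\widehat C^u_{ij}\cap C^s_{mn})\subseteq f^{-i}(\widehat C^u_{ij}\cap C^s)=C^s_{ij}$,
where the inclusion is exactly the saturation hypothesis $C^s_{mn}\subseteq C^s$. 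That saturation step is the thing that lets the argument apply to \emph{all} $w$, and it sits downstream of the geometric factoring you have not established.
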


We will prove this lemma momentarily.

\begin{Remark}
Notice that the last statement in  Lemma \ref{prop:nested} does not follow directly from  Lemma \ref{lem:nestdisj}. Indeed, the fact that two stable strips \( \widehat C^s_{ij},  \widehat C^s_{k\ell}\) are nested does not \emph{a priori} imply that the corresponding sets \( C^s_{ij},   C^s_{k\ell}\) are either disjoint or nested, recall \eqref{eq:saturation},  and therefore also does not \emph{a priori} imply that \(
  \Gamma^s_{ij}, \Gamma^s_{k\ell}
 \) are either disjoint or nested, recall \eqref{Gij}. 
\end{Remark}

\begin{figure}[tbp]
\includegraphics[width=\textwidth]{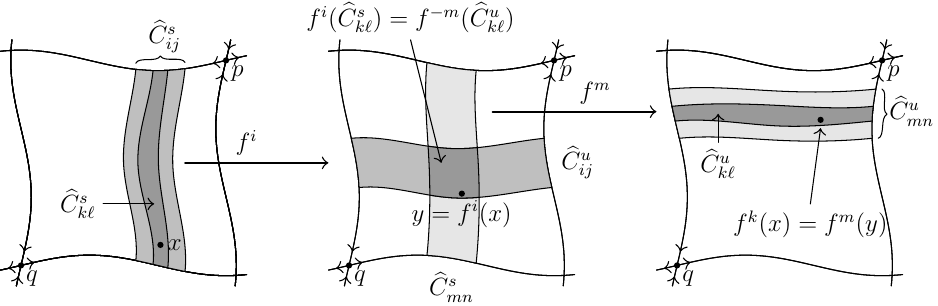}
\caption{Proof of Sublemma \ref{sublem:nested4}.}
\label{fig:ijklmn}
\end{figure}

\begin{Sublemma} \label{sublem:nested4}
In the setting of Lemma \ref{prop:nested},
letting \(  m=k-i  \), there exist \(  ij, mn \in I_0 \) such that  
\(  \widehat C^s_{k\ell} \subseteq \widehat C^s_{ij}  \), 
\(  \widehat C^u_{k\ell} \subseteq \widehat C^u_{mn}  \),
and such that 
\( 
f^i(\widehat C^s_{k\ell}) = \widehat C^u_{ij}\cap \widehat C^s_{mn} = f^{-m}(\widehat C^u_{k\ell}). 
\) 
\end{Sublemma}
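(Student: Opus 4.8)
\textbf{Proof plan for Sublemma \ref{sublem:nested4}.}

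The plan is to produce the two branches $f^i \colon \widehat C^s_{ij} \to \widehat C^u_{ij}$ and $f^m \colon \widehat C^s_{mn} \to \widehat C^u_{mn}$ by invoking the hyperbolic branch property of $\Gamma$ at the two sub-returns determined by the hypothesis, and then to show that the stable strip $f^i(\widehat C^s_{k\ell})$ is exactly the overlap $\widehat C^u_{ij} \cap \widehat C^s_{mn}$. First I would observe that, by hypothesis, $x\in \Gamma^s_{k\ell} \subset \widehat C^s_{k\ell}$ and $f^i(x)\in \Gamma$, while also $f^i(f^m(x)) = f^k(x)\in \Gamma$ (since $x\in\Gamma^s_{k\ell}$ means $x$ has a return at time $k$); thus both $f^i$ and $f^m$ are $T$-returns of points of $\Gamma$. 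By Lemma \ref{lem:Gamma-shb}(2), each of these returns produces a hyperbolic branch: there is $ij\in I$ with $x\in\widehat C^s_{ij}$ and $f^i(x)\in\widehat C^u_{ij}$, and there is $mn\in I$ with $f^i(x)\in\widehat C^s_{mn}$ and $f^k(x) = f^m(f^i(x))\in\widehat C^u_{mn}$.

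Next I would establish the two nesting inclusions $\widehat C^s_{k\ell}\subseteq\widehat C^s_{ij}$ and $\widehat C^u_{k\ell}\subseteq\widehat C^u_{mn}$. The key point is that $i \le k$ (indeed $i<k$), and by Lemma \ref{lem:Gamma-shb} (choosing hyperbolic sequences with minimal first/last index, as in the proof there) we may take $k\ell$ to be a \emph{first} return, so $f^i\colon\widehat C^s_{ij}\to\widehat C^u_{ij}$ is a subbranch of the concatenation realizing the time-$k$ return and hence $i$ is a first return for $x$ as well; this forces $i\le k$. Since $\widehat C^s_{k\ell}$ and $\widehat C^s_{ij}$ both contain $x$, Lemma \ref{lem:nestdisj} says they are nested, and because $i\le k$ the strip with the \emph{shorter} return time is the larger one, giving $\widehat C^s_{k\ell}\subseteq\widehat C^s_{ij}$. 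Symmetrically, $\widehat C^u_{k\ell}$ and $\widehat C^u_{mn}$ both contain $f^k(x)$ (note $f^k = f^{i+m}$ and $f^m$ is the remaining return, so $m\le k$), so again by Lemma \ref{lem:nestdisj} they are nested with $\widehat C^u_{k\ell}\subseteq\widehat C^u_{mn}$.

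Finally I would verify the chain of equalities $f^i(\widehat C^s_{k\ell}) = \widehat C^u_{ij}\cap\widehat C^s_{mn} = f^{-m}(\widehat C^u_{k\ell})$. For the first equality: $f^i(\widehat C^s_{k\ell})$ is a stable strip inside $\widehat C^u_{ij}$ (since $\widehat C^s_{k\ell}\subseteq\widehat C^s_{ij}$ and $f^i$ maps the stable strip of its branch onto the unstable strip, carrying substrips to substrips) whose stable boundary consists of pieces of $W^s_p,W^s_q$ (as $\widehat C^s_{k\ell}$ itself has stable boundary on the stable curves of $p,q$ and $p,q$ are fixed by $f^k$, hence by $f^i$ modulo $T$); on the other hand $f^i(\widehat C^s_{k\ell})$ must be the portion of $\widehat C^u_{ij}$ that gets mapped by the remaining $f^m$ into $\widehat C^u_{k\ell}$, which by definition of the branch $f^m\colon\widehat C^s_{mn}\to\widehat C^u_{mn}$ is precisely $\widehat C^u_{ij}\cap\widehat C^s_{mn}$; the matching of stable/unstable boundaries together with the niceness of $\Gpq$ (via Lemma \ref{lem:nestdisj} and the full-crossing argument used in the proof of Lemma \ref{lem:Gamma-shb}) pins down the equality. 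The second equality is obtained by applying $f^{-m}$ to $f^k(\widehat C^s_{k\ell}) = \widehat C^u_{k\ell}$: since $f^m\colon\widehat C^s_{mn}\to\widehat C^u_{mn}$ is a bijection and $\widehat C^u_{ij}\cap\widehat C^s_{mn}\subseteq\widehat C^s_{mn}$, we get $f^m(\widehat C^u_{ij}\cap\widehat C^s_{mn}) = f^k(\widehat C^s_{k\ell}) = \widehat C^u_{k\ell}$, i.e.\ $f^{-m}(\widehat C^u_{k\ell}) = \widehat C^u_{ij}\cap\widehat C^s_{mn}$. I expect the main obstacle to be the careful bookkeeping that forces $i\le k$ and $m\le k$ (so that the nesting in Lemma \ref{lem:nestdisj} points in the right direction), and then the delicate identification of $f^i(\widehat C^s_{k\ell})$ with the overlap $\widehat C^u_{ij}\cap\widehat C^s_{mn}$ — this requires simultaneously tracking which boundary curves are stable versus unstable and invoking niceness to rule out the strips crossing $\partial\Gpq$, exactly the kind of argument already carried out in Sublemma \ref{slem:almostret}.
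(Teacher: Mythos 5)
Your proposal follows essentially the same strategy as the paper: produce the branches $f^i\colon\widehat C^s_{ij}\to\widehat C^u_{ij}$ and $f^m\colon\widehat C^s_{mn}\to\widehat C^u_{mn}$ from the two sub-returns, use Lemma~\ref{lem:nestdisj} plus $i,m<k$ to orient the nesting, and then identify $f^i(\widehat C^s_{k\ell})$ geometrically with $\widehat C^u_{ij}\cap\widehat C^s_{mn}$. Two remarks.

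First, the paragraph on forcing $i\le k$ and $m\le k$ (choosing hyperbolic sequences with minimal first index, etc.) is overcomplicated: these inequalities are immediate, since the hypothesis of Lemma~\ref{prop:nested} is $0<i<k$ and $m:=k-i$, so $0<m<k$ as well. There is nothing to choose. (Also, inside the proof of Proposition~\ref{prop:sat-tower} the natural lemma to cite for producing the branches is Lemma~\ref{prop:MY}, stated for an arbitrary saturated rectangle; Lemma~\ref{lem:Gamma-shb} pertains to the particular $\Gamma$ built in \S\ref{sec:sat-rec}. In context they yield the same conclusion, so this is minor.)

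Second, and more substantively, your justification of the equality $f^i(\widehat C^s_{k\ell}) = \widehat C^u_{ij}\cap\widehat C^s_{mn}$ is circular as written. You assert that $f^i(\widehat C^s_{k\ell})$ ``must be the portion of $\widehat C^u_{ij}$ that gets mapped by $f^m$ into $\widehat C^u_{k\ell}$, which by definition of the branch $f^m$ is precisely $\widehat C^u_{ij}\cap\widehat C^s_{mn}$.'' But the portion of $\widehat C^u_{ij}$ that $f^m$ maps into $\widehat C^u_{k\ell}$ is $\widehat C^u_{ij}\cap f^{-m}(\widehat C^u_{k\ell})$, and identifying this with $\widehat C^u_{ij}\cap\widehat C^s_{mn}$ is exactly what must be proved. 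The paper's argument avoids this by a simple observation you did not make explicit: $f^i(\widehat C^s_{k\ell})$ and $f^{-m}(\widehat C^u_{k\ell})$ are \emph{tautologically the same set}, since $f^k(\widehat C^s_{k\ell})=\widehat C^u_{k\ell}$ and $k=i+m$. Once that is noted, viewing this one set as the $f^i$-image of the full-height substrip $\widehat C^s_{k\ell}\subseteq\widehat C^s_{ij}$ shows it is full height in $\widehat C^u_{ij}$, and viewing it as the $f^{-m}$-image of the full-width substrip $\widehat C^u_{k\ell}\subseteq\widehat C^u_{mn}$ shows it is full width in $\widehat C^s_{mn}$; a region with both properties is precisely $\widehat C^u_{ij}\cap\widehat C^s_{mn}$. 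That two-line observation replaces the vaguer appeal to niceness and the full-crossing argument that you were anticipating.
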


\begin{proof}
From   Lemma  \ref{prop:MY} we have \(  x\in \Gamma^s_{ij}  \) for some \(ij \in I_0 \). Therefore \(  x\in \Gamma^s_{ij}\cap\Gamma^s_{k\ell}  \) and so 
\(  x\in \widehat C^s_{ij}\cap \widehat C^s_{k\ell}  \) and  in particular 
\(
\widehat C^s_{ij}\cap \widehat C^s_{k\ell}  \neq\emptyset
\) and therefore, by Lemma \ref{lem:nestdisj},  \(   \widehat C^s_{k\ell} \subseteq \widehat C^s_{ij}  \), as shown in Figure \ref{fig:ijklmn}. 
Also,  from Lemma  \ref{prop:MY},  \(  x\in \Gamma^{s}_{k\ell}  \) implies  \(  x\in\Gamma  \) and 
\(  f^{k}(x)\in \Gamma  \) and therefore, 
letting \(  y=f^{i}(x)\in\Gamma  \) we have  \(  f^{m}(y)=f^{m}(f^{i}(x))=f^{k}(x) \in \Gamma  \). Thus  there exists \(  mn\in I_0  \) such that \(  y\in \widehat C^{s}_{mn}  \).  We therefore have \(  f^{m}(y) \in f^{m}(\widehat C^{s}_{mn}) = \widehat C^{u}_{mn}  \) and also  \(  f^{m}(y)=f^{k}(x)\in \Gamma^{u}_{k\ell}\subseteq \widehat C^{u}_{k\ell}  \), and therefore
\(  \widehat C^{u}_{mn}\cap \widehat C^{u}_{k\ell} \neq\emptyset \) and thus, since \(  m< k  \), \( \widehat C^{u}_{k\ell} \subseteq \widehat C^{u}_{mn} \). 
Then, since \(  \widehat C^s_{k\ell} \subseteq \widehat C^s_{ij}  \) are both full height vertical (stable) strips and \(  \widehat C^u_{k\ell} \subseteq \widehat C^u_{mn}  \) are both full length horizontal (unstable) strips, it follows  that \(  f^i(\widehat C^s_{k\ell})  \) is ``full height''  relative to the horizontal strip \(  \widehat C^u_{ij} \) and   \(  f^{-m}(\widehat C^u_{k\ell})  \) is ``full width'' relative to the vertical strip \(  \widehat C^s_{mn} \). 
Since \(   f^i(\widehat C^s_{k\ell})  =   f^{-m}(\widehat C^u_{k\ell})  \) we complete the proof.
\end{proof}

\begin{proof}[Proof of Lemma \ref{prop:nested}]
Let $m=k-i$.  Directly from the definitions,
\begin{equation}\label{eq:nested3}
C^s_{k\ell}=f^{-k}(\widehat C^u_{k\ell} \cap C^s)  
= f^{-i}(f^{-m}(\widehat C^u_{k\ell} \cap C^s)).
\end{equation}
From Sublemma \ref{sublem:nested4} we have \(  f^{-m}(\widehat C^u_{k\ell})  =  \widehat C^u_{ij}\cap \widehat C^s_{mn} \) and thus \eqref{eqn:Cij} gives
\[
f^{-m}(\widehat C^u_{k\ell}\cap C^s)=\widehat C^u_{ij}\cap\widehat C^s_{mn}\cap f^{-m}(C^s)=\widehat C^u_{ij} \cap C^s_{mn}.
\] 
Substituting this into \eqref{eq:nested3} and using the saturation condition which implies \(   C^s_{mn}\subseteq C^s   \), we get 
\(
C^s_{k\ell} = f^{-i}(\widehat C^u_{ij} \cap C^s_{mn})\subseteq f^{-i}( \widehat C^u_{ij} \cap C^s) =: C^s_{ij} 
\),  which implies the statement
since $\Gamma_{k\ell}^s = C_{k\ell}^s \cap C^u \subseteq C_{ij}^s \cap C^u = \Gamma_{ij}^s$ by \eqref{Gij}.
\end{proof}

\begin{proof}[Proof of Proposition  \ref{prop:sat-tower}]
Since the family of sets \(  \{\Gamma^{s}_{ij}\}_{ij\in I_0}  \)  are pairwise either nested or disjoint,   they are partially ordered by inclusion.  We can therefore define the set
\(  I'_0\subset I_0  \) of indices \(  ij  \) which are \emph{maximal} with respect to this partial order. We then let 
\(
\mathcal P:= \{\Gamma^s_{ij}\}_{ij\in I'_0}.
\)
By  Lemma  \ref{prop:MY}, every point \(  x\in \Gamma^{s}  \) belongs to some  \(  \Gamma^s_{ij}  \) for some \(  ij\in I_0  \) and therefore must also belong to some maximal element \(  \Gamma^s_{ij}  \) for some \(  ij\in I'_0  \). Thus \(  \mathcal P  \) is a partition of \(  \Gamma  \) into \emph{pairwise disjoint} \(  s  \)-subsets whose images are \(  u  \)-subsets. This gives the Markov--Young structure. 
To see that it is a First Return Topological Young Tower we suppose by contradiction that there exists some \(  k\ell\in I'_0  \),  \(  x\in \Gamma^s_{k\ell}  \) and  \(  0< i < k  \) such that \(  f^i(x)\in \Gamma  \). Then   
Lemma \ref{prop:nested} implies that there exists some \(  ij\in I_0  \) such that \(  \Gamma^s_{k\ell}\subset \Gamma^s_{ij}  \), contradicting the maximality of \(  \Gamma^s_{k\ell}  \).  
\end{proof}

\subsection{Proof of Proposition \ref{prop:nicefat-tower}}\label{sec:fatyoung}
We split the proof into two independent parts, one to prove the hyperbolicity and distortion conditions \ref{Y1}--\ref{Y2} 
using the hyperbolic branch property and H\"older estimate \eqref{eqn:fa-Gamma}, and the second to prove the integrability of the return times, which follows from the Lebesgue-strong T-return property. 

\subsubsection{Hyperbolicity and distortion properties of the tower}

We will verify Conditions \ref{Y1} and \ref{Y2} in Definition \ref{def:young-tower}. 
Fix $i\in T\NN$, 
$j\in\{1,\dots,m_i\}$, and $x\in\Gamma^s_{ij}$. Note that the map $F=f^{i}\colon\Gamma^s_{ij}\to\Gamma^u_{ij}$ has the $(C,\kappa)$-hyperbolic branch property with constant $C>0$ and $\kappa>0$ independent of $x$. 
Since the number $T$ is large enough, (see Condition 3 in Definition \ref{def:nice}) it ensures that $Ce^{-\kappa T}<1$ and Condition \ref{Y2a} follows with $\kappa_1 = C e^{-\kappa T}$. Condition \ref{Y2b} can be shown by a similar argument.

We now prove Condition \ref{Y3a}, the proof of Condition \ref{Y3b} is similar. It suffices to show that there are constants $c>0$ and $\alpha_1>0$ such that for any $z\in\Gamma$ and $w\in V^s_z$ we have
\begin{equation}\label{bounded-dist}
\Big |\log\frac{\text{Jac}^uF(z)}{\text{Jac}^uF(w)}\Big |\le cd(z,w)^{\alpha_1}.
\end{equation}
Indeed, setting $z=F^n(x)$ and $w=F^n(y)$, the desired bounded distortion estimate follows from \eqref{bounded-dist} and Condition \ref{Y2a}. 

To show \eqref{bounded-dist} notice that $z\in\Gamma^s_{ij}$ for some 
$i\in T\NN$ and $j\in\{1,\dots,m_i\}$ and hence,
\begin{equation}\label{equat1}
\Big |\log\frac{\text{Jac}^uF(z)}{\text{Jac}^uF(w)}\Big |
=\Big |\sum_{a=0}^{i-1}\log\frac{\text{Jac}^uf(f^a(z))}{\text{Jac}^uf(f^a(w))}\Big |.
\end{equation}
Since $\Gamma$ is a $(C_0,\kappa)$-rectangle, we see that for
any $z\in \Gamma$, $w\in V_z^s$, and $0\leq a\leq i-1$, we have $f^a(w)\in V^s_{f^a(z)}$ and 
\begin{equation}\label{equat2}
d(f^a(z),f^a(w))\le 
C_0 e^{-\kappa a} d(z,w).
\end{equation} 
Moreover, using the assumption that $\Gamma$ satisfies \eqref{eqn:fa-Gamma}, we see that
\begin{equation}\label{eqn:dEu}
\begin{aligned}
d(E_{f^a(z)}^u, &E_{f^a(w)}^u) \leq C_1 e^{\gamma_1 a} d(f^a(z),f^a(w))^{\beta_1} \\
&\leq C_1 e^{\gamma_1 a} C_0^{\beta_1} e^{-\kappa a \beta_1} d(z,w)^{\beta_1}
= C_1 C_0^{\beta_1} e^{(\gamma_1 - \kappa\beta_1) a} d(z,w)^{\beta_1}.
\end{aligned}
\end{equation}
Since $f$ is $C^{1+\alpha}$, it follows that there is $C_2>0$ such that
\begin{equation}\label{eqn:Jacu}
\begin{aligned}
\Big |\frac{\text{Jac}^uf(f^a(z))}{\text{Jac}^uf(f^a(w))}-1\Big |
&=\Big |\frac{\text{Jac}^uf(f^a(z))-\text{Jac}^uf(f^a(w))}{\text{Jac}^uf(f^a(w))}\Big |\\
&\leq C_2 d(E_{f^a(z)}^u,E_{f^a(w)}^u)^\alpha \\
&\leq C_2 C_1^\alpha C_0^{\beta_1\alpha} e^{\alpha(\gamma_1 - \kappa \beta_1) a} d(z,w)^{\beta_1 \alpha}.
\end{aligned}
\end{equation}
If $\text{Jac}^uf(f^a(z)) \geq \text{Jac}^uf(f^a(w))$ this gives
\[
\log\frac{\text{Jac}^uf(f^a(z))}{\text{Jac}^uf(f^a(w))}
\leq \frac{\text{Jac}^uf(f^a(z))}{\text{Jac}^uf(f^a(w))} - 1 \leq C_2 C_1^\alpha C_0^{\beta_1\alpha} e^{\alpha(\gamma_1 - \kappa \beta_1) a} d(z,w)^{\beta_1 \alpha}.
\]
The same bound holds with $z$ and $w$ exchanged, by exchanging their roles in \eqref{eqn:Jacu}. Then by \eqref{equat1} we get
\begin{equation}\label{eqn:Jacu-2}
\Big |\log\frac{\text{Jac}^uF(z)}{\text{Jac}^uF(w)}\Big |
\leq \sum_{a=0}^{i-1} C_2 C_1^\alpha C_0^{\beta_1\alpha} e^{\alpha(\gamma_1 - \kappa \beta_1) a} d(z,w)^{\beta_1 \alpha}.
\end{equation}
Since we assumed $\gamma_1 < \beta_1 \kappa$, we see that $\gamma_1 - \kappa \beta_1 < 0$, and thus \eqref{eqn:Jacu-2} implies \eqref{bounded-dist}, which proves Condition \ref{Y3a}.

\subsubsection{Integrability of the return times}
Let \( F\colon \Gamma \to \Gamma \) be the induced map to the base of the Topological First T-Return Young Tower where \( F(x)=f^{\tau(x)}(x) \) and  \( \tau(x) \) is the first return time to \( \Gamma \) which is a multiple of \( T \). 

For every \( n\geq 0 \) let 
\[
R_{n}(x) := \sum_{j=0}^{n-1} \tau (F^{j}(x)) \quad \text{ and } \quad v_{n}(x) := \#\{0 < i \leq  n/T: f^{iT}(x)\in \Gamma\} 
\]
where we define \( R_{0}(x)=0 \) by convention. 
Notice that  \( R_{n}, v_{n} \) are  on quite different time scales, the index \( n \) in \( v_{n} \)   refers to the iterates of the original map \( f \),  whereas in \( R_{n} \)  it refers to  the iterates of the induced map \( F \). The following relation between the two quantities is not surprising but neither is it completely trivial, we thank Vilton Pinheiro for explaining it to us. 

\begin{Lemma}[Pinheiro \cite{Pin19}] 
\label{lem:pin}
Let  \( x\in \Gamma \) and suppose 
\( \displaystyle{\lim_{n\to\infty} {R_{n}(x)}/{n}}  \) exists. Then 
\[
\lim_{n\to\infty} \frac{v_{n}(x)}{n}
= \left(\lim_{n\to\infty} \frac {R_{n}(x)}{n} \right)^{-1}.
\]
In particular \( \displaystyle{\lim_{n\to\infty}{v_{n}(x)}/{n} }\) exists and is equal to 0 if \( \displaystyle{\lim_{n\to\infty} {R_{n}(x)}/{n}}=\infty \).
\end{Lemma}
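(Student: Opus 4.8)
The statement is a purely combinatorial fact about the two counting functions $R_n$ and $v_n$ associated to the first return time $\tau$, so the plan is to exploit the fact that $R_n$ and $v_n$ are, in a precise sense, generalized inverse functions of one another. First I would observe the key bracketing relation: for every $x\in\Gamma$ and every $n\geq 1$, if we set $m = v_n(x)$ — the number of returns to $\Gamma$ up to time $n$ — then by definition of $\tau$ as the \emph{first} return time we have
\[
R_m(x) \leq n < R_{m+1}(x),
\]
with the convention $R_0(x)=0$ (so that when $n$ is smaller than the first return time, $m=0$ and the inequality reads $0\leq n < R_1(x)$). Indeed, $R_m(x)$ is exactly the time of the $m$-th return to $\Gamma$, and between the $m$-th and $(m+1)$-st return there are no further returns, which is precisely the content of $v_n(x)=m$ on the interval $R_m(x)\leq n < R_{m+1}(x)$.

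Next I would use this bracketing to compare the two limits. Dividing the displayed inequality by $m = v_n(x)$ gives
\[
\frac{R_m(x)}{m} \leq \frac{n}{v_n(x)} < \frac{R_{m+1}(x)}{m+1}\cdot\frac{m+1}{m}.
\]
Now I need to know that $v_n(x)\to\infty$ as $n\to\infty$. If $\tau$ is bounded this is immediate; in general it follows because $x\in\Gamma$ has a hyperbolic sequence and hence returns to $\Gamma$ infinitely often in forward time (equivalently, $R_m(x)<\infty$ for every $m$, so $n<R_{m+1}(x)$ can hold for only finitely many $n$ once $m$ is fixed), so $m=v_n(x)$ must grow without bound. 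Consequently, as $n\to\infty$ we also have $m=v_n(x)\to\infty$, so both $R_m(x)/m$ and $R_{m+1}(x)/(m+1)$ converge to $L:=\lim_{n\to\infty}R_n(x)/n$ (assumed to exist, possibly $+\infty$), and $(m+1)/m\to 1$. By the squeeze theorem applied to the displayed double inequality, $n/v_n(x)\to L$, i.e. $v_n(x)/n\to 1/L$, with the usual convention $1/\infty=0$. This handles both the finite-$L$ case and the case $L=\infty$ in one stroke.

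\textbf{Main obstacle.} The argument is elementary and the only genuine subtlety is making sure the edge cases are handled cleanly: the convention $R_0(x)=0$, the possibility that $\tau(F^j(x))$ is large for the first few $j$ (so $v_n(x)=0$ for small $n$, and one cannot divide by $m$), and — crucially — the justification that $v_n(x)\to\infty$. For the last point one should be careful to note that this uses recurrence of $\Gamma$ (every point of $\Gamma$ has a forward hyperbolic sequence, Lemma \ref{lem:Gamma-shb}), not just the hypothesis that $\lim R_n(x)/n$ exists; indeed the hypothesis alone already forces $R_n(x)<\infty$ for all $n$, which is what is needed, but it is worth spelling out that finiteness of each $R_n(x)$ is equivalent to $\tau$ being finite at each of the relevant iterates, which is built into the definition of the tower. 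Once these bookkeeping points are dispatched, the squeeze argument closes the proof with no further work, and in particular yields the final assertion that the limit equals $0$ when $L=\infty$.
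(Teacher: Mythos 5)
Your proof is correct and follows essentially the same route as the paper: establish the bracketing $R_{v_n(x)}(x) \leq n < R_{v_n(x)+1}(x)$, divide by $v_n(x)$, and squeeze. The paper's version is slightly terser about the fact that $v_n(x)\to\infty$, which you correctly note follows from finiteness of every $R_n(x)$ (forced by the hypothesis that $\lim R_n(x)/n$ exists), but this is a bookkeeping point rather than a substantive difference.
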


\begin{proof}[Proof of Lemma \ref{lem:pin}]
For any \( x\in \Gamma \), by definition of \( v_{n}(x) \) we have 
\[
v_{n}(x) :=
 \#\{0 < i \leq  n/T: f^{iT}(x)\in \Gamma\}  =\max \Big\{k\geq 0: \sum_{j=0}^{k-1} \tau (F^{j}(x)) \leq  n  \Big\}
\]
and so, for every \( n\geq  \tau(x) \), so that \( v_{n}(x)\geq 1 \),  we have 
\[
R_{v_{n}(x)}(x):=\sum_{j=0}^{v_{n}(x)-1} \tau (F^{j}(x)) 
\leq  n < 
\sum_{j=0}^{v_{n}(x)} \tau (F^{j}(x))=:R_{v_{n}(x)+1}(x).
\]
Dividing through by \( v_{n}(x) \) this gives 
\begin{equation}\label{eq:limits}
\frac{R_{v_{n}(x)}(x)}{v_{n}(x)}
\leq  \frac{n}{v_{n}(x)} < 
\frac{v_{n}(x)+1}{v_{n}(x)} \frac{R_{v_{n}(x)+1}(x)}{v_{n}(x)+1} 
\end{equation}
Since \(({v_{n}(x)+1})/{v_{n}(x)} \to 1 \) as \( n\to \infty \) and the limit of the sequence \( R_{n}(x)/n \) exists, the subsequences on the left and right hand side of  \eqref{eq:limits} also converge to the same limit. It follows that the \( n/v_{n}(x) \) converges and therefore also \( v_{n}(x)/n \) to a limit as in the statement. 
\end{proof}

Lemma \ref{lem:pin}  leads to the integrability of the return times stated in Definition \ref{def:integrability}. Indeed, by the results in \cite{lY98} the induced map \( F\colon \Gamma \to \Gamma \) admits an SRB measure \( \hat\mu \) whose conditional measures \( \hat\mu_{z} \) on unstable  curves  of points of \( \Gamma \) are equivalent to the Lebesgue measure \( m_{V_z^u} \) on these same curves. It is therefore sufficient to show the integrability with respect to one of these conditional measures. 
For $\hat\mu$-a.e.\ $z\in \Gamma$ 
there is $E_z \subset V_z^u$ with $\hat\mu_z(E_z^c)=0$ such that
\begin{equation}\label{eqn:Ez}
\lim_{n\to\infty} \frac{R_n(x)}n = \int_\Gamma \tau \,d\hat\mu = \int_\Gamma \int \tau \,d\hat\mu_z \,d\hat\mu(z)
\text{ for all } x \in E_z,
\end{equation}
where a priori these quantities may be infinite. However, 
since $\Gamma$ is Lebesgue-strongly $T$-recurrent, there is a local unstable curve $V^u$ and a set $E \subset V^u \cap \Gamma$ of positive 1-dimensional Lebesgue measure such that for every $x\in E$ we have $\limsup_{n\to\infty} v_n(x) / n > 0$ (see \eqref{eqn:pos-freq}). Now Lemma \ref{lem:pin} implies that if $x\in E$ is such that $\lim_{n\to\infty} R_n(x)/n$ exists, then this limit is finite.

Observe that for each $n\geq 1$ and $y\in \Gamma$, the function $x\mapsto R_n(x) / n$ is constant on $V_y^s \cap \Gamma$. In particular, fixing $z\in \Gamma$ and writing $E_z' = \bigcup_{y\in E} V_y^s \cap V_z^u$, we see that $E_z' \subset V_z^u$ has $\hat\mu_z(E_z')>0$
and that $\lim_{n\to\infty} R_n(x)/n$ is finite for every $x\in E_z'$ such that the limit exists.

Choosing $z\in \Gamma$ such that $\hat\mu_z(E_z^c)=0$, we get $\hat\mu_z(E_z' \cap E_z) > 0$. 
For any point $x \in E_z' \cap E_z$, \eqref{eqn:Ez} gives $\int_\Gamma \tau \,d\hat\mu < \infty$, and it follows that $\int \tau \,d\hat\mu_z < \infty$ for $\hat\mu$-a.e.\ $z$.
This completes the proof of the integrability of the return times and thus the proof of Proposition \ref{prop:nicefat-tower}.

\subsection{Discussion of the saturation condition}\label{sec:sat-disc}

The saturation condition in Definition \ref{def:satrect} is crucial to our construction of a first $T$-return Young tower, but may appear mysterious at first glance, so we give some general discussion here of the role it plays; this section is not part of the proofs. 

The strategy in the proof of Theorem \ref{thm:existsY} was to use a collection of hyperbolic branches $\mathfrak{C}$ to produce a rectangle $\Gamma$ (Proposition \ref{thm:nicehyp}), and then to build a Young tower that has $\Gamma$ as its base (Propositions \ref{prop:sat-tower} and \ref{prop:nicefat-tower}). In \S\ref{sec:sat-rec}, Proposition \ref{thm:nicehyp} was proved by taking $\mathfrak{C}$ to be the collection of branches associated to almost returns of some nice regular set $A \subset \Gpq$, but Lemma \ref{lem:nicesat} shows that the construction of a nice rectangle $\Gamma$ that is recurrent works if we begin with \emph{any} collection $\mathfrak{C}$ of hyperbolic branches.

The proof of conclusions \eqref{branch}, \eqref{returns}, and \eqref{saturated} in Proposition \ref{thm:nicehyp}, however, required us to work with the specific choice of $\mathfrak{C}$ given by the collection of almost returns. To illustrate why this was important, consider the following example, which shows how \eqref{returns} and \eqref{saturated} can fail for a general collection $\mathfrak{C}$, where the collection of hyperbolic branches associated to returns of $\Gamma$ may contain some branches that do not appear as concatenations of branches from the generating set $\mathfrak{C}$. 

\begin{Example}\label{eg:notsat}
Suppose that $p,q$ are fixed points and that $\Gpq$ contains two hyperbolic branches $f\colon \widehat{C}_0^s \to \widehat{C}_0^u$ and $f\colon \widehat{C}_1^s \to \widehat{C}_1^u$. Define a map $\pi\colon \{0,1\}^\ZZ \to \Gpq$ by $\pi(x) = \bigcap_{n\in\ZZ} f^{-n}(\widehat{C}_{x_n}^s)$. To each finite word $w = w_0 w_1 \cdots w_{n-1} \in \{0,1\}^n$ we can associate the cylinder $[w] = \{ x \in \{0,1\}^\ZZ : x_j = w_j$ for all $0\leq j < n\}$, and $\pi[w]$ is the stable strip for a hyperbolic branch of order $n$ (recall Definition \ref{def:hypbranch}) associated to the word $w$.

Let $\mathfrak{C}$ be the collection of two hyperbolic branches associated to the words $v=01$ and $w=10$. Applying Lemma \ref{lem:nicesat} to $\mathfrak{C}$ produces the rectangle $\Gamma$ consisting of all points $\pi(x)$ for which $x_{2n}x_{2n+1} \in \{v,w\}$ for all $n\in\ZZ$. Observe that for $x = \overline{10}.\overline{10} \in \{0,1\}^\ZZ$ (where the overline denotes infinite repetition to either the left or the right), we have $\pi(x) \in \Gamma$ and $f(\pi(x)) \in\Gamma$ (this is a period-2 orbit), but the hyperbolic branch corresponding to this return is $f \colon \widehat{C}^s_1 \to \widehat{C}_1^u$, which is not a concatenation of branches from $\mathfrak{C}$, so conclusion \eqref{returns} fails. Moreover, the rectangle $\Gamma$ is not saturated, because $z= \pi(\overline{01}.\overline{10}) \in \Gamma$ has
\[
f^{-1}(\widehat{C}^u_1 \cap W_z^s) = \{ \pi(y) : y_0y_1y_2 \cdots = 1\overline{10}\},
\]
which is a full-length local stable curve that is disjoint from $\Gamma$ and thus does not lie in $C^s$, violating \eqref{eq:saturation}. Then the construction of a topological Young tower in \S\ref{sec:building-tower} fails: Lemma \ref{prop:MY} does not go through because the sets defined in \eqref{Gij} are not contained in $\Gamma$.
\end{Example}

In fact one can still build a ``not first return'' topological Young tower over $\Gamma$ by writing $\Gamma_v^s = \{ \pi(y) : y_0 y_1 = v\}$ and $\Gamma_w^s = \{ \pi(y) : y_0 y_1 = w\}$, then setting the inducing time $\tau$ to be equal to $2$ everywhere so that $\Gamma_v^u = f^2(\Gamma_v^s) = \{ \pi(y) : y_{-2} y_{-1} = v\}$, and similarly for $\Gamma_w^{u}$.
This gives the Markov structure in \ref{eq:youngtower}, but is not a first return tower because some points, such as the point $\pi(x)$ given in Example \ref{eg:notsat}, return before time $2$.

One might reasonably object that the rectangle in Example \ref{eg:notsat} is indeed saturated if we take $T=2$ and only consider returns at even times, since then every $T$-return is associated to a concatenation of the branches in $\mathfrak{C}$, and thus $\Gamma$ supports a first $2$-return topological Young tower. This phenomenon occurs if we start with any finite collection of branches $\mathfrak{C}$: taking $T$ to be any common multiple of their lengths will guarantee that the rectangle is saturated (for $T$-returns) and produces a first $T$-return topological Young tower. This simple solution fails when $\mathfrak{C}$ is infinite, as in the next example.

\begin{Example}\label{eg:notsat-2}
Given three hyperbolic branches $\{f\colon \widehat{C}^s_j \to \widehat{C}^u_j\}_{j\in \{0,1,2\}}$, define $\pi \colon \{0,1,2\}^\ZZ \to \Gpq$ as in Example \ref{eg:notsat}, and let $\mathfrak{C}$ be the infinite collection of hyperbolic branches associated to words of the form $210^n1$ and $10^n12$ for $n\geq 0$. Then given any $i\geq 4$, we observe that $x = \overline{210^{i-4}1}.\overline{210^{i-4}1}$ has the property that $\pi(x),f^i(\pi(x)) \in \Gamma$, so for $w = 210^{i-4}12 \in \{0,1,2\}^i$, the branch $f^i \colon \widehat{C}^s_w = \widehat{C}^u_w$ is associated to a return to $\Gamma$, but is not a concatenation of branches from $\mathfrak{C}$. Putting $z = \pi(\overline{10^{i-4}12}.\overline{210^{i-4}1})$ one can check that $f^{-i}(\widehat{C}^u_w \cap W_z^s) \not\subset C^s$, so \eqref{eq:saturation} fails and $\Gamma$ is not saturated for any choice of $T$. Because the stable strips associated to the branches in $\mathfrak{C}$ are disjoint, one can still build a topological Young tower (with infinitely many branches) as in the paragraph following Example \ref{eg:notsat}, but it will not be a first $T$-return tower for any choice of $T$.
\end{Example}

The reason we can avoid these problems in Proposition \ref{thm:nicehyp} is that the collection of branches we use there is not arbitrary: rather, it is the collection of \emph{all} hyperbolic branches associated to almost returns to a particular nice regular set $A$. For such a collection, Lemmas \ref{lem:almostret} and \ref{lem:any-almost} guarantee that any $T$-return for the rectangle $\Gamma$ corresponds to an almost return for the original set $A$, and thus to a branch from the original collection. Thus one may interpret the saturation condition as the requirement that the original collection of branches be ``large enough'' that no new branches are created by ``accidental returns''.

For a discussion of a related issue in the setting of coded shift spaces, see \cite[\S3]{vC18}, and specifically condition [III*] of that paper, which plays the same role there as saturation does here, guaranteeing that a certain Markov coding is 1-1. 
Example 3.6 of that paper could be translated into the setting of Examples \ref{eg:notsat} and \ref{eg:notsat-2} by letting $\mathfrak{C}$ be the collection of four hyperbolic branches associated to the words $\{0, 10, 01, 101\}$; then the associated rectangle is not saturated, and moreover the procedure described after Example \ref{eg:notsat} does not produce a topological Young tower at all since the $s$-sets $\Gamma_w^s$ are not disjoint. One can still obtain a Markov coding but it is uncountable-to-1. Together with the previous two examples, this illustrates how important the saturation condition is for our results.

\section{Hyperbolic measures have nice regular sets: Proof of Theorem \ref{thm:existsnice1}}\label{sec:B->A}

In this section we prove Theorem \ref{thm:existsnice1}. The non-trivial part of the proof is to show that we can find arbitrarily small  domains \(  \Gpq  \) with \(  \mu(\Gpq\cap \Lambda_{\ell})>0  \) where \(  p,q\in \Lambda_{\ell}  \) are periodic points. Then letting \(  T >0  \) be any common multiple of the periods of \(  p  \) and \(  q  \),  it follows that \(  p,q  \) are fixed points for \(  f^{T} \) and therefore \(  \Gpq  \) is a nice domain with \(  T(\Gpq)=T  \). Moreover, \(  \mu  \) is also \(  f^{T}  \)-invariant and therefore \(  \mu  \)-a.e.\ \(  x\in \Gpq\cap \Lambda_{\ell}  \) returns to \(  \Gpq  \cap \Lambda_\ell\) with positive frequency for iterates which are multiples of \(  T  \), in both forward and backward time.  
The set $A$ of such points is $T$-recurrent, and if $\mu$ is an SRB measure it is Lebesgue-strongly $T$-recurrent.
Thus   Theorem \ref{thm:existsnice1}  follows from the statement below.

\begin{Proposition}\label{prop:get-pq}
Let $f$ be a $C^{1+\alpha}$ diffeomorphism, $\mu$ an ergodic non-atomic $\chi$-hyperbolic measure, and $\Lambda$ a $\chi$-hyperbolic set.  
Fix $\lambda\in (0,\chi)$ and $\eps \in (0,\eps_1(f,\chi,\lambda))$.
Let $U\subset M$ be an open set and $\ell\in \NN$ such that $\mu(U\cap \Lambda_\ell)>0$.  Then with $\ell'$ as in Theorem \ref{thm:inf-po},
there are  $(\lambda/4,2\eps,\ell+\ell')$-regular periodic points $p,q$ such that $\Gpq$ is defined, contained in $U$, and satisfies $\mu(\Gpq \cap \Lambda_\ell) > 0$.  Since $\Gpq \subset U$, $\diam\Gpq$ can be made arbitrarily small.
\end{Proposition}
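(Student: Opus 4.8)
The strategy is to use the nonuniform hyperbolicity of $\mu$ to produce, inside any prescribed open set $U$ meeting $\Lambda_\ell$ on a positive-measure set, two periodic points $p,q$ whose local stable and unstable curves bound a topological disk $\Gamma_{pq}$ contained in $U$ and still carrying positive $\mu$-mass on $\Lambda_\ell$. The essential mechanism for producing periodic points is the shadowing result Theorem~\ref{thm:inf-po}: a periodic pseudo-orbit in $\Lambda$ is shadowed by a genuine periodic point, and that point inherits explicit regularity, namely it is $(\lambda/4,2\eps,\ell_0+\ell')$-regular with $\ell'=\lceil|\frac1{2\eps}\log\wQ|\rceil$. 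So the first step is to manufacture suitable periodic pseudo-orbits that begin and end in $\Lambda_\ell$ and stay within $U$.

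\textbf{First step: produce one regular periodic point in $U\cap \Lambda_\ell$.} By Poincar\'e recurrence and ergodicity, $\mu$-a.e.\ point of $U\cap\Lambda_\ell$ returns to $U\cap\Lambda_\ell$ infinitely often. Pick such a recurrent point $x$ and a return time $n$ with $f^n(x)\in U\cap \Lambda_\ell$ and $d(f^n(x),x)$ as small as we like; this is possible because the measure of $U\cap\Lambda_\ell$ is positive and the return map is (by Poincar\'e) defined a.e., so returns accumulate and, by continuity of $f$ on the compact set, we may also demand $f^n(x)$ close to $x$. Concatenating the orbit segment $x,f(x),\dots,f^{n-1}(x)$ periodically gives a $\sigma^n$-periodic $(\bar\ell,\delta,\lambda)$-pseudo-orbit for the regularity sequence $\ell_j=\ell+\min(j,n-j)$ once $d(f^n(x),x)\le \delta e^{-\lambda\ell}$, using $f^j(x)\in\Lambda_{\ell+j}$ from \eqref{eq:images}. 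Theorem~\ref{thm:inf-po} then yields a genuine periodic point $p$ of period $n$, which is $(\lambda/4,2\eps,\ell+\ell')$-regular, and which is $C^0$-close to $x$, hence lies in $U$ (shrinking the return-distance threshold if necessary). The same argument, applied to a second recurrent point or to a different small neighborhood, produces $q$ with the same properties; the point is that $p,q$ can be taken arbitrarily close together and arbitrarily close to a density point of $\mu|_{U\cap\Lambda_\ell}$.

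\textbf{Second step: arrange that $\Gamma_{pq}$ is defined, lies in $U$, and has positive $\mu$-measure on $\Lambda_\ell$.} Having $p,q$ be $(\lambda/4,2\eps,\ell+\ell')$-regular with $d(p,q)$ small, Corollary~\ref{cor:bracket-reg} (equivalently Theorem~\ref{thm:HP} applied to the regular set $\{p,q\}$) guarantees the bracket $[p,q]=V_p^s\cap V_q^u$ is a single point, so the four curves $V_p^{s/u},V_q^{s/u}$ intersect pairwise and bound a topological disk $\Gamma_{pq}$ as in Definition~\ref{def:nice}; this is a nice domain for the map $f^T$, $T$ any common multiple of the periods of $p,q$. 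To control $\diam\Gamma_{pq}$ we use that the local stable/unstable curves of $(\lambda/4,2\eps,\ell+\ell')$-regular points have length bounded by $r_{\ell+\ell'}$ from Theorem~\ref{thm:HP}, and their diameter is comparable to $d(p,q)$ plus this bound; since we first fix $\eps$ (hence $\ell'$ and all the chart constants) and only then send $d(p,q)\to 0$ while forcing $p,q$ to sit near a density point $x_0$ of $\mu|_{U\cap\Lambda_\ell}$, the disk $\Gamma_{pq}$ shrinks to $x_0$ and is eventually contained in $U$. For the positive-measure claim: choose $x_0$ to be a density point of $\mu|_{U\cap\Lambda_\ell}$ and also a Lebesgue-style density point for the stable/unstable lamination; then $\Gamma_{pq}$ contains a neighborhood (in the induced topology on $\Lambda_\ell$) of $x_0$, more precisely it contains all points of $\Lambda_\ell$ sufficiently close to $x_0$ whose stable and unstable curves cross between the boundary curves, and by taking $p,q$ close enough this captured set has positive $\mu$-measure. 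Here one invokes that the local product/bracket structure on the regular set $\Lambda_{\ell}$ is available (again via Theorem~\ref{thm:HP}, part (2)) on the scale $\delta_\ell$, so for $x\in\Lambda_\ell$ with $d(x,x_0)$ small the brackets $[x,p],[x,q]$ are defined and witness $x\in\Gamma_{pq}$.

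\textbf{Main obstacle.} The technically delicate point is the positive-measure conclusion $\mu(\Gamma_{pq}\cap\Lambda_\ell)>0$ together with $\Gamma_{pq}\subset U$: we must shrink $d(p,q)$ (to make $\Gamma_{pq}$ small and contained in $U$) while keeping enough of the support of $\mu|_{\Lambda_\ell}$ inside. This requires choosing $x_0$ as a $\mu|_{U\cap\Lambda_\ell}$-density point and showing that $\Gamma_{pq}$ genuinely contains a positive-measure ``rectangle'' of nearby regular points, i.e.\ that the captured set is not merely a single stable or unstable slice (which could be $\mu$-null). This is exactly where the local product structure of Theorem~\ref{thm:HP}(2) on $\Lambda_\ell$, combined with the fact that a hyperbolic measure disintegrates along both stable and unstable curves, must be used: a density point of $\mu|_{\Lambda_\ell}$ has positive conditional mass in ``both directions,'' so a genuine two-dimensional sliver around it has positive measure. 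Making this precise — controlling simultaneously the regularity level $\ell+\ell'$ at which $p,q$ live, the scale $\delta_{\ell}$ at which brackets exist, and the density-point estimate for $\mu$ — is the heart of the argument; everything else is bookkeeping with constants already fixed by the choice of $\eps$.
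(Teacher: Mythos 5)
Your overall strategy is the right one and coincides in spirit with the paper's: use Poincar\'e recurrence to produce recurrent points in $U\cap\Lambda_\ell$, close them up into periodic pseudo-orbits at scale $\delta e^{-\lambda\ell}$, and apply Theorem~\ref{thm:inf-po} to obtain $(\lambda/4,2\eps,\ell+\ell')$-regular periodic shadowing points. The first step is essentially correct.

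However, there is a genuine gap in your second step, and you essentially concede this when you write that making it precise ``is the heart of the argument.'' The difficulty is this: having produced two periodic points $p,q$ that are both close to a density point $x_0$ of $\mu|_{U\cap\Lambda_\ell}$ does \emph{not} by itself ensure that $x_0\in\Int\Gamma_{pq}$, nor that $\Gamma_{pq}\cap\Lambda_\ell$ has positive measure. If $p$ and $q$ land on the same ``side'' of $x_0$, or if the stable/unstable curves of $p,q$ both pass above or below $x_0$, then $\Gamma_{pq}$ may miss $x_0$ entirely; and even an appeal to a Lebesgue-style density point of $\mu|_{\Lambda_\ell}$ does not help unless you know the rectangle contains a full ball around $x_0$. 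Your sketch asserts that ``a genuine two-dimensional sliver around it has positive measure,'' but you have no control over the relative geometric position of $p$, $q$, and $x_0$, so you cannot guarantee the rectangle captures such a sliver rather than a thin or empty set.

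The paper resolves exactly this issue by proving a sharper shadowing statement first (Lemma~\ref{lem:periodic-bracket}): given any two $\Lambda_\ell$-nonwandering points $y,z$ near $x$, one can produce regular periodic points converging not merely to $y$ or $z$ but to the \emph{bracket} $V_y^s\cap V_z^u$. This gives precise control over where the periodic points sit. The paper then looks at $Z=\overline{B(x,\delta)}\cap\supp(\mu|\Lambda_\ell)$, projects it along stable and unstable leaves to $V_x^u$ and $V_x^s$, picks points $a,b,c,d\in Z$ realizing the extremes of these projections, and lets $\Gamma_0$ be the region bounded by $V_a^s,V_b^s,V_c^u,V_d^u$ — which by construction contains all of $Z$. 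Applying Lemma~\ref{lem:periodic-bracket} twice places $p_k$ near the corner $V_a^s\cap V_c^u$ and $q_k$ near the opposite corner $V_b^s\cap V_d^u$. Crucially, even then no single $\Gamma_{p_kq_k}$ need contain $x$; instead the proof shows that $\bigcup_k\Gamma_{p_kq_k}$ covers $Z$ up to a set contained in finitely many stable/unstable curves, which is $\mu$-null since $\mu$ is non-atomic, so \emph{some} $\Gamma_{p_nq_n}$ must have positive measure. To repair your argument you would need to (i) strengthen your shadowing step to target brackets as in Lemma~\ref{lem:periodic-bracket}, and (ii) replace the density-point heuristic with an argument that actually locates a positive-measure set inside some $\Gamma_{pq}$ — the extremal-projection-plus-exhaustion trick is the clean way to do this.
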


Before proving Proposition \ref{prop:get-pq}, we use Theorem \ref{thm:inf-po} to establish a result reminiscent of the Katok Closing Lemma.
Say that $y\in B(x,\delta) \cap \Lambda_\ell$ is \emph{$\Lambda_\ell$-nonwandering} if there is a sequence $n_k\to\infty$ and $y_k\in \Lambda_\ell \cap f^{-n_k} \Lambda_\ell$ such that $y_k,f^{n_k}(y_k) \xrightarrow{k\to\infty} y$.  Observe that by Poincar\'e Recurrence, every point in $\supp(\mu|\Lambda_\ell)$ is $\Lambda_\ell$-nonwandering.  

\begin{Lemma}\label{lem:periodic-bracket}
Given $\delta>0$ as in Theorem \ref{thm:pseudo-orbit},  $\ell'\in\NN$ as in Theorem \ref{thm:inf-po}, and any $\ell\in\NN$, for all $\Lambda_\ell$-nonwandering points $y,z\in B(x,\delta e^{-\lambda\ell}/3)\cap\Lambda_\ell$ there is a sequence of $(\lambda/4,2\eps,\ell+\ell')$-regular periodic points $p_k\xrightarrow{k\to\infty} V_y^s \cap V_z^u$.
\end{Lemma}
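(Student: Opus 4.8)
The plan is to build a periodic pseudo-orbit using the $\Lambda_\ell$-nonwandering property and then apply the shadowing result of Theorem~\ref{thm:inf-po}. First I would fix $y,z\in B(x,\delta e^{-\lambda\ell}/3)\cap \Lambda_\ell$ that are $\Lambda_\ell$-nonwandering. Using nonwandering-ness of $y$, pick $n_k\to\infty$ and $y_k\in\Lambda_\ell\cap f^{-n_k}\Lambda_\ell$ with $y_k, f^{n_k}(y_k)\to y$; similarly, using nonwandering-ness of $z$ pick $m_k\to\infty$ and $z_k\in\Lambda_\ell\cap f^{-m_k}\Lambda_\ell$ with $z_k,f^{m_k}(z_k)\to z$. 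For $k$ large, the pieces of true orbit $y_k, f(y_k),\dots, f^{n_k}(y_k)$ and $z_k, f(z_k),\dots, f^{m_k}(z_k)$ are genuine orbit segments (hence $(\bar\ell,0,\lambda)$-pseudo-orbits for the appropriate regularity sequences along them via \eqref{eq:images}), and concatenating them in a cyclic fashion yields a periodic pseudo-orbit: we jump from $f^{n_k}(y_k)$ (which is close to $y$, hence close to $z$ since both lie in $B(x,\delta e^{-\lambda\ell}/3)$, so within $\delta e^{-\lambda \ell}$ of $z_k$ for $k$ large) to $z_k$, run along the $z$-segment, then jump from $f^{m_k}(z_k)$ (close to $z$, hence close to $y$) back to $y_k$, run along the $y$-segment, and repeat periodically. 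The two jumps have size $\le \delta e^{-\lambda\ell}$ by the triangle inequality and our radius bound $\delta e^{-\lambda\ell}/3$, so after decreasing the regularity index along the jumps by at most one (which is absorbed because at the jump points we are at regular level $\ell$, the minimum along the segments), this is a genuine bi-infinite periodic $(\bar\ell,\delta,\lambda)$-pseudo-orbit in the sense required by Theorem~\ref{thm:inf-po}, with $\ell_0=\ell$.

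\textbf{Applying shadowing.} By Theorem~\ref{thm:inf-po}, this periodic pseudo-orbit has a unique shadowing point $p_k$, which by the periodicity remark following that theorem (if $\sigma^N\bar\ell=\bar\ell$ and $\sigma^N\bar x=\bar x$ then $f^N(p_k)=p_k$) is a periodic point of period $N_k = n_k + m_k$, and which is $(\lambda/4,2\eps,\ell_0+\ell')=(\lambda/4,2\eps,\ell+\ell')$-regular since $\ell_0=\ell$. It remains to check that $p_k\to V_y^s\cap V_z^u$ as $k\to\infty$. For this, observe that as $k\to\infty$ the pseudo-orbit converges (pointwise, in each fixed coordinate index) to the bi-infinite pseudo-orbit $z_n$ given in Corollary~\ref{cor:bracket-reg} for the pair $y,z$: indeed, the forward part of our pseudo-orbit, read from the jump point near $z$, approximates $z, f(z), f^2(z),\dots$ (since $z_k\to z$ and $f^j$ is continuous), and the backward part approximates $\dots, f^{-2}(y), f^{-1}(y), y$. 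By Corollary~\ref{cor:bracket-reg}, the shadowing point of that limiting pseudo-orbit is exactly the bracket $[z,y]=V_z^s\cap V_y^u$; but note the statement asks for $V_y^s\cap V_z^u$, so I would instead arrange the cyclic concatenation so that the forward part runs along $y$ and the backward part along $z$ — i.e. swap the roles — giving limiting pseudo-orbit $\dots, f^{-1}(z), z, y, f(y), \dots$ with shadowing point $V_y^s\cap V_z^u$. Continuity of the shadowing point with respect to the pseudo-orbit (which follows from the nested-compact-intersection description of $\mathcal B_\barx^0$ in the proof of Theorem~\ref{thm:inf-po}, since the stable/unstable strips depend continuously on finitely many of the $x_j$ and the diameters shrink uniformly) then gives $p_k\to V_y^s\cap V_z^u$.

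\textbf{Main obstacle.} The routine bookkeeping is in making the cyclic concatenation genuinely a valid pseudo-orbit: one must verify that the regularity sequence $\bar\ell$ satisfies $|\ell_{n+1}-\ell_n|\le 1$ across the jump points and that $d(f(x_{n-1}),x_n)\le \delta e^{-\lambda\ell_n}$ at the jumps. The first holds because the orbit-segment regularity indices grow by exactly $1$ per step away from the endpoints (so at a jump we have $\ell$ on both sides, no problem). The second is the triangle-inequality estimate: $d(f^{n_k}(y_k), z_k)\le d(f^{n_k}(y_k), y) + d(y,x) + d(x,z) + d(z, z_k) < o(1) + 2\delta e^{-\lambda\ell}/3 + o(1) \le \delta e^{-\lambda\ell}$ for $k$ large. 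The genuinely delicate point — and the one I expect to be the heart of the matter — is the \emph{continuity of the shadowing map at the limiting pseudo-orbit}: the pseudo-orbits $(x_n^{(k)})_n$ converge to $(z_n)_n$ only in the sense of convergence in each fixed coordinate (they are not even eventually equal, and their periods tend to infinity), so one must argue that for any fixed $N$ the truncated branches agree for large $k$, hence the nested strips $\mathcal B^0_{\barx^{(k)}}(\pm N)$ converge to $\mathcal B^0_{\bar z}(\pm N)$, and then use the uniform exponential shrinking of strip diameters (from the expansion estimate $e^{\lambda(j-i)/3}$ in Definition~\ref{def:reg-branch}) to interchange the limit in $k$ with the intersection over $N$. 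This is where the explicit, $\ell$-uniform constants in Theorem~\ref{thm:pseudo-orbit} are essential.
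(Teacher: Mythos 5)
Your proposal follows essentially the same construction as the paper: build a periodic pseudo-orbit by cyclically concatenating a forward orbit segment of $y_k$ (with $y_k, f^{n_k}(y_k)\to y$) and one of $z_k$ (with $z_k, f^{m_k}(z_k)\to z$), check that the two jumps have size $\leq\delta e^{-\lambda\ell}$, and apply Theorem~\ref{thm:inf-po} to get $(\lambda/4,2\eps,\ell+\ell')$-regular periodic shadowing points $p_k$. The one place you take a slightly different route is the final convergence $p_k\to V_y^s\cap V_z^u$. You frame it as continuity of the shadowing map in the pointwise topology on pseudo-orbits, which would require showing that the finite-length stable and unstable strips depend continuously on the base points $x_j$ (on $\Lambda_\ell$ this is only H\"older via the Lyapunov coordinates). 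The paper bypasses that lemma with a more direct observation: $p_k$ lies in $\mathcal{N}_\barx^0 \cap \mathcal{N}_\barx^{n_k+m_k}$, a stable strip containing $y_k$ whose transverse diameter is $\lesssim e^{-\lambda n_k/3}$ intersected with an unstable strip containing $f^{m_k}(z_k)$ whose transverse diameter is $\lesssim e^{-\lambda m_k/3}$; since $y_k\to y$, $f^{m_k}(z_k)\to z$, and both strips shrink, the intersection converges to $V_y^s\cap V_z^u$. Both arguments rest on the same underlying estimates, but the paper's avoids stating an auxiliary continuity lemma. One small remark: the ``swap'' you performed was not actually needed. In the concatenation $y_k,\dots,f^{n_k-1}(y_k),z_k,\dots,f^{m_k-1}(z_k),y_k,\dots$ with index $0$ at $y_k$, the forward part already follows $y_k$'s true orbit and the backward part already follows the backward orbit of $f^{m_k}(z_k)\approx z$, so $p_k$ at index $0$ automatically approximates $V_y^s\cap V_z^u$; the configuration you first wrote corresponds to reading the same shadowing orbit at index $n_k$ rather than $0$, which gives a different point on the same periodic orbit, namely one near $V_z^s\cap V_y^u$.
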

\begin{proof}
Suppose $y,z$ are as in the hypothesis.
Choose $n_k\to\infty$ and $y_k \in \Lambda_\ell \cap f^{-n_k} \Lambda_\ell$ such that $y_k, f^{n_k}(y_k) \to y$.  Choose $m_k,z_k$ similarly for $z$.  For suitably large $k$, we have
\[
y_k, f^{n_k}(y_k), z_k, f^{m_k}(z_k) \in B(x,\delta e^{-\lambda \ell}/2)
\]
and thus in particular
\[
d(f^{n_k}(y_k),z_k) \leq \delta e^{-\lambda\ell}
\quad\text{and}\quad
d(f^{m_k}(z_k),y_k) \leq \delta e^{-\lambda\ell}.
\]
It follows that $y_k, f(y_k), \dots, f^{n_k-1}(y_k), z_k, f(z_k), \dots, f^{m_k-1}(z_k), y_k$
is a $(\bar\ell,\delta,\lambda)$-pseudo-orbit with 
\[
\ell_i = \ell + \begin{cases}
\min (i, n_k - i) & 0\leq i\leq n_k, \\
\min (i - n_k, n_k + m_k - i) & n_k \leq i \leq n_k + m_k.
\end{cases}
\]
Repeating this finite pseudo-orbit $\barx$ periodically gives a periodic bi-infinite pseudo-orbit to which we can apply Theorem \ref{thm:inf-po} and obtain a $(\lambda/4,2\eps,\ell+\ell')$-regular periodic shadowing point $p_k$.  Note that $p_k \in \mathcal{N}_{\barx}^0 \cap \mathcal{N}_\barx^{n_k+m_k}$, and that the intersections converge to $V_y^s \cap V_z^u$ as $k\to\infty$ because $y_k \in \mathcal{N}_{\barx}^0$ and $f^{m_k} z_k \in \mathcal{N}_\barx^{n_k+m_k}$.  Thus $p_k \to V_y^s \cap V_z^u$.
\end{proof}

\begin{figure}[tbp]
\includegraphics[width=.9\textwidth]{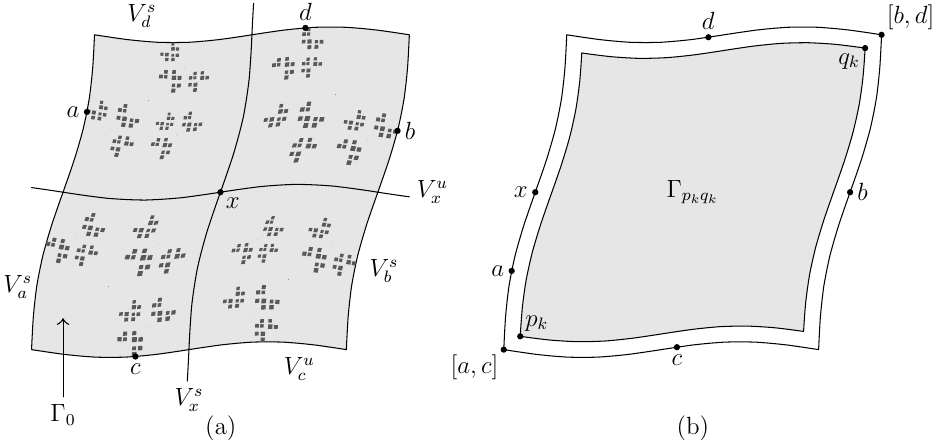}
\caption{Proving Proposition \ref{prop:get-pq}.}
\label{fig:abcd}
\end{figure}

\begin{proof}[Proof of Proposition \ref{prop:get-pq}]
Fix $x\in U\cap \supp(\mu|\Lambda_\ell)$.  Since $\Lambda_\ell$ is closed, we have $\supp(\mu|\Lambda_\ell) \subset \Lambda_\ell$.  Choose $\delta',\delta>0$ sufficiently small  that $B(x,\delta') \subset U$, and such that for every $y,z\in \overline{B(x,\delta)} \cap \Lambda_\ell$, the intersection $V_y^s \cap V_z^u$ is a single point and lies in $B(x,\delta') \cap \Lambda_{\ell'}$.  
Assume also that $\delta$ is chosen small enough and $\ell'$ large enough to satisfy Lemma \ref{lem:periodic-bracket}.

Let $Z := \overline{B(x,\delta)} \cap \supp(\mu|\Lambda_\ell)$.  Observe that $Z$ is compact, and that $\mu(Z)>0$ by our choice of $x$.  
Let $\pi^s \colon Z \to V_x^u$ and $\pi^u\colon Z\to V_x^s$ be projection along local stable and unstable leaves, respectively.  Since $V_x^{s,u}$ are one-dimensional we can equip each with a total order, and by compactness we can choose $a,b,c,d\in Z$ such that
\begin{gather*}
\pi^s(a)=\inf \pi^s(Z),
\quad
\pi^s(b)=\sup \pi^s(Z), \\
\pi^u(c)=\inf \pi^u(Z),
\quad
\pi^u(d) = \sup\pi^u(Z).
\end{gather*}
Let $\Gamma_0$ be the region bounded by $V_a^s$, $V_b^s$, $V_c^u$, and $V_d^u$, as shown in Figure \ref{fig:abcd}.  Observe that $\Gamma_0 \supset Z$ and thus $\mu(\Gamma_0 \cap \Lambda_\ell)>0$.  By Lemma \ref{lem:periodic-bracket} there are periodic points $p_k,q_k\in \Lambda_{\ell'}$ such that $p_k \to V_a^s \cap V_c^u$ and $q_k \to V_b^s \cap V_d^u$.  It is possible that none of the domains $\Gamma_{p_kq_k}$ contains $x$ (this can occur, for example, if $a\in V_x^u$, as in Figure \ref{fig:abcd}(b)); on the other hand, the union $\bigcup_n \Gamma_{p_nq_n}$ covers all of $Z$ except possibly for $Z \cap V_a^s \cup V_b^s \cup V_c^u \cup V_d^u$.  Since $\mu$ is non-atomic, a single local stable or unstable curve always has zero measure, thus this subset is $\mu$-null.  Using the fact that $\mu(Z)>0$, we conclude that there is some $n$ such that $\mu(\Gamma_{p_nq_n} \cap Z) > 0$.  This completes the proof of the proposition.
\end{proof}

\addtocontents{toc}{\vspace{\normalbaselineskip}}
\appendix
\section{List of terminology and notation}

\newcommand{\fullref}[1]{\ref{#1} on page~\pageref{#1}} 

\begin{enumerate}
\item almost returns, Definition \fullref{def:almostreturns}
\item brackets, Definition \fullref{def:brackets}
\item branch
\begin{enumerate}
\item[--] $(C,\kappa)$-hyperbolic, Definition \fullref{def:hypbranch}
\item[--] $\hat{\ell}$-regular, Definition \fullref{def:reg-branch}
\item[--] $(C,\kappa)$-hyperbolic branch property, Definition \fullref{def:hyp-branch-prop}
\end{enumerate} 
\item overlapping charts, Definition \fullref{def:overlapping} 
\item concatenation property, Definition \fullref{def:concatenation}
\item cones, Definition \fullref{def:cones1}
\begin{enumerate}
\item[--] in regular neighbourhoods,  Definition \fullref{def:cones}
\end{enumerate}
\item conefield, Definition \fullref{def:conefield}
\begin{enumerate}
\item[--] adapted, Definition \fullref{def:conefields}
\end{enumerate} 
\item curves
\begin{enumerate}
\item[--] local $(C,\lambda)$-stable (unstable), Definition \fullref{def:localman}
\item[--] $\mathcal{K}$-admissible, Definition \fullref{def:kadmissible}
\item[--] stable and unstable admissible, Definition \fullref{def:nice-adm} 
 \item[--] in regular neighbourhoods, Definition  \fullref{def:localmanreg} 
\item[--] 
full length stable and unstable admissible, Definition \ref{def:nice-adm} and Definition \ref{def:localmanreg}
\end{enumerate}
\item $(\chi,\varepsilon,\ell,r)$-nice domain, Definition \fullref{def:nice}
\item measure
\begin{enumerate}
\item[--] hyperbolic, Definition \fullref{def:nonzero}
\item[--] physical,  Definition \fullref{def:physical}
\item[--] SRB, Definition \fullref{srb-mes}
\end{enumerate}
\item nice
\begin{enumerate}
\item[--] domain, Definition  \fullref{def:nice}
\item[--] regular set, Definition  \fullref{def:nice-set}
\item[--] rectangle, Definition  \fullref{def:nicerectangle}
\end{enumerate}
\item pseudo-orbit
\begin{enumerate}
\item[--] finite $(\hat{\ell},\delta,\lambda)$-, Definition \fullref{def:po}
\item[--] bi-infinite $(\hat{\ell},\delta,\lambda)$-, Definition \fullref{def:infpo}
\end{enumerate}
\item rectangle
\begin{enumerate}
\item[--] $(C,\lambda)$, Definition  \fullref{def:rectangle}
\item[--] nice, Definition  \fullref{def:nicerectangle}
\item[--] saturated, Definition \fullref{def:satrect}
\end{enumerate}
\item recurrence (recurrent)
\begin{enumerate}
\item[--] recurrent and Lebesgue-strongly recurrent, Definition \fullref{def:Treturn}
\item[--] almost recurrent, Definition \fullref{def:almostreturns}
\end{enumerate}
\item regular
\begin{enumerate}
\item[--] level sets, Definition \fullref{def:reg}
\item[--] $(\chi,\varepsilon,\ell)$-regular set, Definition \fullref{def:chi-reg}
\item[--] $(\chi,\varepsilon,\ell,r)$-nice regular set (nice regular), Definition \fullref{def:nice-set}
\item[--] $\hat{\ell}$-regular branch, Definition \fullref{def:reg-branch}
\end{enumerate}
\item sequences, hyperbolic, Definition \fullref{def:hypseq}
\item set
\begin{enumerate}
\item[--] fat, Definition \fullref{def:fat}
\item[--] $(\chi,\varepsilon)$-hyperbolic, Definition  \fullref{def:hypset}
\item[--] regular level, Definition  \fullref{def:reg}
\item[--] $(\chi,\varepsilon,\ell)$-regular, Definition  \fullref{def:chi-reg}
\item[--] $s/u$-subsets, Definition \fullref{def:su-sets}
\item[--] $(\chi,\varepsilon,\ell,r)$-nice regular (nice regular), Definition  \fullref{def:nice-set}
\end{enumerate}
\item stable and unstable strips, 
\begin{enumerate}
\item[--] in a nice domain, Definition \fullref{def:nice-strips} 
\item[--] in regular neighbourhoods, Definition \fullref{def:strips}
\end{enumerate}

\item $T$-returns time, Definition \fullref{def:firstTret}
\item tower
\begin{enumerate}
\item[--] topological Young, Definition \fullref{def:FRYT}
\item[--] Young, Definition  \fullref{def:young-tower}
\end{enumerate}

\end{enumerate}

\bibliographystyle{amsplain}
\bibliography{CLP}

\end{document}